\declaretheorem[name=Theorem]{theorem}
\declaretheorem[name=Lemma,sibling=theorem]{lemma}
\declaretheorem[name=Proposition,sibling=theorem]{proposition}
\declaretheorem[name=Definition,sibling=theorem,style=definition]{definition}
\declaretheorem[name=Corollary,sibling=theorem]{corollary}
\declaretheorem[name=Observation,sibling=theorem]{observation}
\declaretheorem[name=Claim,sibling=theorem]{claim}
\newcommand{\conv}{\operatorname{conv}}
\newcommand{\poly}{\operatorname{poly}}
\newcommand{\R}{\mathbb{R}}
\newcommand{\Z}{\mathbb{Z}}
\newcommand{\Q}{\mathbb{Q}}
\newcommand{\N}{\mathbb{Z}_{\geq 0}}
\newcommand{\supp}{\operatorname{supp}}
\newcommand{\ocp}{\operatorname{ocp}}
\newcommand{\oct}{\operatorname{oct}}
\newcommand{\zero}{\mathbf{0}}
\newcommand{\one}{\mathbf{1}}
\newcommand{\EP}{Erd\H{o}s-P\'osa}
\newcommand{\surf}{\mathbb{S}}
\newcommand{\eps}{\varepsilon}
\newcommand{\eg}{\operatorname{eg}}
\newcommand{\holes}{\operatorname{h}}
\newcommand{\Del}{\operatorname{Del}}
\newcommand{\del}{\operatorname{del}}
\newcommand{\full}{\operatorname{f}}
\newcommand{\val}{\operatorname{val}}
\newcommand{\bd}{\operatorname{bd}}
\newcommand{\inter}{\operatorname{int}}
\newcommand{\apexbd}{g} 
\newcommand{\vortexbd}{g}
\newcommand{\adhesionbd}{g}
\newcommand{\genusbd}{g}
\newcommand{\resiliencebd}{\rho}
\let\le\leqslant
\let\ge\geqslant
\let\leq\leqslant
\let\geq\geqslant
\title{Integer programs with bounded subdeterminants and two nonzeros per row\thanks{A preliminary version of this paper appeared in \emph{Proceedings of the 2021 IEEE 62nd Annual Symposium on Foundations of Computer Science (FOCS)}, pages 13--24. Denver, CO, USA, 2022.}}
\author[1]{Samuel Fiorini}
\author[1]{Gwena\"el Joret}
\author[2]{Stefan Weltge}
\author[1]{Yelena Yuditsky}
\affil[1]{\itshape\small Universit\'e libre de Bruxelles, Brussels, Belgium}
\affil[2]{\itshape\small Technische Universit\"at M\"unchen, Munich, Germany}
\affil[ ]{}
\affil[1]{{\color{gray} \footnotesize \texttt{\{samuel.fiorini,gwenael.joret,yelena.yuditsky\}@ulb.be}}}
\affil[2]{{\color{gray} \footnotesize \texttt{weltge@tum.de}}}
\begin{document}

\maketitle

\begin{abstract}
We give a strongly polynomial-time algorithm for integer linear programs defined by integer coefficient matrices whose subdeterminants are bounded by a constant and that contain at most two nonzero entries in each row.
The core of our approach is the first polynomial-time algorithm for the weighted stable set problem on graphs that do not contain more than $k$ vertex-disjoint odd cycles, where $k$ is any constant.
Previously, polynomial-time algorithms were only known for $k=0$ (bipartite graphs) and for $k=1$.

We observe that integer linear programs defined by coefficient matrices with bounded subdeterminants and two nonzeros per column can be also solved in strongly polynomial-time, using a reduction to $b$-matching.
\end{abstract}

\section{Introduction}\label{sec:intro}

Many discrete optimization problems can be naturally formulated as integer (linear) programs of the form
\begin{equation} \label{eq:IP} \tag{IP}
\max \, \{ w^\intercal x : Ax \le b,\ x \in \Z^n \},
\end{equation}
where $A \in \Z^{m \times n}$, $b \in \Z^m$, $w \in \Z^n$.
While general integer programs cover NP-hard problems, polynomial-time algorithms have been developed for various interesting classes.
Prominent examples include Papadimitriou's algorithm~\cite{Papadimitriou} (recently improved by Eisenbrand and Weismantel~\cite{EisenbrandWeismantel}), Lenstra's algorithm~\cite{Lenstra} (improved by Kannan~\cite{Kannan} and Dadush~\cite{Dadush}), and several algorithms for block-structured integer programs, see, \emph{e.g.}~\cite{hemmecke2013n,jansen2020near,cslovjecsek2021block}.
These methods can be found at the core of many approaches for combinatorial optimization problems.
Illustrative examples are algorithms by Goemans and Rothvo\ss~\cite{GoemansRothvoss} and Jansen, Klein, Maack, and Rau~\cite{JansenKMR19} for bin packing and scheduling problems, respectively.

One of the most fundamental efficiently solvable classes consists of integer programs of the form~\eqref{eq:IP} whose coefficient matrix $A$ is totally unimodular, \emph{i.e.}, each square submatrix of $A$ has a determinant within $\{-1,0,1\}$.
In this case, an optimal solution of~\eqref{eq:IP} can be easily obtained from its linear programming relaxation $\max \{ w^\intercal x \mid Ax \leqslant b,\ x \in \R^n \}$. 
Such integer programs include maximum flows, minimum cost flows, maximum matchings in bipartite graphs, or maximum stable sets in bipartite graphs, and we refer to Schrijver's books~\cite{SchrijverLPIP,SchrijverCombOpt} for more background and examples.

Recently, Artmann, Weismantel, and Zenklusen~\cite{Artmann} showed that integer programs of the form~\eqref{eq:IP} can be still solved in (strongly) polynomial time when $A$ is \emph{totally 2-modular}.
Here, we say that an integer matrix is \emph{totally $\Delta$-modular} if each of its subdeterminants (determinants of square submatrices of any size) is within $\{-\Delta,\dots,\Delta\}$.
In view of other work on linear and integer programs with bounded subdeterminants~\cite{Tardos86,VC09,BDEHN14,DF94,EV17,Paat}, it is tempting to believe that integer programs defined by totally $\Delta$-modular matrices can be solved in polynomial time whenever $\Delta$ is a constant.
While this question is still open (even for $\Delta = 3$), we prove the following result.

\begin{theorem} \label{thmMain}
    For every integer $\Delta \ge 0$ there exists a strongly polynomial-time algorithm for solving integer programs of the form~\eqref{eq:IP} where $A$ is totally $\Delta$-modular and contains at most two nonzero entries in each row or in each column.
\end{theorem}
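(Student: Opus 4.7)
The plan is to treat the two subcases of Theorem~\ref{thmMain} separately.

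\textbf{Two nonzeros per column.} I would associate to $A$ a bidirected multigraph $H$ on vertex set $[m]$ whose edges are the variables: a column with nonzeros in rows $i, i'$ gives an edge $ii'$ (a loop when $i=i'$), with the two signs determined by those of the nonzero entries. The system $Ax \le b$ then becomes a list of degree-type constraints at the vertices of $H$, so \eqref{eq:IP} reduces to a weighted bidirected $b$-matching problem with integer edge multiplicities bounded by $\Delta$. Edmonds' blossom algorithm, adapted to the bidirected setting, solves this in strongly polynomial time.

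\textbf{Two nonzeros per row: reduction to stable set.} After standard preprocessing (scaling, splitting variables at a threshold, and eliminating rows with a single nonzero), one recasts \eqref{eq:IP} as a weighted stable set problem with box constraints on a graph $G$ whose edges correspond to the rows of $A$. The key structural fact is that square submatrices of the signed vertex--edge incidence matrix of $G$ have determinants in $\{0\} \cup \{\pm 2^c : c \ge 0\}$, where $c$ counts the odd cycles in the associated subgraph. Hence $\Delta$-modularity of $A$ translates into the odd cycle packing bound $\ocp(G) \le \lfloor \log_2 \Delta \rfloor$, and the theorem reduces to solving weighted stable set on graphs $G$ with $\ocp(G) \le k$ for some constant $k$.

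\textbf{Stable set with bounded ocp.} The core new ingredient is a strongly polynomial-time algorithm for weighted stable set on graphs with $\ocp(G) \le k$. The base cases $k=0$ (K\"onig's theorem) and $k=1$ (Gr\"otschel--Lov\'asz--Schrijver; Artmann--Weismantel--Zenklusen) are known, and the plan is to proceed by induction on $k$: in polynomial time, find either (i) an odd cycle $C$ whose removal strictly decreases $\ocp$, so that one enumerates the $O(|V(C)|)$ stable-set patterns on $C$ and recurses on $G - V(C)$, or (ii) a structural certificate identifying a bounded-size ``odd part'' of $G$ outside of which the graph is bipartite, which can then be handled by dynamic programming combined with bipartite stable set.

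\textbf{Main obstacle.} The principal difficulty is that odd cycles do not satisfy an \EP-type duality in general graphs: $\ocp(G) \le k$ does \emph{not} imply that $G$ has a bounded odd cycle transversal, so the naive ``delete a small transversal and solve bipartite stable set on the remainder'' strategy is unavailable. Overcoming this seems to require a genuinely new structural decomposition of graphs with bounded odd cycle packing, generalizing Lov\'asz's characterization of the $k=1$ case, and this is where I expect the bulk of the technical effort to lie.
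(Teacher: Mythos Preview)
Your two-case split and the reduction of the row case to weighted stable set on graphs with $\ocp(G) \le \lfloor \log_2 \Delta \rfloor$ match the paper's outline. However, both reductions gloss over entries of absolute value $\ge 2$: a column with entries $3$ and $-2$ is not a bidirected edge, and ``scaling, splitting variables at a threshold'' does not eliminate such columns in the row case either. The paper first uses the Cook--Gerards--Schrijver--Tardos proximity theorem together with a triangular-submatrix argument (Lemma~\ref{lem:IP2_reduction}) to guess and fix $O(\log_2 \Delta)$ variables, thereby reducing to the $\{-1,0,+1\}$ setting; only then do the bidirected/$b$-matching and graph-incidence interpretations apply.

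The real gap is the stable-set core. In your option~(i), an odd cycle $C$ with $\ocp(G - V(C)) < \ocp(G)$ does exist (take any cycle from a maximum packing), but $|V(C)|$ can be linear in $n$ and the number of stable sets of a cycle of length $\ell$ is $\Theta(\phi^{\ell})$, not $O(\ell)$; so the enumeration is exponential. Your option~(ii) --- a bounded ``odd part'' whose removal leaves a bipartite graph --- is precisely what Escher walls rule out, as you yourself note. Thus the proposal contains no algorithm for the bounded-$\ocp$ stable set problem, only a restatement of the obstacle. The paper's route is entirely different: combining Reed's Escher-wall theorem with the Kawarabayashi--Thomas--Wollan version of the Graph Minors structure theorem (and substantial further refinement), it proves that a sufficiently \emph{resilient} graph with $\ocp \le k$ admits a near-embedding in a bounded-genus non-orientable surface with a bounded apex set, a surface part $G_0$ in which every odd cycle is $1$-sided, and a bounded number of bipartite vortices of bounded adhesion. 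The stable set problem is then recast, via edge-induced weights and slack sets, as finding a minimum-cost integer circulation in the dual of $G_0$ subject to $g$ homology constraints, coupled with local problems in the vortices; this is solved by a dynamic program over ``sketches'' recording how the global circulation meets the vortex faces. Induction on $k$ does appear in the paper, but only through the resilience reduction (deleting a bounded-size set $X$ so that every component of $G-X$ has smaller $\ocp$), not through deleting odd cycles.
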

We remark that if we do not restrict the coefficient matrices to be totally $\Delta$-modular, then the above class covers NP-hard problems such as stable set (two nonzero entries per row, see below) or integer knapsack (two nonzero entries per column, see~\cite[\S A6]{GareyJohnson}).

Our main motivation for studying the above class of integer program stems from the fact that it captures the weighted \emph{stable set problem in graphs that do not contain $k+1$ vertex-disjoint odd cycles}, where $k\geq 0$ is a fixed constant.
In fact, given an undirected graph $G$ and vertex weights $w : V(G) \to \R$, the weighted stable set problem can be formulated as maximizing $\sum_{v \in V(G)} w(v) x_v$ subject to $x_v \in \Z$, $0 \le x_v \le 1$ for each vertex $v \in V(G)$ and $x_u + x_v \le 1$ for each edge $uv \in E(G)$.
Bringing this into the form~\eqref{eq:IP}, the largest subdeterminant of the coefficient matrix is equal to $2^{\ocp(G)}$, where $\ocp(G)$ is the largest number of vertex-disjoint odd cycles in $G$, called the \emph{odd cycle packing number} of $G$ (this is a well-known fact proved, \emph{e.g.}, in~\cite{GKS95}). Thus, the following result is a direct consequence of Theorem~\ref{thmMain}.

\begin{theorem} \label{thmStableSet}
    For every integer $k \ge 0$ there exists a strongly polynomial-time algorithm for solving the weighted stable set problem in graphs that do not contain $k+1$ vertex-disjoint odd cycles, that is, graphs $G$ with $\ocp(G) \leq k$.
\end{theorem}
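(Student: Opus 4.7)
The plan is to reduce the weighted stable set problem on any graph $G$ with $\ocp(G) \le k$ to an integer program of the form~\eqref{eq:IP} satisfying the hypotheses of Theorem~\ref{thmMain}. Concretely, the reduction should produce a coefficient matrix with at most two nonzero entries per row whose subdeterminants are bounded by a constant $\Delta$ depending only on $k$; invoking Theorem~\ref{thmMain} then finishes the proof.

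First I would write the standard IP formulation of weighted stable set: maximize $\sum_{v \in V(G)} w(v) x_v$ subject to $x_u + x_v \le 1$ for each edge $uv \in E(G)$ together with $0 \le x_v \le 1$ and $x_v \in \Z$ for each vertex $v$. Assembling these constraints as $Ax \le b$, the rows of $A$ come in three types: (i) one row per edge with exactly two $+1$ entries; (ii) one row per vertex with a single $+1$ entry, encoding $x_v \le 1$; and (iii) one row per vertex with a single $-1$ entry, encoding $-x_v \le 0$. In particular, $A$ has at most two nonzero entries per row, fulfilling the row hypothesis of Theorem~\ref{thmMain}.

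Next I would bound the subdeterminants of $A$. For any square submatrix $M$ of $A$, if $M$ contains a row originating from a box constraint of type (ii) or (iii), then within $M$ that row has at most one nonzero entry; if it has none then $\det M = 0$, and otherwise expanding along this row reduces $|\det M|$ to a subdeterminant of strictly smaller order. Iterating, $|\det M|$ is bounded by some subdeterminant of the edge-vertex incidence matrix of $G$, which by the classical fact quoted in the excerpt is at most $2^{\ocp(G)} \le 2^k$. Hence $A$ is totally $\Delta$-modular with $\Delta := 2^k$, a constant depending only on $k$.

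Applying Theorem~\ref{thmMain} with this value of $\Delta$ then yields a strongly polynomial-time algorithm for the IP and therefore for the weighted stable set problem on $G$. There is no real obstacle, as all the technical difficulty is absorbed into Theorem~\ref{thmMain}; the only point to verify is that the reduction itself preserves strong polynomiality, which is clear since the number of constraints grows linearly in $|V(G)| + |E(G)|$ and no arithmetic on the weights is performed.
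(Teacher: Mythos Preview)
Your proposal is correct and matches exactly the derivation the paper gives in the paragraph preceding Theorem~\ref{thmStableSet}: the stable set IP has at most two nonzeros per row and, by the cited fact on the edge--vertex incidence matrix, subdeterminants bounded by $2^{\ocp(G)} \le 2^k$, so Theorem~\ref{thmMain} applies with $\Delta = 2^k$. Be aware, though, that in the paper's actual logical order Theorem~\ref{thmStableSet} is established \emph{first} (via the structure results and dynamic program of Sections~\ref{sec:struct}--\ref{sec:DP}) and the row case of Theorem~\ref{thmMain} is then obtained from it through the reduction of Section~\ref{sec:IP2_row}; your derivation is therefore the intended corollary direction, not the substantive proof.
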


Determining the complexity of the stable set problem for graphs with bounded odd cycle packing number was an open problem pioneered by Bock, Faenza, Moldenhauer, and Ruiz-Vargas~\cite{BFMR14}.
Note that $\ocp(G) = 0$ holds if and only if $G$ is bipartite, in which case the stable set problem is polynomially solvable.
The result of Artmann \emph{et al.}~\cite{Artmann} led to the first polynomial-time algorithm for the case $\ocp(G) = 1$ and it was recently shown that the corresponding stable set polytopes admit quadratic-size extended formulations~\cite{ocp1}.

Considering the family of graphs $G$ with $\ocp(G) \le k$ for some constant $k \ge 2$, a polynomial-time algorithm has been obtained recently under the additional assumption that the genus of $G$ is bounded by a constant~\cite{ocpgenus}.
Without the latter assumption, it was known that the stable set problem admits a PTAS.
In fact, the stable set problem in graphs $G$ excluding an odd $K_t$ minor admits a PTAS, as shown by Tazari~\cite{tazari12} using structural results by Demaine, Hajiaghayi and Kawarabayashi~\cite{DHK10}.
Notice that if $G$ has an odd $K_t$ minor, then in particular $\ocp(G) \geq
\lfloor t/3 \rfloor$. Moreover, Bock \emph{et al.}~\cite{BFMR14} state that 
the problem even admits a PTAS if $ \ocp(G) = O(\sqrt{|V(G)| / \log 
\log |V(G)|}) $. However, previous to our work, an exact 
polynomial-time algorithm was not even known for graphs $G$ with 
$\ocp(G) = 2$.

While the statement of Theorem~\ref{thmMain} directly implies Theorem~\ref{thmStableSet}, the proof of the latter result is actually at the core of our approach.
In fact, using proximity results by Cook, Gerards, Schrijver, and Tardos~\cite{CGST86} we show that integer programs defined by a totally $\Delta$-modular coefficient matrix with at most two nonzero entries in each \emph{row} can be reduced to the weighted stable set problem on graphs with odd cycle packing number at most $\log_2 \Delta$. The reduction is efficient provided that $\Delta$ is a constant.
Our proof of Theorem~\ref{thmStableSet} is based on new structural results about graphs with bounded odd cycle packing number, Theorems~\ref{thm:structure_bounded_OCP} and \ref{thm:structure_bounded_OCP_refined}, as well as recent connections between stable sets in surface embedded graphs $G$ and integer circulations in the dual of $G$ established in~\cite{ocpgenus}. 
Let us remark that, while our algorithm in Theorem~\ref{thmStableSet} is efficient for fixed $k$, the dependency of its running time on $k$ is huge: It is of the form $O(|V(G)|^{c})$ for some enormous constant $c=c(k)$. 
This is because it relies on the Excluded Minor Structure Theorem of Robertson and Seymour~\cite{RS-GraphMinorsXVI-JCTB03}, which involves such enormous constants.   
%, and more precisely on a recent algorithmic version of the theorem due to Kawarabayashi, Thomas, and Wollan~\cite{KTW20}. 
The dependency in $\Delta$ of the running time of the algorithm in Theorem~\ref{thmMain} is similarly huge. 
Whether fixed-parameter algorithms exist for these two problems when parameterized by $k$ and $\Delta$, respectively, is left as an open problem.

In the case matrix $A$ has at most two nonzero entries in each \emph{column}, we give an efficient reduction to the subcase where all entries of $A$ are in $\{-1,0,+1\}$, again provided that the original coefficient matrix is totally $\Delta$-modular for some constant $\Delta$.
For such matrices it is known that the corresponding integer program can be solved in strongly polynomial time~\cite[Thm.~36.1]{SchrijverCombOpt}.

\section{Overview} \label{sec:prelim}

Consider an integer program of the form~\eqref{eq:IP}, where $A$ is totally $\Delta$-modular for some constant $\Delta \ge 1$ and has at most two nonzeros per row or per column.
In both cases, we will employ a proximity result by Cook \emph{et al.}~\cite{CGST86}, see Theorem~\ref{thm:Cook_et_al}, to reduce to integer programs with a particular structure.
In the case of at most two nonzeros per row, we will obtain the integer programming formulation of a weighted stable set problem on a graph $G$ with $\ocp(G) \leq k := \lfloor \log_2 \Delta \rfloor$.
In the case of at most two nonzeros per column, we will obtain an integer program where the two nonzero entries are within $\{-1,+1\}$.
The latter case is considerably simpler and can be treated using a reduction due to Tutte and Edmonds, see Section~\ref{sec2nonzerospercolumn}.

In fact, the main body of this work is concerned with a strongly polynomial algorithm for the weighted stable set problem on graphs with bounded odd cycle packing number.
We start in Section~\ref{sec:struct} by providing first structural results on graphs $G$ with $\ocp(G) \le k$, see Theorem~\ref{thm:structure_bounded_OCP}.
Our findings are based on various deep results on graph minors.

An {\em odd cycle transversal} of a graph $G$ is a vertex subset meeting all odd cycles of $G$. The minimum size of an odd cycle transversal is called the {\em odd cycle transversal number} of $G$, and is denoted $\oct(G)$. Trivially, $\ocp(G) \leq \oct(G)$ for every graph $G$. Hence, a small odd cycle transversal number always guarantees a small odd cycle packing number. However, as was originally observed by Lov\'asz and Schrijver (see~\cite{seymour95}), there is no counterpart to this fact: a small odd cycle packing number \emph{does not} guarantee a small odd cycle transversal number. In fact, there are $n$-vertex graphs with $\ocp(G) = 1$ and $\oct(G)$ as big as $\Omega(\sqrt{n})$. These graphs are known as \emph{Escher walls}, see Fig.~\ref{fig:Escher_wall} (see Section~\ref{subsec:walls} for the precise definition).

\begin{figure}[h]
    \centering
    \includegraphics[width=0.6\textwidth]{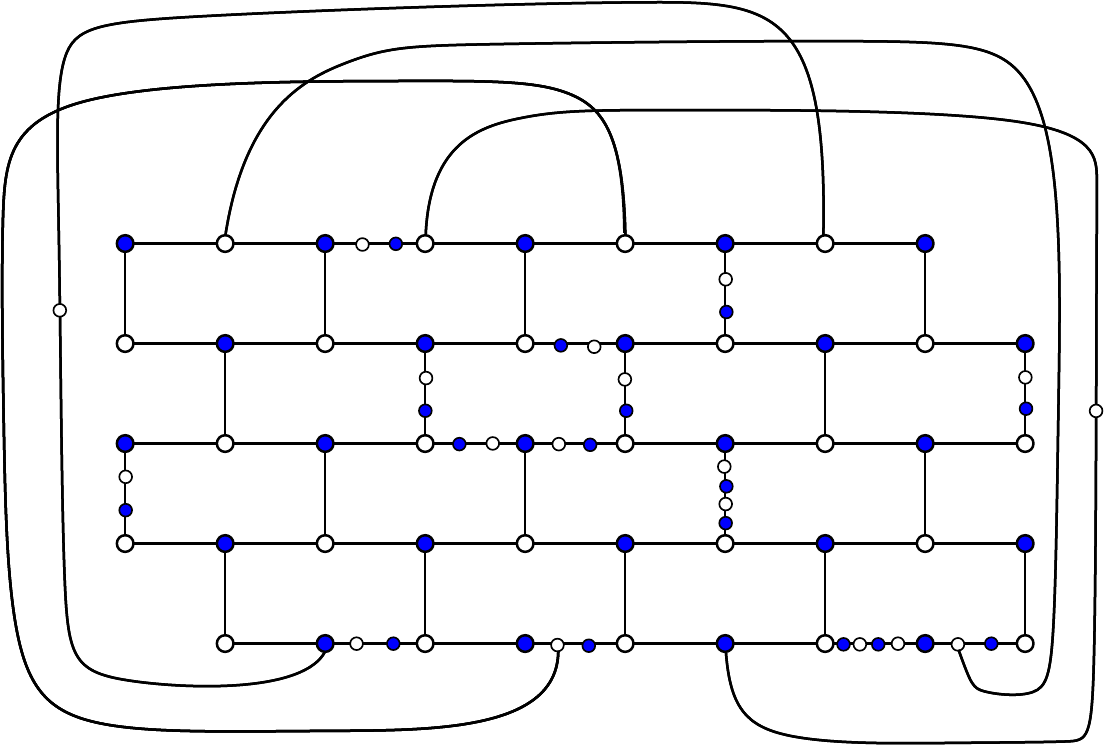}
    \caption{\label{fig:Escher_wall}An Escher wall of height $5$. 
    (The coloring highlights the bipartition of the wall.)}
\end{figure}

The starting point for our first structural result is a theorem of Reed~\cite{Reed99} stating that every graph with $\ocp(G) \leq k$ and $\oct(G) \geq f(k,h)$ contains an Escher wall of height $h$, where $f$ is a computable function. This allows us to assume that our graph $G$ contains a large Escher wall, since graphs with small odd cycle transversals are easily dealt with. Indeed, if $\oct(G)$ is bounded by a constant, then one can compute a minimum-size odd cycle transversal $X$ of $G$ efficiently using an algorithm of Kawarabayashi and Reed~\cite{KR_SODA10}, and for each stable set $S'$ in $X$, one can compute efficiently a maximum weight stable set $S''$ in the bipartite graph $G-X-N(S')$, where $N(S')$ denotes the set of vertices outside $S'$ having a neighbor in $S'$. 
Then, choosing the pair $S', S''$ whose union $S' \cup S''$ has maximum weight gives an optimal solution to the maximum weight stable set problem on $G$.
%%%% previous version: 
% in such a graph the optimal solution is the best out of the solution we obtain by choosing a set of $\oct(G)$ vertices, removing it from the graph and solving the problem on the resulting bipartite graph (if the resulting graph is not bipartite then we discard this case). 

Then, using a result of Geelen, Gerards, Reed, Seymour, and Vetta~\cite{GGRSV09}, we show that $G$ cannot contain a large complete graph minor that is ``well attached'' to the Escher wall since otherwise we would obtain a packing of $k+1$ odd cycles in $G$. 
Next, we use the celebrated Excluded Minor Structure Theorem of Robertson and Seymour~\cite{RS-GraphMinorsXVI-JCTB03}, and more precisely a recent version of the theorem due to Kawarabayashi, Thomas, and Wollan~\cite{KTW20}, which comes with explicit bounds and an efficient algorithm. 
This allows us to conclude that $G$ has a ``near embedding'' in a surface $\surf$ of bounded genus: after removing a bounded number of vertices, we can embed a subgraph $G_0 \subseteq G$ in $\surf$ in such a way that $G_0$ essentially contains a large Escher wall, and that the rest of $G$ sits in a bounded number of ``large vortices'' and in a (possibly large) number of ``small vortices'', where the number of small vortices is unbounded in terms of $k$ (but bounded in the size of the graph). A vortex is a pair of a (sub)graph and a linearly ordered subset of some vertices of the graph. Each vortex (large or small) can be drawn, with edge crossings, in a disk contained in $\surf$ (each vortex gets its own disk). The interiors of these disks are mutually disjoint, and disjoint from $G_0$. Each large vortex has bounded ``depth'', and each small vortex has at most $3$ vertices in common with $G_0$. See Fig.~\ref{fig:struct_thm} for an illustration.

\begin{figure}[ht]
\centering
% l b r t
\includegraphics[width=10cm,clip=true,trim=3cm 3.5cm 6cm 5cm]{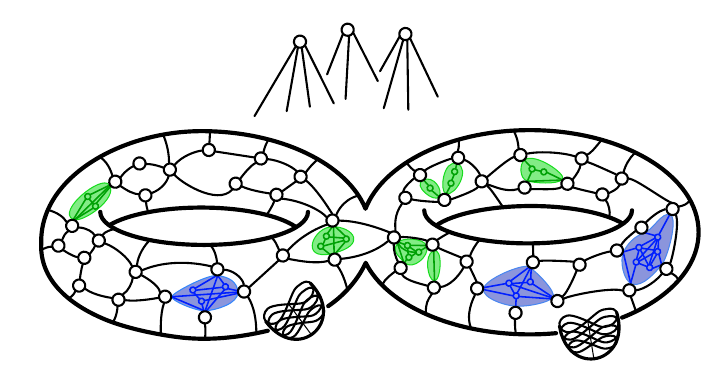}
\caption{Theorem~\ref{thm:structure_bounded_OCP} illustrated. The surface part $G_0$ is shown in black. Small vortices are depicted in green, and large vortices in blue.}
\label{fig:struct_thm}
\end{figure}

Gearing toward the stable set problem, we prove a more precise structural result in Section~\ref{sec:clean_near_embeddings}. We elaborate on the differences between our first structural result, Theorem~\ref{thm:structure_bounded_OCP}, and the refined result, Theorem~\ref{thm:structure_bounded_OCP_refined}. Instead of assuming that $\oct(G)$ is large, we assume the more restrictive condition that $G$ has high ``resilience'', namely, that there is no small vertex subset $X$ such that $\ocp(H) < \ocp(G)$ holds for every connected component $H$ of $G - X$. 
This implies that, for every small vertex subset $X$, one connected component $H$ of $G - X$ satisfies $\ocp(H) = \ocp(G)$, while all other connected components of $G - X$ are bipartite. 

A very useful consequence of having high resilience is that all the small vortices are bipartite. (This is because each small vortex can be separated from $G_0$ by removing a small vertex set $X$, and the connected component of $G-X$ containing most of the Escher wall that ``lived'' in $G_0$ is not bipartite, implying that all other connected components of $G-X$ are bipartite.) 
Among the further properties gained in the refined result, we can make the distance (in the vertex-face incidence graph) between any two large vortices, as well as the ``face-width'' (also known as ``representativity'') of $G_0$, arbitrarily large. This is based on results of Diestel, Kawarabayashi, M\"uller and Wollan \cite[Section 5]{DiestelKMW12}. Moreover, we can make sure that every odd cycle in $G_0$ defines a M\"obius strip in $\surf$. This implies in particular that all the faces of $G_0$ are even, and relies on previous work~\cite[Theorem 10.9]{ocpgenus}. Finally, we prove that each large vortex can be made bipartite in a strong sense: we can assume that every such vortex is bipartite, even when it is augmented with the boundary of corresponding face of $G_0$, see Theorem~\ref{thm:bipartite_enlarged}. All these properties are achieved at the cost of deleting a bounded number of extra vertices from $G$. 
%%%G: Remark below is not exactly true, the number of large vortices could increase, it is 2*large vortices + eg which decreases
%Possibly, the near embedding could become simpler in the sense that the number of large vortices might drop, as well as the genus of $\surf$.

We point out that the near embedding guaranteed by each one of  Theorems~\ref{thm:structure_bounded_OCP} and \ref{thm:structure_bounded_OCP_refined} can be constructed in polynomial time.

In Section~\ref{sec:preprocessing}, we explain the final preprocessing steps that we perform on the given instance of the maximum weight stable set problem. The first step is to ensure high resilience. If the resilience is small, we can find a suitable vertex subset $X$ in polynomial time by combining a brute-force search and the FPT algorithm of Kawarabayashi and Reed~\cite{KR_STOC10} to compute the odd cycle packing number of each component of $G - X$. Once $X$ is found, one can solve the given weighted stable set instance in strongly polynomial time by recursing on $k$. The second step is to translate the stable set problem to a problem on the \emph{edges} of $G$, a key idea from the recent works~\cite{ocpgenus,ocp1}. This is done through solving the linear programming (LP) relaxation. The LP relaxation can in fact be solved in strongly polynomial time, using Tardos's algorithm~\cite{Tardos86}. The third step is to collapse all small vortices to gadgets with edge costs, see Lemma~\ref{lemhd8sg7}. Each gadget (a path or a subdivided $Y$ graph) can be added to the surface part $G_0$ without creating edge crossings. This involves the solution of a small (constant) number of weighted stable set instances on each small vortex, which again can be performed in strongly polynomial time. The fourth and last step is to slightly modify $G_0$ in order to ensure that each face of $G_0$ is bounded by a cycle.%, which in particular implies that $G_0$ is $2$-connected.

In Section~\ref{sec:DP}, we present our main algorithm for solving instances of the weighted stable set problem on graphs $G$ with $\ocp(G) \leq k$, where $k$ is any constant, in strongly polynomial time. Our algorithm exploits the structure from the three previous sections, which can be assumed without loss of generality. Let $G_1$, \ldots, $G_t$ denote the subgraphs of $G$ obtained from the large vortices by adding the boundary of the corresponding faces of $G_0$. Each such boundary is an even cycle, which equals the intersection of $G_0$ and the corresponding $G_i$. Recall that the number $t$ of large vortices is bounded.

Viewing any solution to the problem as an edge subset, we can decompose the solution into several parts, one global part contained in $G_0$ and several local parts, each contained in some $G_i$ for $i \in [t] := \{1,\ldots,t\}$. From~\cite[Proposition 3.1]{ocpgenus}, we know that the part of the solution in $G_0$ is a 0/1 circulation in a well-chosen orientation of the dual graph $G_0^*$. This circulation satisfies $g$ extra constraints, where $g := \eg(\surf)$ is the Euler genus of $\surf$. Topologically, these $g$ extra constraints mean that the circulation from our solution is homologous to the all-one circulation. %Taken all together, the various constraints translate in the edge space the trivial fact that one can obtain any stable set $S$ from the empty stable set $\varnothing$, which corresponds to the all-one circulation, by adding the vertices of $S$ one by one.

The main result of \cite{ocpgenus} (see also Morell, Seidel and Weltge~\cite{MSW21}) tells us how to solve the problem on the surface part $G_0$ alone. If we knew that the feasible solution avoids the boundary cycle of each $G_i$, $i \in [t]$, then we would be done after performing a single homologous circulation computation on $G_0^*$, and taking the empty (zero) solution within each $G_i$, $i \in [t]$. 
(This is because $G_i$ is bipartite; in terms of stable sets, this amounts to selecting the side of the bipartition of $G_i$ that has maximum weight.) 
Unfortunately, this is in general not the case, and we cannot simply ignore the subgraphs $G_1$, \ldots, $G_t$. The global solution may interact nontrivially with some local solutions through the boundary cycle of the corresponding $G_i$.

In addition to being bipartite, the subgraphs $G_i$, $i \in [t]$ have a particularly nice structure in terms of their cutsets, which is known as ``linear decomposition'' of bounded ``adhesion''. This means that for each vertex $v$ on the boundary of $G_i$, we have a ``bag'' $X(v) \subseteq V(G_i)$ containing $v$. For each $z \in V(G_i)$, the set of boundary vertices $v \in V(G_i)$ with $z \in X(v)$ form a path contained in the boundary of $G_i$. For $z, z' \in V(G_i)$, there can be an edge between $z$ and $z'$ only if the corresponding paths intersect. Moreover, if two boundary vertices $v, v' \in V(G_i)$ are consecutive on the boundary then $X(v) \cap X(v')$ has a bounded number of vertices. Using the bags $X(v)$ and their properties, we can define a bounded size ``cutset'' $Y(e)$ for each boundary edge $e=vv' \in E(G_i)$.

The strategy we employ is to consider an optimal solution and, using an uncrossing step, derive from the corresponding global solution a directed graph on $t$ vertices that is embedded in $\surf$. We call this a ``sketch'' of the optimal solution. This sketch describes how the global solution connects the different subgraphs $G_i$, $i \in [t]$. Despite the fact that the sketch has a bounded number of vertices, it may have an unbounded number of directed edges (parallel or anti-parallel edges as well as loops). Hence the algorithm cannot possibly guess the whole sketch. Instead, the algorithm guesses the edges of the sketch one by one. Moreover, it focuses on a single face of the subgraph of the sketch (formed by the edges added so far) at a time. Each time an edge of the sketch is guessed, we also guess the intersection of the optimal stable set with the two corresponding cutsets $Y(e)$ and $Y(e')$, where $e$ is a boundary edge of some $G_i$, $i \in [t]$ and $e'$ is a boundary edge of some $G_{i'}$, $i' \in [t]$ (possibly $i = i'$). The final technical hurdle that we need to overcome is that the sketch might have some face that has an unbounded number of directed edges (loops). The cure we propose to that is to only remember the relevant part of the face boundary. Each time a part of the boundary is erased from our records, we have to solve some maximum weight stable set problem ``between'' two cutsets inside the same $G_i$, $i \in [t]$. We obtain a dynamic program that can be solved in strongly polynomial time, see Theorem~\ref{thm:DP}.

To the best of our knowledge, our Theorems~\ref{thm:structure_bounded_OCP} and \ref{thm:structure_bounded_OCP_refined} have just the right ingredients to permit an efficient algorithm for solving the stable set problem on graphs with bounded odd cycle packing number.
Let us mention that some of the ideas underlying these theorems are already present in the works of Kawarabayashi and Reed~\cite{KR_STOC10, KR_SODA10} giving FPT algorithms for odd cycle packing and odd cycle transversal.

\section{Integer programs with two nonzero entries per row} \label{sec:IP2_row}

Let us consider the integer program~\eqref{eq:IP} under the assumption that $A$ is totally $\Delta$-modular for some integer constant $\Delta \ge 1$, and that every row of $A$ has at most two nonzero entries.

We mention a related result due to Hochbaum, Megiddo, Naor, and Tamir~\cite{HMNT93} who gave a 2-approximation for the class of integer programs with at most two nonzero entries per row that minimize a nonnegative objective over nonnegative variables, with running time polynomial in the size of the coefficient matrix and an upper bound on each variable.
However, their techniques do not seem to be applicable to obtain an exact algorithm for our case.

In fact, we follow a different approach:
In this section, we will describe a strongly polynomial reduction to the weighted stable set problem on a graph $G$ with $\ocp(G) \le k = \lfloor \log_2 \Delta \rfloor$, consisting of four steps.

As a first step, we will employ the following proximity result by Cook \emph{et al.}~\cite{CGST86} to efficiently eliminate variables that appear with a coefficient of absolute value greater than 1.

\begin{theorem}[Cook \emph{et al.}~\cite{CGST86}] \label{thm:Cook_et_al}
Let $A$ be a totally $\Delta$-modular $m \times n$ matrix and let $b$ and $w$ be vectors such that $Ax \leqslant b$ has an integral solution and $\max \{w^\intercal x : Ax \leqslant b\}$ exists. Then for each optimal solution $\bar{x}$ to $\max \{w^\intercal x : Ax \leqslant b\}$, there exists an optimal solution $z^*$ to $\max \{w^\intercal x : Ax \leqslant b,\ x \in \Z^n\}$ with $||\bar{x} - z^*||_{\infty} \leqslant n \Delta$.
\end{theorem}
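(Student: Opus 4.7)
The plan is to apply the classical conic decomposition argument, iteratively modifying a given IP-optimal solution $z^*$ so as to bring it within $\ell_\infty$-distance $n\Delta$ of the fixed LP-optimum $\bar{x}$, while preserving both integrality and optimality. Set $y := \bar{x} - z^*$, and partition the rows of $A$ according to the sign of $a_i^\intercal y$; let $C$ be the polyhedral cone obtained by imposing $a_i^\intercal v \geq 0$, $a_i^\intercal v \leq 0$, or $a_i^\intercal v = 0$ on each row $i$ according to whether $a_i^\intercal y$ is positive, negative, or zero. By construction $y \in C$. By the conic form of Carath\'eodory's theorem, we can write
\[ y = \sum_{j=1}^s \lambda_j g_j, \quad s \leq n,\ \lambda_j > 0, \]
where each $g_j$ is an extreme ray of $C$. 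Each extreme ray is determined by $n-1$ of the defining linear relations holding with equality, so by Cramer's rule applied to the resulting $(n-1) \times n$ submatrix of $A$, one can select $g_j \in \Z^n$ with $\|g_j\|_\infty \leq \Delta$, using that every $(n-1) \times (n-1)$ subdeterminant of $A$ is bounded by $\Delta$.

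Suppose toward contradiction that $\|y\|_\infty > n\Delta$. Then $\|y\|_\infty \leq \sum_j \lambda_j \|g_j\|_\infty \leq \Delta \sum_j \lambda_j$ forces $\sum_j \lambda_j > n \geq s$, so some $\lambda_{j_0} > 1$. I would next verify that both $z^* + g_{j_0}$ and $\bar{x} - g_{j_0}$ are feasible for $Ax \leq b$. For a row whose sign pattern matches that of $a_i^\intercal g_{j_0}$ (or is zero) this is immediate from $g_{j_0} \in C$; for the remaining "wrong sign" rows, one writes $a_i^\intercal y - \lambda_{j_0} a_i^\intercal g_{j_0} = \sum_{j \neq j_0} \lambda_j a_i^\intercal g_j$, observes that this residual is sign-correct because the other rays also lie in $C$, and combines this with $\lambda_{j_0} > 1$ to close the estimate. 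With both modified points feasible, LP-optimality of $\bar{x}$ gives $w^\intercal g_{j_0} \geq 0$ while IP-optimality of $z^*$ gives $w^\intercal g_{j_0} \leq 0$, so $w^\intercal g_{j_0} = 0$ and $z^* + g_{j_0}$ is still IP-optimal. The new difference $y - g_{j_0} = (\lambda_{j_0} - 1) g_{j_0} + \sum_{j \neq j_0} \lambda_j g_j$ lies in $C$ and admits a Carath\'eodory decomposition of total weight $\sum_j \lambda_j - 1$, so iteration terminates when $\sum_j \lambda_j \leq n$, at which point $\|\bar{x} - z^*\|_\infty \leq n\Delta$.

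The main obstacle is the feasibility check for the "wrong sign" rows when moving from $z^*$ to $z^* + g_{j_0}$ (and symmetrically for $\bar{x} - g_{j_0}$); this is precisely where the condition $\lambda_{j_0} > 1$, rather than mere positivity, must be exploited. A secondary technical point is the extraction of an integer extreme ray with $\ell_\infty$-norm at most $\Delta$, which is a direct application of Cramer's rule to the totally $\Delta$-modular matrix $A$. Termination then follows because the nonnegative integer quantity $\lfloor \sum_j \lambda_j \rfloor$ strictly decreases at each iteration.
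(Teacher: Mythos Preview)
The paper does not prove this theorem at all: it is quoted verbatim from Cook, Gerards, Schrijver, and Tardos~\cite{CGST86} and used as a black box, so there is no ``paper's own proof'' to compare against. What you have written is precisely the classical Cook--Gerards--Schrijver--Tardos argument, and it is correct.

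Two small remarks on the write-up. First, your phrase ``wrong sign rows'' is slightly misleading: by construction every generator $g_j\in C$ satisfies the \emph{same} sign pattern as $y$ on each row, so there are no rows where $a_i^\intercal g_{j_0}$ has the wrong sign. The point that actually requires $\lambda_{j_0}\geq 1$ is the rows with $a_i^\intercal y>0$ (for the feasibility of $z^*+g_{j_0}$) and those with $a_i^\intercal y<0$ (for $\bar{x}-g_{j_0}$), where one uses $a_i^\intercal g_{j_0}\leq \lambda_{j_0}\,a_i^\intercal g_{j_0}\leq \sum_j \lambda_j\, a_i^\intercal g_j = a_i^\intercal y \leq b_i - a_i^\intercal z^*$ (and the symmetric inequality). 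Second, for termination you should note that after replacing $z^*$ by $z^*+g_{j_0}$ the new difference $y-g_{j_0}$ still lies in the \emph{same} cone $C$ (the sign pattern is weakly preserved by the inequality just mentioned), so you may keep the same set of generators with total coefficient decreased by exactly~$1$; this makes the descent argument clean. Finally, the existence of an IP-optimal $z^*$ to start from follows from the hypotheses (rational polyhedron with an integer point and bounded LP), which is standard.
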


In the second step, we introduce auxiliary integer variables to obtain an equivalent integer program in which all constraints are of the form $x_i + x_j \le b_{ij}$, $x_i + x_j = b_{ij}$, or describe variable bounds.

By modifying the objective function, we are able to eliminate constraints of the second type in the third step.

Finally, we show that for integer programs of the resulting type, Theorem~\ref{thm:Cook_et_al} can be significantly strengthened, allowing us to restrict all variables attain values in $\{0,1\}$.
With some final simple processing we end up with a weighted stable set problem over some graph whose odd cycle packing number is indeed bounded by $\log_2 \Delta$.

\subsection{Reduction to coefficients in $\{-1,0,+1\}$}\label{sec:reduction_to_bidirected}

Let us start by identifying a small set of variables whose elimination yields an integer program defined by a submatrix of $A$ that has all its entries in $\{-1,0,+1\}$.
To this end, consider the following lemma.

\begin{lemma} \label{lem:IP2_reduction}
Let $A \in \Z^{m \times n}$ be totally $\Delta$-modular.
There exists a subset $J \subseteq [n]$ with $|J| \le \log_2 \Delta$ such that for each $i \in [m]$, we have $A_{i}^\intercal \in \{-1,0,+1\}^n$ or there is some $j \in J$ with $A_{ij} \ne 0$. Moreover, the set $J$ can be computed in polynomial time. 
\end{lemma}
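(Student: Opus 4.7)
The plan is to construct $J$ greedily. Initialize $J := \emptyset$. While there exists a ``bad'' row $i$, meaning a row $i$ with $A_i^\intercal \notin \{-1,0,+1\}^n$ such that $A_{ij} = 0$ for all $j \in J$ (equivalently, $i$ is not yet covered by $J$), pick any column $j^\star$ with $|A_{ij^\star}| \ge 2$ (such a $j^\star$ exists precisely because row $i$ is bad) and insert $j^\star$ into $J$. The loop terminates after at most $n$ iterations, and by its exit condition every bad row $i$ has some $j \in J$ with $A_{ij} \ne 0$. This already secures the covering property demanded by the lemma; the entire task reduces to controlling $|J|$.

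To bound $|J|$, I would record the sequence $(i_1, j_1), \ldots, (i_k, j_k)$ of trigger-row/added-column pairs produced over the $k$ iterations, and then inspect the $k \times k$ submatrix $M$ of $A$ defined by $M_{s,t} := A_{i_s, j_t}$. Two properties fall out of the construction. First, by the choice of $j_s$ we have $|M_{s,s}| = |A_{i_s, j_s}| \ge 2$ along the diagonal. Second, by the invariant that row $i_s$ was not yet covered by $\{j_1, \ldots, j_{s-1}\}$ at the moment $j_s$ was appended, we have $M_{s,t} = A_{i_s, j_t} = 0$ for every $t < s$, so $M$ is upper triangular. Consequently $|\det M| = \prod_{s=1}^k |M_{s,s}| \ge 2^k$, and total $\Delta$-modularity of $A$ forces $2^k \le \Delta$, i.e., $k \le \log_2 \Delta$, as required.

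I do not expect a serious obstacle: the whole argument hinges on recognizing that each greedy insertion augments a strictly triangular witness by a single row and column with a diagonal entry of absolute value at least $2$, so that $k$ columns jointly expose a subdeterminant of magnitude at least $2^k$. One small matter to dispatch in the writeup is the degenerate case $\Delta = 0$, in which $A$ is the zero matrix and the empty $J$ trivially works; the interesting range is $\Delta \ge 1$, where the bound $\log_2 \Delta \ge 0$ is meaningful.
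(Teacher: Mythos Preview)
Your proposal is correct and matches the paper's proof essentially verbatim: build $J$ greedily, record the triggering rows, observe that the resulting square submatrix is triangular with diagonal entries of absolute value at least $2$, and conclude $|J| \le \log_2 \Delta$ from total $\Delta$-modularity. The only addition is your remark on the degenerate case $\Delta = 0$, which the paper leaves implicit.
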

\begin{proof}
We define $J \subseteq [n]$ iteratively, initializing $J$ with the empty set. For later use, we also define a set $I \subseteq [m]$, which is initially empty.
Suppose that there exists a row index $i \in [m]$ such that none of the two conditions in the above statement is satisfied.
Then there is a column index $j \in [n] \setminus J$ with $|A_{ij}| \ge 2$.
We add $i$ to $I$ and $j$ to $J$ and repeat the process until every row index satisfies one of the two conditions above.

It is easy to see that throughout all steps, and in particular at the end of the process, the submatrix $A'$ of $A$ induced by $I$ and $J$ is a (square) triangular matrix whose diagonal entries are integers with absolute value at least two.
We conclude $\Delta \ge |\det(A')| \ge 2^{|J|}$ and hence $|J| \le \log_2 \Delta$.
\end{proof}

Now, consider an integer program of the form~\eqref{eq:IP}, where the coefficient matrix $A$ is totally $\Delta$-modular for some constant $\Delta$ and such that $A$ has at most two nonzero entries in each row.
Let us first solve the LP relaxation $\max \{w^\intercal x : Ax \leqslant b\}$.
This linear program can be solved in strongly polynomial time using the algorithm of Tardos~\cite{Tardos86} since the absolute values of all entries in $A$ are bounded by $\Delta$.
If the LP is infeasible, then~\eqref{eq:IP} is infeasible as well and we are done.
If the LP is unbounded, then~\eqref{eq:IP} is either infeasible or unbounded.
In this case we may stop or repeat the process with $w = \zero$ to distinguish between the latter two cases.

Thus, we may assume that the LP relaxation has an optimal solution, say $\bar{x}$.
Let $J$ be as in Lemma~\ref{lem:IP2_reduction}.
By Lemma~\ref{lem:IP2_reduction} such a set $J$ can be efficiently computed.
By Theorem~\ref{thm:Cook_et_al}, we know that there exists an optimal integer solution $\bar{z}$ (if the IP is bounded) that satisfies
\[
    \bar{z}_j \in \{ z \in \Z : |\bar{x}_j - z| \le n \Delta \} =: S_j \quad \text{ for all } j \in J.
\]
Thus, for each $j \in J$ we may guess a value $z_j \in S_j$, and solve a subproblem of~\eqref{eq:IP} in which we fix $x_j = z_j$ for $j \in J$.
In this approach, the total number of subproblems (guesses) that we have to consider is
\[
    \prod_{j \in J} |S_j| \le (2 n \Delta + 1)^{|J|} \le (2 n \Delta + 1)^{\log_2 \Delta} = \poly(n).
\]
By Theorem~\ref{thm:Cook_et_al}, if~\eqref{eq:IP} is feasible, then at least one subproblem has an optimal solution that is also optimal for~\eqref{eq:IP}.
If all subproblems are infeasible, then so is~\eqref{eq:IP}.

It remains to observe that each subproblem arises from~\eqref{eq:IP} by deleting all variables indexed by $J$.
The resulting subproblem has the property that every constraint either involves at most one variable, or two variables with coefficients in $\{-1,+1\}$.
After replacing each constraint of the form $\alpha x_j \leqslant \beta$ by $x_j \leqslant \lfloor \beta / \alpha\rfloor$ if $\alpha > 0$ or $- x_j \leqslant \lfloor - \beta / \alpha\rfloor$ if $\alpha < 0$, we can in fact assume that the constraint matrix of each subproblem has \emph{all} its entries in $\{-1,0,+1\}$.
Moreover, the constraint matrix is still totally $\Delta$-modular.

\subsection{Reduction to edge constraints and variable bounds}

We have seen that it suffices to consider integer programs~\eqref{eq:IP} with a totally $\Delta$-modular matrix $A \in \{-1,0,+1\}^{m \times n}$ having at most two nonzero entries per row.
Next, we will construct an equivalent integer program together with a graph $G$ with $\ocp(G) \le \log_2 \Delta$ such that every constraint that is not a variable bound ($x_i \le u_i$ or $x_i \ge \ell_i$) is of the form $x_i + x_j \le b_{ij}$ or $x_i + x_j = b_{ij}$ for some edge $ij \in E(G)$.

As before, we may assume that the LP relaxation of the given IP has an optimal solution, which we can compute in strongly polynomial time.
By Theorem~\ref{thm:Cook_et_al}, we may then add variable bounds to obtain an equivalent integer program
\[
    \max \, \{ w^\intercal x : Ax \le b, \, x \in [\ell, u] \cap \Z^n \}
\]
for some $\ell, u \in \Z^n$.
At this point, we can assume that every row of $A$ has exactly two nonzero entries.
This means that every constraint in $Ax \le b$ is of one of the types
\[
    x_i + x_j \le \alpha_{ij} \qquad
    x_i - x_j \le \beta_{ij} \qquad
    -x_i - x_j \le \psi_{ij}
\]
with $i,j \in [n]$, $i \ne j$, $\alpha_{ij}, \beta_{ij}, \psi_{ij} \in \Z$.
We may assume that for every pair $i,j$ there is at most one constraint of every type.
For every constraint of the second type, let us introduce an auxiliary integer variable $y_{ij}$ and replace the constraint by two new (equivalent) constraints
\[
    x_i + y_{ij} \le \beta_{ij} + 1, \qquad
    x_j + y_{ij} = 1.
\]
Similarly, for every constraint of the third type, let us introduce two auxiliary integer variables $z_{ij}, z_{ij}'$ and replace the constraint by the three constraints
\[
    z_{ij} + z_{ij}' \le \psi_{ij} + 2, \qquad
    x_i + z_{ij} = 1, \qquad
    x_j + z_{ij}' = 1.
\]
By case analysis, it is easy to see that every replacement yields a matrix that is still totally $\Delta$-modular.
Moreover, every pair of variables appears in at most one constraint.
Thus, we may create an undirected graph $G$ and associate every variable to a vertex of $G$ such that every constraint that is not a variable bound is of the form $y_i + y_j \le b_{ij}$ or $y_i + y_j = b_{ij}$ for some edge $ij \in E(G)$, and every edge of $G$ corresponds to exactly one such constraint.
We see that the coefficient matrix of the integer program is an edge-vertex incidence matrix of $G$ and hence $\ocp(G) \le \log_2 \Delta$.

\subsection{Eliminating equations}

Our current integer program is of the form
\begin{equation} \label{eqgrapheq}
    \max \, \{ f(x) : x \in [\ell,u] \cap \Z^{V(G)} : Ax \le b, \, A_{|F}x = b_{|F} \},
\end{equation}
where $G$ is an undirected graph with $\ocp(G) \le \log_2 \Delta$, $f : \R^{V(G)} \to \R$ linear, $\ell,u \in \Z^{V(G)}$, $b \in \Z^{E(G)}$, $A$ is an edge-vertex incidence matrix of $G$, and $A_{|F}$, $b_{|F}$ are the restrictions of $A,b$ to some rows $F \subseteq E(G)$.
Our goal is to eliminate the equations $A_{|F} x = b_{|F}$ by modifying the objective function $f$.
To this end, let
\[
    \mu := \max \{ f(x) : x \in [\ell, u] \} - \min \{ f(x) : x \in [\ell, u] \} + 1
\]
and define $g : \R^{V(G)} \to \R$ via
\[
    g(x) = f(x) + \mu \one^\intercal A_{|F}x.
\]
We consider the integer program
\begin{equation} \label{eqgraphnoeq}
    \max \, \{ g(x) : x \in [\ell,u] \cap \Z^{V(G)} : Ax \le b \}.
\end{equation}
Note that if~\eqref{eqgraphnoeq} is infeasible, then so is~\eqref{eqgrapheq}.
It suffices to prove the following lemma.
\begin{lemma}
Let $x^*$ be any optimal solution to~\eqref{eqgraphnoeq}.
If $x^*$ is feasible for~\eqref{eqgrapheq}, then it is also optimal for~\eqref{eqgrapheq}.
Otherwise, \eqref{eqgrapheq} is infeasible.
\end{lemma}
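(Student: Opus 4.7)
The plan is to exploit the fact that $\mu$ has been chosen strictly larger than the oscillation of $f$ over the box $[\ell,u]$, so any violation of an equation $A_{|F}x = b_{|F}$ costs more in the penalty term than can possibly be recovered by the original objective $f$.

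First I would handle the easy direction. Suppose $x^*$ is feasible for \eqref{eqgrapheq}, i.e.\ $A_{|F}x^* = b_{|F}$. Let $y$ be any feasible solution to \eqref{eqgrapheq}. Then $y$ satisfies $Ay \le b$ and $y \in [\ell,u]\cap \Z^{V(G)}$, so $y$ is also feasible for \eqref{eqgraphnoeq}, and by optimality of $x^*$ we have $g(x^*) \ge g(y)$. Since both $x^*$ and $y$ satisfy the equations in $F$, we have $\mathbf{1}^\intercal A_{|F}x^* = \mathbf{1}^\intercal A_{|F} y = \mathbf{1}^\intercal b_{|F}$, so the penalty contributions cancel and $f(x^*) \ge f(y)$. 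Hence $x^*$ is optimal for \eqref{eqgrapheq}.

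Next I would handle the harder direction. Suppose instead $x^*$ is not feasible for \eqref{eqgrapheq}. Since $x^*$ is feasible for \eqref{eqgraphnoeq}, we still have $A_{|F}x^* \le b_{|F}$, so the infeasibility must come from some $e \in F$ with $(A_{|F}x^*)_e < b_e$. Because every entry of $A$ and $b$ is integral and $x^* \in \Z^{V(G)}$, the slack is at least one, so
\[
    \mathbf{1}^\intercal A_{|F}x^* \le \mathbf{1}^\intercal b_{|F} - 1.
\]
Now assume for contradiction that \eqref{eqgrapheq} admits a feasible solution $y$. Then $y$ is also feasible for \eqref{eqgraphnoeq} and $\mathbf{1}^\intercal A_{|F} y = \mathbf{1}^\intercal b_{|F}$, so
\[
    g(y) - g(x^*) = \bigl(f(y) - f(x^*)\bigr) + \mu\bigl(\mathbf{1}^\intercal b_{|F} - \mathbf{1}^\intercal A_{|F}x^*\bigr) \ge \bigl(f(y) - f(x^*)\bigr) + \mu.
\]
Since $x^*, y \in [\ell, u]$, the definition of $\mu$ gives $|f(y) - f(x^*)| \le \mu - 1$, so $g(y) - g(x^*) \ge 1 > 0$, contradicting the optimality of $x^*$ for \eqml{eqgraphnoeq}. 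Hence \eqref{eqgrapheq} must be infeasible.

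The only subtle point, and the main thing to get right, is the integrality gap argument in the second case: one must use both that $x^*$ is integral and that the row sums of $A_{|F}$ are integers to conclude a slack of at least one, and then invoke the precise choice of $\mu$ to beat the worst-case oscillation of $f$ on the box. Everything else is direct comparison of objective values on the two feasible regions.
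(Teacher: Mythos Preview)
Your proof is correct and follows essentially the same approach as the paper: in both cases you compare $g$-values on the two feasible regions, use the integrality of $A$, $b$, and $x^*$ to obtain a slack of at least one when $x^*$ violates an equation, and then invoke the choice of $\mu$ to derive the contradiction. The paper's argument is organized slightly differently (it bounds $g(x^*)-g(\bar{x})$ directly rather than $g(y)-g(x^*)$), but the substance is identical.
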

\begin{proof}
    Let $\nu := \one^\intercal b_{|F}$ and note that every point $x$ that is feasible for~\eqref{eqgrapheq} satisfies $g(x) = f(x) + \mu \nu$.
    Suppose first that $x^*$ is feasible for~\eqref{eqgrapheq}.
    Clearly, every point $x$ that is feasible for~\eqref{eqgrapheq} is also feasible for~\eqref{eqgraphnoeq} and hence it satisfies
    $
        f(x) = g(x) - \mu \nu \le g(x^*) - \mu \nu = f(x^*)
    $,
    which shows that $x^*$ is indeed optimal for~\eqref{eqgrapheq}.

    Suppose now that $x^*$ is not feasible for~\eqref{eqgrapheq}.
    Then we must have $\one^\intercal A_{|F}x^* \le \one^\intercal b_{|F} - 1 = \nu - 1$ and see that
    \[
        g(x^*) \le f(x^*) + \mu (\nu - 1) \le \max \{ f(x) : x \in [\ell, u] \} + \mu (\nu - 1)
    \]
    holds.
    For the sake of contradiction, suppose that there exists a point $\bar{x}$ feasible for~\eqref{eqgrapheq}.
    Then $\bar{x}$ satisfies
    \[
        g(\bar{x}) = f(\bar{x}) + \mu \nu \ge \min \{ f(x) : x \in [\ell, u] \} + \mu \nu,
    \]
    which implies
    \begin{align*}
        g(x^*) - g(\bar{x})
        & \le \max \{ f(x) : x \in [\ell, u] \} - \min \{ f(x) : x \in [\ell, u] \} + \mu (\nu - 1) - \mu \nu \\
        & = \mu - 1 + \mu (\nu - 1) - \mu \nu = -1 < 0.
    \end{align*}
    Since $\bar{x}$ is feasible for~\eqref{eqgraphnoeq}, this is a contradiction to the optimality of $x^*$.
    Thus, \eqref{eqgrapheq} is infeasible.
\end{proof}

\subsection{Reduction to stable set}

In the previous sections we have reduced the original integer program to a series of integer programs of the form
\begin{equation}
    \label{eqhdgeIP}
    \max \, \{ w^\intercal x : x \in [\ell,u] \cap \Z^{V(G)} : Ax \le b \},
\end{equation}
where $A$ is an edge-vertex incidence matrix of an undirected graph $G$ with $\ocp(G) \le \log_2 \Delta$, and $w,\ell,u \in \Z^{V(G)}$, $b \in \Z^{E(G)}$.
In what follows, we will show that by fixing and translating some variables, we may restrict all variables to only attain values in $\{0,1\}$, in which case we end up with a stable set problem over a subgraph of $G$.
To this end, we will consider the LP relaxation
\begin{equation}
    \label{eqhdgeLP}
    \max \, \{ w^\intercal x : x \in [\ell,u] : Ax \le b \},
\end{equation}
and show that Theorem~\ref{thm:Cook_et_al} can be significantly strengthened for problems with the above structure, at least for extremal solutions.
Here, we say that a solution to a linear program is \emph{extremal} if is a vertex of the underlying polyhedron.
\begin{proposition}
    \label{propgz76du}
    Suppose that the integer program~\eqref{eqhdgeIP} is feasible.
    For every extremal optimal solution $x^*$ of the linear relaxation~\eqref{eqhdgeLP} there exists an optimal solution $\bar{x}$ to \eqref{eqhdgeIP} such that $||x^* - \bar{x}||_{\infty} \leqslant 1/2$.
\end{proposition}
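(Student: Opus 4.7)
The plan is in three steps: establish that $x^*$ is half-integral; construct $\bar x$ from an arbitrary IP-optimal solution $z^*$ by coordinate-wise clamping to $[\lfloor x^*\rfloor, \lceil x^*\rceil]$; and use LP duality together with complementary slackness to show that $\bar x$ is IP-feasible and satisfies $w^\intercal \bar x \ge w^\intercal z^*$, hence is itself IP-optimal.

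For half-integrality, every row of the constraint system~\eqref{eqhdgeLP} is either a box bound (single $\pm 1$ entry, integer right-hand side) or a signed edge constraint $a_{e,i} x_i + a_{e,j} x_j \le b_{ij}$ with $a_{e,i}, a_{e,j} \in \{-1,+1\}$ and $b_{ij} \in \Z$. The vertex $x^*$ is determined by $n$ linearly independent tight constraints; eliminating the tight box equalities (which fix integer values to the corresponding variables) reduces us to a square subsystem whose coefficient matrix is a signed vertex-edge incidence matrix of a graph, whose subdeterminants lie in $\{0,\pm 1,\pm 2\}$. Cramer's rule then yields $2 x^* \in \Z^n$. Write $x^*_i = k_i + 1/2$ for $i \in F := \{i : x^*_i \notin \Z\}$ and $x^*_i = k_i \in \Z$ for $i \notin F$. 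Any vector $\bar x$ with $\bar x_i = x^*_i$ for $i \notin F$ and $\bar x_i \in \{k_i, k_i+1\}$ for $i \in F$ is within $\|\cdot\|_\infty$-distance $1/2$ of $x^*$.

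Given any IP-optimal $z^*$, set $\bar x_i := x^*_i$ for $i \notin F$ and $\bar x_i := \max(k_i, \min(k_i+1, z^*_i))$ for $i \in F$. A short case analysis on each row of the constraint system, splitting according to whether each endpoint lies in $F$ and where $z^*_i, z^*_j$ stand relative to $\{k_i, k_i+1\}$, verifies $A\bar x \le b$ and $\ell \le \bar x \le u$; the arguments use integrality of $b$, $\ell$, $u$ together with feasibility of $z^*$ and $x^*$.

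For the core step, let $(y^*, \alpha^*, \beta^*)$ be an optimal dual solution, so $A^\intercal y^* + \alpha^* - \beta^* = w$ with $y^*, \alpha^*, \beta^* \ge 0$. Complementary slackness gives $\alpha^*_i = \beta^*_i = 0$ for every $i \in F$ (because $x^*_i$ lies strictly between the integer bounds $\ell_i$ and $u_i$), and any edge $e = ij$ with $y^*_e > 0$ must have both endpoints in $F$ or both in $[n] \setminus F$ (otherwise the tight equation $a_{e,i} x^*_i + a_{e,j} x^*_j = b_{ij}$ would have a half-integer left-hand side but an integer right-hand side). Writing
\[
    w^\intercal(\bar x - z^*) = \sum_e y^*_e\bigl((A\bar x)_e - (Az^*)_e\bigr) + \sum_i \alpha^*_i(\bar x_i - z^*_i) - \sum_i \beta^*_i(\bar x_i - z^*_i),
\]
the $\alpha^*$- and $\beta^*$-terms are nonnegative because they are supported on $[n]\setminus F$, where $\bar x_i = x^*_i$ already attains $u_i$ or $\ell_i$. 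For an edge $e$ with $y^*_e > 0$ whose endpoints lie in $[n]\setminus F$, $(A\bar x)_e = (Ax^*)_e = b_{ij} \ge (Az^*)_e$. For $e = ij$ with both endpoints in $F$, set $u_i := a_{e,i}(z^*_i - x^*_i)$, a half-integer satisfying $|u_i| \in \{1/2\} \cup [3/2, \infty)$. One computes $a_{e,i}(\bar x_i - z^*_i) = -u_i + (\operatorname{sign} u_i)/2$ if $|u_i| \ge 3/2$ and $=0$ if $|u_i| = 1/2$; combined with $u_i + u_j = (A(z^* - x^*))_e \le 0$ (feasibility of $z^*$ at the tight constraint $e$), a short case split on the signs of $u_i, u_j$ yields $a_{e,i}(\bar x_i - z^*_i) + a_{e,j}(\bar x_j - z^*_j) \ge 0$. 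Summing all contributions gives $w^\intercal \bar x \ge w^\intercal z^*$, so $\bar x$ is IP-optimal. The main technical hurdle is this per-edge verification inside $F$; everything else amounts to bookkeeping.
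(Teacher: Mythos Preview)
Your proof is correct and takes a genuinely different route from the paper's. The paper first translates so that $x^* \in \{0,\tfrac12\}^{V(G)}$, splits the vertices into $V_0 := \{v : x^*_v = 0\}$ and $V_* := \{v : x^*_v = \tfrac12\}$, and then handles the two parts separately: on $V_0$ a perturbation argument shows $\sum_{v \in V_0} w_v z_v \le 0$, and on $V_*$ it invokes Lemma~\ref{lem01solution}, which in turn rests on the external structural result Lemma~\ref{lem01solprev} (that all vertices of $\conv\{x \in \Z^{V(H)} : Bx \le \one\}$ lie in $\{0,1\}^{V(H)}$). Your argument bypasses both auxiliary lemmas: you construct $\bar x$ directly by clamping an arbitrary IP-optimum $z^*$ to $[\lfloor x^*_i\rfloor,\lceil x^*_i\rceil]$ and prove $w^\intercal \bar x \ge w^\intercal z^*$ via complementary slackness, reducing everything to a per-edge inequality $(A\bar x)_e \ge (Az^*)_e$ on tight edges with both endpoints fractional. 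Your approach is more self-contained and slightly more general (it is written for signed edge constraints, though the paper only needs the $+1$ case). The paper's route, on the other hand, makes the connection to the integer hull of $\{x : Bx \le \one\}$ explicit, which is thematically aligned with later sections. One cosmetic point: for the problem at hand $A$ is an edge--vertex incidence matrix, so all edge coefficients are $+1$; your signed-coefficient framing is harmless but not needed.
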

In the proof of the above result we make use of the following two auxiliary facts.
\begin{lemma}[{\cite[Prop.~5.1]{ocpgenus}}] \label{lem01solprev}
Let $H$ be an undirected graph and $B$ an edge-vertex incidence matrix of $H$.
Then all vertices of the polyhedron $\conv \{ x \in \Z^{V(H)} : Bx \le \one \}$ are contained in $\{0,1\}^{V(H)}$.
\end{lemma}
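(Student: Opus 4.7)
I will prove the contrapositive: every integer-feasible point $x^* \in \Z^{V(H)}$ satisfying $Bx^* \leq \one$ that has some coordinate outside $\{0,1\}$ can be written as the midpoint of two distinct integer-feasible points, and hence is not a vertex of $\conv \{ x \in \Z^{V(H)} : Bx \leq \one \}$. Fix such an $x^*$ and pick $v \in V(H)$ with $\alpha := x^*_v \notin \{0,1\}$, so $\alpha \geq 2$ or $\alpha \leq -1$. Call an edge $uw \in E(H)$ \emph{tight} if $x^*_u + x^*_w = 1$, and let $H_t \subseteq H$ be the subgraph of tight edges.

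The first observation is that $H_t$ is bipartite: summing the tight equations around an odd cycle of length $2k+1$ would give $2 \sum_i x^*_{v_i} = 2k+1$, contradicting integrality of $x^*$. Let $C$ be the connected component of $H_t$ containing $v$, with bipartition $(A,B)$ chosen so that $v \in A$. Propagating $x^*_w = 1 - x^*_u$ along tight edges, every $u \in A \cap C$ satisfies $x^*_u = \alpha$ and every $u \in B \cap C$ satisfies $x^*_u = 1-\alpha$. I then define the displacement $d \in \Z^{V(H)}$ by $d_u = +1$ on $A \cap C$, $d_u = -1$ on $B \cap C$, and $d_u = 0$ elsewhere; since $d_v = 1$, the two integer points $x^* + d$ and $x^* - d$ are distinct and have $x^*$ as their midpoint.

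The main work is verifying that $x^* \pm d$ both satisfy $Bx \leq \one$, equivalently $|d_u + d_w| \leq 1 - x^*_u - x^*_w$ for every edge $uw \in E(H)$. If neither $u$ nor $w$ lies in $C$, both sides vanish. If exactly one endpoint lies in $C$, then $uw$ cannot be tight (otherwise both endpoints would lie in $C$), so the slack is at least $1 = |d_u + d_w|$. If both endpoints lie in $C$ and $uw$ is tight, the endpoints are in opposite parts of the bipartition and $d_u + d_w = 0$. The only delicate case is both endpoints in the same part of the bipartition with $uw$ non-tight: then $x^*_u = x^*_w$ (a common value in $\{\alpha, 1-\alpha\}$) and $|d_u + d_w| = 2$. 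If this common value is $\geq 2$, the edge would violate $x^*_u + x^*_w \leq 1$ and cannot exist; in the remaining case the common value is $\leq -1$ (which is forced because $\alpha \geq 2$ or $\alpha \leq -1$), giving slack at least $3 \geq 2$.

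The hard part of the argument is really just this last sub-case, and it is handled for free by two structural facts: the tight subgraph is bipartite thanks to integrality of $x^*$, and $x^*$ is constant on each side of the bipartition of $C$. Once these are in place, the feasibility check is a routine case analysis, and the alternative $\alpha \leq -1$ is handled symmetrically by swapping the roles of $A$ and $B$ (equivalently, negating $d$). This yields the desired midpoint decomposition, completing the proof of the contrapositive.
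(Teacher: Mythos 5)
Your proof is correct. Note that the paper does not actually prove this lemma --- it imports it as \cite[Prop.~5.1]{ocpgenus} --- so there is no in-paper argument to compare against; what you have written is a valid self-contained substitute. Your mechanism (the tight edges form a bipartite subgraph by integrality, $x^*$ is constant on each side of the bipartition of the tight component through $v$, and one perturbs by $\pm 1$ along that component to exhibit $x^*$ as a midpoint of two feasible integer points) is essentially the same "push along a bipartite component of tight edges" idea that the paper itself uses elsewhere, e.g.\ in the proof of Lemma~\ref{lem9j8hz7}, where the components of the zero-slack graph carry values $\alpha$ and $1-\alpha$ and are shifted in opposite directions. Two cosmetic points, neither a gap: in the case where neither endpoint lies in $C$ the left-hand side $|d_u+d_w|$ vanishes while the right-hand side is merely nonnegative by feasibility (it need not be zero as written); and your enumeration silently skips the case of a non-tight edge with endpoints in $C$ on opposite sides of the bipartition, which is immediate since there $d_u+d_w=0$. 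The one genuinely delicate step --- ruling out a same-side non-tight edge with common value $0$, which would break the slack bound --- is handled correctly: since $\alpha\in\Z\setminus\{0,1\}$, the two values $\alpha$ and $1-\alpha$ occurring on $C$ are one at least $2$ and one at most $-1$, so the common value is either infeasible on that edge or at most $-1$, giving slack at least $3$.
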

\begin{lemma} \label{lem01solution}
    Let $H$ be an undirected graph, $B$ an edge-vertex incidence matrix of $H$, and $w \in \R^{V(H)}$.
    If $\max \{ w^\intercal x : Bx \le \one \}$ is attained at $\frac{1}{2} \one$, then $\max \{ w^\intercal z : Bz \le \one, \, z \in \Z^{V(H)} \}$ is attained at a point in $\{0,1\}^{V(H)}$.
\end{lemma}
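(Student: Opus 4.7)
The plan is to use LP duality to pin down the sign of $w$, and then to round any integer feasible solution coordinatewise into a $\{0,1\}$-feasible solution whose objective is at least as large. The one mildly subtle point, and the main obstacle, is that the nonnegativity of $w$ is not stated in the hypothesis: it has to be extracted from the optimality of $\frac{1}{2}\one$ via duality. Once that is in hand, the rest is a clean case analysis on each edge.

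First, I would apply strong duality to the LP $\max\{w^\intercal x : Bx \le \one\}$. Since $x$ is not sign-restricted, the dual reads $\min\{\one^\intercal y : y \ge 0,\ B^\intercal y = w\}$. The hypothesis that the maximum is attained at $\frac{1}{2}\one$ certifies boundedness, so by strong duality there is some $y \ge 0$ with $B^\intercal y = w$. Reading off coordinates yields $w_v = \sum_{e \ni v} y_e \ge 0$ for every $v \in V(H)$.

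Second, given any integer $z$ with $Bz \le \one$, I would define $z' \in \{0,1\}^{V(H)}$ by $z'_v := 1$ if $z_v \ge 1$ and $z'_v := 0$ otherwise. For every edge $uv \in E(H)$, the inequality $z_u + z_v \le 1$ combined with integrality rules out both $z_u \ge 1$ and $z_v \ge 1$ simultaneously, so $z'_u + z'_v \le 1$ and hence $Bz' \le \one$.

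Third, I would compare the two objective values via the dual representation
\[
    w^\intercal z - w^\intercal z' \;=\; y^\intercal B(z - z') \;=\; \sum_{uv \in E(H)} y_{uv} \bigl[(z_u + z_v) - (z'_u + z'_v)\bigr].
\]
A short case distinction on each edge shows the bracket is nonpositive: if $z_u, z_v \le 0$, the bracket equals $z_u + z_v \le 0$; if exactly one of $z_u, z_v$ is $\ge 1$, the bracket equals $(z_u + z_v) - 1 \le 0$ by feasibility; the case of both endpoints being $\ge 1$ is excluded as above. Since $y \ge 0$, the whole sum is $\le 0$, hence $w^\intercal z' \ge w^\intercal z$. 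The integer program is feasible (take $z = \zero$) and bounded above by the LP optimum; since $\{z' \in \{0,1\}^{V(H)} : Bz' \le \one\}$ is finite, a maximizer over this finite set exists, and by the inequality just derived it attains the full integer optimum.
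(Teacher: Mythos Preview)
Your proof is correct and takes a genuinely different route from the paper's. The paper splits into two cases according to whether $H$ is bipartite. In the bipartite case it picks the heavier side of the bipartition, which already matches the LP value. In the non-bipartite case it observes that $B$ has full column rank, so the feasible region (and hence its integer hull) contains no line; then it invokes the cited Lemma~\ref{lem01solprev} to conclude that every vertex of the integer hull lies in $\{0,1\}^{V(H)}$.

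Your argument is more elementary and entirely self-contained: it avoids the bipartite/non-bipartite dichotomy and the external citation for Lemma~\ref{lem01solprev}. The key idea you supply---extracting a nonnegative edge representation $w = B^\intercal y$ via duality and then comparing $w^\intercal z$ and $w^\intercal z'$ edge by edge---is constructive: it exhibits an explicit rounding of \emph{any} integer feasible point to a $\{0,1\}$ point with no loss in objective. The paper's approach, by contrast, is existential and polyhedral. Both get the job done; yours has the advantage of not depending on a structural result about the integer hull, while the paper's approach fits naturally with the polyhedral machinery used elsewhere in the article.
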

\begin{proof}
    We may assume that $H$ is connected.
    Consider $P := \{ x \in \R^{V(H)} : Bx \le \one\}$ and $Q := \conv \{ x \in \Z^{V(H)} : Bx \le \one\}$.
    If $H$ is bipartite, then one side $V' \subseteq V(H)$ of the bipartition satisfies $w(V') \ge \frac{1}{2} w(V(H))$.
    Choosing $z^* \in \{0,1\}^{V(H)}$ as the characteristic vector of $V'$ yields
    \[
        w^\intercal z^*
        = w(V')
        \ge \tfrac{1}{2} w(V(H))
        = \max \{ w^\intercal x : x \in P \}
        \ge \max \{ w^\intercal z : z \in Q \},
    \]
    and hence $z^*$ is as claimed, since $z^* \in Q$.

    If $H$ is not bipartite, then $B$ has full column rank and hence $P$ does not contain a line.
    Clearly, then also $Q \subseteq P$ does not contain a line.
    This means that $\max \{ w^\intercal z : z \in Q \}$ is attained at a vertex of $Q$, which is in $\{0,1\}^{V(H)}$ by Lemma~\ref{lem01solprev}.
\end{proof}
\begin{proof}[Proof of Proposition~\ref{propgz76du}]
It is known that $x^*$ is half-integer, \emph{i.e.}, $x^* \in \frac{1}{2}\Z^{V(G)}$, see~\cite[Prop.~2.1]{NT74}. After translating the feasible region by an integer vector (which may change $\ell,u,b$), we may assume that 
\begin{alignat*}{10}
    x^*_v & = 0 & \quad & \text{ for all } v \in V_0\\
    x^*_v & = 1/2 & \quad & \text{ for all } v \in V_*,
\end{alignat*}
where $V_0 \subseteq V(G)$ and $V_* = V(G) \setminus V_0$.
Note that this implies $\ell \le \zero$ and $u \ge \chi^{V_*}$, where $\chi^{V_*}$ denotes the characteristic vector of $V_*$. A crucial observation is that if a constraint associated to an edge $vv' \in E(G)$ is `tight' at $x^*$ (\emph{i.e.}, $x^*_v + x^*_{v'} = b_{vv'}$), then we have either $v,v' \in V_0$ or $v,v' \in V_*$.

Let $z \in \Z^{V(G)}$ by any optimal solution to~\eqref{eqhdgeIP}.
We claim that
\begin{equation} \label{eqmd8bs7}
    \sum_{v \in V_0} w_v z_v \le 0.
\end{equation}
Otherwise, we may consider the point $x(\eps)$ such that $x(\eps)_v = \eps z_v$ for $v \in V_0$ and $x(\eps)_v = 1/2$ for $v \in V_*$.
Choosing $\eps > 0$ small enough, we see that $x(\eps)$ is feasible for~\eqref{eqhdgeLP} and satisfies $w^\intercal x(\eps) > w^\intercal x^*$, a contradiction to the optimality of $x^*$.

Let $H$ be the graph on vertex set $V_*$ and whose edges are the tight edges of $G$ with both ends in $V_*$, and let $B$ be an edge-vertex incidence matrix of $H$.
Observe that the restriction of $x^*$ to $V_*$, which is equal to $\frac{1}{2} \one$, is an optimal solution to $\max \{ w_{|V_*}^\intercal x : Bx \le \one \}$ and hence by Lemma~\ref{lem01solution} there exists some $y \in \{0,1\}^{V_*}$ with $By \le \one$ and $\sum_{v \in V_*} w_v y_v \ge \sum_{v \in V_*} w_v z_v$.
Let $\bar{x}$ be the vector in $\{0,1\}^{V(G)}$ that agrees with $y$ on $V_*$ and that is zero on $V_0$.
By~\eqref{eqmd8bs7} and the previous inequality, we have $w^\intercal \bar{x} \ge w^\intercal z$.

Moreover, we claim that $\bar{x}$ is feasible for~\eqref{eqhdgeIP}:
Since $\ell \le \zero$ and $u \ge \chi^{V_*}$, we see that $\bar{x} \in [\ell,u]$ holds.
Moreover, as $x^*$ is feasible for~\eqref{eqhdgeLP}, all right-hand sides of edge constraints are nonnegative, and so $\bar{x}$ satisfies all edge constraints with both ends in $V_0$.
The edge constraints with one end in $V_0$ and one end in $V_*$ have a right-hand side of at least one, and hence are also satisfied by $\bar{x}$.
It remains to consider edge constraints with both ends in $V_*$.
By construction of $y$, $\bar{x}$ satisfies all such constraints that are tight at $x^*$.
The right-hand sides of all remaining constraints are at least $2$ and are hence trivially satisfied by $\bar{x}$.

Thus, $\bar{x}$ is an optimal solution for~\eqref{eqhdgeIP}.
Moreover, since all entries of $\bar{x}$ are in $\{0,1\}$, we see that $\|x^* - \bar{x}\|_\infty \le 1/2$ holds.
\end{proof}

To finally reduce~\eqref{eqhdgeIP} to a stable set problem, we proceed as follows.
First, we compute (again in strongly polynomial time) an extremal optimal solution $x^*$ to~\eqref{eqhdgeLP}.
Recall that if no such point exists, then~\eqref{eqhdgeIP} is infeasible.
As mentioned in the above proof, $x^*$ is half-integer and hence by translating all variables by an integer vector, we may assume that $x^* \in \{0,1/2\}^{V(G)}$.
By Proposition~\ref{propgz76du}, we see that there exists an optimal solution to~\eqref{eqhdgeIP} in $\{0,1\}^{V(G)}$.
Thus, we may restrict all variables in~\eqref{eqhdgeIP} to attain values in $\{0,1\}$.
For each edge constraint consider its right-hand side $\beta$.
If $\beta < 0$, then the problem is infeasible.
If $\beta = 0$, then the constraint can be replaced by fixing the respective variables to $0$, and we remove the edge from $G$.
If $\beta > 1$, then the constraint is redundant, and so we also remove the edge from $G$.
The constraints $x \in [\ell,u] \cap \Z^{V(G)}$ may result in further fixings of variables, or, again, directly imply infeasibility.
For each variable that is fixed to $0$, we delete the corresponding vertex from $G$.
For each variable that is fixed to $1$, we delete the corresponding vertices and all its neighbors from $G$.
Note that, given an optimal solution for the resulting integer program, it is easy to recover an optimal solution for the integer program~\eqref{eqhdgeIP}.
Finally, observe that the resulting integer program is a stable set problem on a subgraph $H$ of $G$, which clearly satisfies $\ocp(H) \le \ocp(G) \le \log_2 \Delta$.

\section{Structure of graphs with bounded odd cycle packing number} \label{sec:struct}

In this section we combine a number of results from the literature on graph minors to obtain a first structural description of graphs with bounded odd cycle packing number, Theorem~\ref{thm:structure_bounded_OCP}.

\subsection{Walls}
\label{subsec:walls}

Given an integer $h\geq 2$, an {\em elementary wall of height $h$} is the graph obtained from the $2h \times h$ grid with vertex set $[2h] \times [h]$ by removing all edges with endpoints $(2i-1,2j-1)$ and $(2i-1,2j)$ for all $i\in [h]$ and $j\in [\lfloor h/2 \rfloor]$, all edges with endpoints $(2i,2j)$ and $(2i,2j+1)$ for all $i\in [h]$ and $j\in [\lfloor (h-1)/2 \rfloor]$, and removing the two vertices of degree at most $1$ in the resulting graph.
A {\em brick} of an elementary wall $W$ is a cycle of length $6$ of $W$.

\begin{figure}[hb]
\centering
\includegraphics[width=0.5\textwidth]{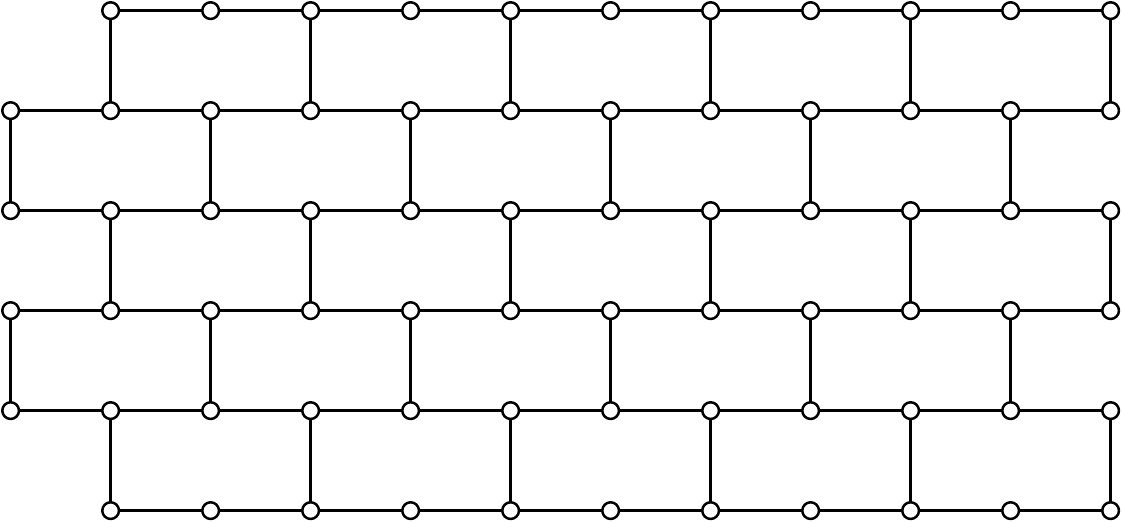}
\caption{\label{fig:wall}An elementary wall of height $6$.}
\end{figure}
 
An elementary wall of height $6$ is depicted in Fig.~\ref{fig:wall}. 
In an elementary wall $W$ of height $h$, there is a unique set of $h$ vertex-disjoint paths linking the bottom row (vertices of the form $(i,1)$) to the top row (vertices of the form $(i,h)$). 
These paths are called the {\em vertical paths} of $W$. We enumerate the vertical paths as $Q_1, \dots, Q_h$ so that the first coordinates of their vertices are increasing. There is also a unique set of $h$ vertex-disjoint paths linking $Q_1$ to $Q_h$, called the {\em horizontal paths} of $W$. 

A subdivision of an elementary wall of height $h$ is called a {\em wall of height $h$}. Bricks, vertical paths, and horizontal paths of a wall are defined as expected, as the subdivided version of their counterparts in the elementary wall.

An {\em Escher wall of height $h$} is any graph that can be obtained from a {\em bipartite} wall $W$ of height $h$ by adding $h-1$ vertex-disjoint paths $R_{1}, \dots, R_{h-1}$ in such a way that each $R_{i}$ has one endpoint in the $i$th brick of the top row of $W$ and the other in the $(h-i)$th
brick of the bottom row, $R_i$ has no other vertex in $V(W)$, and $R_{i} \cup W$ contains an odd cycle. 
It is moreover required that the endpoints of $R_{i}$ are not included in any other brick than the $i$th and the $(h-i)$th bricks of the respective rows. 
The Escher wall is said to {\em extend} the wall $W$. 
See Fig.~\ref{fig:Escher_wall} for an illustration. 

A wall $W'$ that is a subgraph of a wall $W$ is called a {\em subwall} of $W$ if every horizontal path of $W'$ is a subpath of a horizontal path of $W$, and every vertical path of $W'$ is a subpath of a vertical path of $W$.  

Notice that if $W$ is a wall of height $h$ in a graph $G$ and $X$ is a vertex subset of $G$ with $|X|<h$, then $X$ avoids at least one vertical path and one horizontal path of $W$. 
Moreover, all vertical paths and horizontal paths of $W$ avoided by $X$ are contained in the same component of $G-X$, which we call the {\em $W$-majority component} of $G-X$. 
We will need the following easy lemma about $W$-majority components and Escher walls. 

\begin{lemma}
\label{lem:odd_cycle_Escher_wall}
Let $G$ be a graph and let $W$ be a bipartite wall of height $h$ in $G$ that can be extended to an Escher wall $W'$. 
Let $X\subseteq V(G)$ with $|X| < (h-1)/4$. 
Then the $W$-majority component of $G-X$ contains an odd cycle of $W'$ intersecting every horizontal path of $W$. 
\end{lemma}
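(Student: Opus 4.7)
The plan is to exhibit an odd cycle of $W'$ of the form $C_i = R_i \cup P_i$, where $i \in \{1,\dots,h-1\}$ is chosen so that $V(R_i) \cap X = \emptyset$ and $P_i$ is a path in $W - X$ joining the two endpoints $u_i \in B_i^T$ and $v_i \in B_{h-i}^B$ of $R_i$ (with $B_i^T$ the $i$-th top brick and $B_{h-i}^B$ the $(h-i)$-th bottom brick). Parity is automatic: since $W$ is bipartite, every path in $W$ from $u_i$ to $v_i$ has the same length parity, and by the defining property of an Escher wall $R_i \cup W$ contains at least one odd cycle, so $R_i$ together with \emph{any} $W$-path from $u_i$ to $v_i$ is odd. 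Because the $R_j$'s are pairwise vertex-disjoint, at most $|X| < (h-1)/4$ of them meet $X$, so strictly more than $3(h-1)/4$ candidate indices remain.

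For any candidate $i$ and any $P_i \subseteq W - X$, the cycle $C_i$ meets every horizontal path of $W$: for $2 \le l \le h-1$, the set $V(H_l)$ is a vertex-separator of $W$ between its strictly-upper and strictly-lower halves, so the path $P_i$ from $u_i \in V(H_h) \cup V(H_{h-1})$ down to $v_i \in V(H_1) \cup V(H_2)$ must pass through $V(H_l)$. For $l \in \{1,h\}$, if $u_i$ (respectively $v_i$) is not already on $H_l$, we route $P_i$ through a vertex of $H_l$ via a short in-brick detour: each top (resp.\ bottom) brick contains vertices of $H_h$ (resp.\ $H_1$) at graph-distance at most $2$ from $u_i$ (resp.\ $v_i$) inside the brick. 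Moreover $X$ hits at most $|X|$ horizontal and at most $|X|$ vertical paths, leaving more than $3h/4$ of each kind unhit; these form the $W$-majority component of $G-X$ via their mutual intersections. Since $P_i$ already visits every $H_l$ with $2 \le l \le h-1$ and fewer than $(h-1)/4$ of those are hit, $P_i$ necessarily touches some unhit $H_l$, so $C_i$ lies inside the $W$-majority component.

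The main obstacle is the pigeonhole step that exhibits a single $i$ for which all the above is simultaneously realizable in $W - X$. An index $i$ is ``obstructed'' if $V(R_i) \cap X \ne \emptyset$ (at most $|X|$ indices, by vertex-disjointness of the $R_j$'s), or if $X$ blocks the in-brick detour at $B_i^T$ or $B_{h-i}^B$ (each $x \in X$ belongs to only a bounded number of bricks and to only a bounded number of short detours per brick, contributing only $O(|X|)$ obstructed indices in total), or if $X$ disconnects the descent portion of $P_i$; the latter is never a genuine obstruction, since once we have reached $V(H_h)$ we can traverse an unhit segment of $H_h$, descend along any of the more than $3h/4$ unhit vertical paths $Q_{j^*}$, and then shift along unhit horizontal segments to $v_i$. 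Tallying the small absolute constants carefully, the total number of obstructed indices is strictly less than $h-1$ under the hypothesis $|X| < (h-1)/4$, so a good index $i$ exists, and the corresponding cycle $C_i = R_i \cup P_i$ is the desired odd cycle of $W'$ in the $W$-majority component meeting every horizontal path of $W$.
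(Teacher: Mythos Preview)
Your overall strategy---build an odd cycle as $R_i$ together with a $u_i$--$v_i$ path in $W$, use bipartiteness of $W$ for parity, and use the horizontal paths as separators to guarantee the cycle meets them all---is sound. But the pigeonhole step is not actually carried out, and as written it does not go through. You write that ``each $x \in X$ \ldots\ contribut[es] only $O(|X|)$ obstructed indices in total'' and then ``tallying the small absolute constants carefully, the total number of obstructed indices is strictly less than $h-1$''; this is exactly the place where a proof is needed and you do not give one. In particular, your ``descent'' argument requires specific segments of the top and bottom horizontal paths (from the in-brick detour landing point to the chosen $Q_{j^*}$, and from the bottom of $Q_{j^*}$ to the target bottom brick) to avoid $X$. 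A single vertex of $X$ on $H_h$ or $H_1$ can block many indices $i$ simultaneously, because the required segment depends on both $i$ and the position of $j^*$. So the claimed $O(|X|)$ bound on obstructed indices is not justified, and the hypothesis $|X| < (h-1)/4$ is never actually connected to the count.

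The paper avoids this entirely by \emph{fixing} the cycles in advance rather than leaving the wall path flexible. For $i \in [t]$ with $t := \lceil (h-1)/2\rceil$, it defines $C_i$ to run along the top row from the top endpoint of $R_i$ to $Q_i$, down $Q_i$ to the horizontal path $P_{i+1}$, right along $P_{i+1}$ to $Q_{h+1-i}$, down $Q_{h+1-i}$ to the bottom row, left along the bottom row to the other endpoint of $R_i$, and back along $R_i$. Because the cycles are nested in this way, each vertex of $W'$ lies on at most two of them. Hence $X$ hits at most $2|X| < (h-1)/2 \leq t$ of the $C_i$, so some $C_i$ survives intact in $G-X$; and since this $C_i$ uses two full vertical paths it automatically meets every horizontal path and hence lies in the $W$-majority component. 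That ``each vertex in at most two cycles'' property is the key combinatorial fact your argument is missing; once you fix the paths like this, no routing in $W-X$ or detour analysis is needed at all.
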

\begin{proof}
Let $P_1, \dots, P_h$ denote the horizontal paths of $W$ from top to bottom, and let $Q_1, \dots, Q_h$ denote the vertical paths of $W$ from left to right. Let $R_{1}, \dots, R_{h-1}$ denote the extra paths in $G$ that together extend the wall $W$ to the Escher wall $W'$, as in the definition above.  
Let $t:= \lceil (h-1)/2 \rceil$. 
For each $i\in [t]$, let $C_i$ be the odd cycle of $W'$ obtained as follows: Start with the endpoint of $R_i$ on the top row, walk left to the endpoint of the vertical path $Q_i$, walk down following $Q_i$ until hitting $P_{i+1}$, walk right along $P_{i+1}$ until hitting $Q_{h+1-i}$, walk down following $Q_{h+1-i}$ until the bottom row, walk left until the other endpoint of $R_i$, and finally follow $R_i$. 
Observe that each vertex of $X$ is contained in at most two of the cycles $C_1, \dots, C_t$. 
Thus $X$ avoids some cycle $C_i$, since $|X| < (h-1)/4 \leq \lceil (h-1)/2 \rceil / 2 = t/2$. 
Finally, observe that $C_i$ must be in the $W$-majority component of $G-X$ since $C_i$ intersects every horizontal path of $W$. 
\end{proof}    

Reed~\cite{Reed99} proved that large Escher walls are unavoidable in graphs with bounded odd cycle packing number but big odd cycle transversal number. 
Kawarabayashi and Reed~\cite{KR_SODA10} subsequently gave a short algorithmic proof of this fact.

%Remark: These two papers define height differently (+1 difference), I adapted the statement
\begin{theorem}[\cite{Reed99, KR_SODA10}]
\label{thm:mangoes_blueberries}
There exists a computable function $f:\mathbb{Z}_{\geq 0} \times \N\to \N$ such that, for every integers $k\geq 1$ and $h\geq 2$, every graph $G$ with $\ocp(G) \leq k$ and $\oct(G) \geq f(k,h)$ contains an Escher wall of height $h$ as a subgraph. \\
Moreover, for every fixed integer $k\geq 1$, there is an algorithm that, given an $n$-vertex graph $G$ with $\ocp(G) \leq k$, finds in time $O_k(n^2)$ either a vertex subset $X$ of size at most $f(k,h)$ such that $G-X$ is bipartite, or an Escher wall of height $h$ in $G$. 
\end{theorem}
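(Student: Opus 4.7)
The plan is induction on $k$. For $k=0$ the claim is vacuous, as $\ocp(G)=0$ means $G$ is bipartite, so $\oct(G)=0$ and any $f(0,h)\ge 1$ suffices. For the inductive step, assume $\ocp(G)\le k$ and $\oct(G)\ge f(k,h)$ for a sufficiently large function to be determined, and aim to find an Escher wall of height $h$.

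The first step is to extract a large wall in $G$. Since $\oct$ can be computed by dynamic programming on a tree decomposition with running time depending only on the tree-width and $k$, having $\oct(G)$ enormous while $\ocp(G)\le k$ forces $G$ to have large tree-width. The Robertson--Seymour Grid Minor Theorem (or its wall variant) then produces a wall $W$ of height $h'$, where $h'=h'(k,h)$ is chosen much larger than $h$. If $W$ itself carries an odd cycle along its wall edges, we can delete the vertices of that cycle and invoke induction on $k$: $\ocp$ drops by one while $\oct$ decreases by at most the length of the cycle, which is small relative to $f(k,h)$. Hence we may assume $W$ is bipartite.

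The second step is to analyze how the odd cycles of $G$ attach to the bipartite wall $W$. Using linkage/rerouting arguments of the Seymour--Thomassen type, each odd cycle of $G$ can be traced onto $W$ as a ``rung'' joining two of its bricks, with a well-defined parity determined by the relative position of the two attachment bricks in the bipartition of $W$. The key dichotomy is then: either many of these rungs are pairwise non-interleaving, in which case each rung combined with a disjoint piece of $W$ yields an odd cycle, producing $k+1$ vertex-disjoint odd cycles and contradicting $\ocp(G)\le k$; or else the rungs must systematically cross in the Escher pattern, i.e.\ join the $i$-th brick of the top row to the $(h-i)$-th brick of the bottom row, yielding an Escher wall of height $h$ inside $G$.

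The main obstacle is the rerouting-and-crossing analysis: one must keep track of parities as odd cycles are pushed onto the wall, and must argue that non-interleaving rungs can be promoted to vertex-disjoint odd cycles inside $G$, which forces $f(k,h)$ to grow rapidly (each step of the induction absorbs a Grid Minor Theorem bound and a linkage bound). For the algorithmic version, the Grid Minor Theorem is replaced by its polynomial-time counterparts (Chekuri--Chuzhoy, Kawarabayashi--Kobayashi--Reed) to locate $W$ in time $O_k(n^2)$, and Kawarabayashi--Reed's ``irrelevant vertex'' machinery analyzes the attachments while keeping the total running time within $O_k(n^2)$ and either returning an Escher wall of height $h$ or a small odd cycle transversal as certificate.
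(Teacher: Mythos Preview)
First, note that the paper does not give its own proof of this theorem: it is quoted from Reed~\cite{Reed99} (with the algorithmic version from Kawarabayashi--Reed~\cite{KR_SODA10}) and used as a black box. So there is no ``paper's proof'' to compare against; the question is whether your sketch stands on its own.

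There is a concrete error in your inductive step. You write that if the wall $W$ contains an odd cycle $C$, then deleting $V(C)$ makes ``$\ocp$ drop by one''. This is false: there is no reason the particular odd cycle you found inside $W$ participates in a maximum packing of odd cycles of $G$, so $\ocp(G-V(C))$ can equal $\ocp(G)$. (The step is salvageable by a different argument: $W$ is planar, so the Erd\H{o}s--P\'osa property for odd cycles in planar graphs gives $\oct(W)\le g(k)$ for some $g$, and deleting a bounded number of rows and columns of $W$ yields a large bipartite subwall. But this is not what you wrote.)

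The more serious gap is Step~2. Your ``dichotomy'' (non-interleaving rungs give $k+1$ disjoint odd cycles, interleaving rungs give the Escher pattern) is not a proof but a restatement of the desired conclusion. The actual work in Reed's argument lies precisely here: one has to produce many vertex-disjoint $W$-paths of the correct parity attaching to the top and bottom rows, control where their endpoints land, and argue that if they cannot be rerouted to be pairwise non-crossing (which would yield $k+1$ disjoint odd cycles via disjoint pieces of $W$) then some $h-1$ of them realise the specific twisted pattern $i \leftrightarrow h-i$ required by the definition of an Escher wall. Getting from ``$G$ has many odd cycles touching $W$'' to ``$G$ has many vertex-disjoint parity-breaking $W$-paths with endpoints in distinct bricks of the top and bottom rows'' already requires a nontrivial linkage argument, and the subsequent crossing analysis is the heart of the theorem. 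Your sketch does not engage with either of these, so as written it is a plan rather than a proof.
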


The subscript $k$ in the notation $O_k(n^2)$ indicates that the hidden constant factor depends on $k$. 
We note that a more precise bound on the time complexity of the algorithm is given in~\cite{KR_SODA10}, it is almost linear in the number of vertices and edges of $G$.

\subsection{Odd $K_t$ models} 

Given a graph $H$, an {\em $H$ model $M$} in a graph $G$ consists of one (non-empty) tree $M(v) \subseteq G$ for each vertex $v\in V(H)$, called a {\em branch set}, and one edge $M(uv) \in E(G)$ with one endpoint in $M(u)$ and the other in $M(v)$ for each edge $uv \in E(H)$ such that all branch sets are pairwise vertex disjoint. 
For convenience, we denote by $\bigcup M$ the subgraph $\bigcup_{v\in V(H)} M(v)\cup \bigcup_{uv\in E(H)} M(uv)$ of $G$ defined by the model $M$. 
Note that $G$ has an $H$ minor if and only if $G$ has an $H$ model. 

Given a wall $W$ in $G$, and an $H$ model $M$ in $G$ for a graph $H$ with $|V(H)|=t$, we say that the wall $W$ {\em grasps} the model $M$ if for every $v\in V(H)$ there are $t$ distinct horizontal paths $P^v_1, \dots, P^v_t$ of $W$ and $t$ distinct vertical paths $Q^v_1, \dots, Q^v_t$ of $W$ such that $V(P^v_i) \cap V(Q^v_i) \subseteq M(v)$ for each $i\in [t]$. 
(The intuition behind this definition is that the model is attached to the wall in a robust way: If we remove a vertex subset $X$ of size less than $t$ from $G$, then there is still a branch set of the model that survives (i.e.\
 that avoids $X$), and a pair of an horizontal path and a vertical path that survive such that some vertex in their intersection is in the branch set.)

We will also need to consider special types of models. 
An $H$ model $M$ in $G$ is {\em odd} if, for every cycle $C$ contained in $\bigcup M$, the number of edges of $C$ that belong to branch sets is even. 
The reason for this terminology is that, if $H$ is $K_3$, then this amounts to requiring that every such cycle $C$ is odd. 
In this sense, odd $K_t$ models with $t\geq 3$ generalize odd cycles. 
In fact, an odd $K_t$ model readily gives many vertex-disjoint odd cycles in $G$, as the following easy lemma shows. 

\begin{lemma}
\label{lem:obs_oddKt_ocp}
If $G$ is a graph containing an odd $K_t$ model then $\ocp(G) \geq \lfloor t/3 \rfloor$. 
\end{lemma}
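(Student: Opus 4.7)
The plan is to pair odd $K_t$ models with odd cycles by exhibiting, for each triangle $\{u,v,w\}$ of $K_t$, a single odd cycle that lives entirely in $M(u)\cup M(v)\cup M(w)$ together with the three connector edges $M(uv), M(vw), M(uw)$. Since the branch sets are pairwise vertex-disjoint and each connector edge joins exactly two branch sets, cycles coming from vertex-disjoint triples in $V(K_t)$ will automatically be vertex-disjoint in $G$. Partitioning $V(K_t)$ into $\lfloor t/3\rfloor$ pairwise-disjoint triples will therefore yield $\lfloor t/3\rfloor$ vertex-disjoint odd cycles, proving $\ocp(G)\ge \lfloor t/3\rfloor$.

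The key step is to construct the odd cycle associated with a triple $\{u,v,w\}$. For each edge $xy\in E(K_t)$, write the connector edge $M(xy)$ as an edge joining a vertex $p_{xy}^x\in V(M(x))$ to a vertex $p_{xy}^y\in V(M(y))$. In the tree $M(u)$, take the unique path $P_u$ from $p_{uv}^u$ to $p_{uw}^u$, and define $P_v$ and $P_w$ analogously inside the trees $M(v)$ and $M(w)$. Concatenating $P_u,P_v,P_w$ with the three connector edges $M(uv),M(vw),M(uw)$ produces a closed walk $C$ in $G$, and because the three branch sets are vertex-disjoint and each connector edge is used exactly once, $C$ is in fact a cycle.

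To see $C$ is odd, apply the definition of an odd model to $C\subseteq \bigcup M$: the number of edges of $C$ belonging to branch sets is even. That count is precisely $|E(P_u)|+|E(P_v)|+|E(P_w)|$, so the total length of $C$ equals an even number plus the three connector edges, which is odd. Thus $C$ is an odd cycle contained in $M(u)\cup M(v)\cup M(w)\cup \{M(uv),M(vw),M(uw)\}$.

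I expect no real obstacle: the whole argument is just unwinding the definition. The one point to be slightly careful about is confirming that the closed walk built from the three tree-paths and the three connector edges really is a cycle (i.e.\ has no repeated vertex); this follows directly from the vertex-disjointness of the branch sets and the fact that each $P_x$ lives inside a single $M(x)$. The final sentence of the write-up just notes that a partition of $V(K_t)$ into $\lfloor t/3\rfloor$ triples gives that many cycles whose vertex sets lie in disjoint unions of branch sets, and therefore are pairwise vertex-disjoint.
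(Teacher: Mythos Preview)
Your argument is correct and is precisely the intended one. The paper states this lemma without proof (calling it ``easy''), and what you wrote is the standard way to unfold the definition: for each triple $\{u,v,w\}$, the three tree-paths together with the three connector edges form a cycle, and the odd-model condition forces the total number of branch-set edges on that cycle to be even, so the cycle has odd length; partitioning $V(K_t)$ into $\lfloor t/3\rfloor$ disjoint triples then gives the required packing.
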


A {\em block} of a graph $G$ is an inclusion-wise maximal subgraph of $G$ which is $2$-connected, or an edge, or a vertex. 
The following easy observation about blocks and $K_t$ models will be useful. 
\begin{lemma}
Suppose $G$ is a graph, $M$ is a $K_t$ model in $G$, and $X$ is a vertex subset of $G$ with $|X| \leq t-2$. 
Then there is a block of $G-X$ that intersects all the branch sets of $M$ avoided by $X$, and this block is unique. 
This block is called the {\em main block} of $G-X$ (w.r.t.\ the model $M$). 
\end{lemma}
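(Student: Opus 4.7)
Let $S \subseteq V(K_t)$ denote the set of indices $v$ with $V(M(v)) \cap X = \emptyset$; since $|X| \leq t-2$, we have $|S| \geq 2$. For any $v \in S$ the branch set $M(v)$ (a tree, hence connected) lies entirely in $G-X$, and for any $u,v \in S$ the edge $M(uv)$ lies in $G-X$ as well, since its endpoints are in $M(u) \cup M(v)$.

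Existence is straightforward: the plan is to identify a single block $B$ that meets every branch set indexed by $S$. If $|S|=2$ with $S=\{u,v\}$, take $B$ to be the (unique) block of $G-X$ containing the edge $M(uv)$; its two endpoints already witness that $B$ meets $M(u)$ and $M(v)$. If $|S| \geq 3$, I pick any three distinct $u,v,w \in S$; the edges $M(uv), M(vw), M(uw)$, concatenated with paths through the trees $M(u), M(v), M(w)$, form a cycle of $G-X$, so these three edges sit in a common block. Varying $w$ while keeping $u,v$ fixed then propagates to every edge $M(u'v')$ with $u',v' \in S$, so all these edges belong to a single block $B$, which therefore meets every $M(v)$ with $v \in S$.

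Uniqueness is the main obstacle and is where the assumption $|X| \leq t-2$ genuinely gets used. The plan is to argue via the block-cut tree and the pairwise disjointness of branch sets. Suppose $B'$ is another block meeting every $M(v)$, $v \in S$; pick any $u,v \in S$, and work in the connected component $C$ of $G-X$ containing $M(u)\cup M(v)$ (they are joined by $M(uv)$). I would invoke the standard fact that for any connected subgraph $H$ of $C$, the set of nodes of the block-cut tree of $C$ consisting of blocks meeting $H$ together with cut vertices lying in $H$ induces a connected subtree. Applied to $H = M(u)$ and $H = M(v)$, this yields subtrees $T_u$ and $T_v$ both containing $B$ and $B'$, so their intersection contains the unique path $\pi$ from $B$ to $B'$ in the block-cut tree. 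If $B \neq B'$, then $\pi$ has length at least two and alternates between block and cut-vertex nodes, so it contains at least one internal cut vertex $c$. Such a $c$ would have to lie in $V(M(u)) \cap V(M(v))$, contradicting the vertex-disjointness of the branch sets of $M$. Hence $B = B'$.
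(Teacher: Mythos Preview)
Your existence argument is essentially the paper's: both show that every pair of model edges $M(uv)$, $M(u'v')$ with $u,v,u',v'\in S$ lies on a common cycle of $G-X$, and hence all such edges belong to a single block. The paper phrases this via the equivalence relation ``$e \equiv e'$ iff some cycle of $G-X$ contains both'', whose classes are the edge sets of blocks, and then just states that equivalence of cross-edges ``is easily checked''; you make the cycle explicit through three branch sets, which is the same idea.

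For uniqueness you actually go further than the paper. The paper's proof stops once it has exhibited the block containing all cross-edges and does not argue that no \emph{other} block can meet every surviving branch set. Your block--cut tree argument fills this in correctly: for each $v\in S$ the blocks meeting the connected subgraph $M(v)$, together with the cut vertices in $V(M(v))$, form a subtree $T_v$; if two distinct blocks $B,B'$ both lay in $T_u\cap T_v$, the internal cut vertex on the $B$--$B'$ path would lie in $V(M(u))\cap V(M(v))$, contradicting disjointness of branch sets. This is a clean and standard justification, and it is worth keeping since the paper leaves the point implicit.
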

\begin{proof}
Let $M'$ be the model obtained from $M$ by dropping the at most $|X|\leq t-2$ branch sets of $M$ hit by $X$. 
For two edges $e, e'$ in $G-X$, we write $e \equiv e'$ if $e = e'$ or if there is a cycle of $G-X$ containing both $e$ and $e'$. This is an equivalence relation on the edges of $G-X$, whose equivalence classes are the edge sets of the blocks of $G-X$. 
If $e$ and $e'$ both link two distinct pairs of branch sets of $M'$ then $e$ and $e'$ are equivalent, as is easily checked. Thus, all the edges connecting two branch sets of $M'$ belong to the same block of $G - X$.
\end{proof}
    
We will use the following theorems about odd $K_t$ models due to Geelen \emph{et al.}~\cite{GGRSV09}.\footnote{We remark that the theorem appears with the value $c_1=32$ in~\cite[Theorem 5.2]{KR_SODA10}. However, the proof in that paper uses an inequality that is known to be valid only for large enough $\ell$, which is why we decided to leave $c_1$ unspecified, as in~\cite{GGRSV09}.}    

\begin{theorem}[\cite{GGRSV09}]
\label{thm:oddKt}
There exists a universal constant $c_1\geq 32$ such that, for every integer $\ell \geq 1$ and every graph $G$, if $G$ contains a $K_t$ model with $t = \lceil c_1 \ell \sqrt{\log_2 \ell} \rceil$ then either $G$ contains an odd $K_{\ell}$ model, or there is a subset $X$ of vertices of $G$ with $|X| < 8\ell$ such that the main block of $G-X$ is bipartite. 
\end{theorem}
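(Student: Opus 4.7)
The plan is to reduce the statement to a question about signed complete graphs via branch-set $2$-colorings. Given the $K_t$-model $M$ with branch sets $M(v_1), \dots, M(v_t)$, each a tree (hence bipartite), I would fix a proper $2$-coloring $\chi_i \colon V(M(v_i)) \to \{0,1\}$ for every $i \in [t]$. For each inter-branch edge $M(v_i v_j)$ with endpoints $a \in V(M(v_i))$ and $b \in V(M(v_j))$, define its sign $\sigma(v_iv_j) := \chi_i(a) + \chi_j(b) \pmod{2}$. A direct parity computation, tracing colors around any closed walk in $\bigcup M$, shows that the restriction of $M$ to an arbitrary subset $S \subseteq [t]$ is an odd $K_{|S|}$-model of $G$ if and only if the induced signed complete graph $(K_{|S|}, \sigma|_S)$ is \emph{balanced}, that is, the set of $\sigma$-negative edges forms an edge cut of $K_{|S|}$ corresponding to some bipartition $S = A \sqcup B$.

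In this language, the theorem becomes the following dichotomy for signed complete graphs: if $t \geq c_1 \ell \sqrt{\log_2 \ell}$, then either (a) some $\ell$-subset $S \subseteq [t]$ induces a balanced signed $K_\ell$, or (b) there is a subset $X_0 \subseteq [t]$ with $|X_0| < 8\ell$ such that $(K_{t - |X_0|}, \sigma|_{[t] \setminus X_0})$ is balanced. In case~(a), restricting $M$ to $S$ yields the required odd $K_\ell$-model. In case~(b), I would let $X \subseteq V(G)$ consist of one vertex from each $M(v_i)$ with $i \in X_0$; then $|X| < 8\ell$, and the tree $2$-colorings $\chi_i$ of the branches indexed by $[t] \setminus X_0$, after switching according to the bipartition witnessing the balance, combine (as one verifies by case analysis on the four possible placements of $i,j$) into a proper $2$-coloring of the subgraph spanned by the surviving branches, which then coincides with a proper $2$-coloring of the main block of $G - X$.

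The hard part will be establishing the quantitative dichotomy. The factor $\sqrt{\log_2 \ell}$ in the threshold on $t$ strongly suggests an application of Thomason's minor bound: every graph of average degree at least $c \cdot r\sqrt{\log r}$ contains a $K_r$-minor. Concretely, one considers the auxiliary graphs $G^+$ and $G^-$ on vertex set $[t]$ whose edges are $\sigma^{-1}(0)$ and $\sigma^{-1}(1)$, respectively. A clique of size $\ell$ in $G^+$, or an induced complete bipartite subgraph $K_{\ell_1,\ell_2}$ with $\ell_1 + \ell_2 = \ell$ in $G^-$, immediately yields case~(a). In the absence of such substructures, a combination of Thomason-type minor bounds and K\H{o}v\'ari--S\'os--Tur\'an extremal estimates forces the existence of a small vertex set whose removal balances the signed graph, producing case~(b) with the explicit constant $8\ell$.

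A technical subtlety to handle is that $G$-edges within the main block but not belonging to $\bigcup M$ must also be bichromatic under the final $2$-coloring. I would deal with this by promoting $(K_t,\sigma)$ to a signed multigraph on $[t]$ in which every extra $G$-edge between two branches (resp.\ within a single branch) is added as a parallel edge (resp.\ loop), labelled by the $\chi$-parity of its endpoints. Re-proving the dichotomy for this richer signed multigraph requires a few additional vertices to be added to $X$ in order to kill $\sigma$-negative loops and parity-inconsistent parallel edges, and all of this can be absorbed into the bound $|X| < 8\ell$.
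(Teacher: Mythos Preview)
The paper does not prove Theorem~\ref{thm:oddKt}; it is quoted from~\cite{GGRSV09}, so there is no in-paper proof to compare against. That said, your outline has a genuine gap on its own terms.

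Your translation of the first alternative (``$G$ has an odd $K_\ell$-model'') into ``$(K_t,\sigma)$ contains a balanced $K_\ell$'' is correct. The error is in the second alternative: you render ``there exists $X$ with $|X|<8\ell$ such that the main block of $G-X$ is bipartite'' as ``there exists $X_0\subseteq[t]$ with $|X_0|<8\ell$ such that $(K_{[t]\setminus X_0},\sigma)$ is balanced''. These are not equivalent, and the resulting signed-graph dichotomy is simply false. Take $G$ bipartite with a $K_t$-model for large $t$. Using the global $2$-colouring on each branch set, every inter-branch edge gets sign~$1$, so $(K_t,\sigma)$ is switching-equivalent to the all-negative clique; no subset of size $\geq 3$ is balanced, and deleting fewer than $t-2$ vertices cannot make the remainder balanced. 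Thus neither your (a) nor your (b) holds, yet the theorem's second alternative holds trivially with $X=\varnothing$. (Note also that whenever your (b) holds and $t-|X_0|\geq \ell$, it already yields a balanced $K_\ell$ and hence (a); so even formally your ``dichotomy'' collapses to the single claim (a), which the bipartite example refutes.)

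The underlying issue is that ``main block of $G-X$ bipartite'' is a statement about the ambient graph $G$, not about $\bigcup M$. Your signed-multigraph patch only records extra $G$-edges with both endpoints in $\bigcup M$; it says nothing about vertices of the main block lying outside every branch set, and the assertion that $O(\ell)$ further deletions suffice to eliminate all parity conflicts is unsupported. The argument in~\cite{GGRSV09} proceeds in the contrapositive direction: assuming that for \emph{every} small $X$ the main block of $G-X$ is non-bipartite, one uses the odd cycles this guarantees together with the high connectivity supplied by the $K_t$-model (plus a Kostochka--Thomason minor bound, which is indeed the source of the $\ell\sqrt{\log\ell}$) to assemble an odd $K_\ell$-model. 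Your plan never exploits this interaction with $G\setminus\bigcup M$, which is precisely where the content of the second alternative lives.
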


\begin{lemma}
\label{lem:Kt_model_Escher_wall}
Let $k\geq 1$ be an integer and let $t:=\lceil 3c_1 k \sqrt{\log_2 (3k)} \rceil$, with $c_1$ the constant from Theorem~\ref{thm:oddKt}.  
Suppose $G$ is a graph having a $K_t$ model $M$ that is grasped by a bipartite wall $W$ of height $h\geq 96k +1$ in $G$, such that $W$ can be extended to an Escher wall $W'$ of $G$.  
Then $\ocp(G) \geq k$.  
\end{lemma}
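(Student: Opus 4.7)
The plan is to apply Theorem~\ref{thm:oddKt} to the $K_t$ model $M$ with parameter $\ell := 3k$, so that $t = \lceil c_1 \ell \sqrt{\log_2 \ell} \rceil$ matches the value in the statement. Theorem~\ref{thm:oddKt} then yields one of two outcomes: either $G$ contains an odd $K_{3k}$ model, in which case Lemma~\ref{lem:obs_oddKt_ocp} directly gives $\ocp(G) \geq \lfloor 3k/3 \rfloor = k$ and we are done, or there is $X \subseteq V(G)$ with $|X| < 8 \cdot 3k = 24k$ such that the main block $B$ of $G - X$ with respect to $M$ is bipartite. The remaining work is to rule out this second outcome by locating an odd cycle that must lie inside $B$.

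To this end, note that $|X| < 24k \leq (h-1)/4$ by the hypothesis $h \geq 96k+1$, so Lemma~\ref{lem:odd_cycle_Escher_wall} applied to the Escher wall $W'$ extending $W$ produces an odd cycle $C$ of $W'$ contained in the $W$-majority component of $G - X$ and meeting every horizontal path of $W$. Let $W^*$ denote the subgraph of $W$ formed by the union of the horizontal and vertical paths that avoid $X$; since $X$ hits at most $|X|<24k$ paths of each type, $W^*$ retains at least $h - 24k \geq 72k+1$ horizontal paths and at least $72k+1$ vertical paths, and its grid-like intersection pattern makes it a $2$-connected subgraph of $G - X$. On the model side, for every $v \in V(K_t)$ with $M(v) \cap X = \emptyset$, the branch set $M(v)$ contains the $t$ wall vertices $x^v_i := V(P^v_i) \cap V(Q^v_i)$, and at least $t - 2|X|$ of them lie in $W^*$ (those where neither $P^v_i$ nor $Q^v_i$ is hit by $X$); by the choice of $t$ we have $t - 2|X| \geq 2$, so each such $M(v)$ shares at least two vertices with $W^*$.

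The key step is then to argue that $W^*$ and $C$ both lie in $B$. For each surviving pair $u,v$, the edge $M(uv)$ can be completed into a cycle by walking from one endpoint through $M(u)$ to a vertex of $M(u) \cap W^*$, crossing $W^*$ to $M(v) \cap W^*$, and returning through $M(v)$ to the other endpoint; thus every model edge between surviving branch sets sits in the same block of $G - X$ as $W^*$, giving $W^* \subseteq B$. The cycle $C$ meets each of the $\geq 72k+1$ surviving horizontal paths at a distinct vertex, so $C \cup W^*$ is $2$-connected and $C \subseteq B$; but then $B$ contains the odd cycle $C$, contradicting its bipartiteness. The main subtlety I expect in writing this out is the $2$-connectivity bookkeeping: the branch sets $M(v)$ are only trees and may have pendant parts outside $B$, so one has to work inside the ``core'' subtree of each $M(v)$ that actually connects the endpoints of its incident model edges to its chosen wall vertices in $W^*$, and then exhibit explicit cycles through each model edge and each edge of $C$ using the abundance of wall vertices in $M(v) \cap W^*$ together with the high connectivity of $W^*$.
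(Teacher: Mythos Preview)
Your proposal is correct and follows the same overall strategy as the paper: apply Theorem~\ref{thm:oddKt} with $\ell = 3k$, dispose of the odd $K_{3k}$ case via Lemma~\ref{lem:obs_oddKt_ocp}, and in the remaining case use Lemma~\ref{lem:odd_cycle_Escher_wall} to locate an odd cycle $C$ of the Escher wall that must live in the bipartite main block, yielding a contradiction.

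The paper's execution of the last step is noticeably leaner than yours, and worth knowing. Rather than assembling a $2$-connected surviving wall $W^*$, arguing $W^* \subseteq B$ via cycles through model edges, and then arguing $C \cup W^*$ is $2$-connected, the paper simply picks two branch sets $B_1, B_2$ of $M$ avoided by $X$, takes one vertex $u_j \in B_j$ lying in the main block $U$, and exhibits two vertex-disjoint $\{u_1,u_2\}$--$V(C)$ paths in $G-X$. Each path is obtained by choosing a single horizontal path $P_{i_j}$ that avoids $X$ and meets $B_j$ (possible since $X$ kills fewer than $24k$ horizontal paths), and walking inside $B_j \cup P_{i_j}$ from $u_j$ to $C$ (possible since $C$ meets every horizontal path). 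Two disjoint paths from $\{u_1,u_2\} \subseteq V(U)$ to $V(C)$ force $C \subseteq U$, and you are done. This avoids all the $2$-connectivity bookkeeping you flagged, and only needs one surviving horizontal path per branch set instead of your $t - 2|X| \geq 2$ surviving crossing vertices in $W^*$. Your route works, but the paper's two-paths argument is the cleaner way to place a cycle in a prescribed block.
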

\begin{proof}
Apply Theorem~\ref{thm:oddKt} with $\ell = 3k$. 
If the theorem yields an odd $K_{\ell}$ model, then $\ocp(G) \geq k$, as noted in Lemma~\ref{lem:obs_oddKt_ocp}, and we are done. 
Thus we may assume that Theorem~\ref{thm:oddKt} gives a vertex subset $X$ of $G$ with $|X| < 8\ell=24k$ such that the main block $U$ of $G-X$ is bipartite. 

Note that $|X| < 24k \leq (h -1)/4$, and hence by Lemma~\ref{lem:odd_cycle_Escher_wall} the $W$-majority component of $G-X$ contains an odd cycle $C$ from the Escher wall $W'$ that meets all its horizontal paths.  

Since $|X| < 24k < t-1$, we can find two distinct branch sets $B_1$ and $B_2$ of $M$ that avoids $X$. 
Since the model $M$ is grasped by $W$, we can find two horizontal paths $P_{i_1}$ and $P_{i_2}$ with $i_1\neq i_2$ such that $P_{i_j}$ avoids $X$ and meets $B_j$, for $j=1,2$. 
Now, let $u_j$ be a vertex of $U$ in $B_j$, for $j=1,2$. 
Since the cycle $C$ intersects the horizontal path $P_{i_j}$, we can find an $u_j$--$V(C)$ path $Z_j$ inside $B_j \cup P_{i_j}$, for $j=1,2$. 
We thus find two vertex disjoint paths $Z_1$ and $Z_2$ between $\{u_1,u_2\}$ and $V(C)$ in $G-X$, implying that the odd cycle $C$ is contained in the bipartite block $U$, a contradiction. 
\end{proof}
    
\subsection{Excluding a $K_t$ model grasped by a wall}

Next, we turn to a structure theorem for graphs containing some (large) wall $W$ and having no $K_t$ model that is grasped by the wall $W$. 
This theorem is at the heart of the graph minors series of Robertson and Seymour, see~\cite{RS-GraphMinorsXVI-JCTB03}. 
In this paper we will use the following recent variant of the theorem, due to Kawarabayashi \emph{et al.}~\cite{KTW20}, which comes with an efficient algorithm. 
The necessary definitions and notations will be introduced after the theorem 
(in particular, the graph $G'_0$ is a specific supergraph of $G_0$ defined at the end of this subsection).

% note to self: I changed the letter R (from KTW20) into h, to uniformize notation for height in the paper
\begin{theorem}[{\cite[Theorem 2.11]{KTW20}}]
\label{thm:KTW}
Let $t,r\geq 0$ be integers and let 
\[
h:= 49152t^{24}r + t^{10^7 t^{26}}.
\]
Let $G$ be a graph and let $W$ be a wall of height $h$ in $G$. 
Then either $G$ has a $K_t$ model grasped by $W$, or there is an $(\alpha_0,\alpha_1,\alpha_2)$-near embedding $(\sigma,G_0,A,\mathcal{V},\mathcal{W})$ of $G$ in a surface $\mathbb{S}$ with 
\begin{align*}
    &\alpha_0 := t^{10^7 t^{26}} \\
    &\alpha_1 := 2t^2 \\   
    &\alpha_2 := t^{10^7 t^{26}} \\
    &\eg(\surf) \leq t(t+1),
\end{align*} 
and such that $G'_0$ contains a flat wall $W'_0$ of height $r$ that can be lifted to a subwall $W_0$ of $W$.

Furthermore, for some function $T$, there is an algorithm with running time $T(t,r)\cdot n^{O(1)}$ that, given an $n$-vertex graph $G$ and a wall $W$ as above, finds one of the two structures guaranteed by the two outcomes of the theorem. 
\end{theorem}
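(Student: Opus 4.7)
The plan is to deduce Theorem~\ref{thm:KTW} from the Graph Minor Structure Theorem in its tangle/wall formulation, following the Robertson--Seymour paradigm as sharpened by Kawarabayashi--Thomas--Wollan. First I would pass from the wall $W$ of height $h$ to a tangle $\mathcal{T}$ of order roughly $h/4$ in $G$: every wall of this height naturally induces such a tangle by orienting each low-order separation $(A,B)$ towards the side containing the ``majority'' of the horizontal and vertical paths of $W$. Under this translation, the hypothesis that $G$ has no $K_t$ model grasped by $W$ becomes the statement that $\mathcal{T}$ does not control a $K_t$ minor, because any $K_t$ minor controlled by $\mathcal{T}$ can, via the linkage theorem and a rerouting argument, be pulled onto $W$ to produce a grasped $K_t$ model.

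Next I would invoke the main structural output of~\cite{KTW20}: a graph whose tangle of sufficiently large order does not control a $K_t$ minor admits a near embedding in a surface $\surf$ of Euler genus at most $t(t+1)$, with apex set, number of large vortices, and vortex depth bounded by explicit functions of $t$. The value $\alpha_1 = 2t^2$ reflects the fact that each large vortex consumes an $\Omega(t^{-2})$ fraction of the embedding space before forcing a grasped $K_t$ minor, while the towers $t^{10^7 t^{26}}$ appearing in $\alpha_0$ and $\alpha_2$ come from the iterated applications of the GMST internal to the proof. Because $\mathcal{T}$ was built from $W$, one can align the near embedding with $W$ so that the surface part $G_0$ contains a substantial portion of the wall.

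The final step is the flatness extraction. Starting from a wall of height $h = 49152 t^{24} r + t^{10^7 t^{26}}$, after discarding the apex vertices (at most $\alpha_0 = t^{10^7 t^{26}}$ of them) and the ``shadow'' of the large vortices on $W$, a pigeonhole argument locates an $r \times r$ collection of bricks of $W$ lying entirely in $G_0$ and unaffected by vortex attachments; this yields the desired subwall $W_0$ of $W$ that lifts to a subwall $W_0'$ of $G_0$. Flatness of $W_0'$, meaning that no path of $G$ jumps over it through a linkage invisible to the embedding, follows from the tangle alignment: such a jump could be redirected and bundled with the wall structure into a $K_t$ model grasped by $W$, contradicting the hypothesis. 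The constants $49152$ and the polynomial degree $t^{24}$ in $h$ are precisely the slack required by this counting and rerouting argument.

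The main obstacle is this flatness extraction together with making it constructive. It is exactly what motivates the algorithmic refinement of the GMST proved in~\cite{KTW20}: one must combine an algorithmic linkage theorem, an effective vortex cleanup, and an efficient tangle decomposition, and the running time $T(t,r)\cdot n^{O(1)}$ reflects the bookkeeping of all these subroutines. A secondary difficulty is that the towers in $\alpha_0$ and $\alpha_2$ are inherited from the recursive nature of the structure theorem and are currently unavoidable, which is why the bounds in the statement are so coarse; any substantial improvement would require a quantitatively sharper version of the underlying GMST.
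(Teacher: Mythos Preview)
The paper does not prove Theorem~\ref{thm:KTW}; it is quoted verbatim (up to minor reformulation and dropping the $W$-centrality clause) from~\cite[Theorem~2.11]{KTW20} and used as a black box. There is therefore no proof in the paper to compare your proposal against.

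Your sketch is a reasonable high-level account of the Robertson--Seymour/Kawarabayashi--Thomas--Wollan strategy, but as written it would not constitute a proof: several of the steps you describe (passing from ``no grasped $K_t$'' to ``tangle does not control $K_t$'', the flatness extraction with those specific constants, and the algorithmic claims) are themselves the hard content of~\cite{KTW20} and cannot be reduced to a paragraph each. In particular, your statement that a $K_t$ minor controlled by the tangle can be ``pulled onto $W$ via the linkage theorem and a rerouting argument'' hides a substantial argument, and your pigeonhole description of the flat wall extraction glosses over why the resulting subwall is genuinely flat with respect to the near embedding rather than merely embedded in $G_0$. For the purposes of the present paper none of this matters, since the theorem is invoked, not proved; but if you intended this as a standalone proof you would need to either reproduce the machinery of~\cite{KTW20} or cite it directly, as the paper does.
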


We remark that in the above statement we dropped one property of the near embedding that appears in~\cite[Theorem 2.11]{KTW20}, namely `$W$-centrality', because we are not going to need it in this paper.    
This is because the existence of a large flat wall gives us a weaker version of the property that is good enough for our purposes.

Let us now introduce the necessary definitions. 
We note that the terminology and notations used here are not the original ones from~\cite{KTW20} but are taken instead from a related paper by Diestel, Kawarabayashi, M{\"{u}}ller, and Wollan~\cite{DiestelKMW12}.\footnote{While the translation between the formalism of~\cite{KTW20} and that of~\cite{DiestelKMW12} is not difficult, there are two aspects we ought to comment on: (1) Going from a `vortex of depth at most $d$' in the first paper to `a vortex having a linear decomposition of adhesion at most $d$' in the second can be done in time $O(n^{d+3})$ using~\cite[Theorem 12.2]{KTW20}. 
(2) Flat subwalls w.r.t.\ a near embedding are defined slightly differently in the two papers, which led us to introduce the notion of `lift' in the statement.}  
In the next section we will use results on near-embeddings from~\cite{DiestelKMW12}, which is why we chose their terminology. 

A {\em vortex} is a pair $V=(H,\Omega)$ where $H$ is a graph and $\Omega=\Omega(V)$ is a linearly ordered set $(u_1,u_2,\ldots,u_n)$ of some vertices in $H$. 
With a slight abuse of notation, we will denote the (unordered) set of vertices $\{u_1,u_2,\ldots,u_n\}$ by $\Omega$ as well. 
Also, we sometimes treat the vortex $V$ as the corresponding graph $H$ when convenient; for instance, we may simply write $V\subseteq G$ to mean that $H$ is a subgraph of the graph $G$. The vertices of $V - \Omega(V)$ are called {\em internal} vertices. 

Given a vortex $V=(H,\Omega)$ and a vertex subset $A$, we denote by $V-A$ the vortex obtained by deleting the vertices in $A$. 
If $\mathcal{V}$ is a set of vortices, we let $\mathcal{V}-A:=\{V-A:V\in \mathcal{V}, V-A \neq \varnothing\}$.  

A \emph{surface} $\surf$ is a non-empty compact connected Hausdorff topological space in which every point has a neighborhood that is homeomorphic to the plane~\cite{MoharThom}. The \emph{M\"obius strip} is the one-sided non-orientable surface obtained from a rectangle by identifying two of its parallel sides with one of them taken in the opposite direction to the other.
Let $\surf(h,c)$ denote the surface obtained by removing $2h+c$ open disks with disjoint closures from the sphere and gluing $h$ cylinders and $c$ M\"obius strips onto the boundaries of these disks. As stated by the classification theorem of surfaces, every surface is topologically equivalent to $\surf (h,c)$, for some $h$ and $c$. The \emph{Euler genus} $\eg(\surf)$ of a surface $\surf \cong \surf(h,c)$ is $2h+c$. Let $\surf$ be a surface and let $\Delta$ be a closed disk in the surface. We denote by $\bd(\Delta)$ and $\inter(\Delta$) the boundary and the interior of $\Delta$, respectively. 

\begin{definition}[Linear decomposition and adhesion]
Let $V=(H,\Omega)$ be a vortex with $\Omega=(u_1,u_2,\ldots,u_n)$. 
A {\em linear decomposition} of $V$ is a collection of sets $(X_1,X_2,\ldots, X_n)$ such that 
\begin{itemize}
\item for each $i\in [n]$, $X_i \subseteq V(H)$ and $u_i\in X_i$, and moreover $\cup_{i=1}^n X_i = V(H)$; 
\item for every $uv\in E(H)$, there exists $i\in [n]$ such that $\{u,v\}\subseteq X_i$, and
\item for every $x\in V(H)$, the set $\{i:x\in X_i\}$ is an interval in $[n]$. 
\end{itemize}
The {\em adhesion} of the linear decomposition is $\max(|X_i\cap X_{i+1}|: 1\le i \le n-1)$.
\end{definition}

\begin{definition}[$(\alpha_0,\alpha_1,\alpha_2)$-near embedding]
Let $\alpha_0,\alpha_1,\alpha_2\in \mathbb{N}$, and let $\mathbb{S}$ be a surface. A graph $G$ is {\em $(\alpha_0,\alpha_1,\alpha_2)$-nearly embeddable in $\mathbb{S}$} if there is a set of vertices $A\subseteq V(G)$ with $|A|\le \alpha_0$ and an integer $\alpha'\le \alpha_1$ such that $G-A$ can be written as the union of $t+1$ edge-disjoint graphs $G_0,G_1,\ldots,G_t$ with the following properties:
\begin{enumerate}[(i)]
\item For each $i\in [t]$, $V_i:=(G_i,\Omega_i)$ is a vortex where $\Omega_i=V(G_i\cap G_0)$. For all $1\le i < j \le t$, $G_i\cap G_j\subseteq G_0$.
\item The vortices $V_1,\ldots,V_{\alpha'}$ have a linear decomposition of adhesion at most $\alpha_2$. Let $\mathcal{V}$ be the collection of those vortices.
\item The vortices $V_{\alpha'+1},\ldots,V_t$ have $|\Omega_i|\le 3$ for each $\alpha'+1\le i \le t$. Let $\mathcal{W}$ be the collection of those vortices.
\item There are closed disks $\Delta_1,\ldots,\Delta_t$ in $\mathbb{S}$ with disjoint interiors and an embedding $\sigma: G_0 \hookrightarrow \mathbb{S} - \cup_{i=1}^t \inter(\Delta_i)$ such that $\sigma(G_0)\cap \bd(\Delta_i)=\sigma(\Omega_i)$ for each $i\in [t]$ and the linear ordering of $\Omega_i$ is compatible with the cyclic ordering of $\sigma(\Omega_i)$ (that is, they both agree when starting from the first vertex of $\Omega_i$). 
\end{enumerate} 
It is helpful to imagine each vortex $V_i$ ($i\in [t]$) as being drawn inside its disk $\Delta_i$ with edge crossings, as is done in \cite{KTW20}; this way each vortex is contained inside its disk (the drawing itself is irrelevant).
We call the tuple $(\sigma,G_0,A,\mathcal{V},\mathcal{W})$ an {\em $(\alpha_0,\alpha_1,\alpha_2)$-near embedding} of $G$. Below we sometimes denote $\Delta(V)$ the disk $\Delta_i$ for vortex $V = V_i$. We call vortices in $\mathcal{V}$ {\em large vortices} and vortices in $\mathcal{W}$ {\em small vortices}. 
\end{definition}

Observe that for each vortex $V_i$ ($i\in [t]$) in the above definition, there is no edge in $G$ linking a vertex in $V(G_i) - \Omega_i$ to a vertex in $V(G)-A-V(G_i)$.  
Given a near embedding $(\sigma,G_0,A,\mathcal{V},\mathcal{W})$ of a graph $G$ in a surface $\mathbb{S}$, we define a corresponding graph $G_0'$, obtained from $G_0$ by adding an edge $vw$ for every non-adjacent vertices $v,w$ in $G_0$ that are in a common small vortex $V\in \mathcal{W}$. 
These extra edges are drawn without crossings, in the disks accommodating the corresponding vortices. 
We will refer to these edges as the {\it virtual edges} of $G_0'$. 
If $H'$ and $H$ are subgraphs of $G_0'$ and $G-A$, respectively, such that $H$ is obtained from $H'$ by replacing each virtual edge $uv$ of $H'$ with an $u$--$v$ path contained in some vortex in $\mathcal{W}$, in such a way that all the paths are internally (vertex) disjoint, then we call $H$ a {\it lift} of $H'$ (w.r.t.\ the near embedding $(\sigma,G_0,A,\mathcal{V},\mathcal{W})$), and say that $H'$ {\it can be lifted to} $H$. 

A cycle $C$ in a graph $H$ embedded in a surface $\surf$ is {\em flat} if $C$ bounds a disk in $\surf$. 
A wall $W$ in $H$ is {\em flat} if the boundary cycle of $W$ (which is defined in the obvious way) bounds a closed disk $D(W)$ with the wall $W$ drawn inside it.

\subsection{Graphs with bounded odd cycle packing number}

We may now state a structure theorem for graphs with bounded odd cycle packing number, which follows from the results mentioned above. 

\begin{theorem}
\label{thm:structure_bounded_OCP}
There is a computable function $f_1(k,r)$ such that, for every integers $k\geq 1$ and $r\geq 0$, and every graph $G$ with $\ocp(G)\leq k$ and $\oct(G) \geq f_1(k,r)$, there is an $(\alpha_0,\alpha_1,\alpha_2)$-near embedding $(\sigma,G_0,A,\mathcal{V},\mathcal{W})$ of $G$ in a surface $\mathbb{S}$ with 
\begin{align*}    
    &\alpha_0 := t^{10^7 t^{26}} \\
    &\alpha_1 := 2t^2 \\   
    &\alpha_2 := t^{10^7 t^{26}} \\
    &\eg(\surf) \leq t(t+1),
\end{align*} 
and such that $G$ contains a bipartite wall $W$ of height $h$ that can be extended to an Escher wall $W'$ of $G$, and $G'_0$ contains a flat wall $W_0'$ of height $r$ that can be lifted to a subwall $W_0$ of $W$, where 
\begin{align*}    
&h:= 49152t^{24}r + t^{10^7 t^{26}} \\
&t:=\lceil 3c_1(k+1) \sqrt{\log_2 (3(k+1))} \rceil
\end{align*} 
and $c_1$ is the constant from Theorem~\ref{thm:oddKt}.   

Furthermore, for some function $T$, there is an algorithm with running time $T(k,r)\cdot n^{O(1)}$ that, given an $n$-vertex graph $G$, finds these structures. 
\end{theorem}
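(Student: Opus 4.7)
The plan is to combine Theorem~\ref{thm:mangoes_blueberries}, Theorem~\ref{thm:KTW}, and Lemma~\ref{lem:Kt_model_Escher_wall} in a straightforward way. Fix $k \ge 1$ and $r \ge 0$, and set $t$ and $h$ as in the statement. Choose $f_1(k,r) := f(k, h)$, where $f$ is the function from Theorem~\ref{thm:mangoes_blueberries}. Given $G$ with $\ocp(G) \leq k$ and $\oct(G) \geq f_1(k,r)$, Theorem~\ref{thm:mangoes_blueberries} provides an Escher wall $W'$ of height $h$ in $G$; by the definition of an Escher wall, $W'$ extends some bipartite wall $W$ of height $h$ contained in $G$. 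Apply Theorem~\ref{thm:KTW} to $G$ with this wall $W$ and with the chosen parameters $t$ and $r$. The theorem returns one of two outcomes.

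In the first outcome, $G$ has a $K_t$ model $M$ grasped by $W$. I then invoke Lemma~\ref{lem:Kt_model_Escher_wall} with parameter $k+1$ in place of $k$: by our choice of $t = \lceil 3c_1(k+1)\sqrt{\log_2(3(k+1))}\rceil$ this value matches exactly, and one easily checks $h \geq 96(k+1)+1$ since $h \geq t^{10^7 t^{26}}$ dominates. The lemma then yields $\ocp(G) \geq k+1$, contradicting our hypothesis $\ocp(G) \leq k$. Hence this outcome is impossible, and Theorem~\ref{thm:KTW} must deliver the second outcome: a near embedding $(\sigma,G_0,A,\mathcal{V},\mathcal{W})$ of $G$ in a surface $\mathbb{S}$ with the prescribed parameters $\alpha_0,\alpha_1,\alpha_2$ and $\eg(\mathbb{S}) \leq t(t+1)$, together with a flat wall $W_0'$ of height $r$ in $G_0'$ that lifts to a subwall $W_0$ of $W$. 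This gives all the structures claimed in the theorem.

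For the algorithmic statement, I simply chain the algorithms: run the algorithm of Theorem~\ref{thm:mangoes_blueberries} on $G$ with parameters $k$ and $h$. If it reports an odd cycle transversal of size at most $f(k,h)$, then by our choice of $f_1$ we are outside the hypothesis of the theorem, so there is nothing to do. Otherwise it produces an Escher wall $W'$ of height $h$ and we identify the underlying bipartite wall $W$. Then run the algorithm of Theorem~\ref{thm:KTW} on $(G,W)$. As argued above, the $K_t$-model outcome is impossible under the hypothesis (if it ever occurred we could equivalently verify $\ocp(G) \geq k+1$ by Lemma~\ref{lem:Kt_model_Escher_wall} and report an infeasible input), so the algorithm returns the desired near embedding and flat subwall. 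Both steps run in time $T(k,r)\cdot n^{O(1)}$ for some function $T$, as required.

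The proof is essentially a glue argument, so the main obstacle is just bookkeeping: verifying that the parameter $t$ defined from $k+1$ (needed so that Lemma~\ref{lem:Kt_model_Escher_wall} forces $\ocp(G) \geq k+1$) is simultaneously a valid input to Theorem~\ref{thm:KTW}, and that the wall height $h$ chosen for Theorem~\ref{thm:mangoes_blueberries} is at least the threshold $49152 t^{24}r + t^{10^7 t^{26}}$ needed by Theorem~\ref{thm:KTW} and also at least the $96(k+1)+1$ required by Lemma~\ref{lem:Kt_model_Escher_wall}. All three conditions are ensured by the definitions of $t$ and $h$ in the statement, and $f_1(k,r) := f(k,h)$ then takes care of supplying the Escher wall.
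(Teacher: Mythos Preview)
Your proposal is correct and follows essentially the same approach as the paper's proof: define $f_1(k,r) := f(k,h)$, invoke Theorem~\ref{thm:mangoes_blueberries} to obtain the Escher wall, apply Theorem~\ref{thm:KTW}, and rule out the $K_t$-model outcome via Lemma~\ref{lem:Kt_model_Escher_wall} with parameter $k+1$. The parameter bookkeeping you spell out is exactly what is needed, and the algorithmic part also matches.
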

\begin{proof}
For $k\geq 1$ and $r\geq 0$, let $f_1(k,r) := f(k, h)$, where $f$ is the function from Theorem~\ref{thm:mangoes_blueberries}. 
Let us show that the theorem holds with this choice of $f_1$. 
Let $G$ be a graph with $\ocp(G)\leq k$ and $\oct(G) \geq f_1(k,r)$, for some integers $k\geq 1$ and $r\geq 0$.  
By Theorem~\ref{thm:mangoes_blueberries}, there is a bipartite wall $W$ of height $h$ in $G$ that can be extended to an Escher wall $W'$ of $G$. 
Apply Theorem~\ref{thm:KTW} to $G$ with the wall $W$. 
If the theorem yields a $K_t$ model grasped by $W$, then $\ocp(G) \geq k+1$ by Lemma~\ref{lem:Kt_model_Escher_wall}, a contradiction. 
Thus we obtain the second outcome of the theorem, as desired. 
\end{proof}

\section{Refining a near embedding} \label{sec:clean_near_embeddings}

Theorem~\ref{thm:structure_bounded_OCP} already gives a good description of the structure of a graph $G$ with bounded odd cycle packing number but large odd cycle transversal number. The goal of this section is to refine this structure further, in order to apply our main algorithm to it. 
This refinement relies on the following three ingredients: 
(1) tools developed by Diestel {\em et al.}~\cite{DiestelKMW12} in their work on the excluded minor theorem for graphs with large treewidth; 
(2) the odd $S$-path theorem of Geelen {\em et al.}~\cite{GGRSV09}, and 
(3) the {\EP} property of $2$-sided odd cycles for graphs embedded in surfaces, proved in~\cite{ocpgenus}. 
Below, we first introduce these results together with all the necessary definitions. 
Then we explain how to use them to obtain an improved version of Theorem~\ref{thm:structure_bounded_OCP}.  

\subsection{Definitions and tools}

Given a near embedding $(\sigma,G_0,A,\mathcal{V},\mathcal{W})$ of a graph $G$ in a surface $\mathbb{S}$, and a vortex $V\in \mathcal{W}$, we say that $V$ is {\em properly attached} if $|\Omega(V)|\leq 3$, for every two distinct vertices $u, v\in \Omega(V)$ there is an $u$--$v$ path in $V$ with no inner vertex in $\Omega(V)$, and for every three distinct vertices $u, v, w\in \Omega(V)$ there is an $u$--$v$ path in $V$ and a $v$--$w$ path, each with no inner vertex in $\Omega(V)$, in $V$ that are internally disjoint. 

Consider a graph $H$ embedded in $\surf$. 
A curve $C$ in $\surf$ is said to be {\em $H$-normal} if it intersects (the embedding of) $H$ only at vertices.
The {\em distance in $\surf$} between two points $x,y\in \surf$ is the minimum of $|C\cap V(H)|$ over all $H$-normal curves $C$ linking $x$ to $y$. 
The distance in $\surf$ between two vertex subsets $A, B$ of $H$ is the minimum distance in $\surf$ between a vertex of $A$ and a vertex of $B$. 
In the context of near embeddings, the distance in $\surf$ between two vortices $V$ and $W$ is the distance in $\surf$ between $\Omega(V)$ and $\Omega(W)$. 
A {\em noose} is a simple, closed, $H$-normal and noncontractible curve in $\surf$. For a surface $\surf$ that is not a sphere, the {\em face-width} (also known as {\em representativity}) of the embedding of $H$ in $\surf$ is the minimum of $|C\cap V(H)|$ over all nooses $C$. 

Vertex-disjoint cycles $C_1, \dots, C_t$ of $H$ are {\em concentric} if they bound closed disks with $D(C_1) \supseteq \cdots \supseteq D(C_t)$ in $\surf$. 
These cycles {\em enclose} a vertex subset $\Omega$ if $\Omega \subseteq D(C_t)$. 
They {\em tightly enclose} $\Omega$ if moreover, for every $i\in[t]$ and every point $v\in \bd( D(C_i))$, there is a vertex $w\in \Omega$ at distance at most $t-i+2$ from $v$ in $\surf$.  

In the context of a near-embedding $(\sigma,G_0,A,\mathcal{V},\mathcal{W})$ of a graph $G$ in a surface $\surf$, concentric cycles $C_1, \dots, C_t$ in $G'_0$ {\em (tightly) enclose} a vortex $V\in \mathcal{V}$ if they (tightly) enclose $\Omega(V)$. 

For integers $3\leq \beta \leq r$, an $(\alpha_0,\alpha_1,\alpha_2)$-near embedding $(\sigma,G_0,A,\mathcal{V},\mathcal{W})$ of a graph $G$ in a surface $\surf$ is {\em $(\beta, r)$-good} if the following properties are satisfied. 
\begin{enumerate}[(1)]
\item \label{P:flat}
$G'_0$ contains a flat wall $W'_0$ of height $r$. 
\item \label{P:face-width}
If $\surf$ is not the sphere, then the face-width of $G'_0$ in $\surf$ is at least $\beta$. 
\item \label{P:concentric}
For every vortex $V\in \mathcal{V}$ there are $\beta$ concentric cycles $C_1(V), \dots, C_{\beta}(V)$ in $G'_0$ tightly enclosing $V$ and bounding closed disks $D_1(V) \supseteq \cdots \supseteq D_{\beta}(V)$, such that $D_{\beta}(V)$ contains $\Omega(V)$ and $D(W'_0)$ does not meet $D_1(V)$. 
For distinct $V, V'\in \mathcal{V}$, the disks $D_1(V)$ and $D_1(V')$ are disjoint. 
\item \label{P:properly_attached}
All vortices in $\mathcal{W}$ are properly attached.  
\end{enumerate}
This is a weakening of the notion of {\em $(\beta, r)$-rich} near embeddings from~\cite{DiestelKMW12}, obtained by dropping some of the properties in the latter that we will not need. 

The heart of the proof of the main result in~\cite{DiestelKMW12} (Theorem 2 in that paper) is a procedure that starts with a near embedding of a graph $G$ in a surface $\surf$ together with a large enough flat wall and iteratively improves the near embedding until it becomes `$(\beta, r)$-rich'. 
While this is not explicitly discussed in~\cite{DiestelKMW12}, it can be checked that all the steps of this procedure can be realized in polynomial time when all the parameters involved are bounded by constants. 
Specifically, one can check that the following theorem follows from the proof in~\cite{DiestelKMW12} for the weaker notion of $(\beta, r)$-good embeddings.\footnote{We remark that since we focus on $(\beta, r)$-good embeddings, in the proof of Theorem 2 in~\cite{DiestelKMW12} we can stop as soon as properties (P1)--(P4) are established (\emph{i.e.}\ end of first paragraph on page 1208). 
The bounds in Theorem~\ref{thm:Diestel_et_al} were calculated accordingly.} 

\begin{theorem}[{implicit in \cite{DiestelKMW12}}]
\label{thm:Diestel_et_al}
Suppose we are given integers $3\leq \beta \leq r$, an $(\widetilde{\alpha}_0,\widetilde{\alpha}_1,\widetilde{\alpha}_2)$-near embedding $(\widetilde{\sigma},\widetilde{G}_0,\widetilde{A},\widetilde{\mathcal{V}},\widetilde{\mathcal{W}})$ 
of a graph $G$ in a surface $\mathbb{\widetilde{S}}$, 
with all vortices in $\widetilde{\mathcal{W}}$ being properly attached,   
and a flat wall $\widetilde{W}'_0$ in $\widetilde{G}'_0$  of height 
\[
6^{\widetilde{\alpha} + \eg(\widetilde{\surf})}(r + \widetilde{\alpha}(\beta+\widetilde{\alpha}+3)+p),
\] 
where $\widetilde{\alpha} :=\max_{0\leq i \leq 2}\widetilde{\alpha}_i$ and $p:=2\widetilde{\alpha}(\beta+2\widetilde{\alpha}+\eg(\widetilde{\surf})+2)+4$.   
Then we can find a $(\beta, r)$-good $(\alpha_0,\alpha_1,\alpha_2)$-near embedding $(\sigma,G_0,A,\mathcal{V},\mathcal{W})$ of $G$ in a surface $\surf$, with 
\begin{align*}
&\alpha_0:=\widetilde{\alpha}_0 + p(2\eg(\widetilde{\surf}) + \widetilde{\alpha}_1) \\
&\alpha_1:=\widetilde{\alpha}_1 + \eg(\widetilde{\surf}) \\
&\alpha_2:=\widetilde{\alpha}_2 + \widetilde{\alpha}_1 +  \eg(\widetilde{\surf}) \\
&\eg(\surf) \leq \eg(\widetilde{\surf})
\end{align*}
and such that the flat wall $W'_0$ in $G'_0$ guaranteed by property~\eqref{P:flat} in the definition of $(\beta, r)$-good is a subwall of $\widetilde{W}'_0$. 
Moreover, this near embedding can be computed in polynomial time provided $r, \beta, \widetilde{\alpha}_0,\widetilde{\alpha}_1,\widetilde{\alpha}_2, \eg(\widetilde{\surf})$ are all constants. 
\end{theorem}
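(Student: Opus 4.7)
The statement is explicitly labeled as implicit in [DiestelKMW12], so the plan is to trace through the proof of Theorem~2 of that paper and verify two things: (a) one may terminate the improvement procedure once properties~\eqref{P:flat}--\eqref{P:properly_attached} are established, yielding a $(\beta,r)$-good near embedding rather than the stronger $(\beta,r)$-rich notion, and (b) each intermediate step can be executed in $n^{O(1)}$ time when the near-embedding parameters and the surface genus are constants. The footnote in the statement already flags these as the only two things we need to check.

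The strategy is iterative: start from $(\widetilde{\sigma},\widetilde{G}_0,\widetilde{A},\widetilde{\mathcal{V}},\widetilde{\mathcal{W}})$ with the large flat wall $\widetilde{W}'_0$, and repeatedly apply local modifications, each of which monotonically decreases a complexity measure while preserving both the near-embedding structure and the existence of a flat subwall whose height shrinks by at most a bounded multiplicative factor per step. The natural complexity measures are: the Euler genus of the host surface; the number of large vortices that fail to be tightly enclosed by $\beta$ concentric cycles; and the number of small vortices that are not properly attached. Whenever $\surf$ has face-width below $\beta$, a certificate noose with fewer than $\beta$ vertices of $G_0$ can be found; cutting $\surf$ along such a noose either disconnects $\surf$ into simpler pieces or reduces $\eg(\surf)$ by at least one, at the cost of placing the noose vertices into the apex set $A$. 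A completely analogous cut around a large vortex that is not tightly enclosed yields an additional surface simplification or moves the vortex to a simpler region; and small vortices that are not properly attached can be dealt with by promoting their few vertices into $A$.

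The order I would follow is: (i) make all small vortices properly attached (promoting at most $O(\widetilde{\alpha}_0)$ vertices into $A$); (ii) raise the face-width of $G'_0$ to at least $\beta$ by iterated noose cuts, bounded by $\eg(\widetilde{\surf})$ rounds; (iii) enforce $\beta$ tightly enclosing concentric cycles around each of the at most $\widetilde{\alpha}_1$ large vortices, again by noose-cut arguments confined to a neighborhood of each vortex. Each of these operations can eliminate at most a constant number of rows and columns of the current flat wall, and can occasionally split the wall into pieces of which we keep the largest; a factor of at most $6$ per operation is standard in the graph-minors literature, and $\widetilde{\alpha}+\eg(\widetilde{\surf})$ operations suffice, which explains the $6^{\widetilde{\alpha}+\eg(\widetilde{\surf})}$ multiplier in the lower bound on the starting wall height, together with the additive term $\widetilde{\alpha}(\beta+\widetilde{\alpha}+3)+p$ needed to absorb the local cost of ensuring properties~\eqref{P:face-width} and \eqref{P:concentric}. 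Tracking the apex-set growth through the at most $p$ rounds per vortex and the $2\eg(\widetilde{\surf})+\widetilde{\alpha}_1$ rounds that can spawn new vortices reproduces $\alpha_0 = \widetilde{\alpha}_0+p(2\eg(\widetilde{\surf})+\widetilde{\alpha}_1)$; the bound $\alpha_1 = \widetilde{\alpha}_1+\eg(\widetilde{\surf})$ records that each surface simplification can create at most one new large vortex; and $\alpha_2 = \widetilde{\alpha}_2+\widetilde{\alpha}_1+\eg(\widetilde{\surf})$ accounts for linear decompositions being extended by the boundary vertices introduced through cuts.

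The main obstacle is the careful bookkeeping of how the flat wall degrades under each operation and the verification that the termination is bounded by a function of the parameters, not of $n$, so that the total running time is genuinely polynomial. Once this is in hand, the polynomial-time claim reduces to standard facts: computing face-width, finding short nooses, detecting proper attachment, and constructing concentric cycles around a vortex can all be done in $n^{O(1)}$ time on a bounded-genus embedded graph when the remaining parameters are constant. I expect no conceptual novelty beyond [DiestelKMW12]; the work is entirely in isolating the initial segment of their argument (ending on p.~1208 of that paper, as the footnote observes), recomputing the parameter bounds for the truncated procedure, and explicitly certifying each step as polynomial.
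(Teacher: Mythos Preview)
Your proposal is correct and matches the paper's approach: the paper does not give its own proof but simply defers to the proof of Theorem~2 in \cite{DiestelKMW12}, noting (in the footnote) that one can stop once properties (P1)--(P4) are established and that the stated bounds were recomputed accordingly. Your sketch of the iterative noose-cutting and wall-degradation bookkeeping is a faithful outline of that argument, with more detail than the paper itself provides.
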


An embedding of a graph $G$ in a surface $\surf$ is {\em cellular} if every face is homeomorphic to an open disk. A graph $G$ is {\em cellularly embedded} in $\surf$ if the embedding is cellular. 
 
\begin{theorem}[{Robertson and Vitray~\cite{RV90}, \cite[Proposition 5.5.2]{MoharThom}}]\label{thm:cycle_faces}
Let $\surf$ be a surface with $\eg(\surf) > 0$ and let $G$ be a graph that is embedded in $\surf$ with face-width at least $2$. Then there is precisely one block $Q$ of $G$ that contains a noncontractible cycle. Moreover, $Q$ is cellularly embedded in $\surf$ and all its faces are bounded by cycles. Each block $Q'$ of $G$ distinct from $Q$ is a planar subgraph of $G$ contained in the closure of some face of $Q$. Finally, the face-width of $Q$ is equal to the face-width of $G$.
\end{theorem}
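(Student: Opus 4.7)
My plan is to follow the standard topological route via face-width and block decomposition. The first observation is that face-width at least $2$ already forces $G$ to be cellularly embedded in $\surf$: a face of $G$ that were not an open disk would contain a noncontractible simple closed curve lying entirely in $\surf \setminus G$, that is, a noncontractible noose meeting $G$ in $0$ vertices. Combined with $\eg(\surf) > 0$, this lets me produce a noncontractible cycle $C_0 \subseteq G$: pick any noncontractible simple closed curve $\gamma$ in $\surf$, make it $G$-normal, and replace each arc of $\gamma$ lying in a face of $G$ by an arc along that face's boundary. Since every face of $G$ is an open disk, the resulting closed walk is homotopic to $\gamma$ in $\surf$, hence noncontractible, and therefore contains a noncontractible cycle $C_0$. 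Let $Q$ be the block of $G$ containing $C_0$.

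Next I would prove that every face $F$ of $Q$ is an open disk. Suppose not. Then $F$ is a connected open subsurface of $\surf$ with non-trivial topology, and the classification of $2$-manifolds yields a simple closed curve $\gamma_F \subset F$ that is noncontractible in $F$. The delicate subcase is when $\gamma_F$ is nevertheless contractible in $\surf$; here I use that $Q$ contains the noncontractible cycle $C_0$ and perform a surgery on $\gamma_F$ along a path in $\overline{F}$ (either linking two boundary components of $F$, or running parallel to $C_0$), producing a simple closed curve in $\overline{F}$ that is noncontractible in $\surf$. The only part of $G$ this new curve can meet is $G \setminus Q$, which inside $\overline{F}$ consists of finitely many compact connected subgraphs; a generic perturbation within $F$ bypasses these subgraphs without altering the homotopy class in $\surf$, yielding a noncontractible noose meeting $G$ in $0$ vertices, a contradiction to face-width $\geq 2$. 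Hence $Q$ is cellularly embedded in $\surf$, and since $Q$ is $2$-connected as a block containing a cycle, each face boundary of $Q$ is a cycle by the standard fact on cellular embeddings of $2$-connected graphs.

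Once every face of $Q$ is a disk the remaining conclusions follow readily. Any block $Q' \ne Q$ of $G$ shares at most one vertex with $Q$, so $Q' - V(Q)$ is connected and lies in $\surf \setminus Q$, hence in a single face $F'$ of $Q$; therefore $Q' \subseteq \overline{F'}$, a closed disk, which makes $Q'$ planar and further forces every cycle of $Q'$ to bound a disk inside $\overline{F'} \subseteq \surf$ and hence to be contractible in $\surf$. This establishes uniqueness of $Q$. For the face-width equality, a noose realizing the face-width of $G$ is $G$-normal and a fortiori $Q$-normal, so the face-width of $Q$ is at most that of $G$. Conversely, given a $Q$-normal noose $\gamma$ realizing the face-width of $Q$, each arc of $\gamma$ inside a face $F$ of $Q$ lies in the closed disk $\overline{F}$; within this disk I can reroute the arc to avoid the finitely many compact pieces of $G \setminus Q$ lying in $F$ while fixing its two $Q$-vertex endpoints and preserving its homotopy class in $\surf$. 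The rerouted curve is a $G$-normal noncontractible noose meeting $G$ in exactly as many vertices as the face-width of $Q$, completing the equality.

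The main obstacle is the second step, specifically the surgery promoting a curve that is noncontractible in $F$ but possibly contractible in $\surf$ to one that is noncontractible in all of $\surf$. This is where the noncontractible cycle $C_0 \subseteq Q$ has to be brought in, and where $\eg(\surf) > 0$ plays its role through the classification of open $2$-manifolds.
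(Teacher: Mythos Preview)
The paper does not prove this theorem: it is stated with attribution to Robertson and Vitray~\cite{RV90} and to Mohar and Thomassen~\cite[Proposition~5.5.2]{MoharThom}, and is used as a black box. There is therefore no in-paper proof to compare your proposal against.

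As for the proposal itself, your overall architecture matches the standard argument: cellularity of $G$ from face-width $\geq 2$, existence of a noncontractible cycle $C_0$, the block $Q$ containing it, cellularity of $Q$, and then the easy consequences. Steps~1, 3, and the final paragraph are sound. The soft spot is your step~2, precisely the ``surgery'' sub-case you flag yourself. When $\gamma_F$ is noncontractible in $F$ but contractible in $\surf$, your description (``a path linking two boundary components of $F$, or running parallel to $C_0$'') is not a proof: you have not said why $C_0$ should lie on $\bd(F)$ or even near $F$, nor why an arc between two boundary components of $F$ yields a curve noncontractible in $\surf$ rather than merely in $F$. The clean way through is to argue directly that a non-disk face $F$ of $Q$ forces a noose of $G$ meeting $G$ in at most one vertex: since $Q$ is $2$-connected, each component of $\bd(F)$ is a cycle of $Q$; if $F$ is not a disk then either $\hat F$ has positive genus (giving a curve in $F$ that is noncontractible in $\surf$), or $F$ has at least two boundary cycles, in which case a curve in $F$ parallel to one boundary cycle is homotopic in $\surf$ to that cycle of $Q$, hence noncontractible in $\surf$ (a contractible simple cycle in a $2$-connected embedded graph bounds a disk face on one side, contradicting that $F$ lies on that side and is not a disk). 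Either way you obtain a noose through at most the vertices of $G \setminus Q$ in $\overline{F}$, and your perturbation argument (each such piece hangs from a single cutvertex on $\bd(F)$ and is therefore contractible in $\overline{F}$) then pushes the noose off $G$ entirely. Filling in this case analysis would make your argument complete.
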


A corollary of this result is that if $G$ is a $2$-connected graph that is embedded in $\surf$ with face-width at least $2$, then all the faces of $G$ are bounded by cycles.

A cycle in a graph embedded in a surface is {\em $1$-sided} if the curve corresponding to the cycle has a neighborhood homeomorphic to a M\"{o}bius strip. A cycle is {\em $2$-sided} if it is not $1$-sided. 
While the odd cycle transversal number of a graph cannot be bounded from above by a function of its odd cycle packing number (as evidenced by Escher walls), such a bound exists if we restrict ourselves to odd cycles that are $2$-sided in a graph embedded in a fixed surface, as proved by Conforti, Fiorini, Huynh, Joret, and Weltge~\cite{ocpgenus}. 
This fact will be used in our refined structure theorem. 

\begin{theorem}[{\cite{ocpgenus}}]
\label{thm:EP_2sided}
There exists a computable function $f':\mathbb{Z}_{\geq 0} \times \mathbb{Z}_{\geq 0} \to \N$ such that the following holds. 
Let $G$ be a cellularly embedded graph in a surface $\surf$ of Euler genus $g$ such that $G$ has no $k+1$ vertex-disjoint $2$-sided odd cycles. 
Then there exists a subset $X$ of vertices of $G$ with $|X|\leq f'(g, k)$ such that $X$ meets all $2$-sided odd cycles of $G$.
\end{theorem}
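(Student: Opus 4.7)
The plan is to prove the theorem by induction on the Euler genus $g$ of $\surf$, with the bound $f'(g,k)$ determined recursively. For the base case $g=0$, the surface is the sphere and $G$ is planar, so every cycle of $G$ is $2$-sided and the statement reduces to the classical \EP{} property for odd cycles in planar graphs, which gives a polynomial bound in $k$ and yields the value of $f'(0,k)$.

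For the inductive step with $g\geq 1$, I would split on the face-width $\omega$ of the embedding. Fix a threshold $\beta=\beta(g,k)$ to be chosen large enough for the arguments below. If $\omega \leq \beta$, there is a noose $C$ meeting a vertex set $X_0 \subseteq V(G)$ with $|X_0|\leq \beta$. Cutting $\surf$ along $C$ yields either a single surface of strictly smaller Euler genus (drop by $1$ if $C$ is $1$-sided and by $2$ if $C$ is $2$-sided non-separating) or a disjoint union of two surfaces of smaller Euler genus (when $C$ is $2$-sided separating). In each case, $G-X_0$ embeds cellularly in the resulting surface(s), and one verifies that every $2$-sided odd cycle of $G-X_0$ in the new embedding remains a $2$-sided odd cycle of $G$ in $\surf$ (since $2$-sidedness is a local property of the cycle's normal neighborhood, which is unaffected by cutting away from it). Applying the inductive hypothesis to each piece and taking the union with $X_0$ then produces the desired transversal.

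If on the other hand $\omega > \beta$, the plan is to reach a contradiction by exhibiting $k+1$ pairwise vertex-disjoint $2$-sided odd cycles. Large face-width guarantees, around any shortest non-contractible noose, a long sequence of concentric $2$-sided cycles (via Robertson-Seymour style peeling). Within each cylindrical annulus bounded by two consecutive concentric cycles, either the annulus already contains a short $2$-sided odd cycle or, by a parity-shift argument along the cylinder, one combines an odd arc with subwalks of the concentric cycles into a $2$-sided odd cycle confined to that annulus. Repeating this across many nested annuli produces more than $k$ disjoint $2$-sided odd cycles.

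The main obstacle is the large face-width case. The $2$-sidedness hypothesis is essential: for $1$-sided cycles (as in Escher walls), analogous packing attempts fail because non-orientability of the surface creates global parity obstructions that prevent packing even when $\omega$ is large. The argument must therefore exploit that annular regions are orientable so that the parity of closed walks behaves locally as in the plane; formally, this is the interaction between the $\Z/2$-homology class of a cycle in $\surf$ and its length parity. Calibrating $\beta(g,k)$ so that the parity-shift argument delivers $k+1$ disjoint cycles is the delicate quantitative heart of the proof and determines the growth of $f'(g,k)$.
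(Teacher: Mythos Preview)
This theorem is not proved in the present paper; it is quoted from \cite{ocpgenus} and used as a black box, so there is no in-paper proof to compare against. Turning to your proposal: the small face-width branch (cut along a short noose, drop genus, recurse) is standard and essentially sound, modulo the minor point that $G-X_0$ need not embed \emph{cellularly} in the cut surface, which is easily patched.

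The real gap is the large face-width branch. You want high face-width to force $k+1$ pairwise disjoint $2$-sided odd cycles, by peeling off many concentric annuli around a shortest noose and finding a $2$-sided odd cycle in each. But an annulus between two consecutive concentric cycles is a planar region that may well be bipartite; in that case it contains no odd cycle at all, and there is no ``odd arc'' to feed your parity-shift step. Notice that your high face-width argument never invokes the existence of even one $2$-sided odd cycle in $G$ (without which the conclusion is vacuous with $X=\varnothing$): you are attempting to manufacture odd cycles from purely topological data, which cannot work. The missing idea is to anchor the argument on a specific $2$-sided odd cycle $C_0$ and use high face-width to route many pairwise disjoint cycles homotopic to $C_0$ (hence automatically $2$-sided); the substantive work is then controlling the \emph{parity} of these copies, and that is what the proof in \cite{ocpgenus} actually addresses.
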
 

We also need the following result due to Geelen \emph{et al.}~\cite{GGRSV09}, and known as the {\em odd $S$-paths theorem}.

\begin{theorem}[{\cite[Lemma 11]{GGRSV09}}]\label{thm:oddSpaths}
Let $H$ be a graph and let $h \in \Z_{\geq 1}$. For every set $S \subseteq V(H)$, either
\begin{enumerate}[(i)]
\item there are $h$ vertex-disjoint paths, each of which has an odd number of edges and both its endpoints in $S$, or
\item there is a set $X$ such that $|X|\le 2h-2$ and $H-X$ contains no such path. 
\end{enumerate}
\end{theorem}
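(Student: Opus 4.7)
The plan is to prove the theorem by reducing it to a matroid matching (equivalently, matroid parity) problem, following the strategy of Geelen, Gerards, Reed, Seymour, and Vetta~\cite{GGRSV09}. The key observation is that the parity of path lengths is naturally tracked by the \emph{bipartite double cover} $\widetilde{H}$ of $H$: set $V(\widetilde{H}) := V(H)\times \{+,-\}$ and, for every edge $uv \in E(H)$, add the two edges $\{(u,+),(v,-)\}$ and $\{(u,-),(v,+)\}$. Walks of odd length from $u$ to $v$ in $H$ are in bijection with walks from $(u,+)$ to $(v,-)$ in $\widetilde{H}$, so an odd $S$-path in $H$ corresponds to a path in $\widetilde{H}$ between $(u,+)$ and $(v,-)$ with distinct $u,v \in S$ that uses at most one copy of each vertex of $V(H)\setminus S$.

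The second step is to encode the packing problem as a matroid parity instance on $\widetilde{H}$: pair up $(v,+)$ and $(v,-)$ for each $v \in V(H)\setminus S$ so that a feasible solution uses both copies of any non-$S$ vertex at most once, and introduce a small terminal gadget at each vertex of $S$ forcing the two endpoints of each chosen path to project to distinct vertices of $S$. A matroid parity solution of value $h$ in this instance is then equivalent to $h$ vertex-disjoint odd $S$-paths in $H$. Applying Lov\'asz's min-max theorem for linear matroid matching yields either such a packing, which is outcome (i), or a dual ``blocker'' that projects to a vertex set $X\subseteq V(H)$ meeting every odd $S$-path: since $v\in V(H)$ corresponds to the pair $\{(v,+),(v,-)\}$ in $\widetilde{H}$, a blocker in $\widetilde{H}$ of size $b$ yields a cover in $H$ of size at most $b$.

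The main obstacle is obtaining the sharp bound $|X|\leq 2h-2$; generic matroid matching duality only gives $O(h)$. The bound is tightened by an augmenting-path argument in $\widetilde{H}$ modelled on Edmonds' blossom technique but adapted to the parity constraint: starting from a maximum packing of $h'<h$ odd $S$-paths, one analyzes the residual graph to extract a cover, showing that each of the $h'$ chosen paths contributes at most two vertices to a canonical cover. The outcome is a packing of size $h'$ together with a cover of size at most $2h'\leq 2h-2$, proving outcome (ii) whenever outcome (i) fails. The technical core is handling ``odd blossoms'' induced by the parity lifting (a naive alternating-path argument fails because the double cover is bipartite but the original parity structure is not), and invoking linear-algebraic rather than purely combinatorial properties of matroid matching to certify the augmenting structures.
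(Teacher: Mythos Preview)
The paper does not prove this theorem; it is quoted from~\cite{GGRSV09} and used as a black box (in the proof of Theorem~\ref{thm:bipartite_enlarged}). There is no proof in the paper to compare your proposal against.

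Regarding the proposal itself: passing to the bipartite double cover is the right first move, but what you have written is an outline with a real gap rather than a proof. You correctly flag the crux---obtaining the sharp bound $|X|\le 2h-2$---and then defer it to an unspecified ``augmenting-path argument \ldots\ modelled on Edmonds' blossom technique,'' asserting that ``each of the $h'$ chosen paths contributes at most two vertices to a canonical cover.'' That assertion \emph{is} the theorem; you have restated what needs to be shown, not argued it. Moreover, Lov\'asz's min--max formula for linear matroid matching does not hand you a vertex blocker: its dual side is a partition/rank expression, and turning it into a hitting set of the claimed size is additional work you do not supply. In fact the argument in~\cite{GGRSV09} avoids matroid matching altogether: after lifting to the double cover and adding the edges $\{(v,+),(v,-)\}$ for $v\in S$, the auxiliary graph is still bipartite, and the packing/covering relation (with the factor~$2$ coming from the two copies of each original vertex) follows from K\H{o}nig's theorem. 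So your proposal both leaves the decisive step unproved and reaches for heavier machinery than is needed.
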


Notice that in case (i) we can assume that each path is actually an $S$-path, that is, none of its internal vertices is in $S$. This can be achieved by shortening the paths if necessary.

\subsection{Resilience and gadgets}
\label{secknear}

First, we introduce some extra definitions. 

\begin{definition}[$\rho$-resilience and rich component]
Given $\rho \in \Z_{\ge 0}$, a graph $G$ is said to be {\em $\rho$-resilient} if, for all sets $X \subseteq V(G)$ with $|X| \leq \rho$, there is a component $H$ of $G - X$ such that $\ocp(H) = \ocp(G)$. We call $H$ a {\em rich component} of $G-X$. 
\end{definition}
Notice that, in the above definition, the rich component $H$ of $G-X$ is unique when $\ocp(G)>0$, and that $G - V(H)$ is bipartite. 
Notice also that if $\rho' \geq \rho$, $\rho'$-resilient graphs are automatically $\rho$-resilient.

It is not hard to see that, in order to solve the maximum weight stable set problem on graphs $G$ with $\ocp(G) \leq k$, one can reduce to $\rho$-resilient instances where $\rho = \rho(k)$ can be chosen arbitrarily. The idea is that if one can find a set $X \subseteq V(G)$ with $|X| \leqslant \rho$ such that each component $H$ of $G - X$ satisfies $\ocp(H) < \ocp(G)$, then after guessing which vertices of $X$ to include in the stable set, one can reduce to the maximum weight stable set problem on graphs with odd cycle packing number at most $k-1$. This is discussed in more detail at the beginning of Section~\ref{sec:preprocessing}.

Given two vertices $x,y$ of a bipartite graph $H$, we let $p_H(x,y)$ denote the parity of all paths between $x,y\in V(H)$ (if there is any).

Let $(\sigma,G_0,A,\mathcal{V},\mathcal{W})$ be an $(\alpha_0,\alpha_1,\alpha_2)$-near embedding of a graph $G$ in a surface $\surf$, and suppose further that every vortex $V\in \mathcal{W}$ is bipartite. 
(Essentially, this property will be ensured by assuming that the near embedding is $(\beta,r)$-good and that $G$ is $\rho$-resilient for a sufficiently large $\rho \in \Z_{\ge 0}$, as we will explain later.)  
When solving a maximum weight stable set problem on $G$, we may then replace each vortex $V\in \mathcal{W}$ by a small gadget that is embedded in the surface $\surf$, as we explain below.  
This gadget-replacement operation is seen as a modification of the graph $G_0$, and the resulting graph will be denoted $G^+_0$ (not to be confused with the graph $G'_0$). 

Let us describe the gadget-replacement operation. 
First, we make sure that the graphs in $\mathcal{W}$ are connected: 
If a vortex $V \in \mathcal{W}$ is not connected, we simply replace $V$ with a vortex for each component of $V$, with the appropriate subset of $\Omega(V)$ as boundary. 
Now, start with $G^+_0 := G_0$, and for each vortex $V \in \mathcal{W}$ in turn, modify $G^+_0$ as follows, see also Fig.~\ref{fig83dis9} (on page \pageref{fig83dis9}).

\begin{enumerate}[(W1)]
    \item If $|\Omega(V)| \le 1$, we do not modify $G^+_0$. 
    \item If $|\Omega(V)| = 2$, then let $\Omega(V)=\{v_1,v_2\}$. If $p_{V}(v_1,v_2)$ is even, then we add a path of length $2$ between $v_1$ and $v_2$ in $G^+_0$, that is, we add a new vertex $x$ and add the edges $v_1x$ and $v_2x$. 
    If $p_{V}(v_1,v_2)$ is odd then we add a path of length $3$ between $v_1$ and $v_2$ in $G^+_0$, that is, we add two new vertices $x,y$ and add the edges $v_1x,xy,yv_2$. Moreover, if $V$ contains the edge $v_1v_2$, then $v_1v_2$ is also added to $G^+_0$.
    \item If $|\Omega(V)| = 3$, then let $\Omega(V)=\{v_1,v_2,v_3\}$. Note that there are only two options for the parities of the paths between $v_1,v_2,v_3$. Either (a) $p_{V}(v_1,v_2)$, $p_{V}(v_1,v_3)$ and $p_{V}(v_2,v_3)$ are all even or (b) one of them is even, say without loss of generality $p_{V}(v_1,v_2)$, and the other two are odd. In case (a), we add a star with the leaves $v_1,v_2,v_3$ where each edge is subdivided once, more precisely, we add four new vertices $a_1,a_2,a_3,x$ and add the edges $v_1a_1,v_2a_2,v_3a_3,a_1x,a_2x,a_3x$. In case (b), we add a star with the leaves $v_1,v_2,v_3$ where two edges are subdivided once and the last is subdivided twice, more precisely, we add five new vertices $a_1,a_2,a_3,a_3',x$ and add the edges $v_1a_1,v_2a_2,v_3a_3,a_3a_3',a_1x,a_2x,a_3'x$. Again, if any two vertices in $\Omega(V)$ are adjacent in $V$, then the corresponding edge is also added to $G^+_0$.
\end{enumerate}

We refer to the vertices and edges we add in the above process as the set of \emph{virtual vertices} and \emph{virtual edges} of $G^+_0$, respectively. 
Let us point out the following easy observation. 

\begin{observation}\label{obs:sameocp}
$\ocp(G^+_0) \leq \ocp(G-A)$.
\end{observation}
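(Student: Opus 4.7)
I will take a maximum family $\mathcal{C}$ of vertex-disjoint odd cycles in $G^+_0$, so that $|\mathcal{C}| = \ocp(G^+_0)$, and produce an equally large family of pairwise vertex-disjoint odd closed walks in $G-A$. Since every odd closed walk contains an odd cycle as a subgraph, extracting one cycle from each walk yields $\ocp(G-A) \geq \ocp(G^+_0)$.

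The key structural fact---and the main point requiring verification---is that each gadget has a ``bottleneck'' through which every internal traversal must pass: the center vertex $x$ in the $|\Omega(V)| = 3$ cases and in the $|\Omega(V)| = 2$ even case, and the entire central virtual path (namely $v_1xv_2$ or $v_1xyv_2$, depending on parity) in the $|\Omega(V)| = 2$ case. A short case-by-case inspection of constructions (W1)--(W3), based on the observation that every virtual vertex has all its neighbors inside the gadget and every gadget is either a tree or a tree plus a single parallel virtual edge $v_1v_2$ of length one, suffices. Consequently, any two distinct cycles of $\mathcal{C}$ that both meet virtual vertices of the same gadget would share the bottleneck, contradicting vertex-disjointness. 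Hence each gadget is used by at most one cycle of $\mathcal{C}$, and whenever used, the intersection of that cycle with the gadget is a single $v_a$-$v_b$ path with $v_a, v_b \in \Omega(V)$ whose length parity, by the very design of (W1)--(W3), equals $p_V(v_a, v_b)$.

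For each $C \in \mathcal{C}$ I then define a closed walk $W_C$ in $G-A$ by retaining every edge of $C$ that already lies in $G_0$ or is a virtual edge of the form $v_av_b$ (the latter being an actual edge of $V \subseteq G-A$), and by replacing each gadget-internal $v_a$-$v_b$ subpath of $C$ with an arbitrary $v_a$-$v_b$ walk in $V$ of parity $p_V(v_a, v_b)$; such a walk exists because each $V \in \mathcal{W}$ is (made) connected and bipartite. Each substitution preserves parity, so $W_C$ is odd. To see that the family $\{W_C\}_{C \in \mathcal{C}}$ is pairwise vertex-disjoint, note that the $G_0$-vertices of $W_C$ are exactly the $G_0$-vertices of $C$ (disjoint across $\mathcal{C}$ by assumption), while the new internal-vortex vertices introduced by the substitutions lie in pairwise distinct vortices (one per cycle, by the bottleneck argument), whose internal vertex sets are disjoint by property~(i) of the definition of a near embedding. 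Extracting one odd cycle from each $W_C$ then gives $|\mathcal{C}|$ vertex-disjoint odd cycles in $G-A$, as desired.
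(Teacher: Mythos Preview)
Your disjointness argument has a gap. You claim that ``the $G_0$-vertices of $W_C$ are exactly the $G_0$-vertices of $C$'', but this need not hold: when $|\Omega(V)|=3$ and the gadget-internal subpath of $C$ runs from $v_a$ to $v_b$ without touching the third boundary vertex $v_c$, the replacement walk you pick in $V$ may be forced through $v_c \in \Omega(V) \subseteq V(G_0)$. Nothing in your bottleneck argument prevents a second cycle $C' \in \mathcal{C}$ from passing through $v_c$ via $G_0$-edges only (such a $C'$ touches no virtual vertex of this gadget), and then $W_C$ and $W_{C'}$ share $v_c$. For a concrete obstruction, take $V$ to be the path with edge set $\{v_1v_3,\,v_3v_2\}$ and $\Omega(V)=\{v_1,v_2,v_3\}$: every $v_1$--$v_2$ walk in $V$ passes through $v_3$, yet the (W3)(b) gadget supplies a length-$4$ virtual $v_1$--$v_2$ path avoiding $v_3$, so in $G_0^+$ one can pack an odd cycle through that virtual path together with a disjoint odd cycle of $G_0$ through $v_3$.

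The fix is to invoke the \emph{properly attached} hypothesis on vortices in $\mathcal{W}$ (which the paper imposes as part of a $(\beta,r)$-good near embedding): for any two vertices of $\Omega(V)$ there is a path in $V$ with no internal vertex in $\Omega(V)$, and since $V$ is bipartite this path automatically has parity $p_V(v_a,v_b)$. Using such a path as your replacement ensures that the only $G_0$-vertices appearing on $W_C$ are already on $C$, after which your argument goes through. Without proper attachment the inequality can actually fail, so this extra hypothesis is not cosmetic.
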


\subsection{Bipartizing enlarged vortices}

\begin{figure}[h!]
\centering
% l b r t
\includegraphics[width=6cm]{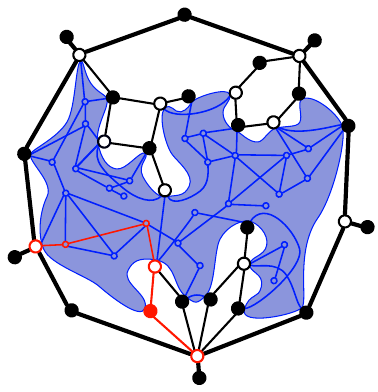}
\caption{The graph $H_2$ from the proof of Theorem~\ref{thm:bipartite_enlarged}. The disk $\Delta(V)$ of vortex $V$ is the shaded blue area. One parity-breaking path is highlighted in red.}
\label{fig:cacti}
\end{figure}

Later, we will assume that the vortices $V \in \mathcal{V}$ are bipartite, even when augmented with the boundary of the face that contains the corresponding disk $\Delta(V)$. The next result explains how that can be achieved, and is used later within the proof of Theorem~\ref{thm:structure_bounded_OCP_refined}. 
Before stating the result, let us introduce some standard terminology regarding walks. 
A \emph{walk} in a graph $G$ is a sequence $W = (v_0, e_1, v_1, \ldots, e_t, v_t)$ of vertices $v_i$ and edges $e_i$ such that for all $i \in [t]$, the edge $e_i$ links $v_{i-1}$ to $v_i$.
Let $W = (v_0, e_1, v_1, \ldots, e_t, v_t)$ be a walk.  The \emph{length} of $W$ is $t$, and $W$ is \emph{odd} or \emph{even} according to the parity of its length.  The walk $W$ is \emph{closed} if $v_0 = v_t$.  
Let $W$ be a closed walk. Seeing $W$ as an Eulerian multigraph, we can partition the edges of $W$ as $C_1 \cup \dots \cup C_\ell$, where each $C_i$ is a cycle.  We say that $W$ is \emph{$1$-sided} if the number of $1$-sided cycles among $C_1, \dots , C_\ell$ is odd. Otherwise, $W$ is \emph{$2$-sided}.

\begin{theorem}\label{thm:bipartite_enlarged}
Let $k \in \Z_{\ge 0}$ and let $G$ be a graph with $\ocp(G) \leq k$. Assume that $G$ has an $(\alpha_0,\alpha_1,\alpha_2)$-near embedding $(\sigma,G_0,A,\mathcal{V},\mathcal{W})$ such that all vortices in $\mathcal{W}$ are bipartite. Assume that $G^+_0$ has no $2$-sided odd closed walk. Let $V \in \mathcal{V}$ be a fixed vortex. Let $C_1, C_2$ be concentric cycles in $G^+_0$ bounding closed disks $D_1, D_2$ such that $D_1 \supseteq D_2 \supseteq \Delta(V)$, and $D_1$ is disjoint from $D(V')$ for all vortices $V' \in \mathcal{V} \setminus \{V\}$. %We assume that $C_2$ is the cycle bounding the face of the non-trivial block of $G^+_0$ containing $\Delta(V)$. 
For $i \in [2]$, let $H_i$ denote the graph obtained by taking the union of $G_0^+ \cap D_i$ and $V$. Let $q := \max \{k+1,2\alpha_2+5\}$. Assume that $G$ is $(2q^4)$-resilient, and that there is no set $Z \subseteq V(G)$ with $|Z| \leq 2q^4$ such that the rich component of $G - Z$ is entirely contained in $H_2$. Then there is a vertex subset $X \subseteq V(H_2)$ of size $|X| < 2q^4$ such that $H_1 - X$ is bipartite.
\end{theorem}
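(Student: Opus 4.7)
The plan is to reduce the question to bipartizing an augmented version of the vortex $V$ and then invoke the odd $S$-paths theorem (Theorem~\ref{thm:oddSpaths}) together with the resilience hypotheses.

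First, the subgraph $G_1 := G^+_0 \cap D_1$ is bipartite: every cycle of $G_1$ lies inside the closed disc $D_1 \subseteq \surf$, so it is contractible and hence $2$-sided, and by hypothesis $G^+_0$ has no $2$-sided odd closed walk. Fix a bipartition $\phi\colon V(G_1)\to\{0,1\}$ (arbitrary on each component). Since every odd cycle of $H_1$ must meet $V$, it suffices to find $X \subseteq V(V) \subseteq V(H_2)$ of size less than $2q^4$ with $H_1 - X$ bipartite. Build an augmented graph $V^\dag$ from $V$ by adding two new vertices $b_0, b_1$, the edge $b_0 b_1$, and, for each $u \in \Omega(V)$ with $\phi(u) = i$, an edge from $u$ to $b_{1-i}$. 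A direct parity check shows that for every $X \subseteq V(V)$, $H_1 - X$ is bipartite if and only if $V^\dag - X$ is bipartite: the parity of a $G_1$-arc between $u, v \in \Omega(V)$ equals $\phi(u) + \phi(v) \bmod 2$, which coincides with the parity of the replacement walk in $V^\dag$ through $b_{1-\phi(u)}$ and $b_{1-\phi(v)}$ (of length $2$ or $3$). Moreover, $V^\dag$ inherits from $V$ a linear decomposition of adhesion at most $\alpha_2 + 2$ (placing $b_0, b_1$ in every bag), and $\ocp(V^\dag) \leq \ocp(V) + 2 \leq k + 2$.

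Next, apply Theorem~\ref{thm:oddSpaths} to $V^\dag$ with $S := \Omega(V) \cup \{b_0, b_1\}$ and parameter $h := q^2$. If the theorem returns a transversal $X_1 \subseteq V(V)$ of size at most $2h-2$, then $V^\dag - X_1$ has no odd $S$-path, so every odd cycle of $V^\dag - X_1$ lies in $V(V) \setminus \Omega(V)$. To destroy these remaining odd cycles we exploit the linear decomposition of bounded adhesion: in each connected component of $V - \Omega(V) - X_1$, we iteratively re-apply Theorem~\ref{thm:oddSpaths} with $S$ chosen from the at most $\alpha_2$ vertices of a cut in the induced linear decomposition, using the fact that any odd cycle must cross such a cut. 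Since $\ocp(V^\dag) \leq k + 2$, the iteration terminates after at most $O(q^2)$ steps, and the choice $q \geq 2\alpha_2 + 5$ guarantees that the total transversal $X \subseteq V(V)$ has size less than $2q^4$ with $V^\dag - X$ bipartite.

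Otherwise, some application of Theorem~\ref{thm:oddSpaths} returns $h = q^2$ vertex-disjoint odd $S$-paths in $V^\dag$. Tracing through the $b_i$-vertices and replacing each $b_i$-segment by a path in the bipartite planar annulus $G_1 \cap (D_1 \setminus \inter(\Delta(V)))$, we obtain $q^2$ odd closed walks in $H_1 \subseteq G$ whose interiors lie in $V(V)$. A topological routing argument in this annulus, using the cyclic order of $\Omega(V)$ along $\bd(\Delta(V))$ and pigeonholing on the $\phi$-classes of endpoints, extracts $q$ pairwise vertex-disjoint odd cycles of $H_1$. Invoking the $(2q^4)$-resilience of $G$ together with the assumption that no $Z$ of size at most $2q^4$ confines the rich component of $G - Z$ to $H_2$, we pick $Z$ of size at most $2q^4$ hitting our cycles appropriately; the rich component of $G - Z$ then meets $V(G) \setminus V(H_2)$ and contributes $k$ further vertex-disjoint odd cycles disjoint from $V(H_2)$. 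Combined with one of our extracted cycles inside $H_2$, this produces $k + 1$ pairwise vertex-disjoint odd cycles of $G$, contradicting $\ocp(G) \leq k$.

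The main obstacle is the topological routing in the bipartite annulus $G_1 \cap (D_1 \setminus \inter(\Delta(V)))$, which must turn $q^2$ odd $S$-paths inside $V$ into $q$ pairwise vertex-disjoint odd cycles of $H_1$: this combines a Menger-style argument constrained by the cyclic order on $\Omega(V)$ with a pigeonhole over the constantly many $\phi$-class types, and the parameter $q \geq 2\alpha_2 + 5$ is tuned to absorb the multiplicative losses. A secondary difficulty is the iterated application of Theorem~\ref{thm:oddSpaths} used to kill inner odd cycles along the linear decomposition, where the bounded adhesion $\alpha_2$ is what keeps the overall blow-up within $q^4$.
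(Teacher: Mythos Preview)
Your overall plan---apply the odd $S$-paths theorem and split into a ``many paths'' branch versus a ``small transversal'' branch---matches the paper, but the execution has real gaps in both branches, and these stem from your choice of $S$.

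\textbf{Choice of $S$ and the transversal branch.} You take $S = \Omega(V)\cup\{b_0,b_1\}$ inside an auxiliary graph $V^\dag$. Even granting the (one-sided) implication that $V^\dag - X$ bipartite $\Rightarrow$ $H_1 - X$ bipartite, hitting all odd $S$-paths leaves untouched every odd cycle living entirely inside $V\setminus\Omega(V)$. Your proposed fix---``iteratively re-apply Theorem~\ref{thm:oddSpaths} along cuts of the linear decomposition''---is not a proof: you do not say which cut to choose, why an odd cycle must cross it, or why the process terminates within the budget. Bounded adhesion plus $\ocp\le k$ does \emph{not} by itself give a bounded odd cycle transversal of the vortex. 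The paper sidesteps this completely by taking $S$ on the cycle $C_2$ (with a parity gadget): then, in the transversal branch, a single application of Menger between $C_2$ and any surviving odd cycle $C_3$, combined with the resilience and ``non-confinement to $H_2$'' hypotheses, directly shows $H_1-X$ is bipartite. You never use $C_2$, which is precisely what makes the transversal branch work.

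\textbf{The many-paths branch.} Here your ``topological routing argument in the annulus'' is where the actual content lies, and you do not supply it. The paper's argument is specific: the $S$-paths are $V(C_2)$-paths (``parity-breaking paths''), and one applies Dilworth's theorem twice, first to the containment order of their arcs on $C_2$, then to a left-to-right order on the resulting antichain. A long chain in either order yields a large \emph{transaction}, which is shown to be impossible using a cutset of size $\le 4\alpha_2+2$ built from the linear decomposition together with planarity of the region between $C_2$ and $\Delta(V)$. The remaining case gives $q\ge k+1$ pairwise vertex-disjoint odd cycles in $H_2$, which directly contradicts $\ocp(G)\le k$. Your final paragraph instead tries to invoke resilience to manufacture $k$ more odd cycles ``outside $H_2$''; this is both unnecessary (you already have $k+1$) and unjustified (the rich component need not contain $k$ odd cycles disjoint from $H_2$). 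Also note that your biconditional ``$H_1-X$ bipartite iff $V^\dag-X$ bipartite'' is false when $G_0^+\cap D_1$ is disconnected, since $\phi$ is chosen independently on each component; only the direction you actually need holds.
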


\begin{proof}
Firstly note that as we assume that $G_0^+$ has no $2$-sided odd closed walks, $G_0^+ \cap D_1$ is a planar graph with all faces even, hence a bipartite planar graph. Therefore, there is a partition of $V(G_0^+ \cap D_1)$ into two disjoint stable sets $B$ and $W$. Observe that every odd cycle contained in $H_1$ must contain an internal vertex from $V$. 

To simplify the presentation of the proof, we perform a small modification to our graphs. For every vertex $v \in W \cap V(C_2)$, we add a unique pendant  vertex $v'$. %and extend the bipartition by adding all the new pendant vertices to $B$.  
Let $N$ be the collection of the new pendant vertices we added in the modification. 

We apply the odd $S$-path theorem, Theorem~\ref{thm:oddSpaths}, to $H_2$ and $S := (B \cap V(C_2)) \cup N$, with $h := q^4$.\medskip

\noindent {\em Case (i).} There is a packing $\mathcal{P}$ of $h$ vertex-disjoint odd $S$-paths in $H_2$. We trim each path in $\mathcal{P}$ by removing every end belonging to $N$. Now $\mathcal{P}$ is a packing of  $V(C_2)$-paths which we call {\em parity-breaking paths}. 

Pick any edge $e_0$ of $C_2$. For a parity-breaking path $P \in \mathcal{P}$, we call the unique path linking the ends of $P$ and contained in $C_2 - e_0$ the {\em arc} of $P$. Notice that taking the union of $P$ and its arc gives an odd cycle contained in $H_2$, which we call the {\em special cycle} of $P$ and denote by $C(P)$. We say that $C(P)$ {\em extends} the parity-breaking path $P$. Since $C(P)$ contains an internal vertex of $V$ for each $P \in \mathcal{P}$, we conclude that each path $P \in \mathcal{P}$ contains an internal vertex of $V$. 

We call a {\em transaction} any subset $\mathcal{Q} \subseteq \mathcal{P}$ consisting only of $L$--$R$ paths, where $L$ and $R$ are disjoint sets of consecutive vertices of $C_2$. We claim that the number of paths in any transaction $\mathcal{Q} = \{Q_1,\ldots,Q_s\} \subseteq \mathcal{P}$ is at most $4 \alpha_2 + 2$. For $i \in [s]$, let $\ell_i \in L$ be the left endpoint of $Q_i$, $r_i \in R$ the right endpoint of $Q_i$ and $m_i \in \Omega(V)$ denote the first vertex of $Q_i$ that is in vortex $V$. 

Let $(X_1,\ldots,X_n)$ be a linear decomposition of $V$ of adhesion at most $\alpha_2$. By renumbering the paths if necessary, we may assume that $m_i$ appears before $m_{i+1}$ in $\Omega(V) := (u_1,\ldots,u_n)$, for each $i < s$. Let $j(i)$ denote the index of $m_i$ in $\Omega(V)$, for each $i \in [s]$. Hence $j(1) < \ldots < j(s)$. 

Now, setting $X_0 := X_{n+1} := \emptyset$, consider the set
$$
Y := (X_{j(1)-1}\cap X_{j(1)})\cup (X_{j(1)}\cap X_{j(1)+1}) \cup (X_{j(s)-1}\cap X_{j(s)})\cup (X_{j(s)}\cap X_{j(s)+1})\cup \{u_{j(1)},u_{j(s)}\}\,.
$$ 
This set is of size at most $4 \alpha_2 + 2$. Let $K$ denote the union of all components of $V - Y$ whose interval is contained in $\{j(1)+1,\ldots,j(s)-1\}$. (By the {\em interval} of component $K$, we mean the union over all vertices $v\in V(K)$ of the sets $\{j \in [n]: v\in X_j\}$, which is an interval in $[n]$ since $K$ is connected.)

Consider any $V(C_2)$-path $Q \subseteq H_2$ starting at one of the vertices $\ell_2, \ldots, \ell_{s-1}$ and vertex-disjoint from $Q_1$, $Q_s$ and $Y$. Let us follow the path starting from its left endpoint $\ell_i$, $1 < i < s$. Until it enters the vortex $V$, the path is confined within the planar region $\mathcal{R}$ delimited by $\ell_1 Q_1 m_1$, $\ell_s Q_s m_s$, $C_2$ and the boundary of $\Delta(V)$. When the path $Q$ enters $V$, it is confined within $K$. When the path $Q$ leaves $V$, it goes back to the planar region $\mathcal{R}$. And so on. See Fig.~\ref{fig:parity_breaking} for an illustration. Hence, $Q$ will never reach a vertex in $R$. 

This implies that $Y$ hits all the paths in transaction $\mathcal{Q}$.  Since $\mathcal{Q}$ is a packing, we see that $|\mathcal{Q}| \leq |Y| \leq 2 \alpha _2 + 4$, which proves our claim.

\begin{figure}[h!]
\centering
% l b r t
\includegraphics[width=7cm,clip=true,trim=2.25cm 4.75cm 11.5cm 6cm]{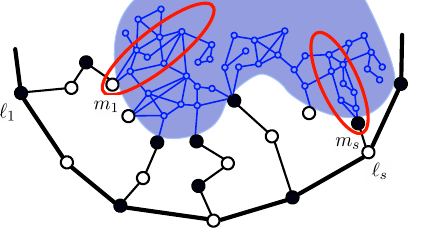}
\caption{Illustration of the proof of Theorem~\ref{thm:bipartite_enlarged}. The figure shows the initial segments $\ell_i Q_i m_i$, $i \in [s]$ of a transaction $\mathcal{Q} := \{Q_1,\ldots,Q_s\}$ and the cutset $Y$ (in red). }
\label{fig:parity_breaking}
\end{figure}

Next, break $C_2$ into a path by removing the edge $e_0$ that was selected above. Choosing a direction of traversal of this path, we obtain a linear ordering on the vertices of $C_2$. For each parity-breaking path $P \in \mathcal{P}$, let $s(P)$ and $t(P)$ be the endpoints of $P$ where $s(P)$ appears before $t(P)$ with respect to their order on $C_2$. We write $s(P) < t(P)$ to indicate this. Let $\sqsubseteq$ be a relation on the paths in $\mathcal{P}$ with $P \sqsubseteq P'$ if and only if $P = P'$ or $s(P')< s(P)< t(P)< t(P')$ (equivalently, the arc for $P$ is contained in the arc for $P'$). It is easy to see that $\sqsubseteq$ is a partial order on the paths in $\mathcal{P}$. 

By Dilworth's theorem there must be either a chain or an antichain of size $\sqrt{h} = q^2$ in the poset $(\mathcal{P},\sqsubseteq)$. Since every chain defines a transaction and $\sqrt{h} > q > 2 \alpha_2 + 4$, our claim implies that there is an antichain of size $\sqrt{h}$ in $(\mathcal{P},\sqsubseteq)$. Let $\mathcal{P}' \subseteq \mathcal{P}$ be such an antichain.

We define a new relation $\preceq$ on $\mathcal{P}'$, with $P \preceq P'$ if and only if $P = P'$ or $s(P) < t(P) < s(P') < t(P')$ (equivalently, the arc of $P$ is entirely to the left of that for $P'$). It is easy to see that $\preceq$ is again a partial order\footnote{In fact, $\preceq$ is a \emph{unit interval order}.}, this time on the paths in $\mathcal{P}'$. By Dilworth's theorem there is either a chain or antichain of size $\sqrt[4]{h} = q$ in $(\mathcal{P}',\preceq)$. If there is an antichain in $(\mathcal{P}',\preceq)$ of size $q$ then we get a transaction of size $q$. We then reach a contradiction on the number of paths in $\mathcal{P}'$, since $q > 2\alpha_2 + 4$. 

Hence there is a chain in $(\mathcal{P}',\preceq)$ of size $q$. Let $\mathcal{P}'' \subseteq \mathcal{P}'$ be the set of paths in this chain. Let 
$\mathcal{C} := \{C(P) : P \in \mathcal{P}''\}$ be the collection of special cycles extending each of the $q$ paths in $\mathcal{P}''$. This is a packing of $q > k$ odd cycles in $H_2 \subseteq G$, contradicting our hypothesis that $\ocp(G) \le k$. Hence, Case (i) cannot occur.\medskip

\noindent {\em Case (ii).} There is a vertex subset $X \subseteq V(H_2)$ such that $|X| \le 2h-2 < 2 q^4$ that hits all the odd $S$-paths in $H_2$. We claim that $X$ is the required set in our theorem. If $X$ happens to contain any vertex $v' \in N$, we delete this vertex and add its neighbor $v \in W \cap V(C_2)$ to the hitting set $X$. Below, we assume that $X$ and $N$ are disjoint. 

Toward a contradiction, assume that $H_1 - X$ contains an odd cycle $C_3$. This odd cycle contains an internal vertex of $V$. By Menger's theorem applied to the graph $H_1 - X$, we can either find two disjoint $V(C_2)$--$V(C_3)$ paths or one vertex $v$ that hits all such paths. In the first case, we can find an odd $S$-path $Q$ in $H_1 - X$ using the odd cycle $C_3$, the two $V(C_2)$--$V(C_3)$ paths and possibly some vertices in $N$ with the incident edges. Notice that $Q$ contains an internal vertex of $V$, because $G_0^+ \cap D_1$ is bipartite, and thus contains no odd $S$-path. It follows that $Q$ is fully contained in $H_2-X$, contradicting the fact that $X$ meets all odd $S$-paths in $H_2$. In the second case, consider the set $X \cup \{v\}$. Observe that $C_3$ is fully contained in $H_2$, as follows from the fact that $G_0^+ \cap D_1$ is bipartite and $\{v\}$ separates $V(C_2)$ from $V(C_3)$ in $H_1 - X$. Since $|X \cup \{v\}| \leq 2h - 2 + 1 \leq 2h$ and $G$ is $(2h)$-resilient, $G - (X \cup \{v\})$ has a rich component. If this rich component contains some vertex of odd the cycle $C_3$, then the rich component has to be fully contained in $H_2$ since $X \cup \{v\}$ separates $V(C_2)$ from $V(C_3)$, which contradicts our hypothesis. Hence, the rich component is disjoint from $C_3$. This is a contradiction to the fact that the odd cycle packing number of the rich component equals that of $G$. This concludes the proof of the theorem.
\end{proof}

\subsection{Final structure}

We may now summarize the final structure that we obtain using the tools from this section. 

\begin{theorem} \label{thm:structure_bounded_OCP_refined}
There are computable functions $\rho(k)$ and $g(k)$ such that, for every integer $k\geq 1$, and for every graph $G$ with $\ocp(G)=k$ that is $\rho(k)$-resilient, there is an $(g(k),g(k),g(k))$-near embedding $(\sigma,G_0,A,\mathcal{V},\mathcal{W})$ of $G$ in a non-orientable surface $\mathbb{S}$ with $\eg(\surf) \leq g(k)$ with the following properties: 
\begin{itemize}
\item all vortices in $\mathcal{W}$ are bipartite;
\item there is no $2$-sided odd walk in $G_0^+$;
\item the embedding of $G_0^+$ in $\surf$ has face-width at least $2$;
%\item $G_0^+$ has a unique block $H_0$ containing a noncontractible cycle. This block $H_0$ is cellularly embedded in $\surf$ and all its facial walks are cycles. Each block of $G_0^+$ that is distinct from $H_0$ is a planar graph drawn inside some face of $H_0$;
\item for each vortex $V \in \mathcal{V}$, there is a flat cycle $C(V)$ in $G^+_0$ bounding a closed disk $D(V)$ containing $\inter(\Delta(V))$, where $\Delta(V)$ is the disk in the near embedding that is associated to $V$; for every two distinct vortices $V, W \in \mathcal{V}$, the closed disks $D(V)$ and $D(W)$ do not intersect;
\item for each vortex $V \in \mathcal{V}$, the union of the subgraph of $G_0^+$ contained in $D(V)$ and $V$ is bipartite;   
\item every face of $G_0^+$ that does not contain $\inter(\Delta(V))$ for any $V \in \mathcal{V}$ is bounded by a cycle.
\end{itemize}

Furthermore, there is a polynomial-time algorithm finding such a near embedding of $G$ when $k$ is a fixed constant. 
\end{theorem}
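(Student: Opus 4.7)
The plan is to iteratively refine the structure produced by Theorem~\ref{thm:structure_bounded_OCP}, adding a bounded number of vertices to the apex set $A$ at each stage and choosing the final constants $\rho(k)$ and $g(k)$ with sufficient slack. First, $\rho(k)$-resilience forces $\oct(G) \geq f_1(k,r)$: otherwise a hitting set $X$ of size $\le f_1(k,r) \le \rho(k)$ would make $G-X$ bipartite, so no component of $G-X$ would be rich, contradicting resilience. So Theorem~\ref{thm:structure_bounded_OCP} supplies a starting near embedding of $G$ together with a bipartite wall $W$ extendable to an Escher wall $W'$ of $G$; in particular the presence of $W'$ forces the surface to be non-orientable (an Escher wall cannot embed in any orientable surface without a crosscap). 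Feeding this into Theorem~\ref{thm:Diestel_et_al} with $\beta,r$ sufficiently large yields a $(\beta,r)$-good near embedding, which already delivers face-width $\geq \beta$, concentric cycles tightly enclosing each vortex $V \in \mathcal{V}$, and properly attached vortices in $\mathcal{W}$.

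\paragraph{Bipartizing small vortices and killing $2$-sided odd walks.}
Next I make every small vortex bipartite. Since odd cycles living in distinct small vortices are vertex-disjoint (being separated by $|\Omega(V)| \le 3$ vertices), at most $k$ vortices $V \in \mathcal{W}$ can contain an odd cycle. For each such $V$ I apply the odd $S$-paths Theorem~\ref{thm:oddSpaths} inside $V$ with $S = \Omega(V)$: the case of many internally disjoint odd $S$-paths is ruled out by stitching them, through the Escher wall, into more than $k$ vertex-disjoint odd cycles of $G$, so Theorem~\ref{thm:oddSpaths} delivers a bounded-size hitting set which I add to $A$. After this step the gadget replacement of Section~\ref{secknear} produces a well-defined $G_0^+$ with $\ocp(G_0^+) \leq \ocp(G-A) \leq k$ by Observation~\ref{obs:sameocp}. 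I then apply Theorem~\ref{thm:EP_2sided} to the portion of $G_0^+$ cellularly embedded in $\surf$, obtaining a bounded-size hitting set for all $2$-sided odd cycles, which is moved into $A$ (non-virtual vertices directly; virtual ones by undoing the corresponding gadget and adding the appropriate internal vortex vertex). Now $G_0^+$ has no $2$-sided odd closed walk, and Theorem~\ref{thm:bipartite_enlarged} applies to each large vortex $V \in \mathcal{V}$ with $C_1,C_2$ chosen among the innermost concentric cycles from $(\beta,r)$-goodness, bipartizing the enlarged vortex at the cost of $O(q^4)$ additional apex vertices per $V$; since $|\mathcal{V}|$ is bounded, the total number added remains bounded.

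\paragraph{Cycle-bounded faces and main obstacle.}
Finally, $G_0^+$ still has face-width $\geq 2$ (this is preserved with slack by the prior additions to $A$ provided $\beta$ was chosen large enough), so Theorem~\ref{thm:cycle_faces} yields that the unique block of $G_0^+$ carrying a noncontractible cycle is cellularly embedded with every face bounded by a cycle, while the remaining blocks are planar and sit in the closure of some face of the main block. A last cleanup step — trimming tree-like pendants in non-vortex faces and absorbing them into $A$, possibly after subdividing gadget edges to keep the embedding planar — ensures that every face of $G_0^+$ not containing some $\inter(\Delta(V))$ is bounded by a cycle. The polynomial-time claim follows from the algorithmic statements of Theorems~\ref{thm:structure_bounded_OCP}, \ref{thm:Diestel_et_al}, \ref{thm:EP_2sided}, and \ref{thm:bipartite_enlarged}. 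The main obstacle is the \emph{order of operations}: each addition to $A$ can perturb the near embedding, so the parameters $\rho(k), g(k), \beta, r$ must be chosen with enough slack that properties established earlier survive the later modifications. In particular, one must verify that the ``no rich component trapped in $H_2$'' hypothesis of Theorem~\ref{thm:bipartite_enlarged} holds at the moment the theorem is invoked, which is enforced by taking $\rho(k)$ sufficiently large and, if necessary, peeling off a trapped subgraph and recursing on it, an operation that terminates since $|\mathcal{V}|$ and $\eg(\surf)$ are bounded.
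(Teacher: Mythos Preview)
Your overall architecture matches the paper's, but three steps contain genuine gaps.

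The most serious is your treatment of small vortices. Hitting odd $S$-paths with $S=\Omega(V)$ does not bipartize $V$: odd cycles confined to $V-\Omega(V)$ are untouched, and your ``stitching through the Escher wall'' to rule out many odd $S$-paths is not justified (the wall sits in $G_0$, and you provide no mechanism to route vertex-disjointly from $\Omega(V)$ to it). The paper's argument is both simpler and adds nothing to $A$: remove $A\cup\Omega(V)$ (size at most $\alpha_0+3$); by resilience there is a rich component, which must coincide with the $W$-majority component because the latter contains an odd cycle of the Escher wall via Lemma~\ref{lem:odd_cycle_Escher_wall}; this component is disjoint from the interior of $V$, and since it already realizes $\ocp(G)$, the vortex $V$ cannot contribute an additional odd cycle and is therefore bipartite.

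Two further points. Your non-orientability argument is incorrect: the Escher wall $W'$ is a subgraph of $G$, not of the embedded graph $G_0$ (pieces of $W'$ may pass through apices or vortices), so its topology says nothing about $\surf$. The paper derives non-orientability only \emph{after} all bipartization steps: if $\surf$ were orientable, $G_0^+$ would have no $1$-sided cycles and hence (having no $2$-sided odd walk) be bipartite, whence $G-A$ is bipartite, contradicting resilience. Finally, ``absorbing pendants into $A$'' for the last bullet fails because the non-main blocks produced by Theorem~\ref{thm:cycle_faces} can be arbitrarily large planar graphs, not mere pendants, so $|A|$ would become unbounded; the paper instead converts each such block into a new bipartite vortex in $\mathcal{W}$ attached at a single vertex of $G_0$.
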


\begin{proof}
Let $k\geq 1$, and let $G$ be as in the theorem statement. 
Let 
\begin{align*}
&t:=\lceil 3c_1(k+1) \sqrt{\log_2 (3(k+1))} \rceil \\
&\widetilde{\alpha}_0 := t^{10^7 t^{26}} \\
&\widetilde{\alpha}_1 := 2t^2 \\   
&\widetilde{\alpha}_2 := t^{10^7 t^{26}} \\
&\widetilde{\alpha} :=\max_{0\leq i \leq 2}\widetilde{\alpha}_i \\
&\alpha_1:=\widetilde{\alpha}_1 + t(t+1) \\
&\alpha_2:=\widetilde{\alpha}_2 + \widetilde{\alpha}_1 +  t(t+1) \\    
&q:= \max(k+1, 2\alpha_2 + 5) \\
&\beta := 3f'(t(t+1),k) + 3t(t+1) + 6q^4 \alpha_1 + 2 \\
&p:=2\widetilde{\alpha}(\beta+2\widetilde{\alpha}+t(t+1)+2)+4 \\
&\alpha_0:=\widetilde{\alpha}_0 + p(2t(t+1) + \widetilde{\alpha}_1) \\
&r := \max(\alpha_0+4, 3f'(t(t+1),k) + 3t(t+1) + 2^5q^4+2) \\
&\tilde{r} := 6^{\widetilde{\alpha} + t(t+1)}(r + \widetilde{\alpha}(\beta+\widetilde{\alpha}+3)+p) \\
&h:= 49152t^{24}\tilde{r} + t^{10^7 t^{26}} 
\end{align*} 
where $c_1$ is the constant from Theorem~\ref{thm:oddKt} and $f'$ is the function from Theorem~\ref{thm:EP_2sided}. 
Define    
\begin{align*}
&g(k) := \max(\alpha_0 + 3f'(t(t+1),k) + 3t(t+1) + 2q^4 \alpha_1, \alpha_2) \\
&\rho(k) := \max(f_1(k,\tilde{r}), \alpha_0 + 3f'(t(t+1),k) + 3t(t+1) + 2q^4 \alpha_1 + 1), 
\end{align*}
where $f_1(k,\tilde{r})$ is the function from Theorem~\ref{thm:structure_bounded_OCP}. 
Observe that $\oct(G) \geq \rho(k) \geq f_1(k,\tilde{r})$. 
Apply Theorem~\ref{thm:structure_bounded_OCP} to $G$, using $\tilde{r}$ for the value of $r$ in that theorem.   
Let $(\widetilde{\sigma},\widetilde{G}_0,\widetilde{A},\widetilde{\mathcal{V}},\widetilde{\mathcal{W}})$ denote the resulting
$(\widetilde{\alpha}_0,\widetilde{\alpha}_1,\widetilde{\alpha}_2)$-near embedding 
of $G$ in a surface $\mathbb{\widetilde{S}}$, with $\eg(\mathbb{\widetilde{S}}) \leq t(t+1)$.
Let also $W$ be the resulting bipartite wall of height $h$ in $G$, that can be extended to an Escher wall $W'$ of $G$, and let $\widetilde{W}'_0$ denote the given flat wall of height $\tilde{r}$, which can be lifted to a subwall $\widetilde{W}_0$ of $W$.

We may assume that all vortices in $\widetilde{\mathcal{W}}$ are properly attached, as is easily checked.    
Apply Theorem~\ref{thm:Diestel_et_al} to the near embedding $(\widetilde{\sigma},\widetilde{G}_0,\widetilde{A},\widetilde{\mathcal{V}},\widetilde{\mathcal{W}})$ with the flat wall $\widetilde{W}'_0$, to obtain a $(\beta, r)$-good near embedding. 
Let $(\sigma,G_0,A,\mathcal{V},\mathcal{W})$ denote the resulting $(\beta, r)$-good $(\alpha_0,\alpha_1,\alpha_2)$-near embedding of $G$ in a surface $\surf$ with $\eg(\surf)\leq \eg(\mathbb{\widetilde{S}})$, 
and let $W'_0$ denote the resulting flat wall in $G'_0$ of height $r$, which can thus be lifted to a subwall $W_0$ of $W$. 

Next, let us show that every vortex $V \in \mathcal{W}$ must be bipartite. 
Since $|A| + |\Omega(V)| \leq \alpha_0 + 3  \leq \rho(k)$ by resilience there is a rich component $C$ of $G-(A\cup \Omega(V))$. 
Also, since $|A| + |\Omega(V)| < r$, there is a $W_0$-majority component $C'$ of $G-(A\cup \Omega(V))$, and a $W$-majority component $C''$ of $G-(A\cup \Omega(V))$. 
Observe that $C''=C'$, since $C''$ includes an horizontal path of $W$ avoided by $A\cup \Omega(V)$, which intersects a vertical path of $W_0$ avoided by $A\cup \Omega(V)$. 
Since $|A| + |\Omega(V)| < (h-1)/4$, by Lemma~\ref{lem:odd_cycle_Escher_wall}, $C''$ contains an odd cycle. 
Hence we must have $C=C'=C''$. 
Now, a key observation is that the component $C$ must be vertex disjoint from the vortex $V$, because $C$ contains a vertex not in $V$ (namely, a vertex of $W_0$ not in $\Omega(V)$). 
Since $\ocp(G)=\ocp(C)$, it follows that $V$ is bipartite (otherwise we would have $\ocp(G)>\ocp(C)$), as claimed. 

Now that we have established that all vortices in $\mathcal{W}$ are bipartite, we may consider the graph $G_0^+$ resulting from gadget-replacement. 
Apply Theorem~\ref{thm:EP_2sided} to $G_0^+$. 
Note that we cannot have $k+1$ vertex-disjoint $2$-sided odd cycles in $G_0^+$, since $\ocp(G_0^+) \leq \ocp(G) \leq k$. 
Hence, we obtain a set $Y$ of vertices of $G_0^+$ meeting all $2$-sided odd cycles of $G_0^+$, of size  $|Y| \leq f'(\eg(\surf), k)$, where $f'$ is the function from Theorem~\ref{thm:EP_2sided}. 
By Lemma 10.8 in~\cite{ocpgenus}, we may extend $Y$ by adding at most $\eg(\surf)$ vertices of $G_0^+$, so that the resulting set $Y'$ meets all $2$-sided odd walks of $G_0^+$. 
Finally, using the definition of the gadgets, we observe that for every vortex $V\in \mathcal{W}$, we may replace the corresponding virtual vertices that are in $Y'$ (if any) by the at most three vertices in $\Omega(V)$, keeping the property that all $2$-sided odd walks of $G_0^+$ are hit. 
Let $X$ denote the resulting set, which is thus a subset of $V(G_0)$.  
Note that  
\[
|X| \leq 3|Y'| \leq 3(f'(\eg(\surf), k) + \eg(\surf)) \leq 3f'(t(t+1),k) + 3t(t+1).  
\]

Modify the $(\alpha_0,\alpha_1,\alpha_2)$-near embedding $(\sigma,G_0,A,\mathcal{V},\mathcal{W})$ of $G$ in $\surf$ by removing all vertices in $X$ from $G_0$ and from the vortices, and adding them to the apex set $A$, and replacing the flat wall $W'_0$ by a subwall of $W'_0$ of height $r-|X|$ avoiding $X$. 
For the sake of readability, with some abuse of notation we denote the resulting near embedding by $(\sigma,G_0,A,\mathcal{V},\mathcal{W})$ again, and the flat subwall by $W'_0$ again (and let $W_0$ denote a lift of the new wall $W'_0$ in the new near embedding).  
Thus $|A| \leq \alpha_0 + |X|$ now. 
Observe that this near embedding is $(\beta', r')$-good for $\beta':=\beta - |X|$ and $r' := r- |X|$. 

Next, for each vortex $V\in \mathcal{V}$, we apply Theorem~\ref{thm:bipartite_enlarged} to the near embedding for vortex $V$, with $q$ as defined in the beginning of the proof, and with $D_1, D_2$ being two of the $\beta'$ concentric cycles around $V$ that survived.  
Let us quickly justify that the condition about sets $Z$ in that theorem is indeed satisfied: Suppose $Z\subseteq V(G)$ with $|Z| \leq 2q^4$ is such that the rich component $K$ of $G-Z$ is entirely contained in $H_2$ (using the notations of the theorem). 
Then $K$ is vertex disjoint from the wall $W_0$, by our choice of $D_2$. 
On the other hand, similarly as argued above for vortices in $\mathcal{W}$, since $W_0$ has height $r' > |Z|$, the $W_0$-majority component of $G-Z$ is the same as the $W$-majority component of $G-Z$, and must contain an odd cycle (as argued above with the Escher wall $W'$). 
Hence, this component must be $K$, a contradiction. 
The application of  Theorem~\ref{thm:bipartite_enlarged} results in a set $Y_V$ of vertices meeting all odd cycles contained in the union of $G_0^+ \cap D_1$ and $V$, with $|Y_V|\leq 2q^4$.  
Again, modifying $Y_V$ to avoid virtual vertices as above, we obtain a set $X_V$ that contains no virtual vertex from $G_0^+$, with $|X_V|\leq 3|Y_V|\leq 6q^4$, such that $X_V$ meets all odd cycles contained in the union of $G_0^+ \cap D_1$ and $V$. 

Let $X'$ be the union of $X_V$ for all vortices $V\in \mathcal{V}$. 
Thus $|X'| \leq 6q^4 |\mathcal{V}| \leq 6q^4 \alpha_1$. 
Similarly as before, modify the current near embedding by removing all vertices of $X'$ from $G_0$ and from the vortices, and adding them to the apex set $A$. 
Thus, now we have  $|A| \leq \alpha_0 + |X| + |X'| \leq g(k)$. 

Observe that, since $\beta' > |X'|$, for each vortex $V\in \mathcal{V}$ one of the $\beta'$ concentric cycles around $V$ survives.   

If $\surf$ is orientable, then $G_0^+$ must be bipartite since $G_0^+$ cannot have $2$-sided odd cycles. Since all vortices in $\mathcal{V}$ and in $\mathcal{W}$ are bipartite, we deduce that $G-A$ is bipartite, a contradiction with resilience since $|A|\leq \alpha_0 + |X| + |X'| < \rho(k)$. 
Hence, $\surf$ must be non-orientable. 

Furthermore, since $\beta \geq |X|+|X'|+2$, the embedding of $G_0'$ in $\surf$ has face-width at least $2$, and the same holds for $G_0^+$, as is easily checked. 

Finally, the last property can be achieved easily by possibly creating extra bipartite vortices in $\mathcal{W}$ attaching on at most one vertex of $G_0$. 
Indeed, suppose that there is a face of $G_0^+$ that does not contain $\inter(\Delta(V))$ for any $V \in \mathcal{V}$, and that is not bounded by a cycle. 
Then, by Theorem~\ref{thm:cycle_faces} and using the notations of that theorem, there is a block $Q' \neq Q$ of $G_0^+$ that is drawn (in a planar way) in the closure of the corresponding face of $Q$. 
Noticing that $Q'$ must be bipartite, it can be made into a vortex in $\mathcal{W}$ attaching on at most one vertex of $G_0$, by modifying the near embedding in an appropriate way. 
(Note that all properties of our near embedding established above still hold after this modification.)

In summary, the resulting near embedding satisfy all the desired properties. 
It is not difficult to check that the above proof can be carried out in polynomial time when $k$ is a constant. 
\end{proof}    

For simplicity, let us simply call {\em $k$-near embedding} the near-embedding of $G$ described in Theorem~\ref{thm:structure_bounded_OCP_refined}.

\section{Preprocessing and postprocessing} \label{sec:preprocessing}

While the previous two sections provided general structural results, we will now employ techniques specific to the stable set problem to show that every instance of the maximum weight stable set problem for graphs with bounded odd cycle packing number can be reduced to very specific instances in strongly polynomial time.
In other words, we will describe the first steps of our main algorithm to obtain a highly structured instance.

To this end, let $G$ be a graph with $\ocp(G) \le k$, where $k \ge 1$ is a constant.
We assume that we have already established a strongly polynomial time algorithm for the stable set problem on graphs with odd cycle packing number at most $k - 1$.
Note that such an algorithm exists for graphs with odd cycle packing number equal to $0$, \emph{i.e.}, bipartite graphs (see \emph{e.g.}~\cite{SchrijverLPIP}). 

As a first step, we check whether $G$ is $\resiliencebd(k)$-resilient.
To this end, for each $X \subseteq V(G)$ with $|X| \le \resiliencebd(k)$ we check whether every connected component $H$ of $G - X$ satisfies $\ocp(H) \le k - 1$, which can be efficiently done using the algorithm of Kawarabayashi and Reed~\cite{KR_STOC10}. 
In the case that such a subset $X$ has been found, we simply enumerate all stable sets $S_1,\dots,S_\ell \subseteq X$ and compute a maximum-weight stable set $S_i'$ in $G - (X \cup N(S_i))$ for each $i \in [\ell]$.
The latter can be done efficiently since all connected components of $G - (X \cup N(S_i))$ have odd cycle packing number at most $k - 1$.
A maximum-weight stable set for $G$ is given by $S_i \cup S_i'$ whose weight is maximum over all $i \in [\ell]$, and we are done.

Suppose now that $G$ is $\resiliencebd(k)$-resilient. Since $\resiliencebd(k) \ge \apexbd(k)$, we obtain from Theorem~\ref{thm:structure_bounded_OCP_refined} a $k$-near embedding of $G - A$ for some apex set $A \subseteq V(G)$ with $|A| \leq \apexbd(k)$. 
As a next step, it appears natural to enumerate all stable sets of the apex set $A$ and perform as for the set $X$ above.
However, given a stable set $S \subseteq A$, instead of computing a maximum weight stable set in $G - (A \cup N(S))$, we will compute a maximum weight stable set $S'$ in $G-A$, by setting the weights of the vertices in $N(S) \setminus A$ to be zero, and replace $S'$ by $S' \setminus N(S)$.
This slight modification will be needed since we want to work with the whole of $G-A$. We do not want to explicitly delete vertices beyond those of $A$ since the structure we have on $G-A$ from  Theorem~\ref{thm:structure_bounded_OCP_refined} is quite delicate.

At this point, we may assume that $A = \emptyset$.

\subsection{Edge-induced weights} \label{secEdgeInduced}

Next, it will be convenient to replace the vertex weights by ``equivalent'' weights that are induced by edge costs.
This notion was introduced in~\cite{ocpgenus} and is crucial for several parts of our algorithm.
We say that vertex weights $w : V(G) \to \Q_{\ge 0}$ are \emph{edge-induced} if there exist nonnegative edge costs $c : E(G) \to \Q_{\geq 0}$ such that $w(v) = \sum_{e \in \delta(v)} c(e)$, where $\delta(v) := \{e \in E(G) \mid v \in e\}$ denotes the set of edges incident to $v$.
In this case we will also say that $w$ is \emph{induced by} $c$.
The following result states that we may efficiently reduce to edge-induced weights. It is a consequence of an earlier result by Nemhauser and Trotter~\cite{nemhauser1975vertex}.

\begin{proposition}
    Given a graph $G$ and vertex weights $w : V(G) \to \Q_{\ge 0}$, in strongly polynomial time we can compute edge-induced vertex weights $w' : V(G) \to \Q_{\ge 0}$ and sets $S_0,S_1 \subseteq V(G)$ such that for every $w'$-maximal stable set $S$ in $G$, the set $(S \setminus S_0) \cup S_1$ is a $w$-maximal stable set in $G$.
\end{proposition}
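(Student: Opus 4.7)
The plan is to invoke the Nemhauser--Trotter theorem~\cite{nemhauser1975vertex} on the LP relaxation
\[
    \max\{w^\intercal x : x_u + x_v \leq 1 \text{ for all } uv \in E(G),\ 0 \leq x \leq 1\},
\]
which admits a half-integral optimum $x^*$ and satisfies the persistency property: every maximum $w$-weight stable set of $G$ may be taken to contain $V_1 := \{v \in V(G) : x^*_v = 1\}$ and to avoid $V_0 := \{v \in V(G) : x^*_v = 0\}$. First, I would compute such an $x^*$ in strongly polynomial time using Tardos's algorithm~\cite{Tardos86}, set $V_{1/2} := V(G) \setminus (V_0 \cup V_1)$, and declare $S_1 := V_1$ and $S_0 := V_0$. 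The set $V_1$ is itself stable and its neighbors all lie in $V_0$ (otherwise some constraint $x^*_u + x^*_v \leq 1$ would be violated), so for every stable set $S \subseteq V(G)$ the set $(S \setminus S_0) \cup S_1 = (S \cap V_{1/2}) \cup V_1$ is stable in $G$.

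Next, I would extract edge-induced weights via LP duality on $G[V_{1/2}]$. Extending any feasible LP point on $G[V_{1/2}]$ by $1$ on $V_1$ and $0$ on $V_0$ produces a feasible point for the LP on $G$, which forces the LP optimum on $G[V_{1/2}]$ to be attained at $\tfrac{1}{2}\one$ with value $\tfrac{1}{2} w(V_{1/2})$. Solving the corresponding dual in strongly polynomial time yields nonnegative values $y^* : E(G[V_{1/2}]) \to \Q_{\geq 0}$ and $z^* : V_{1/2} \to \Q_{\geq 0}$ with $\sum_{e \ni v} y^*_e + z^*_v \geq w(v)$ for every $v \in V_{1/2}$ and matching primal objective. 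Complementary slackness against $\tfrac{1}{2}\one$ forces $z^*_v = 0$ (because $x^*_v < 1$) and $\sum_{e \ni v} y^*_e = w(v)$ (because $x^*_v > 0$) at every $v \in V_{1/2}$. Extending $y^*$ by zero to $E(G)$ and defining $w'(v) := \sum_{e \ni v} y^*_e$ then gives an edge-induced weight function satisfying $w' = w$ on $V_{1/2}$ and $w' \equiv 0$ on $V_0 \cup V_1$.

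For the verification, let $S$ be any $w'$-maximum stable set of $G$. Every stable set $T \subseteq V_{1/2}$ is stable in $G$ with $w(T) = w'(T) \leq w'(S) = w(S \cap V_{1/2})$, so $S \cap V_{1/2}$ is $w$-maximum in $G[V_{1/2}]$. Consequently $(S \cap V_{1/2}) \cup V_1$ is a stable set of $G$ of $w$-weight $w(V_1) + w(S \cap V_{1/2})$, which by Nemhauser--Trotter persistency equals the maximum $w$-weight of a stable set in $G$, as required.

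The only non-routine step is recognising the LP dual on $G[V_{1/2}]$ as exactly the certificate that exhibits $w|_{V_{1/2}}$ as edge-induced; once that observation is made, strong polynomiality follows from Tardos's algorithm and the remainder is a short stability and optimality check.
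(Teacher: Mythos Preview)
Your proof is correct and follows essentially the same approach as the paper: solve the LP relaxation, set $S_0$ and $S_1$ to be the sets of variables at $0$ and $1$, define $w'$ to agree with $w$ on the fractional part and vanish elsewhere, invoke Nemhauser--Trotter persistency for correctness, and read off edge costs from an optimal dual solution via complementary slackness. The only cosmetic difference is that you solve the dual LP on the induced subgraph $G[V_{1/2}]$ (after first arguing that $\tfrac{1}{2}\one$ is optimal there), whereas the paper solves the dual on all of $G$ and then uses complementary slackness on the edges crossing into $V_0 \cup V_1$ to zero out those dual variables; both routes arrive at the same edge costs.
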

\begin{proof}
    Let $x^*$ denote an optimal solution to the LP relaxation
    \[
        \max \Big\{ \sum \nolimits_{i \in V(G)} w(i) x_i : x_i + x_j \le 1 \text{ for all } ij \in E(G), \, x \in [0,1]^{V(G)} \Big\},
    \]
    which we can compute in strongly polynomial time with Tardos's  algorithm~\cite{Tardos86}.
    We define $S_0 := \{ i \in V(G) : x^*_i = 0 \}$, $S_1 := \{ i \in V(G) : x^*_i = 1\}$, and $w' : V(G) \to \Q_{\ge 0}$ via
    \[
        w'(i) = \begin{cases} w(i) & \text{ if } i \in V(G - S_0 - S_1) \\ 0 & \text{ else}. \end{cases}
    \]
    Consider any $w'$-maximal stable set $S$ in $G$ and note that $S \setminus (S_0 \cup S_1)$ is a $w$-maximal stable set in $G - S_0 - S_1$.
    The result by Nemhauser and Trotter~\cite{nemhauser1975vertex} states that adding $S_1$ to $S \setminus (S_0 \cup S_1)$ yields a $w$-maximal stable set in $G$.

    It remains to argue that $w'$ is edge-induced.
    Consider the dual of the above LP, given by
    \[
        \min \{ y(E(G)) + z(V(G)) : y(\delta(i)) + z_i \ge w(i) \text{ for all } i \in V(G), \, y \in \R^{E(G)}_{\ge 0}, \, z \in \R^{V(G)}_{\ge 0} \},
    \]
    where $y(F) := \sum_{e \in F} y_e$ and $z(V(G)) := \sum_{i \in V(G)} z_i$.
    Let $(y^*, z^*) \in \Q^{E(G)} \times \Q^{V(G)}$ denote an extremal optimal solution of the dual.
    
    Define edge costs $c : E(G) \to \R_{\ge 0}$ via
    \[
        c(e) = \begin{cases} y^*_e & \text{ if } e \subseteq V(G - S_0 - S_1) \\ 0 & \text{ else.} \end{cases}
    \]
    We claim that $w'(i) = \sum_{e \in \delta(i)} c(e)$ holds for all $i \in V(G)$, which implies that $w'$ is edge-induced.
    Clearly, the claim holds for all vertices in $S_0 \cup S_1$.
    For a vertex $i \in V(G - S_0 - S_1)$ recall that we have $0 < x_i^* < 1$.
    Thus, by complementary slackness we have $y^*(\delta(i)) + z^*_i = w(i)$ and $z^*_i = 0$.
    Moreover, observe that for every edge $ij \in \delta(i)$ with $j \in S_0 \cup S_1$ we cannot have $x_i^* + x_j^* = 1$, and hence, again by complementary slackness, we must have $y^*_{ij} = 0$.
    This yields
    \[
        w'(i) = w(i) = y^*(\delta(i)) = \sum_{ij \in \delta(i) : j \notin S_0 \cup S_1} y^*_{ij} = \sum_{e \in \delta(i)} c(e). \qedhere
    \]
\end{proof}

\subsection{Slack sets and edge costs}
\label{secSlackSets}

Given edge-induced vertex weights, we will associate a cost to every stable set that allows us to treat some of the following steps in a more elegant way.

Consider any stable set $S \subseteq V(G)$. We say that an edge of $G$ is \emph{slack} with respect to $S$ if none of its two endpoints belongs to $S$. The other edges are called \emph{tight}. Notice that an edge is tight if and only if exactly one of its endpoints is in $S$.
By $\sigma(S) \subseteq E(G)$ we denote the set of edges that are slack with respect to $S$, \emph{i.e.}, $\sigma(S) = \{ e \in E(G) : e \cap S = \emptyset \}$, and say that $\sigma(S)$ is the \emph{slack set} of $S$.
Moreover, we call a set of edges $F \subseteq E(G)$ a \emph{slack set} if $F$ is the slack set of some stable set in $G$.

Given edge costs $c : E \to \R_{\ge 0}$, we say that the \emph{cost} of $S$ is the total cost of its slack edges, \emph{i.e.}, $c(S) := c(\sigma(S)) = \sum_{e \in \sigma(S)} c(e)$.
With this definition, finding a maximum-weight stable set is equivalent to finding a minimum-cost slack set:

\begin{lemma} \label{lemdh47eu}
    Let $w : V(G) \to \R_{\ge 0}$ be induced by $c : E(G) \to \R_{\ge 0}$.
    For every stable set $S$ in $G$ we have $w(S) + c(S) = c(E(G))$.
\end{lemma}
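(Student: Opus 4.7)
The plan is to prove the identity by a direct double-counting argument, exploiting the fact that the edge sets ``tight with respect to $S$'' and ``slack with respect to $S$'' partition $E(G)$ whenever $S$ is a stable set.

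First I would expand $w(S)$ using the hypothesis that $w$ is induced by $c$. By definition,
\[
w(S) \;=\; \sum_{v \in S} w(v) \;=\; \sum_{v \in S} \sum_{e \in \delta(v)} c(e) \;=\; \sum_{e \in E(G)} |e \cap S| \cdot c(e).
\]
Since $S$ is a stable set, no edge of $G$ has both endpoints in $S$, so $|e \cap S| \in \{0,1\}$ for every $e \in E(G)$: it equals $1$ exactly on the tight edges (the edges with exactly one endpoint in $S$) and $0$ exactly on the slack edges (those in $\sigma(S)$). Therefore
\[
w(S) \;=\; \sum_{e \in E(G) \setminus \sigma(S)} c(e) \;=\; c(E(G)) - c(\sigma(S)).
\]

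Rearranging and using the definition $c(S) = c(\sigma(S))$ yields $w(S) + c(S) = c(E(G))$, as required. There is essentially no obstacle here; the only subtle point is the observation that stability of $S$ forces $|e \cap S| \leq 1$, which is precisely what turns the double sum into a partition of $E(G)$ into tight and slack edges.
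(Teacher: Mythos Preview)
Your proof is correct and follows essentially the same approach as the paper: both expand $w(S)$ via the edge-induced definition and use stability of $S$ to conclude that the resulting double sum equals the total cost of the tight edges, i.e., $c(E(G)) - c(\sigma(S))$. Your version simply makes the counting explicit via the multiplicity $|e \cap S|$.
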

\begin{proof}
    Since $w$ is induced by $c$, we have $w(S) = \sum_{v \in S} w(v) = \sum_{v \in S} \sum_{e \in \delta(v)} c(e)$.
    Since $S$ is a stable set, the latter is equal to $c(E(G) \setminus \sigma(S)) = c(E(G)) - c(\sigma(S)) = c(E(G)) - c(S)$.
\end{proof}

For the following parts it will be useful to observe that, in a bipartite graph with edge-induced vertex weights, the minimum cost of a stable set is always zero, which is attained by each side of the bipartition. The corresponding slack set is the empty set.

\subsection{Replacing small vortices by gadgets} \label{sec:gadgets}

Recall that the $k$-near embedding $(\sigma,\varnothing,\mathcal{V},\mathcal{W})$ we get from Theorem~\ref{thm:structure_bounded_OCP_refined} 
includes vortices $W\in \mathcal{W}$ with $|\Omega(W)| \leq 3$ (recall that we assume $A = \varnothing$, and that this is without loss of generality due to our preprocessing). Suppose that we replace the interior of each $W \in \mathcal{W}$ by a gadget as in the definition of $G_0^+$ in Section~\ref{secknear}.
In what follows, we will show that we can compute edge costs $c^{+}$ for the resulting graph such that every minimum $c^{+}$-cost stable set in that graph can be turned into a minimum $c$-cost stable set in the original graph, both in strongly polynomial time. To this end, it suffices to apply the following lemma sequentially, which is implicit in~\cite[§4]{ocp1} but reproven here:

\begin{lemma}
    \label{lemhd8sg7}
    Let $G,W$ be two given graphs such that $W$ is bipartite and $|V(G) \cap V(W)| \le 3$, and let $c : E(G) \cup E(W) \to \R_{\ge 0}$ be given edge costs.
    Let $G^{+}$ be the graph that arises from $G$ according to (W1)--(W3) in Section~\ref{secknear}.
    In strongly polynomial time, we can compute edge costs $c^{+} : E(G^{+}) \to \R_{\ge 0}$ such that for any given minimum $c^{+}$-cost stable set in $G^{+}$ we can obtain a minimum $c$-cost stable set in $G \cup W$, again in strongly polynomial time.
\end{lemma}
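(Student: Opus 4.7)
The idea is to encode the contribution of the vortex $W$ by a boundary cost function
\[
  f(T) \;:=\; \min\{\, c(\sigma_W(S)) : S \text{ stable in } W,\ S \cap \Omega(W) = T\,\}, \qquad T \subseteq \Omega(W),
\]
where $\sigma_W(S):=\{e\in E(W) : e\cap S=\emptyset\}$ and $f(T):=+\infty$ if no feasible $S$ exists, and then to choose the gadget costs $c^+$ so that the gadget inside $G^+$ replicates $f$ up to an additive constant $C_W$. Since $W$ is bipartite and $|\Omega(W)|\le 3$, all at most $2^{3}=8$ values $f(T)$---together with optimal stable sets $S_T$ attaining them---can be computed in strongly polynomial time, each via a maximum-weight stable set computation in the bipartite graph $W$ with a few vertices forced in or out (equivalently, minimum-weight vertex cover, or minimum-cost bipartite matching). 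On edges inherited from $G$ we set $c^+ := c$; on the handful of gadget edges, we read the cost off from $f$ by a small explicit linear system. For instance, in case (W2a), where the gadget consists of a new vertex $x$ joined to $v_1$ and $v_2$, the choice $c^+(v_1 x) := f(\{v_2\})$, $c^+(x v_2) := f(\{v_1\})$, $C_W := 0$ is verified by enumerating the five stable sets of the gadget to reproduce $f$ exactly; analogous explicit formulas handle (W1), (W2b), (W3a), and (W3b).

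\textbf{The main obstacle.} One must verify that the prescribed $c^+$ are non-negative and that the system is consistent at \emph{all} values of $T$. Non-negativity is automatic since $c\ge 0$ forces $f\ge 0$. Consistency rests on a parity identity coming from the bipartite structure of $W$: letting $(A,B)$ denote a bipartition of $W$, both $A$ and $B$ are stable sets whose slack edges lie entirely inside the opposite class and hence have $c$-cost $0$, which yields
\[
  f(\Omega(W) \cap A) \;=\; f(\Omega(W) \cap B) \;=\; 0.
\]
This matches precisely the observation that in each gadget (W1)--(W3) exactly two values of $T$---the two ``correct'' sides determined by the parities $p_W(v_i,v_j)$---admit a cost-zero stable set inside the gadget; so the gadget forces $f$ to vanish on those two sides, consistently with the above identity. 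The remaining values of $f$ are then independent parameters, realized by a free choice of the gadget edge costs. The resulting case analysis is small and mechanical.

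\textbf{Recovery and wrap-up.} Given a minimum $c^+$-cost stable set $S^+$ of $G^+$, set $T:=S^+\cap\Omega(W)$ and define $S:=(S^+\cap V(G))\cup S_T$. Then $S$ is stable in $G\cup W$: the restriction to $V(G)$ is $S^+\cap V(G)$, stable in $G$; the restriction to $V(W)$ is $S_T$, stable in $W$; and both restrictions meet $\Omega(W)=V(G)\cap V(W)$ exactly in $T$. By the design of $c^+$, the cost satisfies $c(\sigma_{G\cup W}(S)) = c^+(\sigma_{G^+}(S^+)) - C_W$; conversely, starting from an optimal $S$ in $G\cup W$, restricting to $V(G)$ and completing with an optimal gadget-internal extension (attaining the gadget minimum for $T=S\cap\Omega(W)$) yields a stable set in $G^+$ of $c^+$-cost equal to $c(\sigma_{G\cup W}(S))+C_W$. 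Hence minimizers transfer between $G^+$ and $G\cup W$, and all computations are strongly polynomial, proving the lemma.
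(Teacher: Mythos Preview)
Your plan is sound up through the cases $|\Omega(W)|\le 2$, and there the gadget does reproduce $f$ exactly as you describe. The gap is in the (W3) cases. For a fixed boundary set $T\subseteq\Omega(W)$ with $|T|\in\{1,2\}$ the gadget admits \emph{several} internal stable extensions, and the gadget's contribution is the \emph{minimum} over all of them, not a single linear combination of edge costs. For instance, in (W3a) with $T=\{v_3\}$ one can either leave $a_3x$ slack (cost $c^+(a_3x)$) or leave $v_1a_1,v_2a_2$ slack (cost $c^+(v_1a_1)+c^+(v_2a_2)$). With the natural assignment $c^+(a_ix):=f(\{v_i\})$ and $c^+(v_ia_i):=f(\Omega\setminus\{v_i\})$, matching the gadget minimum to $f(T)$ therefore requires
\[
f(\{v_3\})\ \le\ f(\{v_1,v_3\})+f(\{v_2,v_3\}),
\]
and five analogous inequalities for the other $T$. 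These do \emph{not} follow from your parity identity $f(\Omega\cap A)=f(\Omega\cap B)=0$; the six nonzero values of $f$ are not ``independent parameters'' as you assert. Indeed, the gadget is itself a bipartite graph, so its own boundary cost function satisfies such supermodularity-type inequalities for \emph{every} choice of nonnegative edge costs, and hence cannot reproduce an $f$ that violates them.

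The paper supplies precisely the missing ingredient: an uncrossing lemma (Lemma~\ref{lemks9dh7}) showing that whenever $p_W(v_i,v_j)$ is even, two stable sets in $W$ whose traces on $\{v_i,v_j\}$ are $\{v_i\}$ and $\{v_j\}$ can be recombined into stable sets with traces $\{v_i,v_j\}$ and $\varnothing$ having the same multiset union. Applied to optimal representatives, this yields exactly the inequalities above (using $f(\{v_1,v_2,v_3\})=0$). Without this step, the equality $c(\sigma_{G\cup W}(S))=c^+(\sigma_{G^+}(S^+))-C_W$ in your wrap-up can fail when the given minimum-cost $S^+$ happens to use the ``wrong'' gadget extension for its boundary $T$, and your recovery produces an $S$ that need not be optimal.
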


The proof of Lemma~\ref{lemhd8sg7} relies on the following fact:
\begin{lemma}
    \label{lemks9dh7}
    Let $W$ be a graph with two vertices $v_1,v_2$ such that all $v_1$--$v_2$ paths in $W$ have the same parity $p$.
    \begin{enumerate}[(i)]
        \item Suppose that $p$ is odd, and let $S_1, S_2$ denote stable sets in $W$ with $S_1 \cap \{v_1,v_2\} = \{v_1,v_2\}$ and $S_2 \cap \{v_1,v_2\} = \emptyset$.
        Then there exist stable sets $S_3,S_4$ in $W$ satisfying $S_3 \cap \{v_1,v_2\} = \{v_1\}$, $S_4 \cap \{v_1,v_2\} = \{v_2\}$, $S_1 \cup S_2 = S_3 \cup S_4$, and $S_1 \cap S_2 = S_3 \cap S_4$.
        \item Suppose that $p$ is even, and let $S_1, S_2$ denote stable sets in $W$ with $S_1 \cap \{v_1,v_2\} = \{v_1\}$ and $S_2 \cap \{v_1,v_2\} = \{v_2\}$.
        Then there exist stable sets $S_3,S_4$ in $W$ satisfying $S_3 \cap \{v_1,v_2\} = \{v_1,v_2\}$, $S_4 \cap \{v_1,v_2\} = \emptyset$, $S_1 \cup S_2 = S_3 \cup S_4$, and $S_1 \cap S_2 = S_3 \cap S_4$.
    \end{enumerate}
\end{lemma}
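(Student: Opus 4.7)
The plan is a standard uncrossing argument on the symmetric difference. Set $I := S_1 \cap S_2$, $A := S_1 \setminus S_2$, $B := S_2 \setminus S_1$, and $D := A \cup B = S_1 \triangle S_2$, and work with the subgraph $H := W[D]$ of $W$ induced on $D$.

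First I would observe that $H$ is bipartite with bipartition $(A,B)$: an edge with both endpoints in $A$ would violate stability of $S_1$, one with both endpoints in $B$ would violate stability of $S_2$, and any edge joining $I$ to $D$ would violate stability of whichever of $S_1,S_2$ contains the $D$-endpoint. In particular $I$ has no neighbor in $D$. Now any candidate pair $S_3, S_4$ of stable sets satisfying $S_3 \cup S_4 = S_1 \cup S_2$ and $S_3 \cap S_4 = I$ must contain $I$ in both $S_3$ and $S_4$ and partition $D$ between $S_3 \setminus S_4$ and $S_4 \setminus S_3$; restricted to any connected component $K$ of $H$, stability forces $\{S_3 \cap V(K),\, S_4 \cap V(K)\} = \{V(K)\cap A,\, V(K)\cap B\}$, since a connected bipartite graph admits a unique bipartition. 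Conversely, making such a choice independently for each component yields a valid pair, because $H$ has no edges across components and $I$ has no neighbors in $D$.

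The key claim is then that $v_1$ and $v_2$ lie in \emph{different} connected components of $H$ in each case, which is where the parity hypothesis is used. In case (i) both $v_1$ and $v_2$ lie in $A$, so an $H$-path between them would be a $v_1$--$v_2$ walk in $W$ of even length (alternating $A,B$), contradicting $p$ odd. In case (ii), $v_1 \in A$ and $v_2 \in B$, so such a path would have odd length, contradicting $p$ even. Either way, the components of $v_1$ and $v_2$ can be oriented independently when assigning sides to $S_3$ and $S_4$.

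Granting the claim, the construction of $S_3, S_4$ is immediate: put $I$ into both. In case (i), send the side of $v_1$'s component containing $v_1$ to $S_3$ and the other side to $S_4$; send the side of $v_2$'s component containing $v_2$ to $S_4$ and the other side to $S_3$; assign the remaining components arbitrarily. In case (ii), instead send the $v_1$-side and the $v_2$-side both to $S_3$ (with their opposites to $S_4$). The prescribed intersections with $\{v_1, v_2\}$ hold by construction, and the equalities $S_3 \cup S_4 = S_1 \cup S_2$ and $S_3 \cap S_4 = I = S_1 \cap S_2$ follow from the fact that the per-component choices repartition $D$ without changing the union or the common part. There is no real obstacle in this proof; the only nontrivial step is the parity-based observation that $v_1$ and $v_2$ cannot share a component of $H$, and that is a one-line check.
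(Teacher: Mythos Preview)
Your proof is correct and follows essentially the same uncrossing argument as the paper. The paper phrases it slightly more tersely: it takes the component $K$ of $W[S_1 \cup S_2]$ containing $v_1$, notes that $v_2 \notin K$, and sets $S_3 := (S_1 \cap K) \cup (S_2 \setminus K)$ and $S_4 := (S_1 \setminus K) \cup (S_2 \cap K)$. Since every vertex of $I = S_1 \cap S_2$ is isolated in $W[S_1 \cup S_2]$, the nontrivial components of $W[S_1 \cup S_2]$ coincide with those of your $H = W[S_1 \triangle S_2]$, so the two constructions agree up to the irrelevant choice of how to assign the components not containing $v_1$ or $v_2$.
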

\begin{proof}
    We repeat the arguments from~\cite[proof of Thm.~1]{witt2018polyhedral}.
    In each of the above cases, let $K$ denote the connected component of $W[S_1 \cup S_2]$ that contains $v_1$.
    Note that $v_2 \notin K$.
    It is easy to see that $S_3 := (S_1 \cap K) \cup (S_2 \setminus K)$ and $S_4 := (S_1 \setminus K) \cup (S_2 \cap K)$ are stable sets in $W$ satisfying the claim.
\end{proof}

\begin{proof}[Proof of Lemma~\ref{lemhd8sg7}]
    Let $\Omega := V(G) \cap V(W)$ and let $c_G,c_W$ denote the restrictions of $c$ onto $E(G),E(W)$, respectively.
    If $|\Omega| \le 1$, observe that $G^{+} = G$.
    In this case, we may simply set $c^{+} := c_G$.
    To see this, first observe that for every stable set $S$ in $G \cup W$, the set $S' = S \cap V(G)$ is a stable set in $G^{+}$ with $c^{+}(S') \le c(S)$.
    Moreover, every stable set $S'$ in $G^{+}$ can be easily extended to a stable set $S$ in $G \cup W$ with $c(S) = c^{+}(S')$:
    If $|S' \cap \Omega| = 1$, $S$ arises from $S'$ by adding the side of the bipartition of $W$ that contains the vertex in $S' \cap \Omega$.
    Otherwise, $S$ arises from $S'$ by adding the other side of the bipartition.

    Next, let us consider the case $|\Omega| = 3$ and assume that all paths in $W$ joining two vertices from $\Omega$ have even length.
    The remaining cases (two paths have odd length or the case $|\Omega| = 2$) are similar or easier and are left to the reader.
    Suppose that $\Omega = \{v_1,v_2,v_3\}$ and let $a_1,a_2,a_3,x$ denote the virtual vertices as in (W3) in Section~\ref{secknear}.
    For each $I \subseteq [3]$ let $S_I$ denote a minimum $c_W$-cost stable set in $W$ with $S_I \cap \Omega = \{v_i : i \in I\}$.
    Note that we may compute all these stable sets in strongly polynomial time since $W$ is bipartite.
    Moreover, observe that we have $c_W(S_\emptyset) = c_W(S_{\{1,2,3\}}) = 0$.
    We define $c^{+}$ by setting
    \begin{alignat*}{10}
        c^{+}(e) & := c(e) & \quad & \text{for } e \in E(G), \\
        c^{+}(v_ia_i) & := c_W(S_{[3] \setminus \{i\}}) & & \text{for } i=1,2,3, \text{ and} \\
        c^{+}(a_ix) & := c_W(S_{\{i\}}) & & \text{for } i=1,2,3.
    \end{alignat*}
    The costs $c^{+}$ for the other cases are depicted in Fig.~\ref{fig83dis9}.

    \begin{figure}
        \begin{center}
            \includegraphics[width=0.8\textwidth]{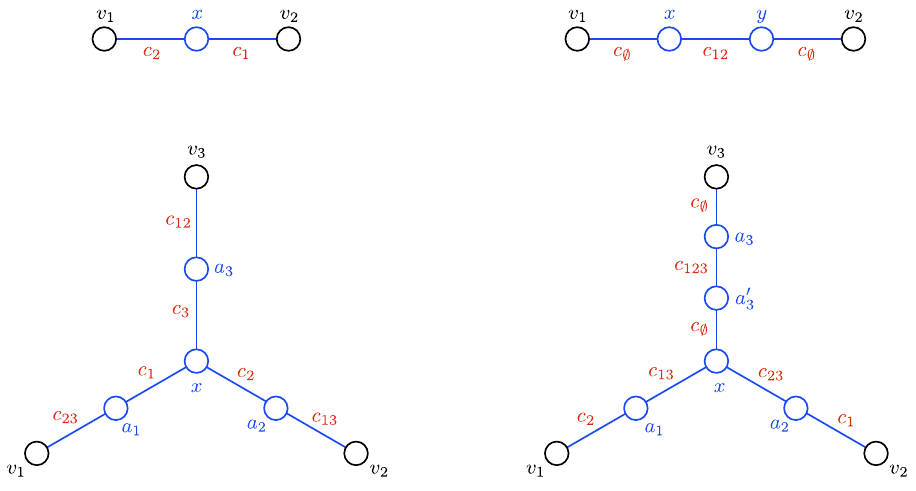}
        \end{center}        
        \caption{\label{fig83dis9}
        Gadgets used in the definition of $G_0^+$ in Section~\ref{secknear}.
        Vertices of $G_0$ are depicted in black, virtual vertices and virtual edges in blue.
        The edge costs $c^{+}$ for the proof of Lemma~\ref{lemhd8sg7} are given in red, where we use the shorthand notation $c_\emptyset := c_W(S_\emptyset)$, $c_i := c_W(S_{\{i\}})$, $c_{ij} := c_W(S_{\{i,j\}})$, and $c_{123} := c_W(S_{\{1,2,3\}})$.}
    \end{figure}

    It remains to prove the following: a) For every stable set $S$ in $G \cup W$, there is a stable set $S'$ in $G^{+}$ with $c^{+}(S') \le c(S)$. b) Moreover, for every stable set $S'$ in $G^{+}$, in strongly polynomial time we can compute a stable set $S$ in $G \cup W$ with $c(S) \le c^{+}(S')$.

    To prove a), let $S$ be a stable set in $G \cup W$ and set $I := \{ i : v_i \in S\}$.
    Let $S'$ arise from $S$ as follows.
    First, we delete the vertices from $V(W) \setminus V(G)$.
    Next, if $|I| \in \{0,3\}$, then we may add virtual vertices to $S'$ so that all virtual edges are tight with respect to $S'$.
    In this case, we obtain $ c^{+}(S') = c_G(S \cap V(G)) \le c(S) $.
    If $|I| \in \{1,2\}$, then we may add virtual vertices to $S'$ so that exactly one virtual edge $e$ with $c^{+}(e) = c_W(S_I)$ is slack with respect to $S'$.
    In this case, we obtain
    \begin{align*}
        c^{+}(S') = c_G(S \cap V(G)) + c^{+}(e) & = c_G(S \cap V(G)) + c_W(S_I) \\
        & \le c_G(S \cap V(G)) + c_W(S \cap V(W)) \\
        & = c(S).
    \end{align*}

    To prove b), let $S'$ be a stable set in $G^{+}$ and set $I := \{ i : v_i \in S'\}$.
    We let $S$ arise from $S'$ by deleting its virtual vertices and adding $S_I$.
    If $|I| \in \{0,3\}$, then we immediately obtain
    \[
        c(S)
        = c_G(S \cap V(G)) + c_W(S_I)
        = c_G(S \cap V(G))
        \le c^{+}(S').
    \]
    If $|I| = 1$, we may assume that $I = \{3\}$ holds.
    Let $F$ denote the set of virtual edges that are slack with respect to $S'$.
    It suffices to show that $c^{+}(F) \ge c_W(S_{\{3\}})$ holds since then we obtain
    \[
        c(S)
        = c_G(S \cap V(G)) + c_W(S_{\{3\}})
        \le c_G(S \cap V(G)) + c^{+}(F)
        = c^{+}(S').
    \]
    If $a_3x \in F$, then $c^{+}(F) \ge c^{+}(a_3x) = c_W(S_{\{3\}})$.
    Otherwise, $v_1a_1 \in F$ and $v_2a_2 \in F$ and hence
    \begin{align*}
        c^{+}(F)
        \ge c^{+}(v_1a_1) + c^{+}(v_2a_2)
        = c_W(S_{\{2,3\}}) + c_W(S_{\{1,3\}})
        & \ge c_W(S_{\{3\}}) + c_W(S_{\{1,2,3\}}) \\
        & = c_W(S_{\{3\}}),
    \end{align*}
    where the last inequality is due to Lemma~\ref{lemks9dh7} and Lemma~\ref{lemdh47eu}.

    It remains to consider the case $|I| = 2$, in which we may assume that $I = \{1,2\}$ holds.
    Again, let $F$ denote the set of virtual edges that are slack with respect to $S'$.
    It suffices to show that $c^{+}(F) \ge c_W(S_{\{1,2\}})$ holds since then we obtain
    \[
        c(S)
        = c_G(S \cap V(G)) + c_W(S_{\{1,2\}})
        \le c_G(S \cap V(G)) + c^{+}(F)
        = c^{+}(S').
    \]
    If $v_3a_3 \in F$, then $c^{+}(F) \ge c^{+}(v_3a_3) = c_W(S_{\{1,2\}})$.
    Otherwise, $a_1x \in F$ and $a_2x \in F$ and hence
    \begin{align*}
        c^{+}(F)
        \ge c^{+}(a_1x) + c^{+}(a_2x)
        = c_W(S_{\{1\}}) + c_W(S_{\{2\}})
        & \ge c_W(S_\emptyset) + c_W(S_{\{1,2\}}) \\
        & = c_W(S_{\{1,2\}}),
    \end{align*}
    where the last inequality is again due to Lemma~\ref{lemks9dh7} and Lemma~\ref{lemdh47eu}.
\end{proof}

Going back to our instance $G$ and its $k$-near embedding $(\sigma,\varnothing,\mathcal{V},\mathcal{W})$, from now on we may assume that $\mathcal{W} = \emptyset$, and hence $G_0 = G_0^+$.

\subsection{Increasing connectivity}

In subsequent steps, we will require that $G_0$ is $2$-connected. If this happens not to be the case, we enlarge $G_0$ using the following construction. Below, we assume the notations of Theorem~\ref{thm:structure_bounded_OCP_refined}.

Recall that the embedding of $G_0 = G_0^+$ has face-width at least $2$. 
It is known that, for any cellularly embedded graph $H$, the faces of $H$ are all bounded by cycles if and only if $H$ is $2$-connected and has face-width at least $2$, see Mohar~\cite[Proposition 3.8]{Mohar97}. Hence, it suffices to make sure that the faces of $G_0$ are all bounded by cycles. 

Suppose that there is a face $f$ of $G_0$ whose boundary is not a cycle. By Theorem~\ref{thm:structure_bounded_OCP_refined}, $f$ contains the disk $\Delta(V)$ for some vortex $V \in \mathcal{V}$. By Theorem~\ref{thm:cycle_faces}, $G_0$ has a unique block $H_0$ containing a noncontractible cycle. This block $H_0$ is cellularly embedded in $\surf$ and all its facial walks are cycles. Each block of $G_0$ that is distinct from $H_0$ is a planar graph drawn inside some face of $H_0$. Let $\hat{f}$ denote the face of $H_0$ containing $f$. The boundary of $\hat{f}$ is a cycle, say $C$. Moreover, the boundary of $f$ is a {\it cactus graph} containing $C$ together with some extra cacti hanging from vertices of $C$, where a cactus graph is a graph where every two simple cycles have at most one vertex in common. 

Let $W = (v_0,e_1,v_1,\ldots,e_\ell,v_\ell)$ denote the facial walk of $f$. We modify the graph $G$ by adding, for each index $j \in \{0,1,\ldots,\ell\}$, two new vertices $v'_j$ and $v''_j$, as well as the edges $v_j v'_j$, $v'_j v''_j$ and $v''_jv''_{j+1}$ (indices are computed modulo $\ell+1$). All these new vertices and edges are added to $G_0$. Next, the vortex $V$ is updated by replacing each vertex $v_j$ of the walk $W$ belonging to $\Omega(V)$ by the corresponding vertex $v''_j$. In case some vertex $v \in \Omega(V)$ repeats in $W$, we perform the operation only once, for the occurrence of $v$ in $W$ that is found by following $\bd(f)$ until it reaches $v$. See Fig.~\ref{fig:thicken} for an illustration. Finally, we set sufficiently high costs on the edges $v_j v'_j$ and $v'_j v''_j$ for $j \in [\ell]$ to force these edges to be tight in every optimal solution, and we set zero costs on all the edges of the form $v''_jv''_{j+1}$.

\begin{figure}[h!]
\centering
% l b r t
\includegraphics[width=11cm,clip=true,trim=2cm 5cm 2.5cm 3.5cm]{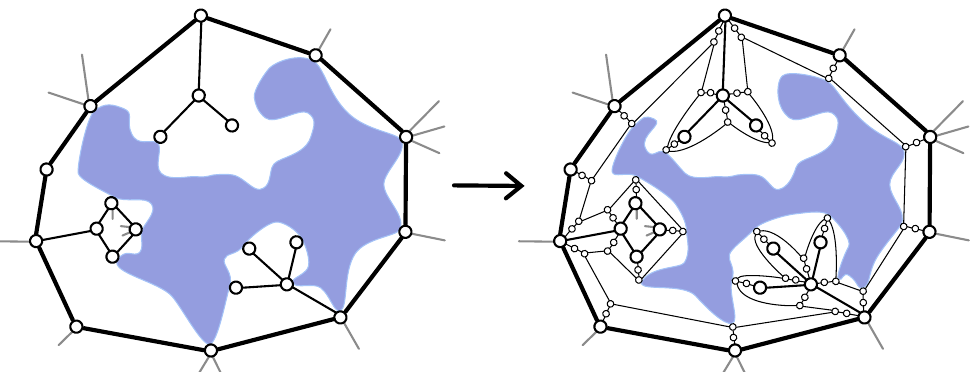}
\caption{\label{fig:thicken}Thickening the boundary of a face of $G_0$ toward $2$-connectivity. The disk $\Delta(V)$ is shown in blue.}
\end{figure}

Notice that these modifications do not change the minimum cost of a stable set in $(G,c)$. Notice that for every minimum cost stable set $S$ in the new edge-weighted graph, $S \ni v_j$ if and only if $S \ni v''_j$, for all $j$. Hence, we can trivially map any minimum cost stable set in the new edge-weighted graph to a minimum cost stable set in the original edge-weighted graph. 

Notice also that the thickening process described above preserves the property that $G_0$ has no $2$-sided odd closed walk, and also the property that, for each vortex $V\in \mathcal{V}$, the union of $G_0 \cap D(V)$ and $V$ is bipartite (where the disk $D(V)$ is modified as described as above). 

From now on, we may assume that each face of $G_0 = G_0^+$ is bounded by a cycle, and in particular that $G_0$ is $2$-connected.

\subsection{Slack vectors}

In Section~\ref{secSlackSets} we have reduced the task of finding a maximum-weight stable set to that of finding a minimum-cost stable set.
The latter task is equivalent to finding a minimum-cost slack set $F$, provided that we are able to efficiently compute a stable set $S$ with $\sigma(S) = F$.
To this end, let us consider the set
\[
    \mathcal{Y} = \mathcal{Y}(G) := \left\{ y \in \Z_{\ge 0}^{E(G)} \mid \exists x \in \Z^{V(G)} : y_{vw} = 1 - x_v - x_w \text{ for all } vw \in E(G) \right\},
\]
which we call the set of \emph{slack vectors} of $G$.
Notice that if $F$ is a slack set of $G$, then its characteristic vector belongs to $\mathcal{Y}$, in which case $x$ can be chosen as the characteristic vector of the corresponding stable set.

\begin{lemma}
    \label{lemgzh6f8}
    Let $G$ be a connected non-bipartite graph and let $y \in \mathcal{Y}(G)$.
    In strongly polynomial time, we can compute the unique vector $x \in \Z^{V(G)}$ with $y_{vw} = 1 - x_v - x_w$ for all $vw \in E(G)$.
\end{lemma}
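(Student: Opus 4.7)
The plan is to exploit a spanning tree of $G$ together with an odd cycle to both prove uniqueness and give a strongly polynomial procedure to recover $x$.

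First I would establish uniqueness. Suppose $x, x' \in \Z^{V(G)}$ both realize $y$, and set $z := x - x'$. Then $z_v + z_w = 0$ for every edge $vw$, so $z_w = -z_v$. Since $G$ is connected this already forces $|z_v|$ to be constant along any path, and walking once around an odd cycle of $G$ (which exists because $G$ is non-bipartite) yields $z_v = -z_v$, hence $z \equiv 0$. This gives the uniqueness statement.

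For the algorithm, pick any root $v_0 \in V(G)$ and compute a spanning tree $T$ of $G$ (strongly polynomial). Treat $s := x_{v_0}$ as an unknown integer parameter and propagate along $T$ by the rule $x_w := 1 - y_{vw} - x_v$ whenever $vw$ is a tree edge oriented away from $v_0$. An easy induction on the tree distance $d_T(v_0, v)$ shows that the resulting expression has the form
\[
x_v = a_v + \varepsilon_v s, \qquad \varepsilon_v = (-1)^{d_T(v_0,v)},
\]
where each $a_v \in \Z$ is computable by a single depth-first traversal of $T$. To pin down $s$, find any edge $uv \in E(G)$ such that the unique cycle in $T + uv$ is odd; this is done in strongly polynomial time by two-coloring $T$ and choosing $uv$ to be any non-tree edge with both endpoints in the same color class (such an edge must exist since $G$ is non-bipartite). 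For this edge, $d_T(u,v)$ is even, so $\varepsilon_u = \varepsilon_v$, and the edge constraint $x_u + x_v = 1 - y_{uv}$ becomes
\[
(a_u + a_v) + 2\varepsilon_u s = 1 - y_{uv},
\]
a single linear equation that determines $s$ uniquely.

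Finally I would observe that we do not need to verify the remaining edge constraints: since $y \in \mathcal{Y}(G)$ is given, some integer vector $x$ realizes $y$, and by the uniqueness proved in the first paragraph the vector we just reconstructed must coincide with it. (If one wished a self-contained check, one could simply evaluate $1 - x_v - x_w$ on all non-tree edges and compare with $y$, still in strongly polynomial time.) No step is a genuine obstacle: uniqueness is immediate from an odd-cycle argument, and the computation consists only of a spanning tree traversal, a bipartition test, and the solution of one linear equation — hence strongly polynomial.
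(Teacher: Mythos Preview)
Your proof is correct and follows essentially the same approach as the paper: use an odd cycle to determine the value of $x$ at one vertex, then propagate along edges via $x_w = 1 - y_{vw} - x_v$. The paper does this slightly more directly by taking any odd cycle $C = (e_1,\dots,e_t)$ through a vertex $u$ and computing $x_u$ from the alternating sum $\sum_{i=1}^t (-1)^{i-1} y_{e_i} = 1 - 2x_u$, whereas you parametrize along a spanning tree and then solve for the parameter using a fundamental odd cycle; these are the same computation in different clothing.
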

\begin{proof}
    Since $y \in \mathcal{Y}$ we know that there exists some $x$ with the desired properties.
    Let $C$ denote an odd cycle in $G$ and let $e_1,\dots,e_t$ denote the edges along $C$.
    Letting $u \in e_1 \cap e_t$, we can compute $x_u$ by observing that $\sum_{i=1}^t (-1)^{i-1} y_{e_i} = 1 - 2x_u$ holds.
    The remaining entries of $x$ can be computed by simple propagation:
    If we know the value $x_v$ for some $v \in V(G)$, then for each neighbor $w$ of $v$ we have $x_w = 1 - x_v - y_{vw}$. 
\end{proof}

In particular, given a minimum-cost slack set of a graph $G$, we obtain a minimum-cost stable set by applying the above lemma for each non-bipartite connected component, and by selecting one side of the bipartition of each bipartite connected component.

In our setting, instead of computing minimum-cost slack sets it turns out to be more convenient to compute minimum-cost slack vectors. Assume for now that graph $G$ is cellularly embedded on a surface in such a way that there is a $2$-sided odd walk. We will use the key observation from \cite{ocpgenus} that slack vectors of $G$ can be viewed as nonnegative integer circulations in (some orientation of) the dual graph of $G$ that satisfy a small number of additional constraints. This interpretation of the solution space allows us to apply algorithmic techniques commonly used for optimization over circulations (such as decomposition into cycles), which turn out to be very useful. Also, we will use the fact that the sum of two integer circulations is an integer circulation, which has no natural counterpart for slack sets.

Clearly, most slack vectors are not characteristic vectors of slack sets.
However, for nonnegative edge costs, the minimum cost of a slack vector is always attained by a characteristic vector of a slack set:

\begin{lemma}
\label{lem:convert}
Let $G$ be a graph with edge costs $c : E(G) \to \Q_{\ge 0}$ and let $y \in \mathcal{Y}(G)$.
In strongly polynomial time, we can find a slack set $F$ with $c(F) \leq c(y)$ and a stable set $S$ with $\sigma(S) = F$.
\end{lemma}
\begin{proof}
We may assume that $G$ is connected.
If $G$ is bipartite, we may return $F = \emptyset$.
Suppose that $G$ is not bipartite.
Given $y \in \mathcal{Y}(G)$, we first compute $x \in \Z^{V(G)}$ with $y_{uv} = 1 - x_u - x_v$ for all $uv \in E(G)$ using Lemma~\ref{lemgzh6f8}.

In what follows, we will iteratively modify $x$ in a way that the resulting vector $y$ remains nonnegative and without increasing its cost.
In each iteration, none of the sets $V_{01} := \{ v \in V(G) : x_v \in \{0,1\}\}$ and $E_0 := \{ e \in E(G) : y_e = 0 \}$ will become smaller, and the size of at least one of the sets will strictly increase. Thus, the number of iterations will be at most $|V(G)|+|E(G)|$. 

If $x \in \{0,1\}^{V(G)}$, then $x$ is the characteristic vector of a stable set and the support of $y$ is the associated slack set, in which case we are done.

Otherwise, let $H$ be a connected component of the graph $(V(G), E_0)$ containing a vertex $v$ with $x_v \notin \{0,1\}$.
Note that $H$ is bipartite:
In fact, there exists some $\alpha \ge 2$ such that $x_a = \alpha$ for all $a \in A$ and $x_b = 1 - \alpha$ for all $b \in B$, where $A,B$ is a bipartition of $H$.
For $X \in \{A,B\}$ let $E_X$ denote the sets of edges of $G$ with at least one vertex in $X$.
Note that $E_A \cap E_B = E(H)$.

If $c(E_A) > c(E_B)$, we may add some integer $t > 0$ to all $x_a$, $a \in A$ and subtract it from all $x_b$, $b \in B$ in such a way that a new edge becomes part of $E_0$.
If $c(E_A) \le c(E_B)$, we may add some integer $t > 0$ to all $x_b$, $b \in B$ and subtract it from all $x_a$, $a \in A$ in such a way that a new edge becomes part of $E_0$ or all vertices of $H$ become part of $V_{01}$. 

Note that the number of arithmetic operations is polynomial in the size of $G$ and that only additions and subtractions are performed.
Hence, this procedure runs in strongly polynomial time.
\end{proof}

The fact that optimal solutions are attained at characteristic vectors of slack sets will be exploited in the next section to reduce the search space.

Let $V_1 = (H_1, \Omega_1)$, \ldots, $V_t = (H_t, \Omega_t)$ denote the vortices in $\mathcal{V}$. For each $i \in [t]$, let $G_i$ denote the graph that is obtained by augmenting $H_i$ with the boundary of the face $f_i$ of $G_0$ containing $\Delta(V_i)$. Thus $G_0 \cap G_i$ is the boundary cycle of face $f_i$. In our approach, we will decompose a slack vector $y \in \mathcal{Y}(G)$ into $y_i \in \Z^{E(G_i)}$ for $i=0,1,\dots,t$. Notice that each $y_i$ is a slack vector of $G_i$.
We will regard $y_0$ as a global solution, and each $y_i$ with $i \in [t]$ as a local solution. The following lemma allows us to combine feasible solutions for each of $G_0$, $G_1$, \ldots, $G_t$ into a solution for $G$, as long as the solutions coincide on the boundary of each $f_i$, $i \in [t]$.

\begin{lemma} \label{lem:composing_solutions}
Let $G$ be a connected graph that is the union of connected graphs $G_0$, $G_1$, \ldots, $G_t$ such that for each $i \in \{1, \ldots, t\}$, $G_i$ is bipartite and $G_0 \cap G_i$ is connected.
A vector $y \in \R^{E(G)}$ satisfies $y \in \mathcal{Y}(G)$ if and only if $y_0 \in \mathcal{Y}(G_0), y_1 \in \mathcal{Y}(G_1), \dots, y_t \in \mathcal{Y}(G_t)$, where $y_i \in \R^{E(G_i)}$ denote the restriction of $y$ to the coordinates in $E(G_i)$.
\end{lemma}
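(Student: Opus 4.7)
The forward implication is immediate: if $x \in \Z^{V(G)}$ witnesses $y \in \mathcal{Y}(G)$, then its restriction to $V(G_i)$ witnesses $y_i \in \mathcal{Y}(G_i)$, and non-negativity of $y_i$ follows from that of $y$. So the task is to glue given witnesses $x^i \in \Z^{V(G_i)}$ ($i = 0, 1, \dots, t$) into a common witness $x \in \Z^{V(G)}$. The plan is to keep $x^0$ unchanged and modify each $x^i$, for $i \ge 1$, using the bipartiteness of $G_i$ in order to make it agree with $x^0$ on $V(G_0 \cap G_i)$.

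Fix $i \ge 1$. For every edge $vw \in E(G_0 \cap G_i)$, both $x^0$ and $x^i$ reproduce $y_{vw}$, hence $d^i := x^0 - x^i$, viewed as an integer function on $V(G_0 \cap G_i)$, satisfies $d^i_v + d^i_w = 0$. Since $G_0 \cap G_i$ is connected and, being a subgraph of the bipartite graph $G_i$, itself bipartite, this forces $d^i$ to be constant equal to some integer $+a_i$ on one side of the unique bipartition of $G_0 \cap G_i$ and $-a_i$ on the other.

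Now use that $G_i$ is connected and bipartite to extend. Let $(A_i, B_i)$ be the unique bipartition of $G_i$; it restricts to the bipartition of $G_0 \cap G_i$ determined above (up to swapping $A_i, B_i$, which we do if necessary). Extend $d^i$ to a function $\tilde d^i$ on $V(G_i)$ by setting it equal to $+a_i$ on $A_i$ and $-a_i$ on $B_i$, and put $\tilde x^i := x^i + \tilde d^i$. Then $\tilde x^i$ still witnesses $y_i$, since for every edge $vw \in E(G_i)$ we have $\tilde d^i_v + \tilde d^i_w = 0$, so $1 - \tilde x^i_v - \tilde x^i_w = 1 - x^i_v - x^i_w = y_{vw}$. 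Moreover $\tilde x^i$ coincides with $x^0$ on $V(G_0 \cap G_i)$ by construction.

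Finally, define $x \in \Z^{V(G)}$ by $x_v := x^0_v$ if $v \in V(G_0)$ and $x_v := \tilde x^i_v$ if $v$ lies in some $V(G_i) \setminus V(G_0)$; in the setting of Section~\ref{sec:DP} (disjoint vortex disks), any two enlarged vortices share only vertices of $G_0$, so this is well defined. Every edge $vw \in E(G)$ lies in some $G_i$, and then both $x_v$ and $x_w$ agree with the corresponding values of $\tilde x^i$ (or of $x^0$ for $i = 0$), so $1 - x_v - x_w = y_{vw}$. This shows $y \in \mathcal{Y}(G)$. The main obstacle, and the only place where the hypotheses are used, is producing the adjustment $\tilde d^i$ as a scalar multiple of the $\pm 1$ bipartition indicator of $G_i$: this is exactly what requires both connectedness of $G_0 \cap G_i$ (so that a single $a_i$ suffices on the intersection) and bipartiteness of the connected graph $G_i$ (so that a unique compatible two-coloring extends to all of $V(G_i)$).
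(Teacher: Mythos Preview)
Your proof is correct and takes a genuinely different route from the paper's. The paper argues via the closed-walk characterization of $\mathcal{Y}$ (Proposition~\ref{prop:characterization_y}): given a closed walk $W$ in $G$, it iteratively reroutes each maximal subwalk lying in some $E(G_i)\setminus E(G_0)$ through a path in $G_0\cap G_i$, eventually obtaining a closed walk $W_0\subseteq G_0$ of the same parity with $\omega_W(y)=\omega_{W_0}(y_0)$; the conclusion then follows from $y_0\in\mathcal{Y}(G_0)$. You instead work directly with witness vectors: keep $x^0$ and shift each $x^i$ by a scalar multiple of the $\pm1$ bipartition indicator of $G_i$ so that it matches $x^0$ on $V(G_0\cap G_i)$, then glue. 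Your argument is more elementary and fully self-contained (no appeal to the external proposition), and it makes transparent exactly where connectedness of $G_0\cap G_i$ and bipartiteness of $G_i$ are used.

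One point worth noting: both arguments tacitly need that distinct $G_i,G_j$ with $i,j\ge 1$ meet only inside $G_0$ (otherwise the lemma is false as stated, e.g.\ take $G_0$ a single edge and $G_1,G_2$ two further edges forming a triangle). You flag this explicitly by appealing to the setting of Section~\ref{sec:DP}; the paper's rerouting step uses the same fact when asserting that the endpoints $v_j,v_k$ of a maximal $E(G_i)\setminus E(G_0)$ subwalk lie in $V(G_0)$, but does not comment on it.
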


%\todon{SW: actually, there is a simpler proof by switching on the connected components of $G_0 \cap G_i$ formed by the tight edges; probably we keep it for later; if we remove the next proposition, make sure that the $\Omega_W$ notation is given later}

The proof of Lemma~\ref{lem:composing_solutions} is based on the following characterization of $\mathcal{Y}(G)$ from~\cite{ocpgenus}.
For every walk $W = (v_0,e_1,v_1,\ldots,e_\ell,v_\ell)$ in $G$ and vector $y \in \R^{E(G)}$, we use the notation
$$
\omega_W(y) := \sum_{i=1}^\ell (-1)^{i-1} y_{e_i}\,.
$$
 
\begin{proposition}[{\cite[Proposition 6.1]{ocpgenus}}] 
\label{prop:characterization_y}
Let $G$ be a connected graph. If $G$ is bipartite, then 
$$
\mathcal{Y}(G) = \{y \in \Z_{\geq 0}^{E(G)} \mid \omega_W(y) = 0 \text{ for all even closed walks } W \text{ in } G\}\,.
$$ 
Otherwise, for any odd cycle $C$ contained in $G$, we have
$$
\mathcal{Y}(G) = \{y \in \Z_{\geq 0}^{E(G)} \mid \omega_C(y) \text{ is odd},\ \omega_W(y) = 0 \text{ for all even closed walks } W \text{ in } G\}\,.
$$ 
\end{proposition}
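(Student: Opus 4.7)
The plan is to prove both inclusions. The forward direction ($\subseteq$) is a direct calculation: given $y \in \mathcal{Y}(G)$ with witness $x \in \Z^{V(G)}$, I substitute $y_{e_i} = 1 - x_{v_{i-1}} - x_{v_i}$ into $\omega_W(y)$ for a closed walk $W = (v_0, e_1, \ldots, e_\ell, v_0)$. The $x$-contributions at internal vertices cancel pairwise, leaving $\sum_{i=1}^\ell (-1)^{i-1}(x_{v_{i-1}} + x_{v_i}) = (1 + (-1)^{\ell-1}) x_{v_0}$, while $\sum_{i=1}^\ell (-1)^{i-1}$ equals $0$ for even $\ell$ and $1$ for odd $\ell$. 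Hence $\omega_W(y) = 0$ whenever $\ell$ is even, and $\omega_W(y) = 1 - 2 x_{v_0}$ (an odd integer) whenever $\ell$ is odd. This establishes both required conditions.

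For the backward direction ($\supseteq$), the task is to reconstruct $x$ from $y$. A short induction on $\ell$ shows that along any walk $W = (v_0, e_1, \ldots, e_\ell, v_\ell)$, the recurrence $x_{v_i} = 1 - y_{e_i} - x_{v_{i-1}}$ forces
\[
x_{v_\ell} = \begin{cases} x_{v_0} + \omega_W(y) & \text{if } \ell \text{ is even,}\\ 1 - x_{v_0} - \omega_W(y) & \text{if } \ell \text{ is odd.}\end{cases}
\]
I will fix $v_0$ and a value of $x_{v_0} \in \Z$, and define $x_v$ via this formula applied to some walk from $v_0$ to $v$. Once well-definedness is granted, the identity $y_{vw} = 1 - x_v - x_w$ will follow for every edge $vw$ by extending any walk ending at $v$ with the edge $vw$ and applying the recurrence once.

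Well-definedness rests on two elementary identities, which I establish first: $\omega_{W_1 \cdot W_2}(y) = \omega_{W_1}(y) + (-1)^{|W_1|} \omega_{W_2}(y)$ under concatenation, and $\omega_{W^{-1}}(y) = (-1)^{|W|+1} \omega_W(y)$ under reversal. In the bipartite case all $v_0$--$v$ walks have the same parity, so two such walks $W_1, W_2$ differ by the even closed walk $W_1 \cdot W_2^{-1}$ at $v_0$; combining the two identities with the hypothesis $\omega = 0$ on even closed walks yields $\omega_{W_1}(y) = \omega_{W_2}(y)$, so any integer $x_{v_0}$ works. In the non-bipartite case, I choose $v_0 \in V(C)$ and set $x_{v_0} := (1 - \omega_C(y))/2 \in \Z$, an integer because $\omega_C(y)$ is odd. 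Same-parity walks are handled exactly as in the bipartite case. For walks $W_1, W_2$ of \emph{opposite} parity from $v_0$ to $v$, the two formulas agree iff $\omega_{W_1}(y) + \omega_{W_2}(y) = 1 - 2x_{v_0} = \omega_C(y)$; the left-hand side equals $\omega_{W_1 \cdot W_2^{-1}}(y)$, an odd closed walk at $v_0$, so it suffices to prove $\omega_Q(y) = \omega_C(y)$ for every odd closed walk $Q$ at $v_0$. Applying the hypothesis to the even closed walk $Q \cdot C^{-1}$ at $v_0$ gives exactly this.

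The main obstacle is the careful bookkeeping of parities and signs in the concatenation/reversal identities, together with the reduction showing that the single odd cycle $C$ suffices to fix $\omega_Q(y)$ for \emph{every} odd closed walk $Q$ based at $v_0$; once this is in hand, everything else is routine verification.
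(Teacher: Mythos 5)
The paper does not actually prove this proposition: it is imported verbatim from~\cite[Proposition 6.1]{ocpgenus}, so there is no in-paper argument to compare against. Your proof is correct and self-contained, and it follows the natural route one would expect (and essentially the one used in the cited reference): the inclusion $\subseteq$ by telescoping the substitution $y_{e}=1-x_{v}-x_{w}$ along a closed walk, and the inclusion $\supseteq$ by propagating a value of $x$ from a root $v_0$ along walks via $x_{v_i}=1-y_{e_i}-x_{v_{i-1}}$, with well-definedness reduced, through your concatenation and reversal identities (both of which check out, including the signs), to the hypotheses on even closed walks and on the odd cycle $C$. One small point worth making explicit when writing this up: for an odd closed walk the quantity $\omega$ is invariant under reversal but \emph{not} under change of base point, only its parity is; your argument is fine because you fix $v_0\in V(C)$ once and for all and always evaluate $\omega_C$, $\omega_Q$, and the concatenated walks with that base point, but a reader will appreciate seeing this invariance issue acknowledged, since the proposition itself only refers to the (base-point-independent) parity of $\omega_C(y)$.
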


\begin{proof}[Proof of Lemma~\ref{lem:composing_solutions}]
The forward implication is clear. 
Assume that the vector $y \in \R^{E(G)}$ satisfies $y_i \in \mathcal{Y}(G_i)$ for each $i \in \{0,1,\ldots,t\}$. Let $W = (v_0,e_1,v_1,\ldots,e_\ell,v_\ell)$ denote any closed walk in $G$. We claim that $\omega_W(y) = 0$ in case $W$ is even, and $\omega_W(y) \equiv 1 \pmod{2}$ in case $W$ is odd. By Proposition~\ref{prop:characterization_y}, the claim implies that $y \in \mathcal{Y}(G)$.

Assume that $W$ is not fully contained in any of the graphs $G_i$ for $i \ge 1$. This can be done without loss of generality, since otherwise $W$ is contained in $G_i$ for some $i \geq 1$. In this case, $\omega_W(y) = \omega_W(y_i)$, which is $0$ if $W$ is even or an odd number if $W$ is odd, because $y_i \in \mathcal{Y}(G_i)$. 

We will show that there is a closed walk $W_0$ in $G_0$ with the same parity as $W$ such that $\omega_{W}(y) = \omega_{W_0}(y) = \omega_{W_0}(y_0)$ (notice that the last equality is trivial). The claim then directly follows from Proposition~\ref{prop:characterization_y}, since no matter what the parity of $W$ is, we find a correct value for $\omega_{W}(y)$.  

If $W$ is not fully contained in $G_0$, then there are indices $i \in \{1,\ldots,t\}$ and $j, k \in \{0,\ldots,\ell\}$ such that $j < k$, $v_j, v_k \in V(G_0) \cap V(G_i)$, and the subwalk
$
v_j W v_k := (v_j,e_{j+1},v_{j+1},\ldots,e_k,v_k)
$
has all its edges in $E(G_i) \setminus E(G_0)$. Let $P$ denote any $v_j$--$v_k$ path contained in $G_0 \cap G_i$, and let $Z := (v_j W v_k) P^{-1}$ denote the closed walk obtained by composing $v_j W v_k$ and $P^{-1}$.

Since $G_i$ is bipartite, $Z$ is even. Because $y_i \in \mathcal{Y}(G_i)$, this yields $\omega_{v_j W v_k}(y_i) = \omega_{P}(y_i)$. Let $W' := P (v_k W v_j)$ denote the closed walk obtained from $W$ by substituting $P$ for $v_j W v_k$. Since $Z = (v_j W v_k) P^{-1}$ is even, $W'$ has the same parity as $W$. Moreover,
$$
\omega_{W'}(y) = \omega_{W}(y) \pm \underbrace{(\omega_{P}(y) - \omega_{v_j W v_k}(y))}_{=0} = \omega_{W}(y)\,.
$$
Notice that $W'$ contains less edges of $E(G_i) \setminus E(G_0)$ than $W$. 

Iterating this reasoning a finite number of times, we find a closed walk $W_0$ contained in $G_0$, such that $W_0$ has the same parity as $W$ and $\omega_{W}(y) = \omega_{W_0}(y)$, establishing the claim.
\end{proof}

\section{Final algorithm} \label{sec:DP}

We start this section with a summary of the structural results and preprocessing steps established in the previous sections.

\begin{corollary} \label{cor:structure_DP}
For every constant $k \in \Z_{\ge 1}$, there exist computable $\rho(k), g(k) \in \Z_{\ge 1}$ such that for every instance of the maximum weight stable set problem on an $n$-vertex graph with at most $k$ vertex-disjoint odd cycles, one of the following holds.

\begin{enumerate}[(I)]
\item The instance can be reduced, in strongly polynomial time, to at most $2^{\rho(k)}$ instances on induced subgraphs of the input graph such that each component has at most $k-1$ vertex-disjoint odd cycles. 

\item The input graph can be transformed, in strongly polynomial time, into a graph $G$ with $O(n + \vortexbd(k))$ vertices as well as $s \le 2^{\vortexbd(k)}$ many edge-induced vertex weights $w_1, \ldots, w_s \in \Q^{V(G)}_{\ge 0}$, such that the given instance reduces to computing a maximum weight stable set in each weighted graph $(G,w_i)$ for $i \in [s]$. Moreover, $G$ decomposes as the union of 
$2$-connected subgraphs $G_0, G_1, \ldots, G_t$, where $t \le \vortexbd(k)$, such that:

\begin{enumerate}[(i)]
\item $G_0$ is cellularly embedded in a non-orientable surface $\surf$ of Euler genus at most $\genusbd(k)$, with face-width at least $2$ (in particular, every face of $G_0$ is bounded by a cycle);
\item Every odd cycle of $G_0$ is $1$-sided (in particular, every facial cycle of $G_0$ is even);
\item For every $i \in [t]$, there is a vortex $V_i^+ := (G_i,\Omega^+_i)$ that has a linear decomposition of adhesion at most $\adhesionbd(k) + 2$, where $\Omega^+_i$ is the vertex set of the boundary of some face $f_i$ of $G_0$, and the linear ordering of $\Omega^+_i$ is compatible with the cyclic ordering of the vertices incident to $f_i$;
\item For all $i \in [t]$, $G_i$ is bipartite, and $G_0 \cap G_i$ is the cycle bounding $f_i$;
\item For distinct $i, j \in [t]$, the subgraphs $G_i$ and $G_j$ are vertex-disjoint.
\end{enumerate}
\end{enumerate}
\end{corollary}

\begin{proof}
The result follows by combining \Cref{thm:structure_bounded_OCP_refined} with the results and observations from \Cref{sec:preprocessing}. Case (I) corresponds to input graphs that are not $\rho(k)$-resilient, and case (II) to input graphs that are $\rho(k)$-resilient.

The vortices in (iii) are slightly larger than those in \Cref{thm:structure_bounded_OCP_refined} in the sense that each $V^+_i = (G_i, \Omega^+_i)$ is the union of a (large) vortex $V_i = (H_i,\Omega_i)$ in $\mathcal{V}$ and its corresponding cycle $C(V_i)$. We can easily modify the linear decomposition of any such vortex $V_i$ in order to obtain a linear decomposition of $V_i^+$ without increasing the adhesion by more than $2$.

We remark that \Cref{thm:structure_bounded_OCP_refined} does not guarantee that each $G_i$ is $2$-connected for $i \ge 1$. Notwithstanding, we may assume this without loss of generality. Indeed, $G_i$ has a unique block $B_i$ containing $C(V_i)$. Since $G_i$ is bipartite, we can set $y(e) = 0$ for each edge $e$ belonging to a block of $G_i$ distinct from $B_i$, while preserving optimality of a slack vector $y \in \mathcal{Y}(G)$. %This has the same effect as deleting these blocks.
\end{proof}

For the rest of the section, we assume that $G$ is a graph as in case (II) of \Cref{cor:structure_DP} and that $c \in \Q^{E(G)}_{\ge 0}$ are edge costs. Our goal is to compute a minimum $c$-cost slack vector of $G$. We design a dynamic program (DP) to achieve this. Recall that, by \Cref{lem:convert}, we can convert in strongly polynomial time any minimum cost slack vector $y \in \mathcal{Y}(G)$ to a maximum weight stable set $S \subseteq V(G)$.

Consider any slack vector $y \in \mathcal{Y}(G)$. For $i \in \{0\} \cup [t]$, let
$y_i$ denote the restriction of $y$ to $E(G_i)$. It should be clear that each $y_i$ is a slack vector of the corresponding subgraph $G_i$. Moreover, for $i \ge 1$, we have $y_0(e) = y_i(e)$ for each edge $e$ of the even cycle $G_0 \cap G_i$. In virtue of \Cref{lem:composing_solutions}, we can view $y$ as being obtained by composing slack vectors $y_0$, $y_1$, \ldots, $y_t$ where each $y_i \in \mathcal{Y}(G_i)$ and $y_0$ agrees with each $y_i$ on $E(G_0 \cap G_i)$ whenever $i \ge 1$. For each $i \ge 1$, we know that $G_i$ is bipartite, hence $\mathcal{Y}(G_i)$ has a simple structure. Here we focus mainly on $y_0$ and $\mathcal{Y}(G_0)$. Notice that since $\surf$ is non-orientable, we know that $G_0$ is not bipartite.

\subsection{Dual graph and circulations} %was ``The sketch''

In this section, we recall from \cite{ocpgenus} how every slack vector $y_0 \in \mathcal{Y}(G_0)$ can be interpreted as an integer circulation in a particular orientation of $G_0^*$, the dual graph of $G_0$.

%{\red Reviewer 2: I find section 7.1 a bit murky, especially the part in page 32, where the definitions are given with half-baked intuitions, and it is hard to distinguish which is which. It would be great to have rigorous definitions, and then, well-separated, an intuition of what is going on. For instance, you refer often to parts of the dynamic program here, but it is unclear what the DP does at this step. Maybe that's the right moment to delve more into it. Moreover, the definition of window is not clear to me.}

%Let $G_0^*$ denote the dual graph of $G_0$. 
By \cite[Lemma 7.1]{ocpgenus}, $G_0^*$ has an orientation such that in the cyclic ordering of the edges incident to each dual vertex $f$, the edges alternatively leave and enter $f$. This is known as an \emph{alternating orientation}. We denote by $D = (V(D),A(D))$ any alternating orientation of $G_0^*$. (In fact, $D$ is unique up to reversal, but we will not need this.) %Thus, $V(D)$ is the set of vertices of $D$ and $A(D)$ the set of directed edges of $D$. 
Notice that $D$ has no loops or parallel directed edges, but could have anti-parallel directed edges. 

For $i \in [t]$, we call the face $f_i$ a \emph{vortex face} and the cycle $G_0 \cap G_i$ the \emph{boundary cycle} of $G_i$. The set of vortex faces $\{f_1,\ldots,f_t\}$ is a bounded size subset of $V(D)$.

\begin{proposition}[{Conforti \emph{et al.}~\cite[Proposition 3.1]{ocpgenus}}] \label{prop:dual_representation} Let $G_0$ be a graph that is $2$-connected, non-bipartite and cellularly embedded in a surface $\surf$ of Euler genus $g$ in such a way that every odd cycle of $G_0$ is $1$-sided. One can compute in polynomial time an alternating orientation $D$ of $G_0^*$ and even closed walks $W_1$, \ldots, $W_{g-1}$ in $G_0$ such that, letting $C$ be any odd cycle of $G_0$ and letting $\omega : \Z^{A(D)} \to \Z_2 \times \Z^{g - 1}$ be the map defined as
$$
\omega(y_0) :=  (\omega_C(y_0) \ (\mathrm{mod} \, 2), \omega_{W_1}(y_0), \dotsc, \omega_{W_{g-1}}(y_0))\,,
$$
we get the representation
$$
\mathcal{Y}(G_0) = \{y_0 \in \Z_{\ge 0}^{A(D)} \mid y_0 \text{ is a circulation in } D \text{ and } \omega(y_0) = (1,\mathbf{0})\}\,.
$$
\end{proposition}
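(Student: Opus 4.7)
The plan is to bootstrap from Proposition~\ref{prop:characterization_y}, which characterizes $\mathcal{Y}(G_0)$, when $G_0$ is non-bipartite, as the set of nonnegative integer vectors $y_0$ with $\omega_W(y_0)=0$ for every even closed walk $W$ and $\omega_C(y_0)\equiv 1\pmod 2$ for the fixed odd cycle $C$. The goal is to replace the infinite family of even-walk constraints by finitely many: one Kirchhoff equation per face of $G_0$ (the circulation in $D$) together with $g-1$ homological equations.

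First, I would build the alternating orientation $D$ of $G_0^*$. The assumption that every odd cycle of $G_0$ is $1$-sided forces every facial walk to be even: a face boundary bounds a disk in $\surf$, hence is $2$-sided, so it cannot be odd. With every dual vertex of even degree, one constructs $D$ in polynomial time by fixing an alternating pattern around one face and propagating across shared edges; consistency follows because each edge lies on exactly two faces and the two local patterns must agree on it. The alternation then yields, for any facial walk $\partial f=(v_0,e_1,\ldots,e_\ell,v_0)$, that the sign $(-1)^{i-1}$ attached to $e_i$ in $\omega_{\partial f}$ coincides with the ``$e_i$ enters/leaves $f$'' pattern of $D$. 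Consequently, the Kirchhoff condition at $f$ in $D$ is literally $\omega_{\partial f}(y_0)=0$, so $y_0$ is a nonnegative integer circulation in $D$ iff $\omega_{\partial f}(y_0)=0$ for every face $f$.

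Next, I would invoke surface topology to reduce the remaining even-walk constraints. On a circulation, $\omega_W(y_0)$ is unchanged when $W$ is modified by a facial boundary (the difference being a Kirchhoff sum that already vanishes), so $\omega_W(y_0)$ depends only on the $\Z$-homology class $[W]\in H_1(\surf;\Z)$. Because $G_0$ carries a $1$-sided cycle, $\surf$ is non-orientable with $\eg(\surf)=g$, so $H_1(\surf;\Z)\cong\Z^{g-1}\oplus\Z_2$, where the $\Z_2$-summand is generated by any $1$-sided curve; by hypothesis, this summand records exactly the parity of closed walks of $G_0$, so the even closed walks span the torsion-free $\Z^{g-1}$-summand. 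I would then choose even closed walks $W_1,\ldots,W_{g-1}$ in $G_0$ whose classes form a $\Z$-basis of that summand, extracted in polynomial time from a system of fundamental cycles relative to a spanning tree of $G_0$ (pairing up cycles with nontrivial $\Z_2$-component so that the combined walks are even).

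Finally, I would close the equivalence. A nonnegative integer circulation $y_0$ in $D$ satisfies $\omega_W(y_0)=0$ for every even closed walk $W$ iff it does so for $W_1,\ldots,W_{g-1}$, by the homological dependence above. Moreover $y_0 \mapsto \omega_W(y_0) \bmod 2$ is a well-defined homomorphism $H_1(\surf;\Z)\to\Z_2$, and under the hypothesis every odd closed walk of $G_0$ carries the nontrivial $\Z_2$-class; hence the family of odd-walk parity constraints collapses to the single requirement $\omega_C(y_0)\equiv 1\pmod 2$. Packaging these as the map $\omega$ yields the claimed representation, and all constructions (building $D$, computing $W_1,\ldots,W_{g-1}$) are polynomial. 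I expect the main obstacle to be the homology argument of the third paragraph: rigorously justifying that $y_0\mapsto\omega_W(y_0)$ factors through $H_1(\surf;\Z)$ and that, under the evenness/oddness dichotomy on $G_0$, the $\Z_2$-torsion of $H_1(\surf;\Z)$ is faithfully recorded by a single parity constraint on $C$.
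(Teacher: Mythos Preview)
The paper does not prove this proposition at all; it is imported verbatim from \cite[Proposition~3.1]{ocpgenus} and used as a black box, so there is no proof in the present paper to compare your proposal against.

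That said, your outline is a faithful reconstruction of the argument one finds in the cited reference: you correctly (i) deduce that all facial walks are even from the hypothesis that every odd cycle is $1$-sided, (ii) use this to build the alternating orientation and identify the Kirchhoff law at each dual vertex $f$ with the constraint $\omega_{\partial f}(y_0)=0$, and (iii) invoke $H_1(\surf;\Z)\cong \Z^{g-1}\oplus \Z_2$ for a non-orientable $\surf$ of Euler genus $g$ to collapse the infinite family of even-walk constraints from Proposition~\ref{prop:characterization_y} down to $g-1$ generators plus one parity constraint. The one place where your sketch is thin is precisely where you flag it yourself: showing that, once $y_0$ is a circulation, the value $\omega_W(y_0)$ for an even closed walk $W$ depends only on the homology class of $W$. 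This requires checking that two even closed walks whose difference is a sum of facial boundaries yield the same $\omega_W$-value, which in turn needs a careful bookkeeping of how the alternating signs in $\omega_W$ interact with concatenation and with reversal of subwalks (the signs in $\omega_W$ are not literally the boundary operator). This is handled in~\cite{ocpgenus}, but it is not a one-line observation.
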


In the proposition above, we identify the spaces $\R^{E(G_0)}$ and $\R^{A(D)}$ through the bijection between $E(G_0)$ and $A(D)$.
Topologically, the condition $\omega(y_0) = (1,\mathbf{0})$ means that $y_0$ belongs to the same integer homology class as the all-one circulation, which encodes the empty stable set.
We remark that the proof in~\cite{ocpgenus} shows that each $W_i$ can be chosen such that $W_i$ does not use an edge more than twice.

\subsection{Sketches} \label{sec:sketches}

Consider any minimum cost slack vector $y \in \mathcal{Y}(G)$ and let $y_0$ denote the restriction of $y$ to $E(G_0)$. By Lemma~\ref{lem:convert}, we may assume that $y_0 \in \{0,1\}^{E(G_0)}$. By Proposition~\ref{prop:dual_representation}, $y_0$ is a 0/1-circulation in $D$ such that $\omega(y_0) = (1,\mathbf{0})$, see Fig.~\ref{figSketch1}. 

The aim of this section is to define a directed graph embedded in $\surf$ and with vertex set $\{f_1,\ldots,f_t\}$, which we call a ``sketch'' of $y_0$ (see below for the definition). This ``sketch'' will serve as roadmap for (the relevant part of) $y_0$. Its two main purposes are to motivate the inner workings of our algorithm, and also guarantee that the optimal solution $y$ is somehow represented in our DP. The ``sketch'' is not explicitly constructed or maintained by the algorithm. 

\begin{figure}[ht]
    \begin{center}
        \includegraphics[width=0.6\textwidth]{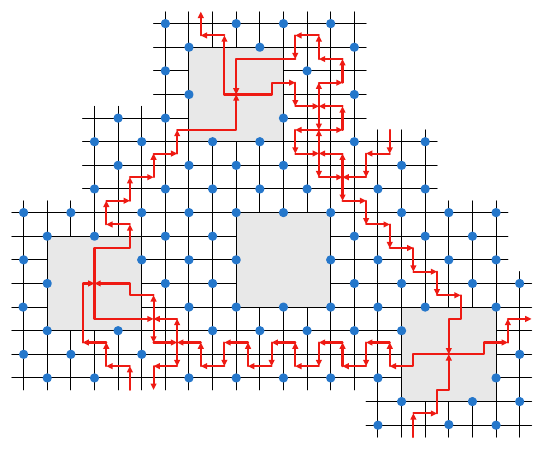}
    \end{center}    
    \caption{\label{figSketch1}Illustration of $G_0$ (black) with vortex faces (gray) and $y_0$ (red). Here, $y_0$ corresponds to the slack edges of a stable set in $G_0$ (blue).}
\end{figure}

Let $\supp(y_0) \subseteq A(D)$ denote the support of $y_0$, and let $\Sigma := (V(D),\supp(y_0))$ be the \emph{support graph} of $y_0$. Since $y_0$ is a 0/1-circulation, $\Sigma$ is a Eulerian subgraph of $D$. Consider a decomposition of $\Sigma$ into edge-disjoint directed cycles $C_1$, \ldots, $C_q$. We say that the decomposition is \emph{cross-free} if for every vertex $f \in V(\Sigma)$ and every two distinct cycles $C_i$ and $C_j$ through $f$, in the cyclic ordering around $f$ restricted to the four edges of $C_i \cup C_j$ incident to $f$, the two edges of $C_i$ are consecutive, as well as the two edges of $C_j$.

For the sake of completeness, we prove in \Cref{lem:cross-free_decomp} below that cross-free cycle decompositions exist. For a vertex $f \in V(\Sigma)$, we let $\delta_\Sigma^+(f)$ (resp.\ $\delta_\Sigma^-(f)$) denote the set of directed edges of $\Sigma$ leaving (resp.\ entering) $f$.

\begin{lemma} \label{lem:cross-free_decomp}
Every Eulerian directed graph $\Sigma$ embedded in $\surf$ admits a cross-free decomposition into edge-disjoint directed cycles.
\end{lemma} 

\begin{proof}
Let $f \in V(\Sigma)$ be an arbitrary vertex of $\Sigma$, and consider the directed edges $e^+ \in \delta_\Sigma^+(f)$ and $e^- \in \delta_\Sigma^-(f)$. Since $\Sigma$ is Eulerian, it has the same number of edges of each type for each $f$. Hence, we can match each $e^+$ with some $e^-$. We call such a perfect matching \emph{cross-free} if there are no two distinct matched pairs $\{e_1^-,e_1^+\}$ and $\{e_2^-,e_2^+\}$ such that, in the cyclic ordering around $f$, $e_2^-$ (resp.\ $e_2^+$) is between $e_1^-$ and $e_1^+$,  while $e_2^+$ (resp.\ $e_2^-$) is between $e_1^+$ and $e_1^-$. 

A cross-free perfect matching of the edges incident to any vertex $f$ always exists. An inductive way to build such a matching is to select one edge $e^- \in \delta^{-}_\Sigma(f)$ entering $f$ and one edge $e^+ \in \delta^{+}_\Sigma(f)$ leaving $f$ such that $e^-$ and $e^+$ are consecutive, match them, delete them and repeat.

After selecting a cross-free perfect matching for each vertex of $\Sigma$, we can define a corresponding decomposition of $\Sigma$ into edge-disjoint directed cycles $C_1, \ldots, C_q$ such that, for each vertex $f$ and each cycle $C_i$ through $f$, the two edges of $C_i$ incident to $f$ are matched together. Hence, $C_1, \ldots, C_q$ is a cross-free cycle decomposition of $\Sigma$.
\end{proof}

Let $C_1$, \ldots, $C_q$ be a cross-free cycle decomposition of $\Sigma$, the support graph of circulation $y_0$. After renumbering if necessary, we may assume that each cycle $C_i$ with $i \leq p$ goes through some vortex face $f_j$ (where $j \in [t]$), and that no cycle $C_i$ with $i > p$ goes through a vortex face.

\begin{figure}
\centering
\begin{tikzpicture}[scale=.35]
\tikzstyle{vtx}=[rectangle,draw,thick,inner sep=2.5pt,fill=white]
\tikzstyle{vtx0}=[rectangle,draw,thick,inner sep=2.5pt,fill=green]
\node[vtx0] (u) at (0,0) {};
\node[vtx] (v1) at (0:4) {};
\node[vtx] (v2) at (20:4) {};
\node[vtx] (v3) at (50:4) {};
\node[vtx] (v4) at (100:4) {};
\node[vtx] (v5) at (135:4) {};
\node[vtx] (v6) at (180:4) {};
\node[vtx] (v7) at (220:4) {};
\node[vtx] (v8) at (290:4) {};
\draw[ultra thick,red,->] (v1) -- (u);
\draw[ultra thick,red,->] (u) -- (v8);
\draw[ultra thick,teal,->] (v7) -- (u);
\draw[ultra thick,teal,->] (u) -- (v2);
\draw[ultra thick,blue,->] (v4) -- (u);
\draw[ultra thick,blue,->] (u) -- (v3);
\draw[ultra thick,orange,->] (v5) -- (u);
\draw[ultra thick,orange,->] (u) -- (v6);
\end{tikzpicture}
\qquad
\qquad
\begin{tikzpicture}[scale=.35]
\tikzstyle{vtx}=[rectangle,draw,thick,inner sep=2.5pt,fill=white]
%\node[vtx] (u) at (0,0) {};
\node[vtx] (v1) at (0:4) {};
\node[vtx] (v2) at (20:4) {};
\node[vtx] (v3) at (50:4) {};
\node[vtx] (v4) at (100:4) {};
\node[vtx] (v5) at (135:4) {};
\node[vtx] (v6) at (180:4) {};
\node[vtx] (v7) at (220:4) {};
\node[vtx] (v8) at (290:4) {};
\draw[ultra thick,red,->] (v1) .. controls ($(u)!0.01!(v1)$)
and ($(u)!0.01!(v8)$) .. (v8);
\draw[ultra thick,teal,->] (v7) .. controls ($(u)!0.01!(v7)$)
and ($(u)!0.01!(v2)$) .. (v2);
\draw[ultra thick,blue,->] (v4) .. controls ($(u)!0.01!(v4)$)
and ($(u)!0.01!(v3)$) .. (v3);
\draw[ultra thick,orange,->] (v5) .. controls ($(u)!0.01!(v5)$)
and ($(u)!0.01!(v6)$) .. (v6);
\end{tikzpicture}
\caption{\label{fig:cross-free} Cross-free cycle decomposition and local perturbation. Edges of the same color belong to the same cycle of the decomposition.}
\end{figure}

The \emph{sketch} is the directed graph $\tilde{\Sigma}$ embedded in $\surf$ with $V(\tilde{\Sigma}) := \{f_1,\ldots,f_t\}$ obtained by taking the union of the directed cycles $C_i$ with $i \leq p$, and slightly perturbing the embedding in a small neighborhood of each vertex $f \in V(D) \setminus \{f_1,\ldots,f_t\}$ in such a way to make all cycles disjoint in the neighborhood. Fig.~\ref{fig:cross-free} illustrates the local perturbation, and Fig.~\ref{figSketch2} illustrates a sketch $\tilde{\Sigma}$. Notice that each directed cycle $C_i$ with $i \leq p$ is either turned to a loop in $\tilde{\Sigma}$ or split in several directed edges of $\tilde{\Sigma}$ forming a directed cycle. Hence, loops of $\tilde{\Sigma}$ correspond to directed cycles in $\Sigma$ and directed edges of $\tilde{\Sigma}$ to directed paths in $\Sigma$.

\begin{figure}
    \begin{center}
        \includegraphics[width=0.6\textwidth]{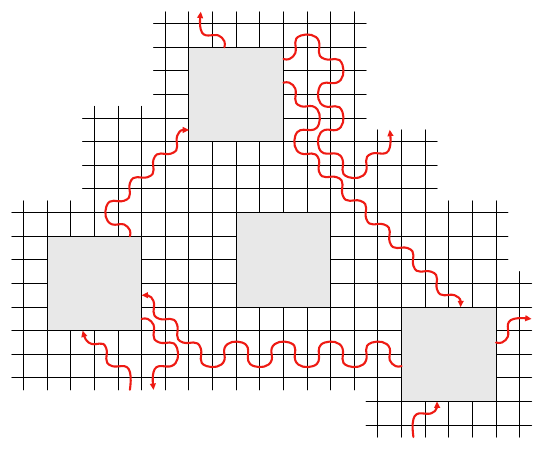}
    \end{center}    
    \caption{\label{figSketch2} Possible sketch (red) for the circulation from Fig.~\ref{figSketch1}.}
\end{figure}

Along with the sketch, we get a decomposition the circulation $y_0$ as a sum $y_{0} = y^{\mathrm{sketch}}_{0} + y^{\mathrm{non-sketch}}_{0}$ of two circulations:
$$
y_0 = \underbrace{\chi^{A(C_1)} + \cdots + \chi^{A(C_p)}}_{= y^{\mathrm{sketch}}_{0}} + \underbrace{\chi^{A(C_{p+1})} + \cdots + \chi^{A(C_q)}}_{= y^{\mathrm{non-sketch}}_{0}}\,.
$$
Notice that the first term correspond to cycles that are represented in the sketch (namely, $C_1$, \ldots, $C_p$), and the second one to cycles that are not (namely, $C_{p+1}$, \ldots, $C_q$). As a subset of $\surf$, the sketch $\tilde{\Sigma}$ is a perturbed version of the support graph of $y^{\mathrm{sketch}}_{0}$, which is a Eulerian subgraph of $\Sigma$. %Notice that each one of these directed graphs is Eulerian.

\subsection{Animated sketches and recursive surface decompositions} \label{sec:animated_sketches}

We point out that, although $\tilde{\Sigma}$ has a bounded number of vertices (namely, $t \le \vortexbd(k)$), we cannot necessarily bound its number of edges by a constant. In our DP, we will follow the construction of a sketch step by step but only remember certain parts of it.
To this end, consider a sketch $\tilde{\Sigma}$, and take an arbitrary linear ordering of its edges, say $\epsilon_1 < \cdots < \epsilon_r$. Together with this linear ordering, $\tilde{\Sigma}$ becomes an \emph{animated sketch}. For each $j \in \{0\} \cup [r]$, we let $\tilde{\Sigma}_j$ denote the subgraph of $\tilde{\Sigma}$ with $V(\tilde{\Sigma}_j) := V(\tilde{\Sigma}) = \{f_1,\ldots,f_t\}$ and $A(\tilde{\Sigma}_j) := \{\epsilon_1,\ldots,\epsilon_j\}$. We call each $\tilde{\Sigma}_j$ a \emph{frame} of the animated sketch $\tilde{\Sigma}$.

Now fix $j \in \{0\} \cup [r]$ and consider the faces of $\tilde{\Sigma}_j$. For technical reasons, we slightly change the definition of the faces of an embedding by taking a small \emph{open} neighborhood of the embedding before removing it from the surface and computing the connected components. Formally, for $X \subseteq \surf$ and $\varepsilon \in \R_{> 0}$, we let $N_\varepsilon(X)$ denote the set of points of $\surf$ that are at distance strictly less than $\varepsilon$ from some point of $X$.\footnote{We measure distances on $\surf$ using any suitable distance function, for instance endowing $\surf$ with a Riemannian structure.}
For $\delta > \varepsilon > 0$ sufficiently small to ensure genericity, we define the \emph{faces} of $\tilde{\Sigma}_j$ as the connected components of $\surf \setminus (N_\delta(\{f_1,\ldots,f_t\}) \cup N_\varepsilon(\{\epsilon_1,\ldots,\epsilon_j\}))$, see Fig.~\ref{fig:local}.\footnote{For the sake of simplicity, we denote by $f_i \in \surf$ the point of $\surf$ corresponding to the dual vertex $f_i$ (which is a vortex face), for $i \in [r]$, and by $\epsilon_i \subseteq \surf$ the simple curve in $\surf$ corresponding to directed edge $\epsilon_i$, for $i \in [r]$.} %\footnote{We take $\varepsilon > 0$ small enough so that $N_\varepsilon(\epsilon_i)$ and $N_\varepsilon(\epsilon_j)$ are disjoint whenever $\epsilon_i$ and $\epsilon_j$ are non-adjacent.}
In general, the embedding of $\tilde{\Sigma}_j$ in $\surf$ is non-cellular since $\tilde{\Sigma}_j$ might have some faces that are not homeomorphic to (closed) disks. 
However, each face of $\tilde{\Sigma}_j$ is a \emph{surface with boundary}, that is, a topological space homeomorphic to a surface from which a bounded number of open disks with disjoint closures are removed. It is a well-known fact that the precise location of the disks in the surface is irrelevant, in the sense that the topological type of the resulting space only depends on the number of disks removed (and of course also on the initial surface).

\begin{figure}[ht]
\centering
\includegraphics[width=6cm]{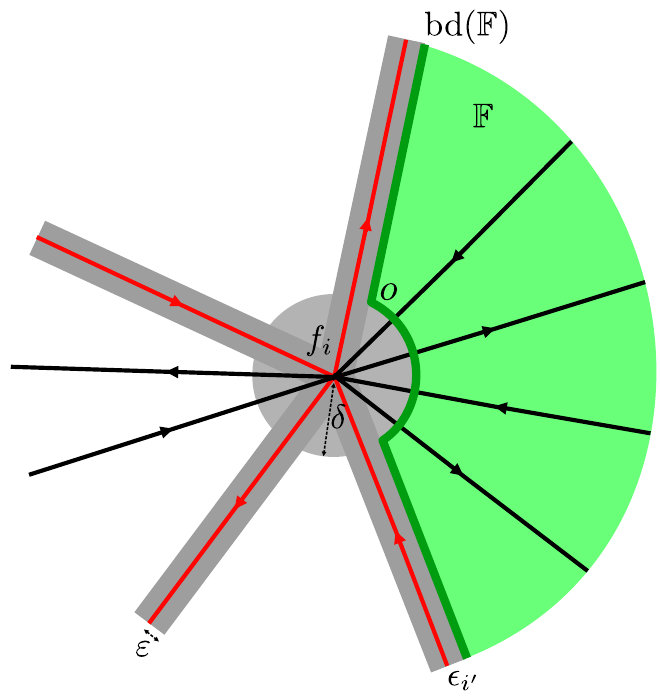}
\caption{Local view of $\tilde{\Sigma}_j$ around some $f_i$, where $i \in [t]$. The area in light green is contained in a face $\mathbb{F}$ of $\tilde{\Sigma}_j$. The dark green curve is $\bd(\mathbb{F})$. The light gray disk is $N_{\delta}(f_i)$ and the medium gray strips are $N_{\varepsilon}(\epsilon_{i'})$ for $i' \leq j$. The point $o$ is a cusp in $\bd(\mathbb{F})$ (see the definition below).}
\label{fig:local}
\end{figure}

Let $\mathbb{F}$ be a surface with boundary (for instance, $\mathbb{F}$ could be a   face of $\tilde{\Sigma}_j$). We denote by $\bd(\mathbb{F})$ the boundary of $\mathbb{F}$, which is a $1$-manifold and more precisely the union of finitely many disjoint simple closed curves. We denote by $\holes(\mathbb{F})$ the number of holes of $\mathbb{F}$, that is, the number of connected components of $\bd(\mathbb{F})$. If $\mathbb{F}$ has a non-empty boundary, then $\holes(\mathbb{F}) \ge 1$. For instance, if $\mathbb{F}$ is a disk then $\mathbb{F}$ has exactly one hole and thus $\holes(\mathbb{F})=1$. Moreover, if $\mathbb{F}$ is homeomorphic to a surface from which $h$ open disks with disjoint closures are removed, then $\holes(\mathbb{F}) = h$. We denote by $\hat{\mathbb{F}}$ the surface (without boundary) obtained from $\mathbb{F}$ by capping each hole with a disk (we sometimes say that $\mathbb{F}$ is \emph{$\hat{\mathbb{F}}$ with $h$ holes}). The Euler genus $\eg(\mathbb{F})$ of $\mathbb{F}$ is defined as the Euler genus of its capped version $\hat{\mathbb{F}}$. %Together with orientability, the Euler genus and number of holes form a complete set of invariants for surfaces with boundary.

For $j = 0$, the frame $\tilde{\Sigma}_j$ has a single face which is simply $\surf$ with $t$ holes. Each time $j$ is increased by $1$, a single edge gets added to the frame. For each $j \in [r]$, edge $\epsilon_j$ intersects a unique face of $\tilde{\Sigma}_{j-1}$, which we denote by $\mathbb{F}_{j-1}$. When $\epsilon_j$ is added to $\tilde{\Sigma}_{j-1}$ to form $\tilde{\Sigma}_j$, the unique face that changes is $\mathbb{F}_{j-1}$. All the other faces remain unchanged. The following result describes the different cases that arise.

%Intuitively, it should be clear that the number of faces either is non-decreasing, and that the number of holes as well as the Euler genus of each face is non-increasing. Moreover, each of these parameters change by at most $1$.

\begin{lemma} \label{lem:topological_op}
Assume the notation above. For each fixed $j \in [r]$, one of the following mutually exclusive cases arises when the edge $\epsilon_j$ is added to frame $\tilde{\Sigma}_{j-1}$. For simplicity, we denote $\epsilon_j$ by $\epsilon$, and $\mathbb{F}_{j-1}$ by $\mathbb{F}$ below. %By clipping $e$, we assume that $e$ is entirely contained in $\mathbb{F}$, that the endpoints of $e$ are two distinct points of $\bd(\mathbb{F})$ and that the other points of $e$ are all contained in $\inter(\mathbb{F})$, the interior of $\mathbb{F}$.

\begin{description}
\item[Merge operation:] $\epsilon$ connects two distinct holes of $\mathbb{F}$. Removing $N_\varepsilon(\epsilon)$ from face $\mathbb{F}$ does not disconnect the face, and merges the two holes that are connected by $\epsilon$. This operation decreases the number of holes of $\mathbb{F}$ by $1$, and does not change its Euler genus.

\item[Simplify operation:] $\epsilon$ intersects a single hole and does not separate $\mathbb{F}$. Let $\hat{\epsilon}$ be any simple closed curve in $\hat{\mathbb{F}}$ arising by completing $\epsilon$ through the hole. Removing $N_\varepsilon(\epsilon)$ from $\mathbb{F}$ decreases the Euler genus by $1$ if $\hat{\epsilon}$ is $1$-sided, or $2$ if $\hat{\epsilon}$ is $2$-sided. In the first case, the number of holes of $\mathbb{F}$ remains the same and in the second case it goes up by $1$. 

\item[Split operation:] $\epsilon$ intersects a single hole and separates $\mathbb{F}$. Removing $N_\varepsilon(\epsilon)$ from $\mathbb{F}$ splits the face into two surfaces with boundary $\mathbb{F}'_1$ and $\mathbb{F}'_2$ such that $\eg(\mathbb{F}'_{1}) + \eg(\mathbb{F}'_{2}) = \eg(\mathbb{F})$, $\holes(\mathbb{F}'_1) + \holes(\mathbb{F}'_2) = \holes(\mathbb{F}) + 1$ and $\holes(\mathbb{F}'_i) \geq 1$ for $i \in [2]$.
\end{description}
\end{lemma}

\begin{proof}
Let $h := \holes(\mathbb{F})$, thus $\mathbb{F}$ is (homeomorphic to) $\hat{\mathbb{F}}$ with $h$ holes, and let $\Delta_1$, \ldots, $\Delta_h$ disjoint (closed) disks in $\hat{\mathbb{F}}$ such that $\mathbb{F}$ is $\hat{\mathbb{F}} \setminus (\inter(\Delta_1) \cup \cdots \cup \inter(\Delta_h))$.

First, assume that $\epsilon$ connects two different components of $\bd(\mathbb{F})$, say $\bd(\Delta_1)$ and $\bd(\Delta_2)$. In $\hat{\mathbb{F}}$, the union of $N_\varepsilon(\epsilon)$, $\inter(\Delta_1)$ and $\inter(\Delta_2)$ equals $\inter(\Delta')$ for some disk $\Delta'$ disjoint from $\Delta_3$, \ldots, $\Delta_h$. Hence, $\mathbb{F} \setminus N_\varepsilon(\epsilon) = \hat{\mathbb{F}} \setminus (\inter(\Delta_1) \cup \cdots \cup \inter(\Delta_h) \cup N_\varepsilon(\epsilon)) = \hat{\mathbb{F}} \setminus (\inter(\Delta') \cup \inter(\Delta_3) \cup \cdots \cup \inter(\Delta_h))$. It follows that $\mathbb{F} \setminus N_\varepsilon(\epsilon)$ is homeomorphic to $\hat{\mathbb{F}}$ with $h-1$ holes. Hence, the first case of the statement holds.

From now on, assume that $\epsilon$ intersects a single component of $\bd(\mathbb{F})$, say $\bd(\Delta_1)$. Notice that $\epsilon$ separates $\mathbb{F}$ if and only if $\hat{\epsilon}$ separates $\hat{\mathbb{F}}$. In order to understand the effect of removing $N_\varepsilon(\epsilon)$ from $\mathbb{F}$, we remove $N_\varepsilon(\hat{\epsilon})$ from $\hat{\mathbb{F}}$. This is a standard operation, see for instance \cite[Section 7.4]{Armstrong83} (removing $N_\varepsilon(\epsilon)$ from $\hat{\mathbb{F}}$, in case $\hat{\epsilon}$ does not separate $\hat{\mathbb{F}}$, is the first step of what is known as a \emph{surgery}). Notice that $N_\varepsilon(\hat{\epsilon})$ is either an open cylinder or an open M\"obius strip, which leads to the following two cases.\medskip

\noindent\emph{Case 1. $N_\varepsilon(\hat{\epsilon})$ is an open cylinder.} 
If $\hat{\epsilon}$ separates $\hat{\mathbb{F}}$, then removing $N_\varepsilon(\hat{\epsilon})$ from $\hat{\mathbb{F}}$ produces a topological space that is the disjoint union $\hat{\mathbb{F}}_1 \cup \hat{\mathbb{F}}_2$ of two surfaces with boundary, each with one hole. Removing $N_\varepsilon(\hat{\epsilon})$ from $\Delta_1$ splits it into two disks $\Delta_{1,1}$ and $\Delta_{1,2}$, where $\Delta_{1,i}$ is contained in $\hat{\mathbb{F}}_i$ for each $i \in [2]$. Removing $\inter(\Delta_{1})$ from $\hat{\mathbb{F}}_i$ just makes the hole of $\hat{\mathbb{F}}_i$ larger, but does not change its topological type. Each one of $\Delta_2$, \ldots, $\Delta_h$ is either in $\hat{\mathbb{F}}_1$ or in $\hat{\mathbb{F}}_2$. We conclude that the third case of the statement holds.

If $\hat{\epsilon}$ does not separate $\hat{\mathbb{F}}$, then removing $N_\varepsilon(\hat{\epsilon})$ from $\hat{\mathbb{F}}$ yields a surface with boundary $\hat{\mathbb{F}}'$ with $\holes(\hat{\mathbb{F}}') = 2$ and $\eg(\hat{\mathbb{F}}') = \eg(\hat{\mathbb{F}}) - 2$. Again, disk $\Delta_1$ gets split into two disjoint disks $\Delta_{1,1}$ and $\Delta_{1,2}$, this time both in the same component. Removing $\inter(\Delta_{1})$ from $\hat{\mathbb{F}}'$ just makes the two holes of $\hat{\mathbb{F}}'$ larger, but nothing else. Here, the second case of the statement holds: we have a simplify operation where $\hat{\epsilon}$ is $2$-sided. Going from $\mathbb{F}$ to $\mathbb{F} \setminus N_\varepsilon(\epsilon)$, the number of holes goes up by $1$.\medskip

\noindent\emph{Case 2. $N_\varepsilon(\hat{\epsilon})$ is a M\"obius strip.} Then $\hat{\epsilon}$ does not separate $\hat{\mathbb{F}}$. Removing $N_\varepsilon(\hat{\epsilon})$ from $\hat{\mathbb{F}}$ yields a surface with boundary $\hat{\mathbb{F}}'$ with $\holes(\hat{\mathbb{F}}') = 1$ and $\eg(\hat{\mathbb{F}}') = \eg(\hat{\mathbb{F}}) - 1$. Once more, $\Delta_1$ gets split into two disjoint disks $\Delta_{1,1}$ and $\Delta_{1,2}$. Removing $\inter(\Delta_{1,1})$ and $\inter(\Delta_{1,2})$ from $\hat{\mathbb{F}}'$ enlarges the unique hole of $\hat{\mathbb{F}}'$. The second case of the statement holds: we have a simplify operation where $\hat{\epsilon}$ is $1$-sided. We see that $\holes(\mathbb{F} \setminus N_\varepsilon(\epsilon)) = \holes(\mathbb{F})$.
\end{proof}

\begin{figure}[ht]
\centering
\includegraphics[width=0.7\textwidth]{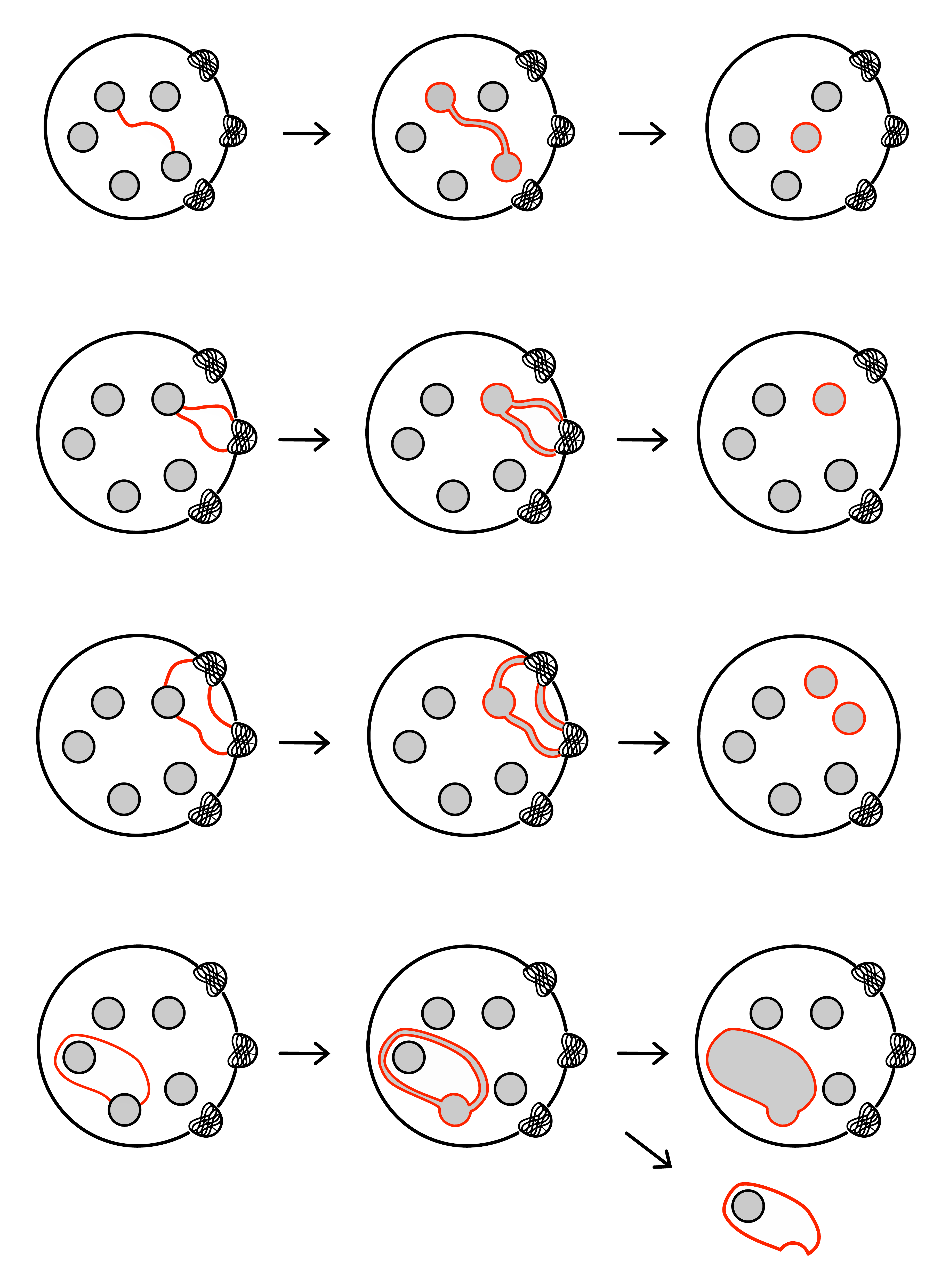}
\caption{The operations arising when an edge is added to a frame. From top to bottom: merge, simplify (1-sided), simplify (2-sided), and split.}
\label{fig:ops}
\end{figure}

In our DP, we will capture combinatorially what is happening topologically on the surface when the sketch $\tilde{\Sigma}$ is built one edge at a time. Each possible animated sketch $(\tilde{\Sigma},<)$ has a corresponding out-arborescence $A = A(\tilde{\Sigma},<)$ contained in the dependence graph of the DP, which is a directed acyclic graph. The out-arborescence $A$ is meant to represent the recursive decomposition of $\surf$ defined by the animated sketch. The vertices of $A$ bijectively correspond to the faces $\mathbb{F}_j$, where $j \in [r]$, in a min-heap fashion. That is, if the face corresponding to a non-root vertex is $\mathbb{F}_j$ then the label of its parent is $\mathbb{F}_{j'}$ for some $j' < j$. The root of $A$ corresponds to $\mathbb{F}_0$, the unique face of $\tilde{\Sigma}_0$ (that is, $\surf$ with $t$ holes). The inner vertices of $A$ correspond to the faces of the frames $\tilde{\Sigma}_j$ with $j < r$ that are modified in a later frame. The leaves of $A$ correspond to the faces of the final frame $\tilde{\Sigma}_r = \tilde{\Sigma}$. Each non-leaf vertex of $A$ has one or two children, depending on which case of \Cref{lem:topological_op} holds for the corresponding face $\mathbb{F} = \mathbb{F}_{j-1}$. In case of a merge or simplify operation, the vertex has one child. In case of a split operation, the vertex has two children.

In the next two sections, we develop the definitions of several combinatorial objects that are used in our algorithm, and inspired by animated sketches and the corresponding recursive surface decompositions discussed in this section. We follow this with a section describing and analyzing the DP.

\subsection{Sectors, clocks and windows} \label{sec:sectors_and_clocks}

%Recall that every edge $e$ of the sketch represents a directed path or cycle in the decomposition of $y_0$. While $e$ tells us which vortex face(s) are connected by this path or cycle, it does not contain any information about the actual edges of the path or cycle, not even the ones that are incident to the vortex faces. In order to process the latter information, let us introduce the following notation.

We define combinatorial objects that will allow us to encode (relevant parts of) the faces of the frames $\tilde{\Sigma}_0$, $\tilde{\Sigma}_1$, \ldots, $\tilde{\Sigma}_r$. Right after stating the definitions, we relate them to the topological considerations of \Cref{sec:sketches} and \Cref{sec:animated_sketches}.

A \emph{proper sector} is a sequence $\sigma=(e_0;e_1,e_2,\ldots,e_q;e_{q+1})$ of directed edges of $D$ ($q \in \Z_{\geqslant 0}$), all incident to some common vortex face $f_i \in V(D)$, and consecutive in some cyclic ordering of the edges incident to $f_i$. The edges $e_1, \ldots, e_q$ are called the \emph{internal} edges and are assumed to be distinct. 
The edges $e_0$ and $e_{q+1}$ are called \emph{delimiters}. We assume that these are distinct from $e_1, \ldots, e_q$, thus no delimiter can be an internal edge. Edge $e_0$ is the \emph{left delimiter}, and edge $e_{q+1}$ the \emph{right delimiter}. Possibly, $e_0 = e_{q+1}$, in which case all the edges incident to $f_i$ distinct from the delimiter appear as internal edges. A \emph{full sector} $\sigma=(;e_1,e_2,\ldots,e_q;)$ is defined similarly. There is no delimiter, and all the edges incident to $f_i$ appear.

A \emph{clock} is any sequence $C = (\sigma_1,\ldots,\sigma_\ell)$ of internally disjoint (proper or full) sectors. 

A \emph{window} is a pair $\mathcal{W} = (\mathcal{C}(\mathcal{W}),\eg(\mathcal{W}))$, where $\mathcal{C}(\mathcal{W})$ is a set of disjoint clocks, and $\eg(\mathcal{W})$ is an integer in $\{0,\ldots,g\}$, where $g \le \vortexbd(k)$ is the Euler genus of $\surf$. If $\mathcal{W}$ is a window, we let $\Del(\mathcal{W})$ denote the set of all delimiters in $\mathcal{W}$. That is, $\Del(\mathcal{W})$ is the set of edges $e$ that appear as delimiters of at least one sector inside a clock of $\mathcal{W}$. We let $\del(\mathcal{W})$ denote the total number of delimiters in $\mathcal{W}$, and $||\mathcal{W}||$ denote the total number of edges in $\mathcal{W}$ (both delimiters and internal edges). Also, we let $\holes(\mathcal{W}) := |\mathcal{C}(\mathcal{W})|$ denote the number of clocks of $\mathcal{W}$. It will follow from the analysis done later that $\holes(\mathcal{W}) \in \{1,\ldots,t+g\}$.

Windows are meant to represent faces $\mathbb{F} = \mathbb{F}_j$ of frames $\tilde{\Sigma}_j$. 

The clocks of a window represent the different holes of the corresponding face $\mathbb{F}$, that is, the different components of $\bd(\mathbb{F})$, see Fig.~\ref{fig:window}.

The interpretation of the sectors within a clock is as follows. Let $\vartheta$ denote a component of $\bd(\mathbb{F})$. %Notice that $\vartheta$ is a $2$-sided simple closed curve in $\surf$.
Thanks to genericity, each point of $\vartheta$ is in at most one of the sets $\bd(N_{\delta}(\{f_1\}))$, \ldots, $\bd(N_{\delta}(\{f_t\}))$, and in at most one of the sets $\bd(N_{\varepsilon}(\epsilon_1))$, \ldots, $\bd(N_{\varepsilon}(\epsilon_j))$, where $\epsilon_1$, \ldots, $\epsilon_j$ denote the edges of frame $\tilde{\Sigma}_j$. We call \emph{cusp} any point of $\vartheta$ that is at the same time at distance $\delta$ from some $f_i$ ($i \in [t]$) and at distance $\varepsilon$ from some $\epsilon_{i'}$ ($i' \in [j]$). An example of a cusp can be found in \Cref{fig:local}. By removing the cusps, we split $\vartheta$ into a finite number of connected components which we call \emph{pieces}. The pieces are of two types: \emph{vertex pieces} are those which are contained in $\bd(N_{\delta}(\{f_i\}))$ for some $i \in [t]$, and \emph{edge pieces} those which are contained in $\bd(N_{\delta}(\epsilon_{i'}))$ for some $i' \in [j]$. In the combinatorial representation of $\vartheta$, we essentially ignore the edge pieces and focus on the vertex pieces, which correspond to the sectors within the clock representing $\vartheta$. 

A vertex piece may take the whole boundary of some disk $N_\delta(f_i)$, in which case the corresponding sector is full. Otherwise, vertex pieces are homeomorphic to open intervals. Consider such a vertex piece $\pi$ and its two endpoints, say $p_1$ and $p_2$, which are distinct cusps of $\vartheta$. There exist unique indices $i_1, i_2 \in [j]$ such that $p_1$ (resp.\ $p_2$) is at distance $\varepsilon$ from $\epsilon_{i_1}$ (resp.\ $\epsilon_{i_2}$). Notice that $i_1 = i_2$ is possible. Recall that sketch $\tilde{\Sigma}$ closely follows the support graph of $y^{\mathrm{sketch}}_{0}$, which is itself a subgraph of $D$. There are well-defined edges of $D$ that are extended by $\epsilon_{i_1}$ and $\epsilon_{i_2}$. These are the delimiters of the sectors, and were denoted by $e_0$ and $e_{q+1}$ above. The internal edges of the sector are the edges of $D$ that are intersected by $\pi$.

\begin{figure}
    \begin{center}
        \includegraphics[width=0.5\textwidth]{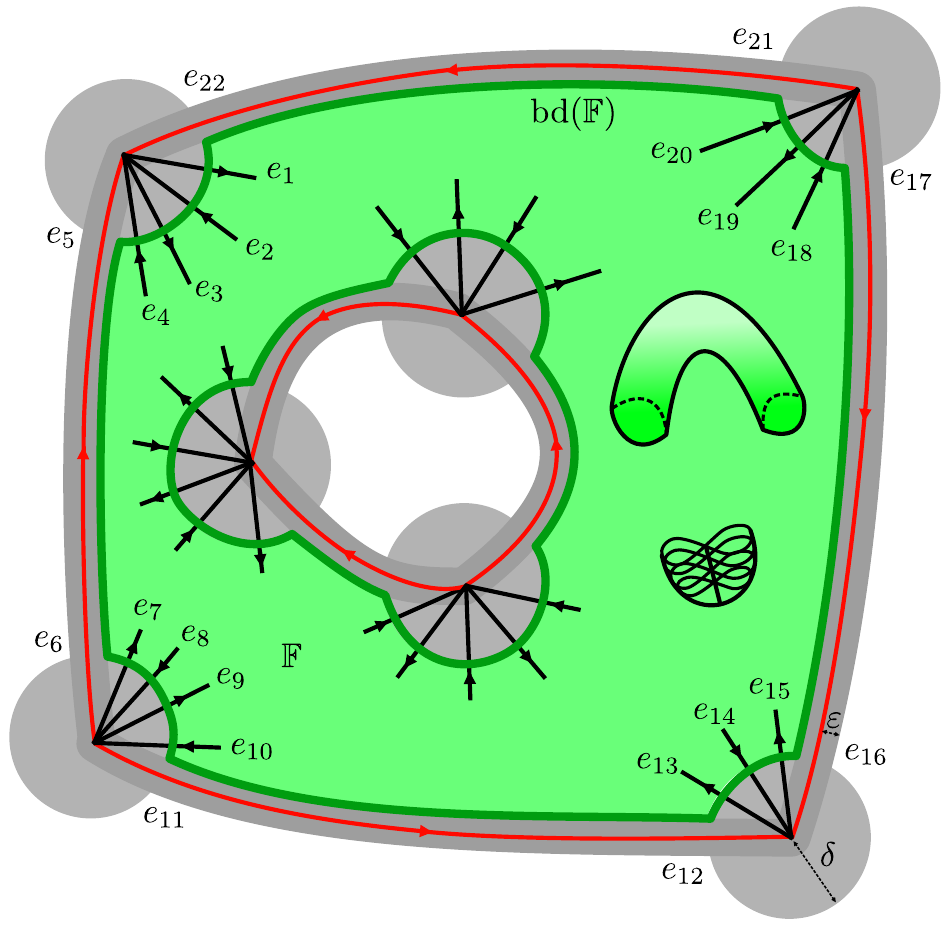}
    \end{center}    
    \caption{\label{fig:window}A window (green) with two clocks, and Euler genus $3$. The displayed clock is $((e_{22};e_1,\ldots,e_4;e_5), (e_6;e_7,\ldots,e_{10};e_{11}), (e_{12};e_{13},e_{14},e_{15};e_{16}),(e_{17};e_{18},e_{19},e_{20};e_{21}))$.}
\end{figure}

\subsection{Operations on windows} \label{sec:operations}

The aim of this section is to define combinatorial versions of the merge, simplify and split operations introduced in \Cref{lem:topological_op}. We begin by discussing some topological and algorithmic aspects. Consider the addition of edge $\epsilon := \epsilon_j$ to frame $\tilde{\Sigma}_j$, inside face $\mathbb{F} := \mathbb{F}_{j-1}$. %As before, we clip $e$ to a subset of $\mathbb{F}$ starting and ending at distinct points of $\bd(\mathbb{F})$, and internally disjoint from $\bd(\mathbb{F})$. 
By removing $N_{\varepsilon}(\epsilon)$ from $\mathbb{F}$, we obtain either one new face $\mathbb{F}'$ (for merge or simplify operations) or two new faces $\mathbb{F}'_1$ and $\mathbb{F}'_2$ (for split operations).

Let $\vartheta_1$ and $\vartheta_2$ denote the components of $\bd(\mathbb{F})$ intersected by $\epsilon$ (we allow $\vartheta_1 = \vartheta_2$ for simplify and split operations). For each $\vartheta_i$, pick a cusp $o_i$ and a direction of travel of $\vartheta_i$. Then, consider the vertex pieces of $\vartheta_i$ in the order in which they are visited when one goes around $\vartheta_i$ from $o_i$ in the chosen direction of travel. Combinatorially, these vertex pieces are represented by the sectors of the corresponding clock, and the ordering of the pieces is recorded in the clock. Removing $N_{\varepsilon}(\epsilon)$ from $\mathbb{F}$ changes the vertex pieces of each $\vartheta_i$ in certain ways, which are described below. Before delving more deeply into this, we discuss an important detail related to the linear ordering of the edges of the animated sketch.

A key observation for setting up our DP is that most of our operations decrease the quantity $\holes(\mathbb{F}) + \eg(\mathbb{F})$. More precisely, we have $\holes(\mathbb{F}') +\eg(\mathbb{F}') = \holes(\mathbb{F}) +\eg(\mathbb{F}) - 1$ for merge and simplify operations. For split operations, in all cases except one, we have $\holes(\mathbb{F}'_i) +\eg(\mathbb{F}'_i) \le \holes(\mathbb{F}) +\eg(\mathbb{F}) - 1$ for all $i \in [2]$. The only exception arises when one of the two connected components of $\mathbb{F} \setminus N_{\varepsilon}(\epsilon)$ is a disk. In that case, we have $\holes(\mathbb{F'}_i) = 1$ and $\eg(\mathbb{F}'_i) = 0$ for some $i \in [2]$, and hence $\holes(\mathbb{F'}_{3-i}) + \eg(\mathbb{F'}_{3-i}) = \holes(\mathbb{F}) + \eg(\mathbb{F})$ for the other connected component.

In order to avoid building an exponentially large DP table, we have to carefully deal with split operations where one of the two resulting faces is a disk. Toward this aim, we take advantage of the fact that we encode holes of $\mathbb{F}$ by  sequences (which are linearly ordered, with well-defined initial and final elements), and constrain the linear ordering of the edges of the animated sketch. 

Assume that $\epsilon$ intersects a single hole and separates $\mathbb{F}$. Let $\vartheta = \vartheta_1 = \vartheta_2$ denote the component of $\bd(\mathbb{F})$ intersected by $\epsilon$. Let $o = o_1 = o_2$ denote the cusp serving as the origin of $\vartheta$. Let $p$ denote the first point of $\epsilon$ encountered when leaving $o$ in the chosen direction of travel. We choose the linear ordering $<$ of the edges of $\tilde{\Sigma}$ in such a way that no edge $\epsilon'$ with $\epsilon < \epsilon'$ intersects the part of $\vartheta$ between $o$ and $p$ (we say that these points are to the \emph{left} of $p$). At the combinatorial level, this will be achieved by \emph{deleting} the corresponding edges in the clock that encodes $\vartheta$.

Let $\mathcal{W}$ be a window, and let $C$ be a clock. Let $e$ denote a directed edge of $D$. If $e$ is an internal edge of (one of the sectors of) $C$, we let $Ce$ denote the clock obtained by deleting all the edges and sectors to the right of $e$, and making $e$ the right delimiter of the sector it belongs to. The clock $eC$ is defined similarly. We let $C^{-1}$ denote the clock $C$ read backwards (from right to left). If $C_1$, \ldots, $C_p$ are internally disjoint clocks, we denote by $(C_1,\ldots,C_p)$ the clock obtained by chaining them.

Let $C$ and $C'$ be clocks of $\mathcal{W}$ (possibly $C = C'$), let $e$ be an internal edge of $C$, and $e'$ be an internal edge of $C'$ (possibly $e = e'$, but only if $C \neq C'$). The merge and simplify operations yield a new window $\mathcal{W}'$. The split operation yields two new windows $\mathcal{W}'_1$ and $\mathcal{W}'_2$. These are defined as follows, see \Cref{fig:ops,figOperationsCombinatorial}. We point out that the window(s) resulting from the operations are not uniquely defined. However, there are a bounded number of choices for what they can be.

\begin{figure}[ht]
\centering
\begin{tabular}{c@{\qquad}c}
\includegraphics[width=0.4\textwidth]{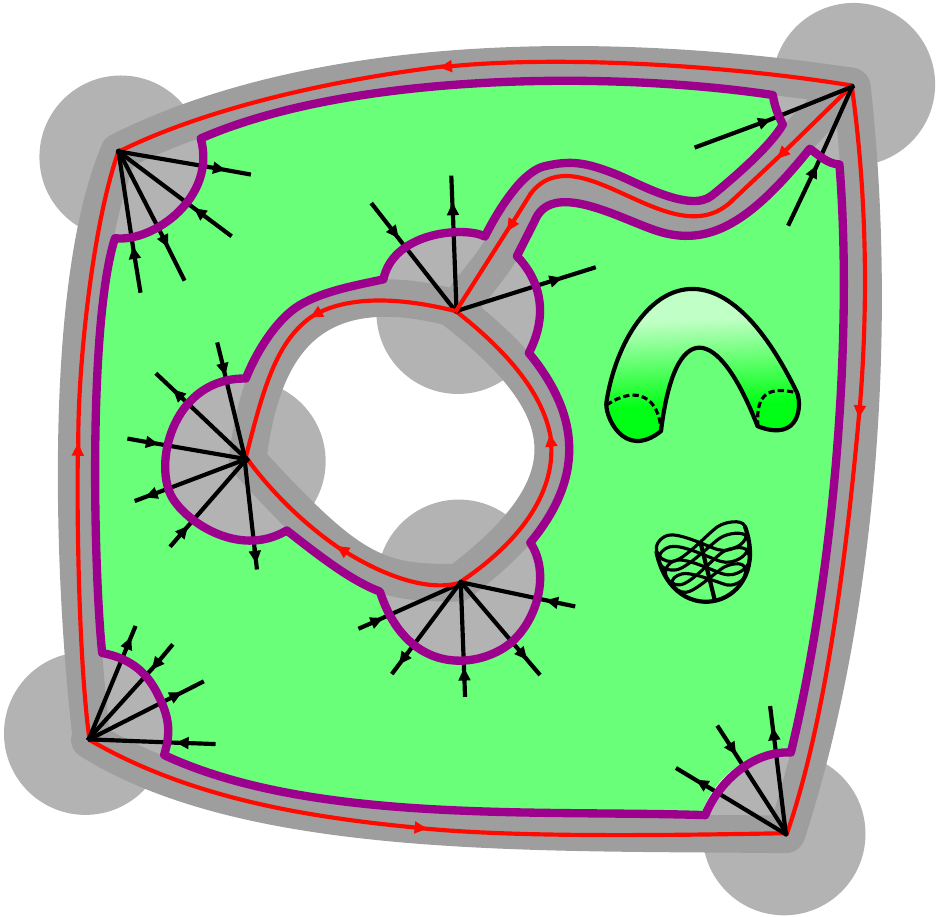} 
&\includegraphics[width=0.4\textwidth]{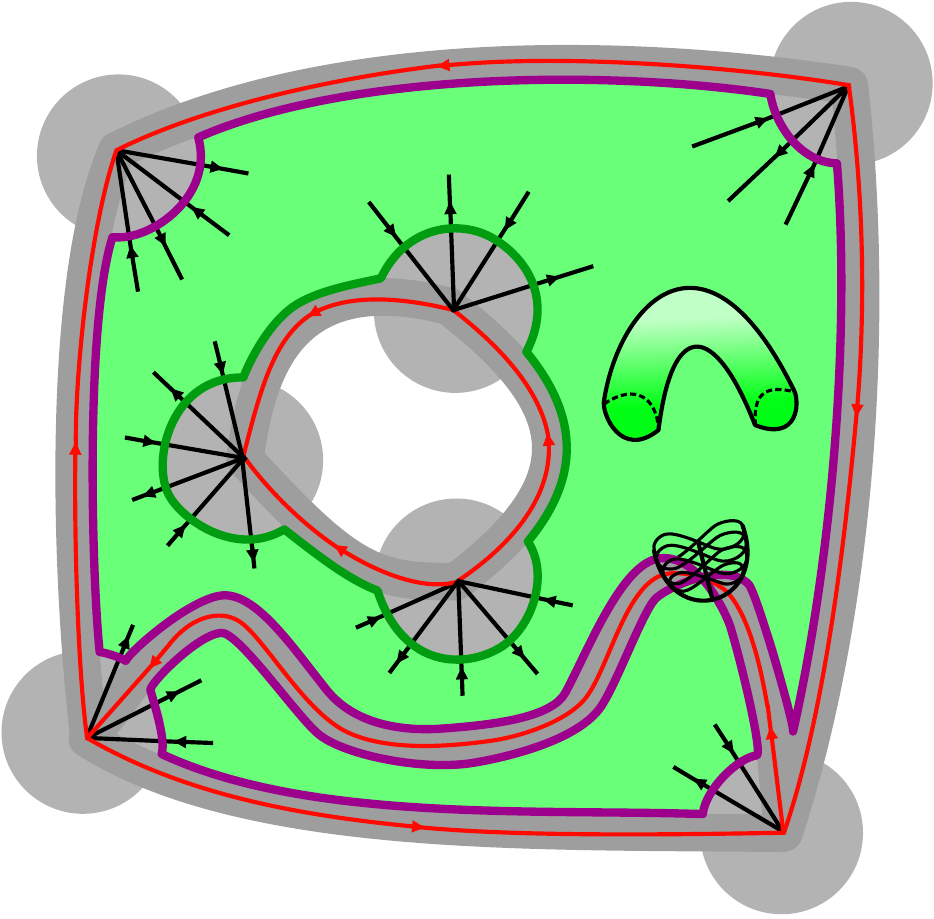}\\
\includegraphics[width=0.4\textwidth]{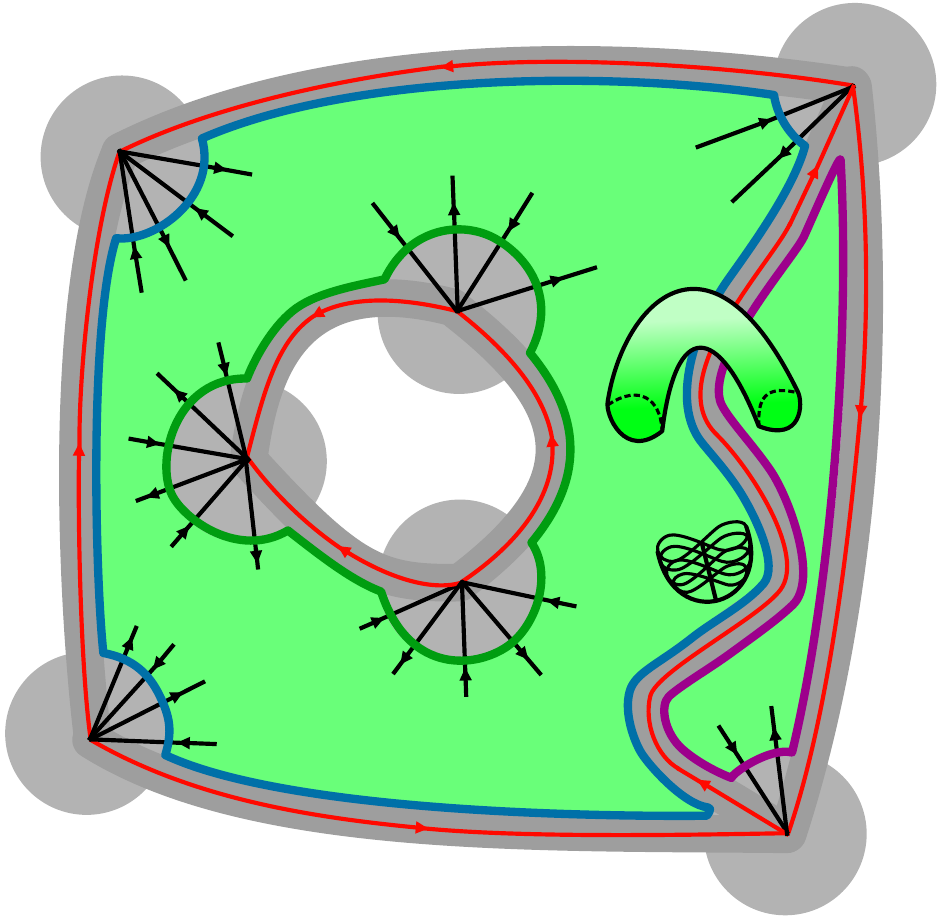} 
&\includegraphics[width=0.4\textwidth]{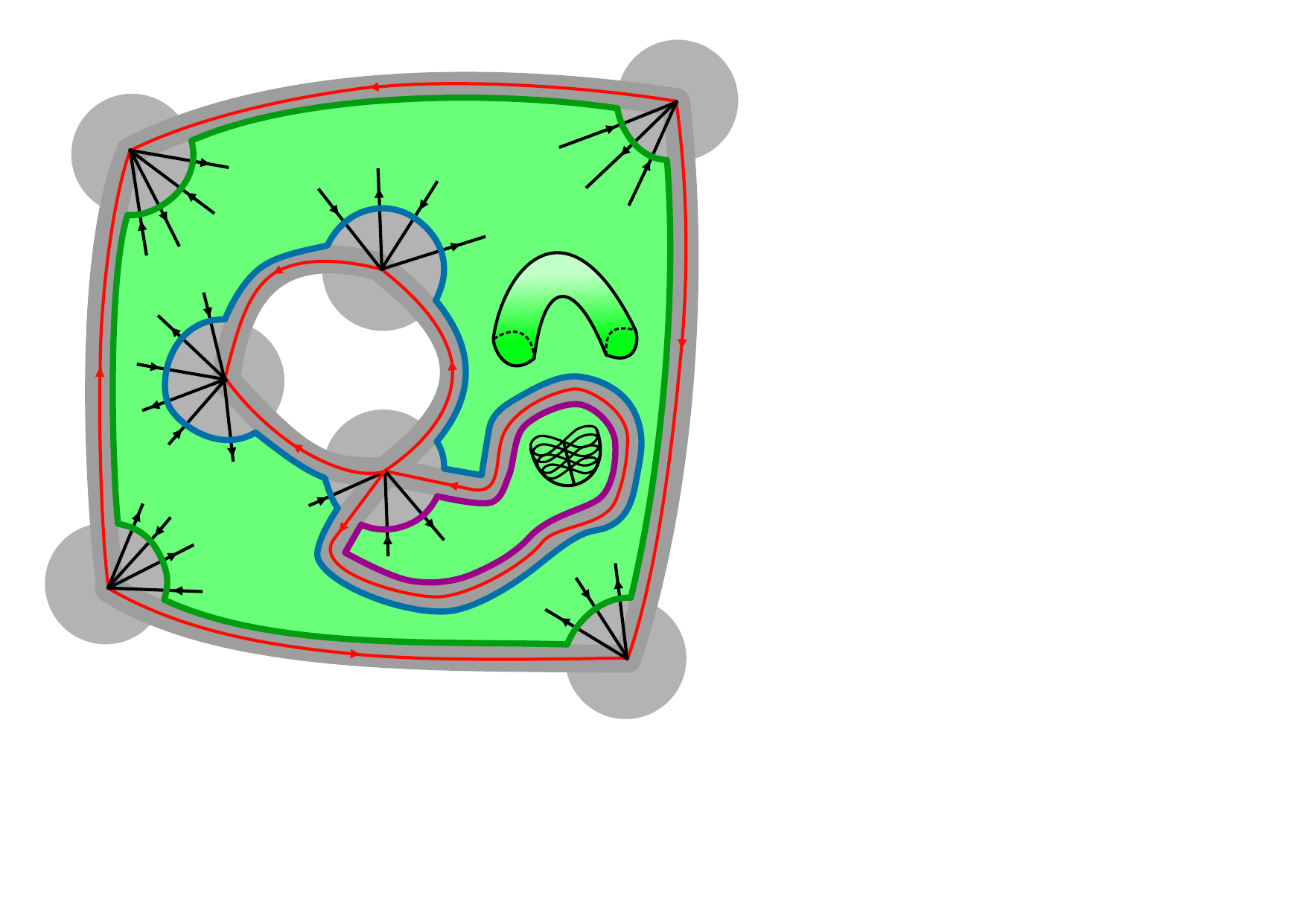}\\
\end{tabular}
\caption{\label{fig:ops}Operations performed on the window of \Cref{fig:window}. From left to right: merge, simplify ($1$-sided), simplify ($2$-sided), split. The new windows are depicted in magenta and blue.}
\end{figure}

\begin{description}
\item[Merge operation (combinatorial)] Assume $C \neq C'$. We \emph{merge} $C$ and $C'$ through $e$ and $e'$ by replacing $C$ and $C'$ by a single clock that is either $(C e,e'C',C'e',eC)$ or $(C e,(C'e')^{-1},(e' C')^{-1},eC)$. Hence, $\holes(\mathcal{W}') = \holes(\mathcal{W}) - 1$. We let $\eg(\mathcal{W}') := \eg(\mathcal{W})$.

\item[Simplify operation (combinatorial)] Assume that $C = C'$ and $e$ is strictly to the left of $e'$ in clock $C$. We \emph{simplify} $C$ through $e$ and $e'$ by replacing $C$ either by the single clock $(Ce,(eCe')^{-1},e'C)$, or by the two clocks $(Ce,e'C)$ and $(eCe')$. In the first case, we have $\holes(\mathcal{W}') = \holes(\mathcal{W})$ and let $\eg(\mathcal{W}') := \eg(\mathcal{W}) - 1$ and in the second case we have $\holes(\mathcal{W}') = \holes(\mathcal{W})+1$ and let $\eg(\mathcal{W}') := \eg(\mathcal{W}) - 2$.

\item[Split operation (combinatorial)] Assume that $C = C'$ and $e$ is strictly to the left of $e'$ in clock $C$. We \emph{split} $C$ through $e$ and $e'$ by replacing clock $C$ by the two clocks $C''_1 := e'C$ and $C''_2 := eCe'$, and partitioning the resulting window into two (non-empty) windows $\mathcal{W}'_1$ and $\mathcal{W}'_2$, one containing $C''_1$ and the other containing $C''_2$. Hence, $\holes(\mathcal{W}'_1) + \holes(\mathcal{W}'_2) = \holes(\mathcal{W}) + 1$ and $\holes(\mathcal{W}'_i) \ge 1$ for all $i \in [2]$. Notice that the sub-clock $Ce$ gets deleted in the process, in accordance to our precedence rule for animated sketches. We let $\eg(\mathcal{W}'_{1})$ and  $\eg(\mathcal{W}'_{2})$ be nonnegative integers such that $\eg(\mathcal{W}'_{1}) + \eg(\mathcal{W}'_{2}) = \eg(\mathcal{W})$.
\end{description}

As expected, no operation increases $\holes(\mathcal{W}) + \eg(\mathcal{W})$. We formally record this in the following result.

\begin{lemma}\label{lem:h+eg}
For each merge or simplify operation, we have $\holes(\mathcal{W}') + \eg(\mathcal{W}') = \holes(\mathcal{W}) + \eg(\mathcal{W}) - 1$. Moreover, for each split operation and $i \in [2]$, we have $\holes(\mathcal{W}'_i) + \eg(\mathcal{W}'_i) \le \holes(\mathcal{W}) + \eg(\mathcal{W})$.
\end{lemma}

\begin{proof}
The first part of the statement follows directly from the definition of the merge and simplify operations. For any split operation, we have 
$$
\holes(\mathcal{W}'_1) +  \eg(\mathcal{W}'_{1}) + \holes(\mathcal{W}'_2) + \eg(\mathcal{W}'_{2}) = \holes(\mathcal{W}) + \eg(\mathcal{W}) + 1\,.
$$
Since we also have $\holes(\mathcal{W}'_i) \ge 1$ for all $i \in [2]$, the second part of the statement follows.
\end{proof}

\begin{figure}[ht!]
    \centering
    \includegraphics[width=0.8\textwidth]{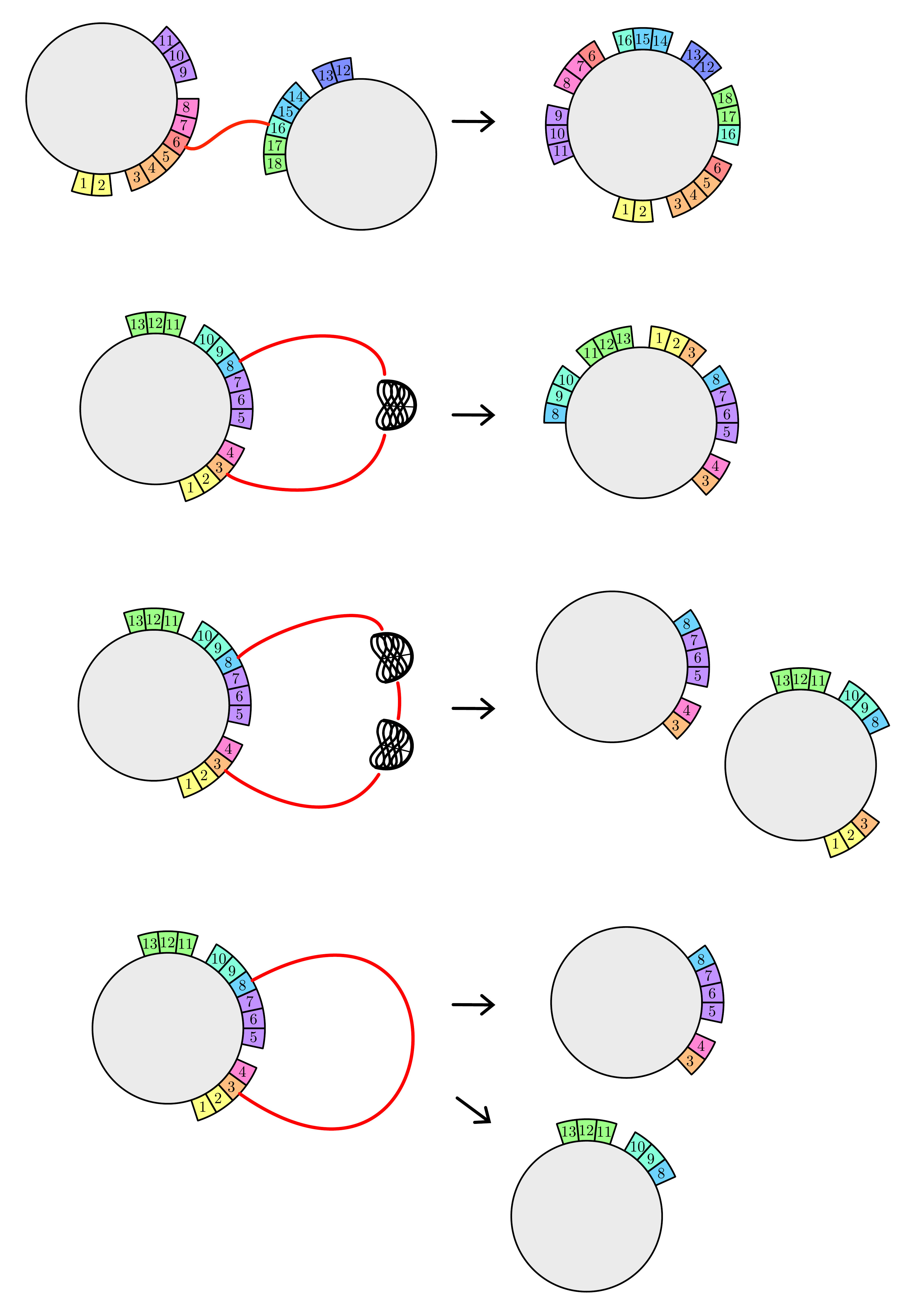}
    \caption{\label{figOperationsCombinatorial}Combinatorial operations on windows. From top to bottom: merge, simplify (1-sided), simplify (2-sided), and split.}
\end{figure}

\subsection{Dynamic program}

%As before, let $t$ denote the number of vortices of $G$. We write $t = t_0 + t_1$, where $t_0$ is the number of vortices such that the corresponding enlarged vortex is bipartite, and $t_1$ is the number of vortices such that the corresponding enlarged vortex is non-bipartite. After renumbering if necessary, we may assume that $G_1$, \ldots, $G_{t_1}$ are non-bipartite and $G_{1+t_1}$, \ldots, $G_{t_0 + t_1}$ are bipartite. For each $i \in \{1,\ldots,t_1\}$, let $r_i$ be a root vertex and $L_{0,i}$ and $L_i$ be odd closed walks as in Lemma~\ref{lem:composing_solutions_bis}.
%
%Let $y_0 \in \Z^{E(G_0)}$. For the sake of simplicity, we redefine $\omega(y_0)$ to be the vector $\Z_2 \times \Z^{g-1} \times \Z^{t_1}$ defined as
%$$
%\omega(y_0) := (\omega_C(y_0) \ (\mathrm{mod} \, 2),\ \omega_{W_1}(y_0), \dotsc, \omega_{W_{g-1}}(y_0),\ \omega_{L_{0,1}}(y_0),\ldots,\omega_{L_{0,t_1}}(y_0))\,.
%$$
%As before, $g$ denotes the Euler genus of $\surf$. Compared to our previous definition, we added $t_1$ components to $\omega(y_0)$ in order to express the constraints $x(r_1) = 0$, \ldots, $x(r_{t_1}) = 0$. The solutions $y_0 \in \mathcal{Y}(G_0)$ satisfying these constraints are the integer circulations $y_0 \in \Z^{A(D)}_{\ge 0}$ such that $\omega(y_0) = (1,\mathbf{0},\mathbf{1})$.

This section describes a strongly-polynomial time algorithm for computing a minimum $c$-cost slack vector of $G$. Recall that $G$ is a graph as in case (II) of \Cref{cor:structure_DP} and that $c \in \Q^{E(G)}_{\ge 0}$ are given edge costs. 

\subsubsection{Cutsets and local optimization in vortices} 

Consider a vortex $V_i = (G_i,\Omega_i)$, where $i \in [t]$. Let $u_1$, \ldots, $u_n$ denote the vertices of $\Omega_i$, enumerated consecutively. By assumption, $u_1$, \ldots, $u_n$ is also a consecutive enumeration of all the vertices on the boundary of face $f_i$. Each directed edge $e \in A(D)$ incident to $f_i$ corresponds to a unique edge $u_ju_{j+1}$ of the boundary cycle of $G_i$. Below, we do not always distinguish between a dual edge in $\delta^+(f_i) \cup \delta^-(f_i)$ and its corresponding primal edge in the boundary cycle of $G_i$.

Let $(X_1,\ldots,X_n)$ denote a linear decomposition of $V_i$ with adhesion at most $\adhesionbd(k) + 2$. The set $X_i$ was called a \emph{bag} and denoted by $X(u_i)$ in \Cref{sec:intro}. Here, we prefer to use the shorter notation $X_i$. Recall that for every $v \in V(G_i)$, the set $\{j \in [n] : X_j \ni v\}$ is an (integer) interval. Let $e := u_ju_{j+1}$ denote any edge of $G_0 \cap G_i$, with $j < n$. We define
$$
Y(e) := (X_{j-1} \cap X_{j}) \cup (X_j \cap X_{j+1}) \cup (X_{j+1} \cap X_{j+2}) \cup \{u_j,u_{j+1}\}\,,
$$ 
where $X_0 := X_{n+1} := \varnothing$. The vertices of $G_i - Y(e)$ can be partitioned into four parts, namely, the vertices whose interval ends before index $j$, the vertices whose interval is $\{j\}$, the vertices whose interval is $\{j+1\}$, and the vertices whose interval starts after index $j+1$. Every two vertices in different parts are non-adjacent. We call $Y(e)$ the \emph{cutset} for edge $e$, although in some boundary cases $Y(e)$ is not a true vertex cutset in $G_i$.\footnote{We remark that it would be more natural to define $Y(e)$ for instance as $(X_{j} \cap X_{j+1}) \cup \{u_j,u_{j+1}\}$. We employ a slightly larger cutset for technical reasons.} These boundary cases are in fact easier to deal with. The vertices whose interval is either $\{j\}$ or $\{j+1\}$ are said to be \emph{detached} by $Y(e)$. For the edge $e := u_1u_n$, we let $Y(e) := \{u_1,u_n\}$. The rest is similar. Notice that each $Y(e)$ is of size at most $3\adhesionbd(k)+8$. 

Let $\sigma := (e_0;e_1,e_2,\ldots,e_q;e_{q+1})$ be a proper sector, where each directed edge $e_j \in A(D)$ is incident to $f_i \in V(D)$. We assume that, as edges of the boundary cycle of $G_i$, $e_0 = u_\ell u_{\ell +1}$ and $e_{q+1} = u_{\ell+q+1}u_{\ell+q+2}$. We let $G[\sigma]$ denote the union of all connected components of $G_i - Y(e_0) - Y(e_{q+1})$ containing vertices whose interval is contained in the interval $\{\ell+2,\ldots,\ell+q\}$. We think of $G[\sigma]$ as the part of $G_i$ ``between'' $Y(e_0)$ and $Y(e_{q+1})$. 

Now let $\mathcal{S}(e_0) \subseteq Y(e_0)$ and $\mathcal{S}(e_{q+1}) \subseteq Y(e_{q+1})$ be given stable sets such that $\mathcal{S}(e_0) \cap Y(e_{q+1}) = \mathcal{S}(e_{q+1}) \cap Y(e_0)$. Consider all stable sets $S$ of $G[\sigma]$ such that $S \cup \mathcal{S}(e_0) \cup \mathcal{S}(e_{q+1})$ is a stable set for which both $e_0$ and $e_{q+1}$ are slack, but none of $e_1$, $e_2$, \ldots, $e_q$ are slack. We call such stable sets \emph{admissible} with respect to $\mathcal{S}(e_0)$ and $\mathcal{S}(e_{q+1})$. (Possibly, there is no admissible stable set.) Intuitively, these stable sets are all local choices for a feasible solution ``between'' $Y(e_0)$ and $Y(e_{q+1})$ that are compatible with $\mathcal{S}(e_0)$ and $\mathcal{S}(e_{q+1})$ and ``use'' no edge of the boundary ``between'' $e_0$ and $e_{q+1}$.

The \emph{internal cost} of an admissible stable set $S$ (with respect to $\mathcal{S}(e_0)$ and $\mathcal{S}(e_{q+1})$) is the sum of the cost $c(e)$ over all \emph{slack} edges $e \in E(G_i)$ that have at least one endpoint in $V(G[\sigma])$. We denote by $c^{\mathrm{between}}(G[\sigma];\mathcal{S}(e_0),\mathcal{S}(e_{q+1}))$ the minimum internal cost of an admissible stable set $S$. In case there is no admissible stable set, we let $c^{\mathrm{between}}(G[\sigma];\mathcal{S}(e_0),\mathcal{S}(e_{q+1})) := +\infty$. 

Let $\mathcal{W}$ denote a window. Let $\mathcal{S} : e \mapsto \mathcal{S}(e)$ be a map associating to each delimiter $e \in \Del(\mathcal{W})$ a stable set $\mathcal{S}(e) \subseteq Y(e)$. Each stable set $\mathcal{S}(e)$ is a guess on how the optimal stable set should intersect cutset $Y(e)$. We say that $\mathcal{S}$ is \emph{conflict-free} if for each distinct $e, e' \in \Del(\mathcal{C})$ we have $\mathcal{S}(e) \cap Y(e') = \mathcal{S}(e') \cap Y(e)$.

\subsubsection{Reachable windows and their properties}

Let $\mathcal{W}^{\mathbb{F}_0}$ denote the window that has one clock per vortex face $f_i$ (where $i \in [t]$), each clock having a single sector, which is a full sector, and with $\eg(\mathcal{W}^{\mathbb{F}_0}) := g$, where $g \le \vortexbd(k)$ denotes the genus of $\surf$. As the notation indicates, $\mathcal{W}^{\mathbb{F}_0}$ is the window representing the single face $\mathbb{F}_0$ of the initial frame $\tilde{\Sigma}_0$. 

Let $\mathcal{W}$ and $\mathcal{W}'$ denote two windows. We say that $\mathcal{W}'$ is \emph{reachable} from $\mathcal{W}$ if $\mathcal{W}'$ can be obtained from $\mathcal{W}$ after performing \emph{any} number of merge, simplify and split operations. If no confusion may arise, we simply say that $\mathcal{W}'$ is reachable to mean that it is reachable from $\mathcal{W}^{\mathbb{F}_0}$.

\begin{lemma} \label{lem:DP_table_polynomial1}
For every fixed $k \in \Z_{\ge 1}$, the number of reachable windows is polynomial in $m_0 := |A(D)| = |E(G_0)|$.
\end{lemma}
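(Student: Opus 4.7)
The plan is to introduce a potential $\Phi$ on sets of clocks that captures their combinatorial size, to show $\Phi$ is non-increasing under every operation (with the precedence rule of the split operation playing the decisive role), and then to count the reachable sets.

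Define
$$\Phi(\mathcal{C}) := 2\bigl(\holes(\mathcal{C}) + \eg(\mathcal{C})\bigr) + s(\mathcal{C}),$$
where $s(\mathcal{C})$ is the total number of sectors across all clocks of $\mathcal{C}$. The initial set $\mathcal{C}^{\surf}$ has $\holes = t$, $\eg = g$ and $s = t$ (one full sector per vortex face), so $\Phi(\mathcal{C}^{\surf}) = 3t + 2g$, a constant depending only on $k$.

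The main step is to check, operation by operation, that $\Phi$ does not increase. All calculations reduce to the sub-clock identities $s(Ce) + s(eC) = s(C) + 1$ and $s(eCe') + s(e'C) = s(eC) + 1$, which hold because the sector containing $e$ (respectively $e'$) is retained in both halves when one cuts $C$ at $e$ (respectively $e'$). Plugging these in yields the following picture:
\begin{itemize}
\item a merge changes $s$ by $+2$, $\holes$ by $-1$ and $\eg$ by $0$, so $\Phi$ is preserved;
\item each variant of simplify changes $s$ by $+2$ and decreases $\holes + \eg$ by $1$ (either $\eg$ drops by $1$, or $\eg$ drops by $2$ while $\holes$ grows by $1$), so $\Phi$ is preserved;
\item a split produces two pieces whose sector total is at most $s(\mathcal{C}) + 1$, with $\holes$ totalling $\holes(\mathcal{C}) + 1$ and $\eg$ totalling $\eg(\mathcal{C})$; the crucial ``$+1$'' (rather than $+2$) comes from the precedence rule, which discards $Ce$. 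Hence $\Phi(\mathcal{C}_1') + \Phi(\mathcal{C}_2') \leq \Phi(\mathcal{C}) + 3$, and since every piece has at least one hole and at least one sector we have $\Phi(\mathcal{C}_i') \geq 3$, so each $\Phi(\mathcal{C}_i')$ is at most $\Phi(\mathcal{C})$.
\end{itemize}
It follows by induction on the number of operations that every reachable $\mathcal{C}$ satisfies $s(\mathcal{C}) \leq \Phi(\mathcal{C}) \leq 3t + 2g$.

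To finish, I will count: a set of clocks is described by the ordered sequence of its at most $3t + 2g = O(1)$ sectors, where each sector is specified by its host vortex face (at most $t$ options) together with at most two delimiter edges from $A(D)$ (giving $O(m_0)$ choices each); the partition of sectors into clocks and the ordering within each clock contribute only constant factors, since the number of sectors is already bounded by a constant. The resulting count is $m_0^{O(t+g)} = \poly(m_0)$, as required. The only delicate point in the whole argument is the split case of the potential analysis: without the precedence rule the bound on $s_1 + s_2$ would be $s+2$ instead of $s+1$, and $\Phi$ could strictly increase for one of the two pieces, which would wreck the polynomial bound.
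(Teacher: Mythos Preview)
Your proof is correct, and the potential-function argument is sound: the identities $s(Ce)+s(eC)=s(C)+1$ and $s(eCe')+s(e'C)=s(eC)+1$ hold for the reasons you give, the case analysis for merge, simplify, and split checks out, and the crucial inequality $\Phi(\mathcal{C}'_1)+\Phi(\mathcal{C}'_2)\leq \Phi(\mathcal{C})+3$ together with $\Phi(\mathcal{C}'_i)\geq 3$ indeed forces each piece's potential not to exceed the parent's. The final count is fine, since once the number of sectors is bounded by a constant, each sector is pinned down by $O(m_0^2)$ data and the remaining combinatorics (partition into clocks, ordering, the value of $\eg(\mathcal{C})\in\{0,\dots,g\}$) contributes only constant factors.

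This is a genuinely different route from the paper's. The paper does not bound the number of sectors directly; instead it argues structurally about operation sequences. It observes that under split operations alone, every clock reachable from $C$ is a ``substring'' of the form $eCe'$ or $e'C$, so from any given $\mathcal{C}$ the number of sets reachable by splits alone is at most $m_0^{O(t)}$. It then groups the full operation sequence into at most $g+t$ blocks, each consisting of splits followed by one merge or simplify (using that merge/simplify strictly decreases $\holes+\eg$), and multiplies the per-block bound across blocks. Your approach replaces this two-stage analysis by a single invariant $\Phi=2(\holes+\eg)+s$ that is preserved by merge and simplify and does not increase under split for either child; this is more direct and, as a bonus, immediately yields the bound $\del(\mathcal{C})\leq 2\,s(\mathcal{C})\leq 2(3t+2g)$, which is the content of the paper's next lemma on the number of delimiters.
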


\begin{proof}
Let $\mathcal{W}$ denote any reachable window. By \Cref{lem:h+eg}, we have $\holes(\mathcal{W}) + \eg(\mathcal{W}) \le \holes(\mathcal{W}^{\mathbb{F}_0}) + \eg(\mathcal{W}^{\mathbb{F}_0}) = t + g$. In particular, $\mathcal{W}$ has at most $t + g$ clocks.

Let $C$ be any clock of $\mathcal{W}$. Notice that all the clocks that can be obtained from $C$ by performing any number of split operations (and no merge or simplify operation) are of the form $eCe'$ or $e'C$. This implies that the number of windows that can be obtained from $\mathcal{W}$ by performing any number of split operations is polynomial. More precisely, let $C_1$, \ldots, $C_s$ denote the clocks of $\mathcal{W}$ and $||C_i||$ denote the number of edges in $C_i$ for $i \in [s]$. Notice that $s \leq t + g$ and $||C_i|| \leq  m_0$ for all $i \in [s]$. Hence we can bound the number of sets of clocks that can be obtained from $\mathcal{C}(\mathcal{W})$, via split operations only, by 
$$
\prod_{i=1}^s ||C_i||^2 \leq m_0^{2s} \leq m_0^{2t+2g}\,.
$$
And we can bound the number of windows that can be obtained from $\mathcal{W}$, via split operations only, by $m_0^{2t+2g} (g+1)$. (Recall that a window is a pair $\mathcal{W} = (\mathcal{C}(\mathcal{W}),\eg(\mathcal{W}))$ where $\eg(\mathcal{W}) \in \{0,\ldots,g\}$.)

Consider the sequence of operations that were performed on $\mathcal{W}^{\mathbb{F}_0}$ in order to obtain some reachable window $\mathcal{W}$. We can split this sequence into blocks, where each block has some number (possibly zero) of split operations followed by a single merge or simplify operation. By \Cref{lem:h+eg}, there can be at most $h(\mathcal{W}^{\mathbb{F}_0}) +\eg(\mathcal{W}^{\mathbb{F}_0}) = t + g$ blocks. 

It follows that we can bound the possibilities for $\mathcal{W}$ by 
$$
\sum_{b=1}^{t+g} \left( m_0^{2t+2g} (g+1) 2 m_0^2 \right)^b
\leq \left( m_0^{2t+2g} (g+1) 2 m_0^2 \right)^{t+g+1}
\leq m_0^{\poly(g,t)} = m_0^{O_k(1)}.
$$
Indeed, for any $1\le b\le g+t$ blocks, there are at most $m_0^{2t+2g}(g+1)$ windows that can be obtained from the split operations. We have two choices for the last operation in the block which is either merge or simplify, and at most $m_0^2$ choices for the two directed edges $e, e' \in A(D)$ relative to which the operation is performed. (Recall that the different clocks of a window are edge-disjoint, hence the choice of $e$ and $e'$ also determines the clocks $C$ and $C'$ of the window on which the operation is carried out.)
\end{proof}

\begin{lemma} \label{lem:DP_table_polynomial2}
For every reachable window $\mathcal{W}$, the number of delimiters $\del(\mathcal{W})$ is at most $4t+2g = O_k(1)$.
\end{lemma}

\begin{proof}
Let us call a proper sector $\sigma = (e_0;e_1,\ldots,e_q;e_{q+1})$ {\em almost full} if $e_0 = e_{q+1}$. Let $\full(\mathcal{W})$ be defined as twice the number of full sectors in $\mathcal{W}$, plus the number of almost full sectors in $\mathcal{W}$. 

Every merge or simplify operation increases $\del(\mathcal{W}) + \full(\mathcal{W})$ by at most $2$. The split operations do not increase $\del(\mathcal{W}) + \full(\mathcal{W})$. Since initially, $\del(\mathcal{W}^{\mathbb{F}_0}) + \full(\mathcal{W}^{\mathbb{F}_0}) = 2t$ and $\holes(\mathcal{W}^{\mathbb{F}_0}) + \eg(\mathcal{W}^{\mathbb{F}_0}) = t + g$, it follows from \Cref{lem:h+eg} that $\del(\mathcal{W}) \leq 2(t+g)+2t = 4t+2g$. That is, there are at most $4t+2g$ delimiters in $\mathcal{W}$. 
\end{proof}

\subsubsection{The dynamic programming table}

Let $\mathcal{B}_1 \subseteq \Z_2 \times \Z^{g-1}$ be a polynomial size set such that every $z_0 \in \{0,1\}^{A(D)}$ satisfies $\omega(z_0) \in \mathcal{B}_1$. For instance, we may take $\mathcal{B}_1 := \{0,1\} \times ([-2m_0,2m_0]^{g-1} \cap \Z^{g-1})$, where $m_0 = |A(D)|$. (Recall that each closed walk $W_i$ uses an edge at most twice.) We guess the contribution of $y^{\mathrm{sketch}}_0$ to $\omega(y_0)$. We denote this guess by $\psi^{\mathrm{sketch}} \in \mathcal{B}_1$.

We define a DP table as follows. A general cell of the DP table has the form $(\mathcal{W}, 
\mathcal{S}, \psi, d)$, where $\mathcal{W}$ is a reachable window, $\mathcal{S}$ is a conflict-free map associating to each $e \in \Del(\mathcal{W})$ a stable set $\mathcal{S}(e)$ contained in $Y(e)$, $\psi \in \mathcal{B}_1$ represents the contribution toward satisfying the homology constraint ``inside'' $\mathcal{W}$, and $d \in \mathcal{B}_2 := [-2m_0,2m_0]^t \cap \Z^t$ represents the demand in flow balance ``inside'' $\mathcal{W}$. 

The \emph{top cells} of the DP table are the cells $(\mathcal{W}^{\mathbb{F}_0},\varnothing,\psi^{\mathrm{sketch}},\mathbf{0})$, where $\psi^{\mathrm{sketch}} \in \mathcal{B}_1$.

\begin{lemma} \label{lem:DP_table_polynomial3}
The DP table has polynomially many cells. More precisely, it has $m_0^{O_k(1)}$ cells, where $m_0 = |A(D)| = |E(G_0)|$ as before.
\end{lemma}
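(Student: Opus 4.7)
The plan is to bound each of the four coordinates of a DP cell separately and then multiply the bounds. Set $m_0 := |A(D)| = |E(G_0)|$; all parameters $g,t,\apexbd(k),\adhesionbd(k)$ are constants depending only on $k$, so it suffices to show each coordinate admits $m_0^{O_k(1)}$ possibilities (and for $\mathcal{S}^{\mathbb{F}}$ even a constant number).

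First, by Lemma~\ref{lem:DP_table_polynomial1}, the number of reachable sets of clocks $\mathcal{C}^{\mathbb{F}}$ is at most $m_0^{O_k(1)}$. Next, for the conflict-free guesses $\mathcal{S}^{\mathbb{F}} = \{S_e \mid e \in \Del(\mathcal{C}^{\mathbb{F}})\}$, recall that each cutset $Y_e$ has size at most $3\adhesionbd(k)+2 = O_k(1)$, so the number of possible stable sets $S_e \subseteq Y_e$ is bounded by $2^{O_k(1)} = O_k(1)$. By Lemma~\ref{lem:DP_table_polynomial2}, the number of delimiters $\del(\mathcal{C}^{\mathbb{F}})$ is at most $2g+4t = O_k(1)$. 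Therefore the total number of candidates for $\mathcal{S}^{\mathbb{F}}$ (ignoring the conflict-free requirement, which only reduces the count) is at most $\left(2^{O_k(1)}\right)^{O_k(1)} = O_k(1)$.

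For the remaining two coordinates, $\psi^{\mathbb{F}}$ ranges over $\mathcal{B}_1 = \{0,1\} \times ([-2m_0,2m_0] \cap \Z)^{g-1}$, which has size $2 \cdot (4m_0+1)^{g-1} = m_0^{O_k(1)}$, and $d^{\mathbb{F}}$ ranges over $\mathcal{B}_2 = ([-2m_0,2m_0] \cap \Z)^t$, of size $(4m_0+1)^t = m_0^{O_k(1)}$. Multiplying the four bounds yields
\[
    m_0^{O_k(1)} \cdot O_k(1) \cdot m_0^{O_k(1)} \cdot m_0^{O_k(1)} = m_0^{O_k(1)},
\]
as required.

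The main (already-resolved) obstacle is Lemma~\ref{lem:DP_table_polynomial1}, which is the only place where polynomial growth in $m_0$ had to be controlled despite an unbounded number of possible split operations; everything else here is just careful bookkeeping of constants depending on $k$.
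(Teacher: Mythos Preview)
Your proof is correct and follows essentially the same approach as the paper: bound each of the four coordinates of a cell separately and multiply. The only minor difference is in how you bound the number of choices for $\mathcal{S}^{\mathbb{F}}$: you condition on $\mathcal{C}^{\mathbb{F}}$ being fixed, so that $\Del(\mathcal{C}^{\mathbb{F}})$ is determined and has $O_k(1)$ elements, yielding an $O_k(1)$ bound; the paper instead counts pairs $(e,S_e)$ independently of $\mathcal{C}^{\mathbb{F}}$ and obtains the looser bound $m_0^{O_k(1)}$ for this coordinate. Your version is slightly tighter and arguably cleaner, but both lead to the same final $m_0^{O_k(1)}$ bound.
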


\begin{proof}
By Lemma~\ref{lem:DP_table_polynomial1}, there are at most $m_0^{O_k(1)}$ choices for the reachable window $\mathcal{W}$. By Lemma~\ref{lem:DP_table_polynomial2}, the number of possibilities for the guesses in $\mathcal{S}$ is bounded by 
\[
m_0^{4t+2g+1} 2^{(3\adhesionbd(k)+8)(4t+2g)} = m_0^{O_k(1)}
\]
since there can be at most $m_0$ choices for each $e \in \Del(\mathcal{W})$, the size of each $Y(e)$ is at most $3\adhesionbd(k)+8$ and $\del(\mathcal{W}) \le 4t + 2g$. Moreover, the size of each of the sets $\mathcal{B}_1$ and $\mathcal{B}_2$ is bounded by $m_0^{O(t+g)} = m_0^{O_k(1)}$. It follows easily from these observations that the number of cells in the DP table is $m_0^{O_k(1)}$. 
\end{proof}

We say that cell $\xi' := (\mathcal{W}',\mathcal{S}',\psi',d')$ is a \emph{successor} of cell $\xi := (\mathcal{W},\mathcal{S},\psi,d)$ if the following conditions are satisfied. First, $\mathcal{W}'$ is obtained from $\mathcal{W}$ by a single merge, simplify or split operation. For each choice of two distinct internal edges $e^+$ and $e^-$ of $\mathcal{W}$, say $e^+ \in \delta^+(f_{i^+})$ and $e^- \in \delta^-(f_{i^-})$ where $i^+, i^- \in [t]$ (possibly, $i^+ = i^-$), there is a bounded number of choices of what $\mathcal{W}'$ can be, see Section~\ref{sec:operations}. Second, $\mathcal{S}'$ extends $\mathcal{S}$ by defining stable sets $\mathcal{S}'(e^+) \subseteq Y(e^+)$ and $\mathcal{S}'(e^-) \subseteq Y(e^-)$ in the corresponding subgraphs $G_{i^+}$ and $G_{i^-}$. Third, $\psi' \in \mathcal{B}_1$ is arbitrary. Fourth, $d' = d$ in case of a simplify operation, and in case of a merge operation, $d'$ equals $d$ with the component corresponding to $f_{i^+}$ decreased by $1$, and the component corresponding to $f_{i^-}$ increased by $1$. In case of a split operation, $d'$ can be an arbitrary element of $\mathcal{B}_2$. 

We call a cell $\xi := (\mathcal{W},\mathcal{S},\psi,d)$ of the DP table a \emph{bottom cell} if $\xi$ has no successor. This occurs when no operation can be applied to $\mathcal{W}$, because there is no suitable pair $e^+, e^-$ of internal edges in $\mathcal{W}$. 

We define the \emph{dependence graph} of the DP table as the directed graph with one vertex per cell, and directed edges of the form $(\xi,\xi')$ where cell $\xi'$ is a successor of cell $\xi$. The dependence graph has one source vertex per top cell. The sink vertices coincide with the bottom cells.

\begin{lemma} \label{lem:DP_table_acyclic}
The dependence graph of the DP table is acyclic.
\end{lemma}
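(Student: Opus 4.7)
The plan is to exhibit a monovariant $\Phi$ on cells of the DP table that strictly decreases along every directed edge of the dependence graph, which immediately rules out directed cycles. For a cell $\xi = (\mathcal{C}^{\mathbb{F}}, \mathcal{S}^{\mathbb{F}}, \psi^{\mathbb{F}}, d^{\mathbb{F}})$, I would define $\Phi(\xi) \in \Z_{\geq 0}$ to be the total number of internal-edge occurrences across all sectors of all clocks of $\mathcal{C}^{\mathbb{F}}$; equivalently, $\Phi(\xi) = ||\mathcal{C}^{\mathbb{F}}||$ minus the total number of delimiter occurrences in $\mathcal{C}^{\mathbb{F}}$. Only the clock set enters $\Phi$, which is convenient since the definition of a successor allows $\mathcal{S}^{\mathbb{F}'}, \psi^{\mathbb{F}'}, d^{\mathbb{F}'}$ to take essentially arbitrary values relative to $\mathcal{S}^{\mathbb{F}}, \psi^{\mathbb{F}}, d^{\mathbb{F}}$.

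The main step is to inspect the combinatorial formulas for the three operations given in Section~\ref{sec:operations}. In every operation two distinct edges $e^+, e^-$ (the pair $e, e'$ from Section~\ref{sec:operations}, living in two distinct clocks of $\mathcal{C}^{\mathbb{F}}$ in the merge case and in a common clock in the simplify or split case) are singled out as \emph{internal} edges of some sectors. Inspecting the output clock(s) of each operation --- $(Ce, e'C', C'e', eC)$ or its anti-parallel variant for merge, $(Ce, (eCe')^{-1}, e'C)$ or the pair of clocks $(Ce, e'C), (eCe')$ for simplify, and $C' = e'C$ together with $C'' = eCe'$ (with the sub-clock $Ce$ additionally deleted) for split --- one checks that in each of the resulting clocks, both $e^+$ and $e^-$ reappear only as delimiters, never as internal edges. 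Moreover, every other internal-edge occurrence of the original clock(s) either remains internal in one of the new clocks or is deleted outright, the latter being possible only in the split operation and only for internal edges contained in the deleted sub-clock $Ce$. Since $e^+ \neq e^-$, at least two internal-edge occurrences are thereby lost, and so $\Phi$ strictly decreases by at least $2$ at every edge of the dependence graph, regardless of whether the operation is a merge, a simplify, or either of the two successor branches of a split.

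The hardest part of turning this plan into a rigorous proof is unwinding the definitions of $Ce$, $eC$, $eCe'$ and the various concatenation formulas carefully, and tracking the internal/delimiter status of every edge in every sector under each operation, so as to confirm that other internal edges are never accidentally ``upgraded'' to delimiters or double-counted across new sectors. Once this bookkeeping is settled, acyclicity is immediate: any directed cycle in the dependence graph would produce a strictly descending chain of $\Z_{\geq 0}$-valued potentials returning to its starting value, which is impossible.
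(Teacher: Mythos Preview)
Your argument is correct and in fact cleaner than the paper's own proof. The paper argues via a lexicographic potential: it notes that every merge or simplify decreases $\eg(\mathcal{C}) + \holes(\mathcal{C})$ by one, that a split never increases this quantity, and that whenever a split leaves it unchanged one of $\del(\mathcal{C})$ or $||\mathcal{C}||$ must go down. This works, but it requires tracking three quantities in lexicographic order and checking how each behaves under each operation.

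Your single scalar potential $\Phi(\xi)$, the number of internal-edge occurrences in $\mathcal{C}^{\mathbb{F}}$, sidesteps this entirely. The key observation, which you identify correctly, is that in all three operations the chosen pair $e^+, e^-$ are internal edges before and become delimiters (or are deleted) after, while every other internal edge either survives as an internal edge in exactly one output sector or is discarded (the latter only in the split case, for edges in the deleted sub-clock $Ce$). Hence $\Phi$ drops by at least $2$ at every merge and simplify; for a split, since $\Phi(\xi'_1) + \Phi(\xi'_2) \le \Phi(\xi) - 2$ and both summands are nonnegative, each successor individually satisfies $\Phi(\xi'_i) < \Phi(\xi)$. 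Your approach has the advantage of being a single monovariant with no case analysis on the topological effect of the operation; the paper's approach has the mild advantage of reusing the $\eg + \holes$ bookkeeping already established for Lemmas~\ref{lem:DP_table_polynomial1} and~\ref{lem:DP_table_polynomial2}.
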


\begin{proof}
We use once again \Cref{lem:h+eg}, and the fact that most operations decrease $\holes(\mathcal{W}) + \eg(\mathcal{W})$. Notice that if an operation leaves $\holes(\mathcal{W}) + \eg(\mathcal{W})$ unchanged, then it is a split operation, and in most cases $\del(\mathcal{W})$ decreases. In case $\del(\mathcal{W})$ is unchanged, then $||\mathcal{W}||$ decreases. The result follows easily from these observations. 
\end{proof}

Now, we explain how to compute the \emph{value} $\val(\xi) \in \Q_{\ge 0} \cup \{+\infty\}$ of each cell $(\mathcal{W},\mathcal{S},\psi,d)$. We will define $\val(\xi)$ as the minimum of four auxiliary values associated to the cell:
$$
\val(\xi) := \min \left\{ \val^{\mathrm{no-op}}(\xi), 
\val^{\mathrm{merge}}(\xi), \val^{\mathrm{simplify}}(\xi), 
\val^{\mathrm{split}}(\xi) \right\}\,.
$$
We explain how each auxiliary value is computed below.

If $d = \mathbf{0}$ and $\psi = \mathbf{0}$, we define $\val^{\mathrm{no-op}}(\xi)$ as the sum of $c^{\mathrm{between}}(G[\sigma];\mathcal{S}(e_0),\mathcal{S}(e_{q+1}))$ taken on the proper sectors $\sigma = (e_0;e_1,e_2,\ldots,e_q;e_{q+1})$ of $\mathcal{C}(\mathcal{W})$. This corresponds to completing the current solution to a solution where no further boundary edge of a vortex is made slack. That is, we set $y_0(e) := 0$ for each boundary edge $e$ that is still undecided. In case $\psi \neq \mathbf{0}$ or $d \neq \mathbf{0}$, we let $\val^{\mathrm{no-op}}(\xi) := +\infty$.

If $\xi$ is a bottom cell, we let $\val^{\mathrm{merge}}(\xi) :=  \val^{\mathrm{simplify}}(\xi) := \val^{\mathrm{split}}(\xi) := +\infty$. From now on, assume that $\xi$ is not a bottom cell.

Let $e^+ \in \delta^+(f_{i^+})$, and $e^- \in \delta^-(f_{i^-})$, where $i^+, i^- \in [t]$. For $\psi \in \mathcal{B}_1$, we let $\varphi(e^+,e^-,\psi)$ denote the minimum cost of a unit integer flow $\phi$ in $A(D)$ from $f_{i^+}$ to $f_{i^-}$ such that $\phi(e^+) = \phi(e^-) = 1$, $\phi(e) = 0$ for all other edges incident to a vortex face, and $\omega(\phi) = \psi$. These values can be precomputed in strongly polynomial time, see \cite{ocpgenus} or Morell, Seidel and Weltge~\cite{MSW21}.

Assume that both $e^+$ and $e^-$ are internal edges in the corresponding sectors of $\mathcal{C}(\mathcal{W})$. Furthermore, assume that $\mathcal{S}'(e^+) \subseteq Y(e^+)$ and $\mathcal{S}'(e^-) \subseteq Y(e^-)$ are stable sets that extend map $\mathcal{S}$ to a conflict-free map $\mathcal{S}'$ defined on $\Del(\mathcal{W}) \cup \{e^-,e^+\}$. We let $c^{\mathrm{add}}(\mathcal{S}'(e^+),\mathcal{S}'(e^-))$ denote the additional cost incurred by adding $\mathcal{S}'(e^+)$ and $\mathcal{S}'(e^-)$ to our stable set, defined below. When guesses $\mathcal{S}'(e^+)$ and $\mathcal{S}'(e^-)$ are chosen, we also optimize over the vertices that are detached by $Y(e^+)$ or $Y(e^-)$ to figure out which of those vertices should be added to the solution, and which should be left out. The cost $c^{\mathrm{add}}(\mathcal{S}'(e^+),\mathcal{S}'(e^-))$ is defined as the sum of $c(e)$ over the new edges $e$ that are made slack by our decisions. 

We let 
$$
\val^{\mathrm{merge}}(\xi) := 
\min \left(\val(\xi') + \varphi(e^+,e^-,\psi-\psi') + c^{\mathrm{add}}(\mathcal{S}'(e^+),\mathcal{S}'(e^-))\right)
$$
where the minimum is taken over successor cells $\xi' := (\mathcal{W}',\mathcal{S}',\psi',d')$ reached from $\xi$ by performing a single merge operation for a suitable choice of edges $e^+, e^- \in A(D)$ and such that $\psi-\psi' \in \mathcal{B}_1$.

We define $\val^{\mathrm{simplify}}(\xi)$ similarly, with the extra constraint that $e^+$ and $e^-$ are in the same clock.

Finally, we let
$$
\val^{\mathrm{split}}(\xi)
:= \min 
\left( \val(\xi'_1) + \val(\xi'_2) + 
\varphi(e^+,e^-,\psi-\psi'_1-\psi'_2) + c^{\mathrm{add}}_{\leftarrow} (\mathcal{S}'(e^+),\mathcal{S}'(e^-))\right)\,,
$$ 
where the minimum is taken over all pairs $\xi'_1 := (\mathcal{W}'_1,\mathcal{S}'_1,\psi'_1,d'_1)$, 
$\xi'_2 := (\mathcal{W}'_2,\mathcal{S}'_2,\psi'_2,d'_2)$ of successor cells that result from a split operation performed on $\xi$ for a suitable choice of edges $e^+, e^- \in A(D)$ from the same clock $C \in \mathcal{C}(\mathcal{W})$, guesses $\mathcal{S}'(e^+) \subseteq Y(e^+)$ and $\mathcal{S}'(e^-) \subseteq Y(e^-)$ that extend $\mathcal{S}$ to a conflict-free map $\mathcal{S}'$ defined on $\Del(\mathcal{W}) \cup \{e^+,e^-\}$ and $\mathcal{S}'_1$, $\mathcal{S}'_2$ are the restrictions of $\mathcal{S}'$ to $\Del(\mathcal{W}'_1)$ and $\Del(\mathcal{W}'_2)$ respectively, guesses $\psi'_1, \psi'_2 \in \mathcal{B}_1$ such that $\psi - \psi'_1 - \psi'_2 \in \mathcal{B}_1$, and guesses $d'_1, d'_2 \in \mathcal{B}_2$ such that $d'_1 + d'_2 = d^{\mathbb{F}}$. 

The cost $c^{\mathrm{add}}_{\leftarrow} (\mathcal{S}'(e^+),\mathcal{S}'(e^-))$ is defined similarly as $c^{\mathrm{add}}(\mathcal{S}'(e^+),\mathcal{S}'(e^-))$, with the extra cost coming from the part of the clock $C$ that is ``to the left'' of both $e^+$ and $e^-$. Assume without loss of generality that $e^+$ is to the left of $e^-$ in the clock $C$. Then we add to $c^{\mathrm{add}}(\mathcal{S}'(e^+),\mathcal{S}'(e^-))$ the sum of $c^{\mathrm{between}}(G[\sigma];\mathcal{S}'(e_0),\mathcal{S}'(e_{q+1})$ for all sectors $\sigma = (e_0;e_1,e_2,\ldots,e_q;e_{q+1})$ of $Ce^+$.

For every $\psi^{\mathrm{non-sketch}} \in \mathcal{B}_1$, we let $\varphi^{\mathrm{non-sketch}}(\psi^{\mathrm{non-sketch}})$ denote the minimum cost of an integer circulation $y_0^{\mathrm{non-sketch}}$ in $D - f_1 - \cdots - f_t$ with $\omega(y_0^{\mathrm{non-sketch}}) = \psi^{\mathrm{non-sketch}}$, see \cite{ocpgenus,MSW21}.

\begin{theorem} \label{thm:DP}
For each cell $\xi$ of the corresponding DP table, the value $\val(\xi)$ is well defined and can be computed in strongly polynomial time. The minimum cost of a slack vector can be derived in strongly polynomial time as
\begin{equation}
\label{eq:DP}
\min \left(
\val(\mathcal{W}^{\mathbb{F}_0},\varnothing,\psi^{\mathrm{sketch}},\mathbf{0}) + \varphi^{\mathrm{non-sketch}}\left((1,\mathbf{0}) - \psi^{\mathrm{sketch}}\right)\right)\,,
\end{equation}
where the minimum is taken over all guesses $\psi^{\mathrm{sketch}} \in \mathcal{B}_1$ such that $(1,\mathbf{0}) - \psi^{\mathrm{sketch}} \in \mathcal{B}_1$. Moreover, a corresponding optimum solution $y \in \mathcal{Y}(G)$ can be constructed in strongly polynomial time.
\end{theorem}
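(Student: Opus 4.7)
The plan is to establish Theorem~\ref{thm:DP} in three stages: first, argue that $\val(\xi)$ is well-defined and each cell's value can be computed from its successors in strongly polynomial time; second, prove the two inequalities that make formula~\eqref{eq:DP} equal to the minimum cost of a slack vector of $G$; and third, explain how an optimal slack vector (and from it, via Lemma~\ref{lem9j8hz7} and Lemma~\ref{lemgzh6f8}, an optimal stable set) is recovered by back-tracing.

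For the first stage, the dependence graph is acyclic by Lemma~\ref{lem:DP_table_acyclic} and has polynomially many cells by Lemma~\ref{lem:DP_table_polynomial3}, so it suffices to bound the work per cell. The quantities $\beta(G[\sigma],c)$ can be precomputed for all sectors $\sigma$ and all boundary guesses at their delimiters: each $G[\sigma]$ is a subgraph of a bipartite vortex $G_i$, so every admissible stable set problem reduces to a minimum-cost closure / min-cut computation on a bipartite graph and can be solved in strongly polynomial time. The flow values $\varphi(e^+,e^-,\psi)$ and $\varphi^{\mathrm{non-sketch}}(\psi)$ are precomputed using the homologous circulation algorithm underlying Proposition~\ref{prop:dual_representation}, exactly as in~\cite{ocpgenus,MSW21}. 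Once these tables are available, $\val^{\mathrm{no-op}}$, $\val^{\mathrm{merge}}$, $\val^{\mathrm{simplify}}$ and $\val^{\mathrm{split}}$ for a given cell are minima over polynomially many choices of $(e^+, e^-, S_{e^+}, S_{e^-}, \psi^{\mathbb{F}'}, d^{\mathbb{F}'})$ and polynomially many successor cells, each of which has already been computed by acyclicity.

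For the second stage, the non-trivial direction is that~\eqref{eq:DP} is at most the minimum cost of $y \in \mathcal{Y}(G)$. Take an optimum $y$ and restrict to $y_0 := y|_{E(G_0)}$; by Lemma~\ref{lem9j8hz7} we may assume $y_0 \in \{0,1\}^{E(G_0)}$, so Proposition~\ref{prop:dual_representation} makes $y_0$ a $0/1$-circulation in $D$ with $\omega(y_0) = (1,\mathbf{0})$. Choose a cross-free matching at every $f \in V(D) \setminus \{f_1, \ldots, f_t\}$, form the associated directed cycle decomposition, and extract the sketch $\Sigma$ as the union of the cycles visiting some vortex face. This also splits $y_0$ into $y_0^{\mathrm{sketch}} + y_0^{\mathrm{non-sketch}}$; set $\psi^{\mathrm{sketch}} := \omega(y_0^{\mathrm{sketch}})$, and observe that $y_0^{\mathrm{non-sketch}}$ is a circulation in $D - f_1 - \cdots - f_t$, whose cost is at least $\varphi^{\mathrm{non-sketch}}((1,\mathbf{0}) - \psi^{\mathrm{sketch}})$. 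Now process the edges of $\Sigma$ one at a time: each addition is a merge, simplify or split of the current face, and by the precedence rule (always discarding the part of the clock ``to the left'' of the new edge) we may replay exactly the operations defined in Section~\ref{sec:operations}, charging the new dual-path segment to the corresponding $\varphi(e^+,e^-,\cdot)$ and the boundary interactions inside each $G_i$ to $c^{\mathrm{add}}$ or $c^{\mathrm{add}}_{\leftarrow}$. Once no sketch edge remains in the current face, the only slack edges left inside the face belong to vortices and are accounted for by the $\beta(G[\sigma],c)$ terms in $\val^{\mathrm{no-op}}$, provided that the flow demands $d^\mathbb{F}$ and the homological demands $\psi^\mathbb{F}$ have been fully paid off --- which is exactly what the $d^\mathbb{F} = \mathbf{0}$, $\psi^\mathbb{F} = \mathbf{0}$ condition ensures. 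For the reverse inequality, any valid evaluation path in the DP provides, via the flows realizing the $\varphi$ and $\varphi^{\mathrm{non-sketch}}$ values, a $0/1$-circulation in $D$ satisfying the homology constraint by additivity of $\omega$; combined with an optimal admissible stable set in each terminal sector and with the boundary-cutset guesses (which are conflict-free by construction), Lemma~\ref{lem:composing_solutions} yields $y \in \mathcal{Y}(G)$ of cost at most the DP value.

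The main obstacle is showing that the face-by-face bookkeeping \emph{does not lose any information}: one must verify that the three scalar pieces of state kept per face --- the clock set $\mathcal{C}^\mathbb{F}$, the homology residue $\psi^\mathbb{F}$, and the flow demand $d^\mathbb{F}$ --- are simultaneously sufficient to reconstruct the part of $y_0$ lying inside $\mathbb{F}$ and correct to guess it independently of the rest of the sketch. Sufficiency relies on the fact that dual flow can be routed inside a single face of $\Sigma$ without interacting with the others (which is where $d^\mathbb{F} \in \mathcal{B}_2$ and the bounded-size set $\mathcal{B}_1$ of homology residues is used), while correctness relies on the cross-free property of the matchings and on Lemma~\ref{lem:composing_solutions} to glue local $y_i$ across vortex boundaries. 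Finally, storing a minimizing argument at each cell during the computation lets us recover $y_0$ and each $y_i$ in strongly polynomial time, completing the proof.
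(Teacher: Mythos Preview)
Your proposal is correct and follows essentially the same approach as the paper's proof: both invoke Lemmas~\ref{lem:DP_table_polynomial3} and~\ref{lem:DP_table_acyclic} for well-definedness, establish the two inequalities by (a) replaying the sketch of an optimal $y$ through the DP operations and (b) reconstructing a slack vector from the DP's out-branching via Lemma~\ref{lem:composing_solutions} and Proposition~\ref{prop:dual_representation}, and recover the optimum by back-tracing. One minor slip to fix: the circulation reconstructed from the DP is a nonnegative \emph{integer} circulation in $D$, not necessarily $0/1$ (the paper notes this explicitly and observes that $y \in \mathcal{Y}(G)$ is enough, since Lemma~\ref{lem9j8hz7} then yields a slack set of no greater cost); also, cross-free matchings are chosen at \emph{every} $f \in V(D)$, with the uncrossing perturbation applied only at the non-vortex faces.
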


\begin{proof}
The fact that all values in the DP table are well-defined and can be computed in strongly polynomial time follows directly from Lemmas~\ref{lem:DP_table_polynomial3} and \ref{lem:DP_table_acyclic}. For each cell $\xi$ of the DP table, we can define a subset of zero, one or two successor cells to record how the minimum for $\val(\xi)$ is attained. This partitions the dependence graphs in several out-branchings, one for each top cell. 

Consider the out-branching rooted at the top cell $(\mathcal{W}^{\mathbb{F}_0},\varnothing,\psi^{\mathrm{sketch}},\mathbf{0})$ for the guess $\psi^{\mathrm{sketch}} \in \mathcal{B}_1$ that attains the minimum in \eqref{eq:DP}. Following this out-arborescence down from the root, we can construct a vector $y^{\mathrm{sketch}}_0 \in \Z_{\ge 0}^{E(G_0)} \cong \Z_{\ge 0}^{A(D)}$ and vectors $y_i \in \{0,1\}^{E(G_i)}$ for $i \in [t]$. (The subgraphs $G_0$, $G_1$, \ldots, $G_t$ of $G$ are defined as before, see \Cref{cor:structure_DP}.) 

The vector $y^{\mathrm{sketch}}_0$ is obtained as a sum of unit flows in $D$, each from some $f_{i^+}$ to some $f_{i^-}$, where $i^+, i^- \in [t]$.\footnote{Notice that there might exist one or several directed edges $e \in A(D)$ such that $y^{\mathrm{sketch}}_0(e) \ge 2$. This is completely fine since every circulation in $D$ with the correct homology is a slack vector of $G_0$, which can if needed be converted to a stable set via \Cref{lem:convert}.} Each vector $y_i$ with $i \in [t]$ is obtained as ``disjoint sum'' of contributions within $\{0,1\}^{E(G_i)}$. The final vector $y^{\mathrm{sketch}}_0$ for the out-arborescence is an integer circulation in $D$ with $\omega(y^{\mathrm{sketch}}_0) = \psi^{\mathrm{sketch}}$. For each $i \in [t]$, the final vector $y_i$ is the characteristic vector of some slack set in $G_i$. If we augment $y^{\mathrm{sketch}}_0$ with any circulation $y^{\mathrm{non-sketch}}_0$ realizing the minimum in $\varphi^{\mathrm{non-sketch}}\left((1,\mathbf{0}) - \psi^{\mathrm{sketch}}\right)$, we obtain an integer circulation $y_0 \in \Z_{\ge 0}^{A(D)} \cong \Z_{\ge 0}^{E(G_0)}$ such that $\omega(y_0) = (1,\mathbf{0})$. Moreover, $y_0(e) = y_i(e)$ for each $i \in [t]$ and each edge $e$ of $G_0 \cap G_i$. By Lemma~\ref{lem:composing_solutions} and Proposition~\ref{prop:dual_representation}, we can combine the vectors $y_0$, $y_1$, \ldots, $y_t$ in a single vector $y \in \Z_{\ge 0}^{E(G)}$. Since $y_i \in \mathcal{Y}(G_i)$ for all $i \ge 0$, we get that $y \in \mathcal{Y}(G)$. Moreover, the cost of $y$ equals the right-hand side of \eqref{eq:DP}. Thus, the minimum cost of a slack vector is at most the right-hand side of \eqref{eq:DP}. (Recall that if $y$ is not the characteristic vector of a slack set, then it can be written as a convex combination of such characteristic vectors.)

Finally, consider any minimum cost slack set $F = \sigma(S)$ in $G$, and the corresponding slack vector $y \in \{0,1\}^{E(G)}$. As before, let $y_0$ denote the restriction of $y$ to $E(G_0)$. By considering some animated sketch for $y_0$, we can infer an out-branching in the dependence graph rooted at a top cell of the DP table. This out-branching proves that the minimum cost of a slack vector (which is equal to the cost of $F$) is at least the right-hand side of \eqref{eq:DP}.
\end{proof}

\section{Integer programs with two nonzero entries per column} \label{sec2nonzerospercolumn}

In this section, we provide the proof of Theorem~\ref{thmMain} for the case of two nonzero entries per \emph{column}.
That is, we describe a strongly polynomial time algorithm for integer programs of the form~\eqref{eq:IP} where $A$ is totally $\Delta$-modular for some constant $\Delta$ and contains at most two nonzero entries in each column.
To this end, we describe an efficient reduction to the case where the nonzero entries of $A$ are within $\{-1,+1\}$.
For this case, the respective integer program can be solved in strongly polynomial time by known reductions to the $b$-matching problem due to Tutte and Edmonds, as explained in Schrijver's book:

\begin{theorem}[{\cite[Thm.~36.1]{SchrijverCombOpt}, \cite{EJ70}}] \label{thmIPcolumns}
    There is a strongly polynomial-time algorithm for solving integer programs of the form~\eqref{eq:IP} whose coefficient matrix has at most two nonzero entries per column that are within $\{-1,+1\}$.
\end{theorem}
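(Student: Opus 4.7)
The plan is to identify the coefficient matrix $A$ with the (signed) incidence matrix of a bidirected graph $H = (V, E)$ on vertex set $V := [m]$. For each column $j \in [n]$, I would create an edge $e_j$ whose endpoints are the (at most two) row indices with nonzero entries in column $j$; the sign of each nonzero entry determines whether the endpoint is a ``tail'' or a ``head'' of the edge at that vertex. Columns with a single nonzero become half-edges, and empty columns correspond to variables constrained only by their variable bounds, which can be handled trivially. The edge weight of $e_j$ is set to $w_j$.

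Under this identification, each constraint $(Ax)_i \le b_i$ becomes a signed degree constraint at vertex $i$ of $H$. After first reducing to a bounded feasible region via Theorem~\ref{thm:Cook_et_al} (applied to the LP relaxation, which is solvable in strongly polynomial time by Tardos's algorithm since all entries of $A$ are in $\{-1,0,+1\}$), the variable bounds $x_j \in [\ell_j, u_j]$ translate into capacity constraints on $e_j$. Thus the integer program $\max\{w^\intercal x : Ax \le b,\, x \in \Z^n\}$ becomes an instance of the weighted \emph{bidirected $b$-matching} problem on $H$. The classical Edmonds--Johnson--Tutte reduction then replaces each vertex $v$ of $H$ by two copies corresponding to the two sign classes, and represents each bidirected edge as an ordinary edge between the appropriate copies; a bidirected $b$-matching in $H$ corresponds to an ordinary $b$-matching in the resulting undirected auxiliary graph $H'$. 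Weighted $b$-matching on $H'$ admits a strongly polynomial-time algorithm, which yields the desired running time and is precisely what is recorded in~\cite[Thm.~33.4, Ch.~36]{SchrijverCombOpt}.

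The main obstacle is bookkeeping: one must check that every ingredient of the reduction (shifting variables to make lower bounds zero, encoding upper bounds via auxiliary edges, introducing slack edges to turn inequality degree constraints into equalities, handling columns with a single nonzero via half-edges or loops, and dealing with possibly negative right-hand sides $b_i$) produces an instance whose size is polynomial in $m+n$ and whose numerical content does not blow up. Since none of these steps introduces an arithmetic dependence on the magnitudes of $A$, $b$, or $w$, strong polynomiality is preserved throughout, and the theorem follows.
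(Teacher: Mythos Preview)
The paper does not prove this theorem; it is quoted as a known result with citations to~\cite[Thm.~36.1]{SchrijverCombOpt} and~\cite{EJ70}, and used as a black box in Section~\ref{sec2nonzerospercolumn}. Your sketch is correct and is essentially the standard bidirected-graph reduction to $b$-matching recorded in those references, so there is nothing to compare.
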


As a first step, we solve the linear programming relaxation $\max \{ w^\intercal x : Ax \le b\}$.
This can be done in strongly polynomial time using Tardos's algorithm~\cite{Tardos86}.
If the LP is infeasible, then so is the integer program and we are done.
If the LP is unbounded, we may first rerun our algorithm for $w = \zero$.
This may return a feasible integer solution, in which case we can conclude that the integer program is also unbounded (due to the rationality of $A$).
Otherwise, the integer program is infeasible.

Thus, it remains to consider the (main) case in which we have obtained an optimal LP solution $x^*$.
In this case, we partition the matrix $A$ as follows:

\begin{claim}
    In strongly polynomial time, we can efficiently compute $I \subseteq [m]$ and $J \subseteq [n]$ such that
    \begin{enumerate}[(i)]
        \item all entries in the submatrix $A_{[m] \setminus I, [n] \setminus J}$ are within $\{-1,0,1\}$,
        \item $|I| \le 2 \log_2 \Delta$, and $|J| \le \log_2 \Delta$.
    \end{enumerate}
\end{claim}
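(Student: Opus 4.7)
The plan is to mimic the proof of Lemma~\ref{lem:IP2_reduction}, exploiting the dual situation of at most two nonzero entries per column instead of per row. The only new twist is that each ``bad'' column has at most one other nonzero row besides its large entry, so each step will consume one column but up to two rows, which is exactly what produces the factor $2$ between the bounds on $|J|$ and $|I|$.

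First I would run a greedy elimination: initialize $I = J = \emptyset$ and, while there exist $i \in [m] \setminus I$ and $j \in [n] \setminus J$ with $|A_{ij}| \ge 2$, pick such a pair $(i_t, j_t)$, identify the (at most one) other row $i'_t$ containing a nonzero entry of column $j_t$, and add $j_t$ to $J$ and both $i_t$ and $i'_t$ (the latter only when it exists and is distinct from $i_t$) to $I$. This terminates in at most $\min(m,n)$ steps since $|J|$ strictly grows, and each step is a simple scan of the matrix, so the whole procedure runs in strongly polynomial time. Upon termination, by construction every entry in $A_{[m] \setminus I, [n] \setminus J}$ lies in $\{-1, 0, +1\}$, and $|I| \le 2|J|$.

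To bound $|J|$, I would exhibit a triangular submatrix of $A$ with large diagonal, exactly as in Lemma~\ref{lem:IP2_reduction}. Writing $k := |J|$, let $B$ be the $k \times k$ submatrix of $A$ with rows $i_1, \ldots, i_k$ and columns $j_1, \ldots, j_k$ in the order of selection. By the choice of $(i_t, j_t)$, we have $|B_{tt}| = |A_{i_t, j_t}| \ge 2$. For $s < t$, the critical point is that at the end of step $s$ every row containing a nonzero entry of column $j_s$ has already been placed in $I$, whereas the selection rule forces $i_t \notin I_{t-1}$; hence $B_{ts} = A_{i_t, j_s} = 0$, and $B$ is lower triangular. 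Total $\Delta$-modularity then gives $2^k \le |\det B| \le \Delta$, so $k \le \log_2 \Delta$ and therefore $|I| \le 2 \log_2 \Delta$.

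The main conceptual point, and arguably the only nontrivial ingredient, is the observation that adding \emph{both} nonzero rows of a selected column $j_t$ to $I$ (rather than just $i_t$) is what guarantees triangularity of $B$ and thus the determinant lower bound; skipping $i'_t$ would be fatal because then $A_{i'_s, j_s}$ could reappear as a below-diagonal nonzero of $B$ when $i'_s$ is picked at some later step. Otherwise the argument is a routine transposition of Lemma~\ref{lem:IP2_reduction}, and I do not foresee any genuine obstacle.
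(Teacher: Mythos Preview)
Your proof is correct and follows essentially the same approach as the paper: build a triangular submatrix with diagonal entries of absolute value at least $2$ via a greedy selection, take $J$ to be its columns, and take $I$ to be the union of the supports of those columns. The paper phrases the construction as finding a \emph{maximal} such triangular submatrix and then deduces~(i) from maximality, whereas you deduce~(i) directly from the greedy termination condition; these are two ways of saying the same thing.

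One small slip: you write that $B$ is ``lower triangular'', but since you establish $B_{ts}=0$ for $s<t$ (row index larger than column index), $B$ is in fact \emph{upper} triangular. This does not affect the argument, as the determinant of a triangular matrix is the product of its diagonal entries either way.
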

\begin{proof}
    First, we compute any square submatrix $A'$ of $A$ that is upper-triangular and whose diagonal elements all have absolute value at least 2, and that is not contained in any larger submatrix with these properties.
    Let $J \subseteq [n]$ correspond to the columns of $A'$.
    Note that $\Delta \ge |\det(A')| \ge 2^{|J|}$ and hence $|J| \le \log_2 \Delta$.
    Let $I \subseteq [m]$ denote the union of the supports of the columns (of $A$) indexed by $J$.
    Recall that $A$ contains at most two nonzero entries in each column and hence $|I| \le 2|J|$, which yields~(ii).

    To see that~(i) holds, consider any $i \in [m] \setminus I$, $j \in [n] \setminus J$.
    Let us enlarge the submatrix $A'$ by first adding row $i$, which results in a zero-row since $i \notin I$.
    Thus, by also adding column $j$ we see that $A'$ is properly contained in a square upper-triangular submatrix of $A$ whose additional diagonal element is $A_{i,j}$.
    By the maximality of $A'$ we obtain $|A_{i,j}| \le 1$, which yields the claim.
\end{proof}

As the set $J$ is bounded in terms of $\Delta$, using $x^*$ and Theorem~\ref{thm:Cook_et_al} we can efficiently enumerate the relevant values for all variables indexed by $J$.
Thus, by deleting the respective columns and adapting the right-hand side, we may assume that $J = \emptyset$.

Since we can bound the number of rows of $A_{I,*}$ in terms of $\Delta$ and as all entries of $A$ are within $\{-\Delta,\dots,\Delta\}$, we see that the number of distinct columns in $A_{I,*}$ can also be bounded in terms of $\Delta$.
Thus, we may partition the columns into a constant number of sets $J_1,\dots,J_k \subseteq [n]$ such that $A_{I,J_t}$ consists of identical columns for each $t \in [k]$.
If one of these submatrices is all-zero, we remove the respective set $J_t$.
Note that, given any point $z \in \Z^n$ we only need to know the sum $\sum_{j \in J_t} z_j$ for every $t \in [k]$ in order to determine whether $A_I z \le b_I$.
Again by using $x^*$ and Theorem~\ref{thm:Cook_et_al} we can efficiently enumerate the relevant values $s_t$, $t \in [k]$ for these sums.
Thus, we may assume that we are given $s_1,\dots,s_k \in \Z$ such that any point $x \in \Z^n$ maximizing $w^\intercal x$ subject to
\begin{equation} \label{eqhsj8a1}
    \sum_{j \in J_t} x_j = s_t \qquad \text{for } t = 1,\dots,k
\end{equation}
and
\begin{equation} \label{eqhsj8a2}
    A_{[m] \setminus I} x \le b_{[m] \setminus I}
\end{equation}
is an optimal solution for our original problem, and that every optimal solution for our original problem can be cast in that way.

We claim that every variable appears in at most two of the above constraints, in which case we are done since we can apply the algorithm in Theorem~\ref{thmIPcolumns}.
To this end, note first that every variable appears in at most one of the constraints in~\eqref{eqhsj8a1}, and in at most two of the constraints in~\eqref{eqhsj8a2}, since $A$ contains at most two nonzero entries in each column.
If a variable appears in one constraint in~\eqref{eqhsj8a1}, it also appears with a nonzero coefficient in $A_I$, and hence it appears in at most one constraint in~\eqref{eqhsj8a2}.

\section*{Acknowledgements}
Samuel Fiorini and Yelena Yuditsky are supported by the Belgian National Fund for Scientific Research (FNRS), through PDR grant BD-OCP.
Gwena\"el Joret is supported by a CDR grant and a PDR grant from FNRS.
Stefan Weltge is supported by the Deutsche Forschungsgemeinschaft (DFG, German Research Foundation), project number 451026932. The authors thank Michele Conforti and Tony Huynh for discussions in the early stages of this project and the anonymous reviewers of the FOCS 2021 submission for their helpful comments. 
The authors are also much grateful to the anonymous reviewers of the JACM submission for their careful reading and their numerous comments, which helped improve the exposition of the paper. 

\bibliographystyle{abbrv}
\bibliography{bibliography}

\end{document}